\newtheorem{theorem}{Theorem}[section]
\newtheorem{lemma}[theorem]{Lemma}
\newtheorem{corollary}[theorem]{Corollary}
\newtheorem{proposition}[theorem]{Proposition}
\theoremstyle{remark}
\newtheorem{remark}[theorem]{Remark}
\theoremstyle{definition}
\newtheorem{example}[theorem]{Example}
\numberwithin{equation}{section}
\DeclareMathOperator{\Cdb}{{\mathbb C}}
\DeclareMathOperator{\Rdb}{{\mathbb R}}
\DeclareMathOperator{\Ndb}{{\mathbb N}}
\DeclareMathOperator{\eps}{\epsilon}
\begin{document}
\title[Noncommutative topology and Jordan operator algebras]{Noncommutative topology and Jordan operator algebras}

\author{David P. Blecher}
\address{Department of Mathematics, University of Houston, Houston, TX
77204-3008, USA}
\email[David P. Blecher]{dblecher@math.uh.edu}

 \author{Matthew Neal}
\address{Department of Mathematics and Computer Science,
Denison University, 100 West College St, Granville, OH 43023,
 USA}
\email[Matthew Neal]{nealm@denison.edu}

\keywords{Jordan operator algebra, Jordan Banach algebra, Open projection, Hereditary subalgebra, Approximate identity, JC*-algebra, Operator spaces, $C^*$-envelope, Real positivity, States, Noncommutative topology, Peak set, Peak interpolation}
\subjclass[msc2010]{Primary: 17C65, 46L07, 46L70,  46L85, 47L05, 47L30; Secondary  46H10, 46H70, 47L10, 47L75}

\date{\today} 
	
\thanks{We are grateful to Denison University and the Simons Foundation (Collaborative
grant 527078) for some financial support.}

\begin{abstract}  Jordan operator algebras are norm-closed spaces of operators
on a Hilbert space with $a^2 \in A$ for all $a \in A$.
 We study noncommutative topology, noncommutative peak sets and peak interpolation, 
and hereditary subalgebras of  Jordan operator algebras.   We show  that Jordan operator algebras 
present perhaps the most general setting for a `full' noncommutative topology in the $C^*$-algebraic sense of 
Akemann, L. G. Brown, Pedersen, etc, and as modified for not necessarily selfadjoint algebras by the authors with Read, Hay and
other coauthors.   Our breakthrough relies in part on establishing several strong 
variants of $C^*$-algebraic results
of Brown  relating to hereditary 
subalgebras, proximinality, deeper facts about $L+L^*$ for a left ideal $L$
in a $C^*$-algebra, noncommutative Urysohn lemmas, etc.     We also prove several other approximation results in $C^*$-algebras
and various subspaces of $C^*$-algebras, related to open and closed projections and 
 technical $C^*$-algebraic results of Brown.  
  \end{abstract}
\maketitle

\section{Introduction}   
Akemann's noncommutative topology  (see e.g.\ \cite{Ake2,APfaces}) for $C^*$-algebras
is a powerful tool that recasts the usual notions and results from topology, and generalizes them, in the setting
of orthogonal projections  in the bidual of the $C^*$-algebra.  Thus a projection $p$ in  the bidual $B^{**}$
 of a $C^*$-algebra $B$ is called {\em open} if it is the weak* limit of an increasing net of positive elements of $B$
(see \cite{P}).    Open projections are also precisely the support projections of hereditary subalgebras of $B$
(that is, the identity of the weak* closure of the subalgebra in $B^{**}$).   This  links noncommutative topology  with a crucial concept in modern $C^*$-algebra theory.
Then $q$ is {\em  closed} if $q^\perp = 1- q$ is open.      For a locally compact Hausdorff space
 $K$,
a projection  $q \in C_0(K)^{**}$  is open (resp.\ closed) if  and only if it is the canonical image (via the 
process of viewing elements of $C_0(K)^{*}$ as measures)  in $C_0(K)^{**}$ of the 
characteristic function of an  open (resp.\ closed) set in $K$.    Akemann, L. G. Brown, Pedersen, and others,
then go on to discuss compact projections, etc, generalize key results in topology such as the Urysohn lemma
to $C^*$-algebras, and
give many deep applications (see e.g.\ \cite{Brown}).

An (associative) {\em operator algebra} is a possibly nonselfadjoint closed subalgebra of $B(H)$, for a
complex Hilbert space $H$.    In a series of many papers (e.g.\ \cite{BHN, BRI, BRII, BRord, BNII, BBS, Bnpi}) 
 the authors,  with Read, Hay and
other coauthors,  generalized Akemann's noncommutative topology to such operator algebras.  Indeed following the 
lead of Hay's thesis \cite{Hay}, 
we fused this  noncommutative topology 
with the classical theory of peak sets, generalized peak sets, peak interpolation, etc, for function
algebras (see e.g.\ \cite{Gam}).  The latter topics are crucial tools for studying  classical algebras of functions. 

 By a {\em  Jordan operator algebra} we
 mean a  norm-closed  {\em  Jordan subalgebra} $A$ of a $C^*$-algebra,  
namely a norm-closed   subspace closed under the 
`Jordan product' $a \circ b = \frac{1}{2}(ab+ba)$. Or equivalently,
 with $a^2 \in A$ for all $a \in A$ (that is, $A$ is
closed under squares; the equivalence
 uses the identity $a \circ b = \frac{1}{2} ((a+b)^2 -a^2 -b^2)$).   This is an interesting class of operator spaces that seems to have had almost had no study until now (besides \cite{AS}) 
except in the selfadjoint case.
The selfadjoint case, that is,  closed selfadjoint subspaces of a $C^*$-algebra which are closed under squares, 
are exactly what is known in the literature as 
{\em JC*-algebras}, and these do have a large literature, see e.g.\ \cite{Rod, HS} for references).     

There are  many interesting examples 
of Jordan operator algebras (we collect some of these in Section \ref{Ex}).     
In a recent preprint \cite{BWj} the first author and Wang initiated  the theory of (possibly nonselfadjoint) Jordan operator   algebras.    Unfortunately progress in \cite{BWj}  could only proceed to a certain point 
because we were blocked by a couple of difficult issues.     In particular, the {\em noncommutative topology} 
and {\em  noncommutative peak set/peak interpolation/hereditary subalgebra} theory
was obstructed at an early stage.      The main contribution of the present paper
is the solution of these  difficult points, using variants of some deep $C^*$-algebra arguments
of Brown \cite{Brown}, thus  removing the blockage.  These new $C^*$-algebraic facts are 
employed in deep operator space variants of classical peak interpolation lemma, to give the Jordan algebra
variant of Hay's theorem (the main result of \cite{Hay}, see also \cite{Bnpi} for a 
simplification).    Thus we establish  the crucial link that
allows our algebras to plug directly into the $C^*$-algebraic noncommutative topology described in the first 
paragraph of the present paper, and also allows a theory of hereditary subalgebras to exist.   
 Using this breakthrough, we go on to show that, remarkably,
 everything relevant to these topics
for  associative operator algebras in a series of papers (e.g.\ 
\cite{BHN, BRI, BRII, BRord, BNII, BBS, Bnpi, Hay}), works in the Jordan case with very minor exceptions,  but
 much of it for 
the deep reasons just alluded to.
That is, Jordan operator algebras turn out to have a
good  noncommutative topology and  hereditary subalgebra theory.  Here `good' by definition (!) means what we with various coauthors have proved for associative subalgebras
of $C^*$-algebras.  This includes for example Urysohn lemmas, peak interpolation,  order theory (in the sense of e.g.\ \cite{BRord}, namely with regard to the `real positive' ordering), new relations with (e.g.\ the noncommutative topology of) enveloping $C^*$-algebras, lifting of projections, etc.     Indeed Jordan operator algebras seem to  be the most general possible setting for  this full 
noncommutative topology.

Even more than  was  seen in \cite{BWj},  it will come to light here that the theory of Jordan operator algebras is  astonishingly similar to that of associative operator algebras.    Since much of this
parallels the development found in several papers 
on the associative operator algebra case (see the list in the previous paragraph), there is 
quite a lot to do.   We  show how to    
generalize almost all of the remaining portion of the theory from the papers
just cited.
   We are able to include a rather large number of results 
in a relatively short manuscript since many proofs are similar to their
operator algebra counterparts, and thus we often need only discuss the 
new points that arise.    However we warn the reader that the proofs will become skimpier (and sometimes
nonexistent) towards the end, since it will be assumed that the reader by then has gained familiarity
with several of the basic tricks to `Jordanize' proofs.   Indeed many of these tricks and ideas 
occur already in \cite{BWj}.   

Several complementary facts and additional theory
will be forthcoming in a work in progress \cite{BWj2} and in  the second authors Ph.\ D.\ thesis \cite{ZWdraft}.  
In another direction, in \cite{BNjp} we study
contractive projections on Jordan operator algebras, finding 
Jordan variants of many of the results in \cite{BNp}, and some 
improvements on a couple of results of that paper, etc.   Indeed the latter project provided the impetus for \cite{BWj} and
 the present paper.

We now describe the structure of our paper.
  In Section 2 we give some
characterizations of Jordan operator algebras complementing the one given
in \cite[Section 2.1]{BWj}.  In Section 3 we list some examples of Jordan operator algebras.  

In 
Section 4 we prove the hard 
technical facts that comprise the main breakthrough of the present paper.
In part of Section 4 we  establish several variants of $C^*$-algebraic results
of L. G. Brown from \cite{Brown} relating to hereditary 
subalgebras, proximinality, deeper facts about $L+L^*$ for a left ideal $L$
in a $C^*$-algebra, etc.    We recall that a subspace $E$ of a Banach space $E$ is proximinal if there is always 
a (not necessarily unique) 
closest point in $E$ to every $x \in X$.  We also prove several other approximation results in $C^*$-algebras
and various subspaces of $C^*$-algebras, related to open and closed projections.  
The main application in Section 4 is the Jordan variant of Hay's main theorem
from \cite{Hay} (see also \cite{Bnpi} for a simplification of part 
of Hay's proof).   That is, if $A$ is a closed Jordan subalgebra of a $C^*$-algebra $B$ then
there is a bijective correspondence between hereditary subalgebras (HSA's for short) in $A$ and HSA's in $B$ with support projection in $A^{\perp \perp}$ (this is Theorem \ref{bhn29}).    By definition
a HSA in $A$ is a Jordan subalgebra $D$ of $A$ containing a 
Jordan contractive approximate identity such that $xAx \subset D$ for all $x \in D$.   We 
show that a subspace
$D$ of $A$ is a hereditary subalgebra,  if and only if  $D$ is of form 
$\{ a \in A : a = pap \}$ for a projection
$p \in A^{\perp \perp}$ which is open in Akemann's sense (see e.g.\ \cite{Ake2}) as a projection in $B^{**}$. 
Equivalently,   a projection
$p \in A^{\perp \perp}$ is what we called $A$-{\em open} in   \cite{BWj} (that is, there exists a net in $A$ with
$x_t = p x_t p \to p$ weak* in $A^{**}$) 
if and only if $p$ is 
open in Akemann's sense  in $B^{**}$.   If $A$ is an approximately unital  Jordan operator algebra then a projection $q$ in $A^{**}$ is {\em closed} if 
$1-q$ is open in $A^{**}$, where $1$ is the identity of $A^{**}$.   By the above, the latter is equivalent to $q$ being closed in Akemann's sense in $B^{**}$.   

  In Section 5 we give some related distance 
formulae in terms of limits along an approximate identity.   Again some of these 
seem new even in the $C^*$-algebra case.   

In Section 6 we give several initial
consequences of the results in  Section 4, to e.g.\ open and closed projections, hereditary subalgebras,
and quotients.  In Section 7 we give consequences for 
the theory of compact projections in  Jordan operator algebras.  We recall that a projection $q$ 
in the bidual of a $C^*$-algebra $B$ is {\em compact} in Akemann's sense if 
it is closed and there is an element $b \in {\rm Ball}(B)_+$ with $q = qb \, (= qbq)$.  This
is equivalent to $q$ being closed in $(B^1)^{**}$ (see e.g.\ \cite{APfaces} where compactness
is called `belonging locally to' $B$, also 
see \cite{BNII} and \cite[Section 6]{BRII} for the generalization
of compactness to nonselfadjoint associative operator algebras).  

In Section 8 we show that
the theory from \cite{BHN, BRI, BRII, BRord, BNII,Bnpi, Hay} of noncommutative peak sets, 
noncommutative peak interpolation, and some noncommutative Urysohn lemmas, etc, all generalize
to Jordan operator algebras.   Noncommutative peak interpolation and  Urysohn are 
important in such settings in some large part because these techniques allow one to build 
elements in $A$ which have certain prescribed properties or behavior  on Akemann's  noncommutative 
generalizations of closed sets, i.e.\ closed projections in the bidual of the $C^*$-algebra (see \cite{Bnpi} for
a survey of this topic  
for associative operator algebras). 
In $C^*$-algebras one has a good functional calculus and one can 
build elements using this calculus and other tricks we are familiar with in $C^*$-algebras, but in more general 
algebras the  functional calculus is not good enough and so often 
one has to have other devices for there to be a hope 
of doing such building.   

Finally in Section 9 we collect several miscellaneous results
in the  noncommutative topology of Jordan operator algebras, mostly Jordan versions of results from 
\cite[Section 6]{BRII}, \cite[Section 6]{BRord}, and \cite[Section 2]{blueda}. 
This includes for example a strict Urysohn lemma, and a Urysohn lemma where the interpolating elements
are `nearly positive' (that is,  as close as we like to 
a contraction that is positive in the usual $C^*$-algebraic sense).  

We remark that we focus on nonselfadjoint Jordan operator   algebras in the present paper and \cite{BWj}.
Some of our results and theory are new even in the selfadjoint  case, i.e.\ for JC*-algebras, however in that case
one would want to replace  real positive elements in statements and proofs
by elements that are positive in the usual sense, which would change those proofs a bit.   We have not taken the trouble to do this here, and leave it to the
interested reader.   See also \cite{FPP0,FPP} 
  for some results along these lines in a more general setting. 
 
 In the remaining part of Section 1 we give some background
and notation.   For background  on operator spaces and associative operator algebras we refer the reader
to \cite{BLM,Pisbk}, and for $C^*$-algebras  the reader could consult e.g.\ 
\cite{P}.   For Sections 6--9 the reader will also want to consult \cite{BWj} frequently
 for background, notation, etc, and will often be referred 
to that paper for various results that are used here.    

  For us a {\em projection}
is always an orthogonal projection.   The letters $H, K$ are reserved for Hilbert spaces.
If $X, Y$ are sets, then $\overline{XY}$ denotes the
closure of the span of products of the form $xy$ for $x \in X, y \in
Y$.      A  (possibly nonassociative) normed algebra $A$  is {\em unital} if it has an identity $1$ of norm $1$, a map $T$ 
is unital if $T(1) = 1$.    We say that $X$ is a {\em unital-subspace} (resp.\ {\em unital-subalgebra}) of
such $A$ if it is a subspace (resp.\ subalgebra) and $1_A \in X$.  We write $X_+$ for the positive operators (in the usual sense) that happen to
belong to $X$.   We write $M_n(X)$ for the space of $n \times n$ matrices 
over $X$, and of course $M_n = M_n(\Cdb)$.  

Jordan subalgebras of commutative (associative)
 operator algebras are of course ordinary (commutative associative)
operator algebras on a Hilbert space, and the Jordan product is the 
ordinary product.    In particular if $a$ is an element in a 
Jordan operator algebra $A$ inside a $C^*$-algebra  $B$,
 then the closed Jordan algebra
generated by $a$ in $A$ equals the closed Banach algebra
generated by $a$ in $B$.    We write this as oa$(a)$.    
A {\em Jordan homomorphism} $T : A \to B$ between
Jordan algebras
is of course  a linear map satisfying $T(ab+ba) = T(a) T(b) + T(b) T(a)$ for $a, b \in A$, or equivalently,
that $T(a^2) = T(a)^2$ for all $a \in A$ (the equivalence follows by applying $T$ to $(a+b)^2$). 
If  $A$ is a Jordan operator subalgebra  of $B(H)$, then the {\em diagonal}
$\Delta(A) = A \cap A^*$  is a JC*-algebra.   If $A$ is
unital then as a  a JC*-algebra $\Delta(A)$ 
  is independent of the Hilbert space $H$  (see the third paragraph of \cite[Section 1.3]{BWj}).   An element $q$ in a Jordan operator algebra $A$
 is called  a {\em projection} if $q^2 = q$ and $\| q \| = 1$
(so these are just the orthogonal projections on the 
Hilbert space $A$ acts on, which are in $A$).    Clearly $q \in \Delta(A)$.

A projection $q$ in a Jordan operator algebra $A$ will be called
{\em central} if $qxq = q \circ x$ for all $x \in A$.  This is equivalent to
$qx = xq = qxq$ in any $C^*$-algebra containing $A$ as a
Jordan subalgebra, by the first labelled equation in \cite{BWj}.     It is also equivalent to that $q$ is central in any generated
(associative) operator algebra, or in a generated $C^*$-algebra.   This notion is independent of the particular generated
(associative) operator algebra since it is captured by the intrinsic formula $qxq = q \circ x$ for $x \in A$.  
For a projection $q$ in $A$ we have $q \circ x = 0$ if  and only if $x = (1-q) x (1-q)$, 
as may be  seen by considering the $2 \times 2$ matrix picture 
of $x$ with respect to $q$.      We will often use the well known  fact or exercise that 
$qx = q$ iff $xq = q$ iff $qxq = q$ if $x$ is a contractive operator.    For this reason many of the expressions
of the form $qx = q$ in the papers that we are generalizing may be replaced by  $qx q= q$, which makes sense in the 
Jordan setting.   Indeed in a Jordan algebra $aba = 2 (a \circ b) \circ a - a^2 \circ b$ (see  the second labelled equation in \cite{BWj}),
where $\circ$ is the Jordan product. 

A 
{\em Jordan  contractive approximate identity}
(or {\em J-cai} for short) for $A$,  is a net
$(e_t)$  of contractions with $e_t \circ a \to a$ for all $a \in A$.
A {\em  partial cai} for $A$ is a net consisting of real positive  elements that acts as a cai  (that is, a contractive approximate identity)
for the ordinary product in
every $C^*$-algebra  which contains and is generated by $A$ as a closed Jordan subalgebra.    If a 
  partial cai for $A$ exists then $A$ is called {\em approximately unital}.  
It is shown in \cite[Section 2.4]{BWj}   that if $A$ has a J-cai then it has a partial cai.  

We recall that 
every Jordan operator algebra $A$ has a  unitization $A^1$ 
which is unique up
to isometric Jordan homomorphism (see
 \cite[Section 2.2]{BWj}).    If $A$ is approximately unital the unitization $A^1$ 
 is unique up completely isometric Jordan homomorphism by \cite[Proposition 2.12]{BWj}.   The latter
is not true in general  if $A$ is not   approximately unital (a two 
dimensional Hilbertian example is given in
\cite{BWj2}), but fortunately in the present paper the isometric
case suffices.

A {\em state} of an approximately unital Jordan 
operator algebra $A$ is a functional with $\Vert \varphi \Vert = \lim_t \, \varphi(e_t) = 1$
for some (or every) J-cai $(e_t)$ for $A$.  These extend to states of the unitization $A^1$.  They also
extend to a state (in the $C^*$-algebraic sense) on any $C^*$-algebra $B$ generated by $A$, and conversely
any state on $B$ restricts to a state of $A$.  See \cite[Section 2.7]{BWj}
for details.

If $A$ is a Jordan subalgebra of a $C^*$-algebra $B$ then $A^{**}$ with its Arens product 
is a Jordan subalgebra of the von Neumann algebra $B^{**}$ (see \cite[Section 1]{BWj}).   Since 
the diagonal $\Delta(A^{**})$ is 
a JW*-algebra  (that is a weak* closed JC*-algebra), it follows that $A^{**}$ is closed under meets and joins of projections.

Because of the uniqueness of unitization up to isometric isomorphism, for a Jordan operator algebra $A$ 
we can define unambiguously ${\mathfrak F}_A = \{ a \in A : \Vert 1 - a \Vert \leq 1 \}$.  Then 
 $\frac{1}{2} {\mathfrak F}_A = \{ a \in A : \Vert 1 - 2 a \Vert \leq 1 \} \subset {\rm Ball}(A)$.
Similarly, ${\mathfrak r}_A$, 
the {\em real positive} or {\em accretive} elements in $A$, may be defined as the 
the set of  $h \in A$ with  Re $\varphi(h) \geq 0$ for all states $\varphi$ of $A^1$.
This is equivalent to all the other usual conditions 
characterizing accretive elements as we said in \cite[Section 2.2]{BWj}.  
We have for example  ${\mathfrak r}_A = \{ a \in A : a + a^* \geq 0 \}$, where 
the adjoint and sum here is in (any) $C^*$-algebra  containing $A$ as a
Jordan subalgebra.   We also have
${\mathfrak r}_A = \overline{ \Rdb_+ {\mathfrak F}_A}$.   If $A$ is a Jordan subalgebra of a Jordan operator algebra $B$
then ${\mathfrak F}_A = {\mathfrak F}_{B} \cap A$
and ${\mathfrak r}_A = {\mathfrak r}_{B} \cap A$.    
Elements $x$  in ${\mathfrak r}_A$ have $n$th roots in the closed (associative) operator algebra oa$(x)$ generated 
by $x$, for all $n \in \Ndb$, and $x^{\frac{1}{n}} \to s(x)$ weak*, where 
$s(x)$ is the {\em support projection} of $x$.   See \cite{BWj} for more details and properties of $s(x)$, although
$s(x)$ lives in the associative  operator algebra oa$(x)^{**}$, so can be treated by the methods in the cited
papers for the associative  operator algebra case.      The same principle applies to the {\em peak projection}  $u(x)$
of an element $x \in {\rm Ball}(A)$: this is the weak* limit in the bidual of $(x^n)_{n \in \Ndb}$ in the case that 
this weak* limit  exists and is nonzero \cite[Lemma 1.3]{BRII}. 
 We say in this case that $x$ {\em peaks} on the projection $u(x)$.
 That is, we may usually directly apply the results about $u(x)$ 
from  the cited
papers for the associative operator algebra case. We define and discuss peak projections more adequately at the start of Section \ref{Peak}.    For now we simply recall from  that 
in a unital operator algebra $A$ we have the relation $u(x) = 1 - s(1-x)$ if $u(x)$
is the peak projection of  $x \in {\rm Ball}(A)$.  
 This  of course is true in the Jordan case too,
since as we said $u(x)$ and $s(1-x)$   may be computed in the   associative operator algebra generated by $x$ and $1$.
 In the case that $a \in \frac{1}{2} {\mathfrak F}_A$ then $u(a) = {\rm w^*lim}_n \, a^n$ is a projection in $A^{\ast\ast},$ and is the peak for $a$ in the sense that $q=u(a)$ satisfies all the equivalent conditions in 	\cite[Lemma 3.1]{BNII}. Also in this case, $u(a)\leq s(a),$ and $u(a^n)=u(a^{1/n})=u(a)$ if $n\in \Ndb.$ And $u(a)\neq 0$  if and only if  $\Vert a\Vert=1.$  See e.g.\ Corollary 3.3 in \cite{BNII}.

We say that a subspace $D$ of a Jordan  operator algebra is an {\em  inner ideal} (in the Jordan sense) if  $aAa \subset D$  for any $a \in D$.  Equivalently (by replacing $a$ by $a \pm c$), if $a,c \in D$
and $b \in A$ then $abc  + cba \in D$.      {\em Hereditary subalgebras}  (in the Jordan sense), or {\em HSA's} for short, are the Jordan subalgebras possessing a Jordan cai which are inner ideals in the Jordan product sense.

We write $[x,y,z] = xyz+zyx$  for $x, y, z$ in a Jordan algebra.
   For a projection $q$ in a 
unital operator algebra $M$
the {\it Peirce 1-projection} on $M$ is $P_1 (x) = [q,x,q^{\perp}]$ for $x \in M$.
This  is  a contraction, as may easily be seen from the $2 \times 2$ matrix picture of $x$ with respect 
to $q$.   The    Peirce 1-space of $q$ is $M_1(q) = \{ P_1(x) : x \in M \}$.   Thus 
$B^{**}_1(q) = \{ P_1(x) : x \in B^{**} \}$ for $C^*$-algebra $B$ and projection $q \in B^{**}$.
 The    {\it Peirce 0-space} of $q$ is  $M_0(q)  = q^\perp M q^\perp$, and the    {\it Peirce 2-space} of $q$ is  $M_2(q)  = q M q$; with obvious modifications if $M = B^{**}$ for $C^*$-algebra $B$.   If $q$ is a closed 
projection in $B^{**}$ then $B^{**}_0(q) \cap B = 
q^\perp B^{**} q^\perp \cap B = \{ b \in B : b = q^\perp B^{**} q^\perp \}$ is a hereditary subalgebra of $B$ which is
weak* dense in $B^{**}_0(q)$.    Indeed this characterizes closed projections.

\section{Characterizations of Jordan operator algebras}

In \cite[Section 2.1]{BWj} an abstract operator space
 characterization of Jordan operator algebras
with an identity or `approximate identity' is given.   
The following very different
characterization of Jordan operator algebras (which does
not assume the existence of any kind of identity or approximate identity)
is the Jordan algebra variant of the Kaneda-Paulsen theorem from 
\cite{KP}; but with a proof that is modeled on the quick proof of the 
latter theorem from \cite[Theorem 5.2]{BNmetric}.   In the following result,
$I(X)$ is the injective envelope \cite[Chapter 4]{BLM}, which is known to be a $C^*$-algebra if $X$ is
a unital operator space (see \cite[Corollary 4.2.8]{BLM}).   The $C^*$-algebra generated by $X$ inside
$I(X)$ is called the $C^*$-envelope 
$C^*_{\rm e}(X)$.
 
\begin{theorem} \label{KP}  
Let  $X$ be an operator space.  The
possibly nonassociative algebra products on $X$ for which
there exists a completely isometric Jordan homomorphism from $X$
onto a Jordan operator algebra, are in a correspondence
with the
 elements $z \in {\rm Ball}(I(X))$
such that $x z^* x
\subset X$ for all $x \in X$.  For such $z$ the associated Jordan
 operator algebra product on $X$ is $\frac{1}{2} 
(x z^* y + y z^* x)$ for $x, y \in X$
(viewing $X \subset I(X)$).
\end{theorem}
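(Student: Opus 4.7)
My plan is to follow the structure of the proof of the associative Kaneda--Paulsen theorem from \cite[Theorem 5.2]{BNmetric}, adapting to the Jordan setting. There are two implications comprising the correspondence.

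For the implication from $z$ to a Jordan product, suppose $z \in \mathrm{Ball}(I(X))$ satisfies $x z^* x \in X$ for all $x \in X$. I would first polarize this by expanding $(x+y) z^* (x+y) \in X$ to obtain the symmetric bilinear condition $x z^* y + y z^* x \in X$. Defining $x \circ y = \frac{1}{2}(x z^* y + y z^* x)$ yields a commutative product on $X$, and a direct computation $(x z^*) \circ (y z^*) = (x \circ y) z^*$ shows that $X z^* \subset I(X)$ is a Jordan subalgebra of the ambient $C^*$-algebra of the TRO $I(X)$, with $\phi \colon x \mapsto x z^*$ a Jordan homomorphism from $(X, \circ)$ onto $X z^*$. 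The crucial remaining step is that $\phi$ is a complete isometry. I would obtain this by passing to the associative hull: let $\widetilde{X}$ denote the smallest closed subspace of $I(X)$ containing $X$ and closed under the bilinear operation $(a, b) \mapsto a z^* b$. Then $\widetilde{X}$ and $z$ satisfy the hypothesis of the associative Kaneda--Paulsen theorem \cite{KP}, whose conclusion provides the complete isometry $\widetilde{x} \mapsto \widetilde{x} z^*$ on $\widetilde{X}$, which restricts to the desired complete isometry on $X$.

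For the reverse implication, from a Jordan product to $z$, suppose $X$ carries a Jordan product realizing it completely isometrically as a Jordan subalgebra of a $C^*$-algebra $B$. Passing to the unitization $B^1$ and invoking injectivity of $I(X)$, extend the inclusion $X \hookrightarrow I(X)$ to a completely contractive map $\Phi \colon B^1 \to I(X)$. Define $z = \Phi(1_{B^1}) \in \mathrm{Ball}(I(X))$. By rigidity of the injective envelope and Hamana's theorem identifying $I(X)$ as the triple envelope of $X$, $\Phi$ preserves the ternary product $\{a, b, c\} = \frac{1}{2}(a b^* c + c b^* a)$ on the TRO generated by $X$ together with $1_{B^1}$ inside $B^1$. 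Applying this to $\{x, 1_{B^1}, y\} = \frac{1}{2}(xy + yx) = x \circ y \in X$ then yields $x \circ y = \Phi(x \circ y) = \{x, z, y\}_{I(X)} = \frac{1}{2}(x z^* y + y z^* x)$, which is the claimed product formula; in particular, $x z^* x = x^2 \in X$.

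The main obstacle is the complete-isometry step in the first direction, since the Jordan hypothesis $x z^* x \in X$ is strictly weaker than the associative Kaneda--Paulsen hypothesis $x z^* y \in X$. My strategy circumvents this by enlarging $X$ to its associative closure $\widetilde{X}$, but some care is required to check that $z$, which a priori lies only in $I(X)$ rather than in $I(\widetilde{X})$, still serves as a legitimate Kaneda--Paulsen element for $\widetilde{X}$; this is handled by rigidity (in particular, $I(\widetilde{X})$ embeds as a subspace of $I(X)$, and the ambient $I(X)$ is an acceptable $C^*$-algebraic setting for the conclusion). The bijectivity of the correspondence then follows by combining the two directions with the uniqueness of $z$ forced by the rigidity of the injective envelope.
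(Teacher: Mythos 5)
There is a genuine gap in your first direction, and an overclaim at the end. The map $\phi\colon x\mapsto xz^*$ is \emph{not} a complete isometry in general, and the associative Kaneda--Paulsen theorem does not assert that it is: take $z=0$, which satisfies $xz^*x=0\in X$ trivially and corresponds to the zero Jordan product, yet $\phi$ collapses $X$ to $\{0\}$. What the associative theorem (and the paper) actually uses is the map
$$\Psi(x) = \left[ \begin{array}{cc} x z^* & x(1- z^* z)^{\frac{1}{2}}  \\ 0 & 0 \end{array}
 \right] ,$$
which is completely isometric because it amounts to right multiplication of a copy of $x$ by the coisometry $[\, z^* \;\; (1-z^*z)^{\frac{1}{2}}\,]$, and which satisfies $\Psi(x)^2=\Psi(xz^*x)$. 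Once you replace $\phi$ by $\Psi$, your detour through the associative hull $\widetilde{X}$ becomes unnecessary: one checks directly that $\Psi|_X$ is a completely isometric Jordan homomorphism onto $\Psi(X)$, which is closed under squares, hence a Jordan operator algebra. (The paper also observes that this direction works with $I(X)$ replaced by any TRO containing $X$, so there is no real difficulty about where $z$ lives.)

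Your second direction is essentially the paper's: set $z=P(1)$ for a completely contractive projection $P$ of $B(H)$ onto $I(X)$ and compute $x^2=P(x1^*x)=P(xP(1)^*x)=xz^*x$. But the tool justifying the middle equality is Youngson's theorem on ranges of completely contractive projections (see \cite[Theorem 4.4.9]{BLM}), not rigidity of $I(X)$ or the triple-envelope identification; rigidity does not give you that your $\Phi$ is a ternary morphism on the TRO generated by $X$ and $1_{B^1}$ (and your $\Phi$ is not even a projection). Finally, your closing claim that the correspondence is bijective, with uniqueness of $z$ ``forced by rigidity,'' is precisely what the authors state they \emph{cannot} prove in the Jordan case: the theorem asserts only ``a correspondence,'' and the remark following it notes that it is unclear whether $xz^*x=0$ for all $x\in X$ forces $z=0$, even when $X=I(X)$. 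That sentence should be deleted.
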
   \begin{proof}  The one direction, and the last
statement, are similar to Remark 2 on p.\ 194 of \cite{BRS}, and work with $I(X)$ replaced by any 
TRO $Z$ containing $X$.   That is, suppose that $X \subset Z \subset B(H)$  for some Hilbert space $H$
with $Z Z^* Z \subset Z$ within
$B(H)$.  Suppose that $z \in {\rm Ball}(Z)$ and $x z^* x
\subset X$ for all $x \in X$.  View
$I(X)$ as a TRO in $B(H)$  for some Hilbert space $H$
(that is, we have $I(X) I(X)^* I(X) \subset I(X)$ within
$B(H)$, and $X \subset I(X) \subset B(H)$.   
See \cite{Ham} or 4.4.2 in
 \cite{BLM}).
Notice that the map
$$\Psi(x) =  \left[ \begin{array}{ccl}
x z^* & x(1- z^* z)^{\frac{1}{2}}  \\ 0 & 0 \end{array}
 \right] , \qquad x \in X, $$ 
is a complete isometry into $M_2(B(H)) \cong B(H^{(2)})$.
Indeed $\Psi$ effectively multiplies a copy of $x \in X$ by the coisometry
$[ \;  z^* \; \;   (1-z^* z)^{\frac{1}{2}} \; ],$
  and multiplication on the right 
by a coisometry is completely isometric.
Next, $\Psi(x)^2 = \Psi(x z^* x)$ for $x \in X$.  Hence  
the range of $\Psi$ is a Jordan subalgebra of $B(H^{(2)})$
and  $\Psi$ is a completely isometric Jordan homomorphism if we
equip $X$ with  product $\frac{1}{2} 
(x z^* y + y z^* x)$ for $x, y \in X$.    

For the other direction,
if $X$ is a Jordan subalgebra of $B(H)$,
then we can view
$X \subset I(X) \subset B(H)$ as above, and by injectivity there exists
a completely contractive projection $P$ from $B(H)$ onto
$I(X)$.  Set $z = P(1)$.   By a theorem
of Youngson \cite[Theorem 4.4.9]{BLM} 
 we have $$x^2 = P(x1^*x) = P(x P(1)^* x) = x z^* x, \qquad x,y \in X,$$
where the last expression  $x z^* x$ is a product with respect to the 
ternary multiplication in $I(X) = {\rm Ran}(P)$ (see e.g.\ 4.4.2 and 4.4.7
in \cite{BLM}).
  \end{proof}

   In the Kaneda-Paulsen theorem \cite{KP} the correspondence between
the product on $X$ and the element $z$ in the injective envelope is bijective.
This follows e.g.\ from \cite[Proposition
4.4.12]{BLM} and its `right-hand version': if $X z^* X = (0)$ then $X z^* = (0) =
X z^* z$, so $z^* z = 0 = z$.  In our Jordan case we do not see yet if 
 the correspondence between
the Jordan product on $X$ and the element $z$ in the injective envelope is  necessarily bijective.
In particular we do not see why $x z^* x = 0$ for all $x \in X$ need  imply that
$z = 0$ in $I(X)$, even if $X = I(X)$.  
 This difficulty disappears in the `unital case' in the 
next result, which is modeled on \cite[Corollary 5.3]{BNmetric}:

\begin{corollary} \label{kpc}
Let  $(X,u)$ be a unital operator space in the sense e.g.\ of
{\rm \cite{BNmetric}}.  The
possibly nonassociative algebra products on $X$ for which
there exists a completely isometric Jordan homomorphism from $X$
onto a Jordan operator algebra, are in a bijective correspondence
with the elements $w \in {\rm Ball}(X)$ such that
$x w x \subset X$ (multiplication taken in the 
$C^*$-algebra which is the $C^*$-envelope of $(X,u)$ (or 
which is the injective envelope $I(X)$ with its unique
$C^*$-algebra product for which $u$ is the identity).
For such $w$ the associated
Jordan operator algebra product on $X$ is $\frac{1}{2}(x w y + y wx)$
for $x, y \in X$.
\end{corollary}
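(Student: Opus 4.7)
The plan is to bootstrap off Theorem~\ref{KP}. When $(X,u)$ is unital, $I(X)$ carries a unique $C^*$-algebra product making $u$ the identity, and $X \subset I(X)$ is a unital complete isometry. Theorem~\ref{KP} then supplies, for each Jordan operator algebra product on $X$, an element $z \in {\rm Ball}(I(X))$ with $x z^* x \in X$ for all $x \in X$, such that the product is $\frac{1}{2}(x z^* y + y z^* x)$.

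The key observation is that any such $z$ automatically satisfies $z^* \in X$. Indeed, substituting $x = u$ into $x z^* x \in X$ gives $z^* = u z^* u \in X$, since $u$ is the identity of the $C^*$-algebra $I(X)$. Setting $w := z^*$, we get $w \in X \cap {\rm Ball}(I(X)) = {\rm Ball}(X)$ (the intersection equality using that $X \hookrightarrow I(X)$ is isometric); the condition on $z$ becomes $x w x \subset X$, and the product takes the stated form $\frac{1}{2}(x w y + y w x)$. Conversely, any $w \in {\rm Ball}(X)$ with $x w x \subset X$ produces $z := w^* \in {\rm Ball}(I(X))$ fulfilling the hypothesis of Theorem~\ref{KP}, so $\frac{1}{2}(x w y + y w x)$ is indeed a Jordan operator algebra product on $X$ (up to completely isometric Jordan isomorphism).

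To upgrade the correspondence from surjective to bijective, suppose $w_1, w_2 \in {\rm Ball}(X)$ both induce the same product; evaluating at $x = y = u$ yields $w_1 = \frac{1}{2}(u w_1 u + u w_1 u) = \frac{1}{2}(u w_2 u + u w_2 u) = w_2$. This is precisely the step that we cannot execute in the general Theorem~\ref{KP}: here, plugging in the identity recovers $w$ from the product.

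I anticipate no serious obstacle; the entire argument hinges on the identity $u z^* u = z^*$ in the $C^*$-algebra $I(X)$, which simultaneously places $z$ (up to the $*$-operation) inside $X$ and furnishes injectivity.
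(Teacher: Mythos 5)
Your argument is correct and is essentially the paper's own proof: both reduce to Theorem~\ref{KP}, observe that $w = z^* = u z^* u \in \{x z^* x : x \in X\} \subset X$ to land the element inside ${\rm Ball}(X)$, and obtain injectivity by substituting the unit $u$ into the product formula. No substantive difference.
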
 

  \begin{proof}  In this setting
$I(X)$ may be taken to be a $C^*$-algebra, containing the $C^*$-envelope 
$C^*_{\rm e}(X)$ as a $C^*$-subalgebra,
with common identity $u$ (see Corollary 4.2.8 (1) and 4.3.3 in \cite{BLM}).  If $z$ is as in Theorem \ref{KP}
then $$w = z^* = u z^* u \in \{ x z^* x : x \in  X \} \subset X .$$
Thus the existence of the correspondence in our corollary follows from Theorem \ref{KP}.  
If $w' \in X$ was another element with
$x w' x = x w x$ for all $x \in X$, then setting $x = u$ shows
that $w' = w$.   Thus  the correspondence is bijective between
the Jordan product on $X$ and the element $w$ in the statement of the corollary. 
\end{proof}

Our  characterizations of Jordan operator algebras immediately have consequences like the following:

\begin{corollary} \label{apch}  Let $A$ be a Jordan  operator algebra, and
$P : A \to A$ a completely contractive projection.
  \begin{itemize} \item  [(1)] The range of $P$ with product
$P(x \circ y)$, is  completely   isometrically Jordan isomorphic to a Jordan operator algebra.  \item  [(2)]  If $A$ is an associative 
operator algebra then the range of $P$ with product $P(x  y)$, is  completely isometrically algebra isomorphic to an
associative operator algebra.
\item  [(3)]  If $A$ is unital and $P(1) = 1$ then the range of  $P$, with product
$P(x \circ y)$,
is unitally completely isometrically Jordan isomorphic to a unital  Jordan operator algebra.
  \end{itemize}
 \end{corollary}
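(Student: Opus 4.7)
The plan is to deduce all three parts from the characterizations in Theorem \ref{KP} and Corollary \ref{kpc}, by extending $P$ to the injective envelope of $A$ and applying Youngson's projection theorem \cite[Theorem 4.4.9]{BLM}. For (1), I start by invoking Theorem \ref{KP} on $A$ to obtain $z_0 \in {\rm Ball}(I(A))$ satisfying $a \circ b = \frac{1}{2}(a z_0^* b + b z_0^* a)$ for all $a, b \in A$, with ternary operations taken in the TRO $I(A)$. Set $Y := P(A)$. Using injectivity of $I(A)$, I extend $P$ to a completely contractive map $I(A) \to I(A)$; a standard Hamana-type argument (iteration plus weak$^*$ cluster points) then yields a completely contractive idempotent $\tilde P : I(A) \to I(A)$ extending $P$ whose range $W$ is an injective envelope of $Y$, so that $W$ may be identified with $I(Y)$ by rigidity of the injective envelope. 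Put $z := \tilde P(z_0) \in {\rm Ball}(W)$. Youngson's theorem makes $W$ into a TRO under the product $(a, b, c) \mapsto \tilde P(a b^* c)$, and gives $\tilde P(a w^* c) = \tilde P(a \tilde P(w)^* c)$ whenever $a, c \in W$ and $w \in I(A)$. Applying $\tilde P$ to the identity $x \circ y = \frac{1}{2}(x z_0^* y + y z_0^* x)$ for $x, y \in Y \subset W$ yields
\[
P(x \circ y) \;=\; \tilde P(x \circ y) \;=\; \tfrac{1}{2}\bigl(\tilde P(x z^* y) + \tilde P(y z^* x)\bigr),
\]
which is precisely the Jordan product associated with $z$ in the TRO structure on $W \cong I(Y)$. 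The hypotheses of Theorem \ref{KP} are then met (in particular $\tilde P(x z^* x) = \tilde P(x^2) = P(x^2) \in Y$ since $x^2 = x z_0^* x \in A$), so Theorem \ref{KP} delivers a completely isometric Jordan isomorphism of $(Y, P(\cdot \circ \cdot))$ onto a Jordan subalgebra of some $B(K)$.

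Part (2) is the associative analogue: substitute the original Kaneda-Paulsen theorem \cite{KP} for Theorem \ref{KP}, so that $z_0 \in I(A)$ now satisfies $ab = a z_0^* b$ without symmetrization, and rerun the extension-and-Youngson calculation. For (3), the completely isometric Jordan isomorphism $\pi : Y \to B(K)$ produced by (1) already handles the purely Jordan content; it remains to see that $\pi$ can be arranged to be unital. Since $\pi$ intertwines $P(\cdot \circ \cdot)$ with $\circ$ on $B(K)$ and $P(1 \circ y) = y$ for all $y \in Y$, we get $\pi(1) \circ \pi(y) = \pi(y)$, so $\pi(1)$ is a Jordan identity of $\pi(Y)$; moreover $\pi(1)^2 = \pi(1)$ and $\Vert \pi(1) \Vert = 1$, so $\pi(1)$ is an orthogonal projection in $B(K)$. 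The identity $\pi(1) \circ \pi(y) = \pi(y)$ rearranges to $(1 - \pi(1)) \circ \pi(y) = 0$, which by the fact recalled in the introduction ($q \circ x = 0$ iff $x = (1-q) x (1-q)$) forces $\pi(y) = \pi(1)\pi(y)\pi(1)$. Thus $\pi$ takes values in the unital corner $\pi(1) B(K) \pi(1)$, in which $\pi(1)$ is the identity, so replacing $B(K)$ by this corner makes $\pi$ unital.

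The main obstacle is the first step of (1): securing an idempotent completely contractive extension $\tilde P$ of $P$ to $I(A)$ whose range is an injective envelope of $Y$. One invokes Hamana's rigidity and essentiality of the injective envelope, together with iteration of completely contractive extensions, to simultaneously enforce idempotence and narrow the range. A cleaner alternative is to extend $P$ directly to a completely contractive map $\Phi : I(A) \to I(Y)$ via injectivity of $I(Y)$, and then establish the ternary module identity $\Phi(x w^* y) = x \Phi(w)^* y$ for $x, y \in Y$ and $w \in I(A)$ by the rigidity of $I(Y)$, a route modeled on the proof of Theorem \ref{KP} given above.
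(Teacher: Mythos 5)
Your proofs of (1) and (2) follow essentially the same route as the paper: extend $P$ to a completely contractive idempotent on the injective envelope, invoke Youngson's theorem to get the TRO structure $\tilde P(ab^*c)$ on the range together with the identity $\tilde P(x z_0^* y)=\tilde P(x \tilde P(z_0)^* y)$, and then feed $z=\tilde P(z_0)$ back into Theorem \ref{KP} (resp.\ Kaneda--Paulsen). One small caution: an arbitrary completely contractive idempotent extension of $P$ to $I(A)$ need not have range equal to an injective envelope of $Y=P(A)$; you need the minimality built into the factorization $Q=\tilde i\circ\tilde P$ through $I(Y)$ as in \cite[Theorem 4.2.9]{BLM} --- which is exactly your ``cleaner alternative'' and is what the paper does, so this is a matter of phrasing rather than a gap. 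Where you genuinely diverge is (3): the paper proves it directly and independently of (1), by extending $P$ to a complete contraction $T$ on $B(H)$, setting $m(a,b)=T(ab)$, and verifying the hypotheses of the abstract characterization \cite[Theorem 2.1]{BWj} (symmetrized product lands in $P(A)$, and $m(1,a)=m(a,1)=a$). You instead deduce (3) from (1): since $P(1\circ y)=y$, the unit is an identity for the new product, so its image under the complete isometry of (1) is a norm-one idempotent, hence an orthogonal projection $q$, and the relation $(1-q)\circ \pi(y)=0$ forces $\pi(Y)\subset qB(K)q$; cutting to that corner makes the isomorphism unital. Both arguments are correct. Yours buys economy (no appeal to the BWj characterization theorem) at the cost of making (3) depend on the heavier injective-envelope machinery of (1), whereas the paper's version of (3) is self-contained and elementary given \cite[Theorem 2.1]{BWj}.
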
 

  \begin{proof}  (3) \  If $A$ is a unital Jordan subalgebra of $B(H)$, extend 
$P$ to a complete contraction $T : B(H) \to B(H)$.  Define $m(a,b) = T(ab)$, and note that 
$$\frac{1}{2}(m(a,b) + m(b,a)) = T(a \circ b) = P(a \circ b) \in P(A) , \qquad a, b \in P(A) .$$
 Also,  $m(1,a) = m(a,1) = a$ for $a \in P(A)$.    Thus we have verified the 
conditions of \cite[Theorem 2.1]{BWj}, so that $P(A)$ is unitally completely 
  isometrically Jordan isomorphic to a unital  Jordan operator algebra. 

(1) \ We sketch a proof of this using Theorem \ref{KP} twice.  By that result if $A$ is the  Jordan operator algebra there
exists $w \in {\rm Ball}(I(A))$ with $x^2 = x w^* x$, 
the latter being the ternary product on $I(A)$.
As in the proof of e.g.\ \cite[Theorem 4.2.9]{BLM} we can 
extend $P$ to a completely contractive surjection
$\tilde{P} : I(A) \to I(P(A))$, and extend the inclusion $P(A) \subset A$  to a complete
isometry $\tilde{i} : I(P(A)) \to I(A)$, such that $Q = \tilde{i} \circ \tilde{P}$ is a completely contractive projection
from $I(A)$ onto a copy $Z$ of $I(P(A))$.    We may regard $\tilde{i}$ as a completely isometric
isomorphism $\kappa$ from $I(P(A))$ onto $Z$.   By Youngson's theorem (e.g.\ \cite[Theorem 4.4.9]{BLM}) we have that 
$Z$ is a TRO with ternary product $Q(x y^* z)$, and 
$Q(x Q(w)^* x) = Q(x w^* x)  = Q(x^2) = P(x^2)$ for $x \in P(A)$, where $x^2$ denotes the square in $A$.   
Apply   $\kappa^{-1}$, which is a ternary isomorphism by e.g.\
  \cite[Corollary 4.4.6]{BLM},  with $\kappa^{-1}(Q(w)) =  \kappa^{-1}(\tilde{i}(\tilde{P}(w))) =  \tilde{P}(w)$.  We see that $P(x^2) = x \tilde{P}(w)^* x$, where the latter is the ternary product in $I(P(A))$ of $x$, $\tilde{P}(w)$, and $x$.
Now appeal to Theorem \ref{KP} with $A$ replaced by $P(A)$, to see that the latter 
is  completely   isometrically Jordan isomorphic to a   Jordan operator algebra.   

(2) \ Similar to the proof of (1) but replacing Theorem \ref{KP} with
the Kaneda-Paulsen theorem, and replacing the final $x$ in any products ending with $x$
 by $y$.  Thus for example $x^2$ for $x \in P(A)$ becomes: $xy$ for $x,
y \in P(A)$.    
 \end{proof}

\begin{corollary} \label{minj}  The unital Jordan  operator algebras that are minimal operator spaces (see e.g.\
{\rm 1.2.21 in \cite{BLM}}), are the unital 
function algebras.  A possibly nonunital Jordan  operator algebra that is a minimal operator space is an associative
commutative operator algebra.  Indeed it is (completely) isometrically isomorphic to a subspace $E$ of $C(K)$ for a compact space $K$,
with $E$ equipped with multiplication $(f,g) \mapsto fg h$ for some fixed
 $h \in {\rm Ball}(C(K))$.   (In this case $\overline{E h}$ is a function algebra.)  \end{corollary}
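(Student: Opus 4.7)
My plan is to apply Corollary \ref{kpc} in the unital case and Theorem \ref{KP} in general, exploiting the key fact that the injective envelope $I(A)$ of a minimal operator space $A$ embeds as a sub-TRO inside some commutative $C^*$-algebra $C(K')$, and in the unital case is itself such a $C^*$-algebra with identity $1_A$. This auxiliary fact follows from minimality ($A \hookrightarrow C(K)$ completely isometrically for some compact $K$) combined with Hamana rigidity/essentiality of injective envelopes applied to the chain $A \hookrightarrow C(K) \hookrightarrow I(C(K))$, since $I(C(K))$ is a commutative $C^*$-algebra.

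For the unital case, Corollary \ref{kpc} supplies $w \in {\rm Ball}(A)$ with Jordan product $\frac{1}{2}(xwy + ywx)$ computed in $I(A) = C(K')$. Commutativity of $C(K')$ collapses this to $xyw$, and setting $x = y = 1_A$ forces $w = 1_A$. Hence the Jordan product coincides with the pointwise product of $C(K')$, so $A$ is a unital closed subalgebra of $C(K')$, namely a unital function algebra.

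For the (possibly nonunital) case, Theorem \ref{KP} provides $z \in {\rm Ball}(I(A)) \subset C(K')$ with $az^*a \in A$ for all $a \in A$, and Jordan product $\frac{1}{2}(az^*b + bz^*a)$. Setting $h := z^* \in {\rm Ball}(C(K'))$ and identifying $A$ with its image $E \subset C(K')$, commutativity of $C(K')$ reduces the Jordan product to $(f,g) \mapsto fgh$. This product is commutative, and associative since each iterated product equals $fgkh^2$. Moreover $(fh)(gh) = (fgh)h \in Eh$ because $fgh \in E$, so $\overline{Eh}$ is closed under the pointwise product of $C(K')$ and is thus a commutative function algebra. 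The parenthetical ``(completely)'' in the statement is automatic because any isometry between minimal operator spaces is completely isometric.

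The only nontrivial point is the auxiliary claim that $I(A)$ embeds as a sub-TRO inside a commutative $C^*$-algebra whenever $A$ is minimal; after that is in hand, the rest is a straightforward calculation in $C(K')$ exploiting commutativity.
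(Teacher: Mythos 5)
Your proposal is correct and follows essentially the same route as the paper: everything reduces to the fact that $I(A)$ is a commutative $C^*$-algebra when $A$ is minimal, after which the product is read off from Theorem \ref{KP} (and, in the unital case, forced to be the pointwise one), exactly as in the paper's proof. The auxiliary fact you isolate is precisely \cite[4.2.11]{BLM}, which the paper simply cites, so your rigidity sketch is not needed --- and note that ``embeds as a sub-TRO'' is a slight misstatement, since what one actually gets is that $I(A)$, as the range of a minimal $A$-projection with its Youngson ternary product, \emph{is} (ternary isomorphic to) a commutative $C^*$-algebra, which is all your computation uses.
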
 

  \begin{proof}   If $A$ is a unital Jordan operator algebra then $A$ is a unital Jordan subalgebra of its injective envelope $I(A)$ 
by a tiny modification of the proof of
  \cite[Corollary 4.2.8 (1)]{BLM}.  Now $I(A)$ 
is a commutative $C^*$-algebra  $C(K)$ if $A$ is minimal  (see e.g.\ 4.2.11 in \cite{BLM}).   Hence $A$ is an 
associative subalgebra of $C(K)$.   Similarly, if $A$ is a nonunital Jordan  operator algebra and  minimal operator space then $I(A) = C(K)$ linearly (completely) isometrically.
By Theorem \ref{KP} the Jordan product on $A$ is  $fg \bar{k}$ for some fixed
 $k \in {\rm Ball}(C(K))$, which is an associative
commutative algebra product.  Indeed  Remark 2 on p.\ 194 of \cite{BRS} shows that $A$  is an associative
 operator algebra with this product.  Clearly $A \bar{k}$ is a function algebra.  \end{proof}

\section{Examples of Jordan operator algebras}  \label{Ex}

In \cite{BWj} we mentioned that besides the associative operator algebras and JC*-algebras, one may obtain many examples of Jordan operator algebras by considering 
$\{ a \in A : \pi(a) = \theta(a) \}$ for a homomorphism $\pi : A \to A$ and an antihomomorphism $\theta : A \to A$, for an associative operator algebra $A$. 
Or  consider $\{ (\theta(a), \pi(a)) \in C  \oplus^\infty D \}$
for a homomorphism $\pi : A \to C$ and antihomomorphism $\theta : A \to D$
for associative operator algebras $C, D$.    
 In \cite{BNjp} we show that the range of various natural classes
of contractive projections on operator algebras
are Jordan operator algebras.   We gave a few more examples of Jordan operator algebras in \cite{BWj},
but proceed now to supplement the list with many others.

\begin{example}\label{gens}   A common way to uncover interesting operator spaces is to look 
at the linear span of the generators in well known $C^*$-algebras.  See e.g.\  \cite{Pisbk}.   One may vary 
this by looking at the smallest Jordan operator subalgebra of these $C^*$-algebras containing the 
generators.   For example one may look at the smallest closed Jordan subalgebra of the (full or
reduced) $C^*$-algebra
of the free group containing the generators (or the generators and their inverses),
this will be contained in the (relevant associative) `free semigroup operator algebra'.  
Other examples include:  the smallest closed Jordan subalgebra of the Cuntz algebra ${\mathcal O}_n$ containing the $n$ canonical generators, the smallest closed Jordan algebra containing a free semicircular family (see e.g.\ \cite[Theorem 9.9.7]{Pisbk}), or one could consider the smallest weak* 
closed Jordan subalgebra of the group von Neumann algebra
of the free group on $n$ generators, containing the generators (or the generators and their inverses).  
Some of these examples seem worth exploring in detail. 
\end{example}

\begin{example} \label{jajcst} Jordan operator algebras coming from JC$^*$-triples:
Let $Z$ be a JC$^*$-triple, that is a closed subspace of a $C^*$-algebra,
or of $B(K,L)$ for Hilbert spaces $K,L$, satisfying $x y^* z + z y^* x \in Z$ for all $x, y, z \in Z$.
JC$^*$-triples (also called $J^*$-algebras in \cite{Har2}) are well studied and there are many examples in the literature (sometimes 
in the  context of JB*-triples or operator spaces, see e.g.\ the works of Bunce and Timoney cited here and references therein, \cite{N2,HN}, etc).  
Fixing $y \in  {\rm Ball}(Z)$, we may define a bilinear map 
$m_y : Z \times Z \to Z$ by $m_y(x,z) = \frac{1}{2}(x y^* z + z y^* x)$.
By the argument in the proof of Theorem \ref{KP}, the map
$$\Psi(x) =  \left[ \begin{array}{ccl}
x y^* & x(1- y^* y)^{\frac{1}{2}}  \\ 0 & 0 \end{array}  \right] , \qquad x \in Z, $$
is a complete isometry from $Z$ into $M_2(B(H)) \cong B(H^{(2)})$,
if the $C^*$-algebra above is in $B(H)$ (in the $K,L$ situation replace $H^{(2)}$ here by $L \oplus K$).
Again $\Psi(x)^2 = \Psi(x y^* x)$ for $x \in Z$, so that
the range of $\Psi$ is a Jordan subalgebra of $B(H^{(2)})$
and  $\Psi$ is a completely isometric Jordan homomorphism if we
equip $Z$ with  Jordan product $m_y(x,z)$ for $x, y \in Z$.
Thus $Z$ with product $m_y$ is a Jordan operator algebra.   It will not usually have an identity, but if one
wants a unital example then one may unitize.

Some particularly interesting examples arise when considering Hilbertian JC$^*$-triples (by Hilbertian here we mean linearly 
isometric to a Hilbert space).    See e.g.\ \cite{N2,NR05,BFT,BT}, and references therein, where these authors also classify
Hilbertian JC$^*$-triples, and in particular contractively complemented Hilbertian JC$^*$-triples.   
We mention several very concrete examples.   First, consider the $n \times n$ antisymmetric matrices
(so that $a^T = -a$), a JC$^*$-triple.  
  When $n = 3$ this is a Hilbertian operator space of dimension 3, a Cartan factor of type 2, with three
  canonical basis elements.   By looking at the canonical basis elements and their relations
one can see that this space is (algebraically)  $J^*$-isomorphic to $B(\Cdb,\Cdb^3)$ in the sense of  \cite{Har2}, 
hence is isometric to that 
Hilbert space by  results in that paper (see also \cite{NR03} for more details about this space).  
Taking $y$ to be the canonical basis element $E_{13}-E_{31}$ the product
$m_y$ defined above gives an interesting example of a 3 dimensional Hilbertian Jordan operator algebra.  
Taking $y$ to be the canonical basis element $E_{12}-E_{21}$
gives another  example of a 3 dimensional Hilbertian Jordan operator algebra.  
If  one
wants a unital example then simply add scalar multiples of $I_3$.  

 Similarly we obtain interesting examples  of Jordan operator algebra structures on the $4 \times 4$ antisymmetric matrices.
These JC*-triples are not Hermitian, and need not be JW*- or
JB*-algebras.
 
A sometimes useful example (see \cite{BWj2})
is the Hilbertian operator space $\Phi_n$ (or $\Phi(I)$,
see \cite[Section 9.3]{Pisbk}), the span of operators $(T_k)$ (with $k \leq n$ or $k \in I$)
 satisfying the
CAR (canonical anticommutator relations).  This is a Jordan
operator algebra with zero product, and is a JC*-triple, so that this
example falls into the above discussion with $y = 0$.

The infinite dimensional 
separable
Hilbertian JC$^*$-triples are of four types \cite{N2}.  
The fourth type is 
the most interesting: namely the space of creation operators on the antisymmetric Fock space. 
Fixing a contractive element $y$ and considering the multiplication $m_y$ above gives an 
interesting Hilbertian Jordan operator algebra.   Again if  one
wants a unital example then simply unitize.

Another of the four types mentioned in the last paragraph gives a particularly simple 
example which 
we will use to illustrate the principles in the first paragraph of Example \ref{jajcst}.
This   is the operator space $R \cap C$,
which may be viewed as the subspace $W$ of $B(l^2 \oplus l^2)$ consisting of
 infinite matrices of block $2 \times 2$ matrix form
$$\left[ \begin{array}{cccccccc}
z_1 & z_2 & z_3 & \ldots & |  & 0 & 0 & \ldots \\
0 &   0   & 0   & \ldots      & |  & 0 & 0 & \ldots \\
0 &   0   & 0   & \ldots      & |  & 0 & 0 & \ldots \\
\vdots & \vdots & \vdots &  \ldots &  &  \vdots & \vdots &  \ldots \\
- & - & - & \ldots &  & - & - & \ldots \\
0 & 0 & 0 & \ldots & |  & z_1 & 0 &   \ldots \\
0 & 0 & 0 & \ldots & |  & z_2 & 0 &   \ldots \\
\vdots & \vdots & \vdots &  \ldots &   &  \vdots & \vdots &  \ldots 
\end{array} \right]. $$
Note that this subspace $W$ of $B(l^2 \oplus l^2)$ is not a
subalgebra, but is closed under squares, hence is a Jordan operator algebra, but it is not a JC$^*$-algebra since it
is not selfadjoint.
However it is also a JC$^*$-triple, and if we set $y = E_{11} \oplus E_{11} \in W$ then the multiplication 
$m_y$ defined above agrees with the natural Jordan product on $W$.   Note too that this particular example is
of the form $\{ (\theta(a), \pi(a)) \in C  \oplus^\infty D \}$ 
for a homomorphism $\theta$ and antihomomorphism $\pi$ described at the start of this section.

Many of the examples above are isometric as Banach spaces
and isomorphic as  Jordan algebras, but very different completely 
isometrically, that is as operator spaces in the sense of \cite{Pisbk,BLM}.

There are more than four types of contractively complemented Hilbertian JC$^*$-triples   in finite dimensions \cite{NR05}, for example the spaces $H^k_n$ studied in \cite{NR05}.   
For specificity, an interesting explicit rectangular 4 dimensional example is given in on p.\ 2260 of \cite{NR03}.   
 \end{example}

\begin{example}   Many of the examples above are unital Jordan operator algebras.    Examples of approximately unital Jordan operator algebras are easily manufactured from real positive (accretive) operators.   We mention two methods 
to do this; in fact every approximately unital Jordan operator algebra may be built in both of these ways.   First, if $A$ is any Jordan  operator algebra (say, a $C^*$-algebra), and $E$ is any nonempty convex set 
of real positive (accretive) elements in $A$, then   $\overline{\{ x A x : x \in E \}}$ 
(which equals $\overline{\{ x a y + ya x : x, y \in E , a \in A \}}$)  is an approximately unital  Jordan operator algebra.   See  Theorem 3.18 (2) in \cite{BWj}.    Note that if $A$ is an  approximately unital  Jordan operator algebra and we take
$E = {\mathfrak r}_A$ then we recover $A$ by this construction (see the proof of \cite[Corollary 4.2]{BWj}). 
 We also recall that if in addition $A$ is separable 
(but not necessarily assumed approximately unital)
then we can take $E$ above to be a singleton set
by \cite[Theorem 3.20]{BWj}.

A second method:  if $A$ is as above then by Proposition 4.4 in \cite{BWj}, the smallest norm closed Jordan algebra of $A$ generated by any set of real positive elements in $A$ 
 is an approximately unital  Jordan operator algebra. \end{example}

\begin{example}  We give 
an example of a Jordan operator algebra $A$ which is not 
an associative operator algebra, but is  completely isometrically
Jordan isomorphic to an associative operator algebra.   More generally, 
  a unital or approximately unital Jordan operator algebra can generate many quite different
associative operator algebras, depending on where the Jordan operator algebra is represented 
(completely isometrically) as a Jordan subalgebra.  One 
example to illustrate this is found in
\cite[Example 6.4]{BNp}, where a finite dimensional unital associative operator algebra
$B$ is constructed together with a unital completely isometric 
Jordan homomorphism of $B$ onto another 
unital Jordan operator algebra $A = T(B)$ which is not an associative operator algebra.
More explicitly, $A = \Phi_2$ (see the end of Example \ref{jajcst}) is
completely isometrically
Jordan isomorphic to $B = \Phi_2$ with zero operator algebra product.
Note that here 
the associative operator algebra generated by $A$, which is 
also finite dimensional (contained in $M_n$ for some $n$) will have strictly
larger dimension than $B$.      \end{example}

It is easy to see that  if $A$ is a  Jordan operator algebra then so is  $A^{{\rm op}}$ and $A^*$ (using the notation of  2.2.8 in \cite{BLM})
with the same Jordan product, and so is 
$A^{{\rm symm}} = \{ (a , a^\circ) \in A \oplus^\infty A^{{\rm op}} : a \in A \}$.

\section[Jordan variants of Hay and Brown's results]{Jordan variants of Hay's theorem and some variants of results of Brown} \label{Jcompl}

In this section we prove the hard technical facts that comprise the main breakthrough of the present paper.   We begin with a relatively easy 
two-sided analogue of   \cite[Proposition 3.1]{Hay}.   

\begin{proposition}\label{gamlemma1} Let $X$ be a closed subspace of a
  $C^*$-algebra  $B$, and let $q \in B^{**}$ be a closed projection such that $X^{\perp}  \subset
(qXq)_{\perp}$.
That is if $\psi \in B^*$ annihilates $X$ then
  $\psi$ is in the annihilator of $qXq \subset B^{**}$ in $B^*$.  
Let $I = \{x \in X : qx q= 0 \}$.  Then $qXq$ is completely isometric
  to $X/I$ via the map $x+I \mapsto qxq$. 
\end{proposition}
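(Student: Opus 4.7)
The plan is as follows. First, the map $T : X/I \to qXq$, $x+I \mapsto qxq$, is well-defined---since $I$ is by definition the kernel of the Peirce-$2$ compression $P_{2} := q(\cdot)q$ when restricted to $X$---and completely contractive, because $P_{2}$ is a completely contractive projection on $B^{**}$. The substance of the proposition is therefore the reverse inequality $\|[x_{ij}+I]\|_{M_{n}(X/I)} \le \|[qx_{ij}q]\|_{M_{n}(B^{**})}$ for every $[x_{ij}] \in M_{n}(X)$ and every $n$.

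The main idea is to pass to $X^{**}$. By bipolar duality the hypothesis is equivalent to $qXq \subset X^{\perp\perp}$; under the canonical isometric identification $X^{\perp\perp} \cong X^{**}$ inside $B^{**}$ I may therefore view $qXq$ as a subspace of $X^{**}$. Since $P_{2}$ is weak*-continuous on $B^{**}$, $P_{2}(X) \subset X^{**}$, and $X$ is weak*-dense in $X^{**}$, the restriction $\hat{T} := P_{2}|_{X^{**}}$ is a weak*-continuous completely contractive projection from $X^{**}$ into itself. The standard theory of completely contractive projections on operator spaces then produces a complete isometry $X^{**}/\ker\hat{T} \cong \hat{T}(X^{**})$ via $z+\ker\hat{T} \mapsto qzq$, inside which $qXq$ sits with its inherited matrix norms. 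Composing the natural inclusion $X/I \hookrightarrow X^{**}/\ker\hat{T}$, $x+I \mapsto x+\ker\hat{T}$, with this isometry recovers $T$ and sends $X/I$ onto $qXq$; so it suffices to prove that the inclusion $X/I \hookrightarrow X^{**}/\ker\hat{T}$ is itself a complete isometry.

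By Hahn-Banach / bipolar duality, the complete isometry of $X/I \hookrightarrow X^{**}/\ker\hat{T}$ reduces to the identity $I^{\perp\perp} = \ker\hat{T}$ inside $X^{**}$ (with the obvious $M_{n}$-analogue handled the same way). The inclusion $I^{\perp\perp} \subset \ker\hat{T}$ is automatic, since $I \subset \ker\hat{T}$ and $\ker\hat{T}$ is weak*-closed. For the reverse, weak*-continuity of $\hat{T}$ together with the weak*-density of $X$ in $X^{**}$ give $\ker\hat{T} = \overline{\{x-qxq : x \in X\}}^{w^{*}}$, so it suffices to put each element $x-qxq$ (for $x \in X$) into $I^{\perp\perp}$. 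To produce a weak*-approximation of $x-qxq$ by elements of $I$, I would apply Goldstine to $qxq \in X^{\perp\perp} = X^{**}$ to obtain a bounded net $y_{\beta} \in X$ with $y_{\beta} \to qxq$ weak*; then $x-y_{\beta} \in X$, $x-y_{\beta} \to x-qxq$ weak*, and $q(x-y_{\beta})q \to 0$ weak*. A diagonal/iterative refinement, potentially supported by the approximate identity $e_{\lambda} \nearrow q^{\perp}$ in $B$ furnished by the closedness of $q$, then yields a single net within $I$ converging weak* to $x-qxq$. The main obstacle is precisely this diagonalization: each correction only drives the Peirce-$2$ residual weak*-small rather than exactly to zero, so one must iterate carefully and pass to an appropriate cofinal refinement, invoking the hypothesis $qXq \subset X^{\perp\perp}$ at every step to keep all approximations inside $X$.
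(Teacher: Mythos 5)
Your soft reductions are fine: the hypothesis is indeed equivalent to $qXq \subset X^{\perp\perp}$, the compression $\hat{T} = q(\cdot)q|_{X^{**}}$ is a weak*-continuous completely contractive idempotent, the induced map $X^{**}/\ker \hat{T} \to \hat{T}(X^{**})$ is automatically a complete isometry for any such idempotent, and the whole proposition would follow from the single identity $\ker \hat{T} = I^{\perp\perp}$ (closure in $X^{**}$). But that identity is exactly where all the content of the proposition lives, and you have not proved it. The Goldstine-plus-diagonalization sketch cannot be completed as described: given $x \in X$, each approximant $x - y_\beta$ only has $q(x-y_\beta)q \to 0$ \emph{weak*}, with no norm control whatsoever, and weak* smallness does not contract under iteration --- there is no Cauchy mechanism, and no device in your outline for correcting an element of $X$ to one lying \emph{exactly} in $I$ while tracking the weak* limit. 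The hypothesis $X^\perp \subset (qXq)_\perp$ gives membership $qXq \subset X^{\perp\perp}$ but provides no quantitative lifting of $q x q$ back into $X$; producing such liftings with norm control is precisely what the proposition (and its corollaries, e.g.\ Lemma \ref{lemma2} and Corollary \ref{coh}) is \emph{for}. Note also that the analogous density statement $Z^{\perp\perp} = \{\eta \in A^{**} : q\eta q = 0\}$ is proved in this paper only later (Claim 1 in the proof of Theorem \ref{supp}), only for unital Jordan subalgebras, and \emph{using} Proposition \ref{gamlemma1} together with the metric characterization of unital operator spaces --- so your reduction is, at best, to a statement at least as hard as the one you are trying to prove, and possibly one whose validity for a general closed subspace $X$ is unclear.

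The actual proof avoids ever having to produce exact elements of $I$ by working entirely on the dual side: one shows that the adjoint map $(qBq)^*/(qXq)^\perp \to B^*/X^\perp$, $\varphi + (qXq)^\perp \mapsto \varphi(q\cdot q) + X^\perp$, is a complete isometry. Two hard $C^*$-algebraic inputs make this work, and both are invisible in your proposal. First, since $q$ is \emph{closed} it supports a closed left ideal $L$ with $(L+L^*)^{\perp\perp} = B^{**}q^\perp + q^\perp B^{**}$ and, by L.~G.~Brown's theorem, $L+L^*$ is proximinal in $B$ (matricially as well); this lets one replace an arbitrary $[b_{kl}]$ with $\|[qb_{kl}q]\| \le 1$ by a representative actually in ${\rm Ball}(M_m(B))$ with the same compression, so the relevant suprema may be taken over ${\rm Ball}(M_m(B))$. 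Second, a net of contractions $e_t \to q$ weak* (again from closedness of $q$) is used to squeeze $\varphi_{ij}(q b_{kl} q) + \widetilde{\psi_{ij}}(q b_{kl} q)$ against $\|[\varphi_{ij}(q\cdot q) + \psi_{ij}]\|$ via a double limit. Your argument makes essentially no use of the closedness of $q$, which is a further sign that the missing step is not a routine diagonalization but the theorem's real difficulty.
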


\begin{proof}  We will be a bit sketchy in places, since we are following the idea in \cite[Proposition 3.1]{Hay},
which in turn follows the idea in \cite[Lemma II.12.3]{Gam}.   On the other hand we will add more details in the computations
of the matrix norms to display that the new situation does not introduce any problems at the matrix 
norm level.  The verbosity is also justified because this proof needs to serve as a template for the more complicated later proof  
Theorem \ref{gamlemma2}.  

Thus $I$ is the kernel of the completely contractive map $x
  \mapsto qxq$ on $X$, so
  that this map factors through the quotient $X/I$:
  $$X \overset{Q}{\rightarrow} X/I \rightarrow qXq,$$
  where $Q$ is the natural quotient map.  Taking adjoints, we find that the associated map from $(qXq)^*$ to $X^*$, is given by
  $\varphi \mapsto \varphi(q \cdot q),$ for each $\varphi \in (qXq)^*$.
  Identifying $(qXq)^*$ with $(qBq)^*/(qXq)^{\perp}$ and $X^*$ with
 $B^*/X^\perp$, the map above  takes an element $\varphi +
  (qXq)^\perp \in (qBq)^*/(qXq)^{\perp}$ to the element $\varphi(q \cdot q) + X^\perp
\in B^*/X^\perp$. 
As   in \cite[Proposition 3.1]{Hay}, it suffices to show
  that  the map $\varphi +
  (qXq)^\perp \to \varphi(q \cdot q) + X^\perp$, which is fairly clearly a complete contraction, is
  a complete isometry. 
  So let $[\varphi_{ij}] \in M_n((qBq)^*)$ and let $[\psi_{ij}] \in M_n(X^\perp)$.     
By  hypothesis then, $[\psi_{ij}] \in M_n((qXq)_{\perp})$, so that if $\widetilde{\psi_{ij}}$ denotes the extension of $\psi_{ij}$ to a weak* continuous
map on $B^{**}$, we have $[\widetilde{\psi_{ij}} |_{qBq}] \in M_n((qXq)^\perp )$.  Now,
  $$\|[\varphi_{ij} + 
\widetilde{\psi_{ij}}|_{qBq}]\|_n = \sup \{ \| [\varphi_{ij}(qb_{kl}q)+  \widetilde{\psi_{ij}}(qb_{kl}q) ] \|
: [qb_{kl}q] \in {\rm Ball}( M_m(qBq)) \} .$$
The left ideal $L$ supported by $q^\perp$ is well known (see e.g.\
p.\ 917 in \cite{Brown}, or \cite{Kirch}) to have the property that 
$(L + L^*)^{\perp \perp} = B^{**} q^\perp + q^\perp B^{**}$, the latter sum being 
weak* closed. 
Hence $$\{ b \in B : q b q = 0 \} = B \cap (B^{**} q^\perp + q^\perp B^{**}) = L+L^* .$$
   The canonical map $B/ \{ b \in B : q b q = 0 \} \to qBq$  is a complete isometry 
since the bidual of $B/ \{ b \in B : q b q = 0 \}$ is $B^{**}/(B^{**} q^\perp + q^\perp B^{**}) 
\cong q B^{**} q$ (the latter since the map $\eta \to q \eta q$ from $B^{**}$ to $q B^{**} q$
 is a complete quotient map with kernel $B^{**} q^\perp + q^\perp B^{**}$. 
Moreover, $L + L^*$ is proximinal in $B$ (this follows from \cite[Theorem 3.3 (b)]{Brown},
see the slightly misstated Remark after 3.4 there), and thus 
$M_m(L) + M_m(L)^*$ is proximinal in $M_n(B)$.   Hence given $b_{kl} \in B$ with $[q b_{kl} q ] \in 
{\rm Ball}(M_m(qBq))$, or equivalently with
$\| [b_{kl}] + M_m(L + L^*) \| \leq 1$, there exists 
$[a_{kl}] \in M_m(L + L^*)$ with $\| [b_{kl}] + [a_{kl}]  \| \leq 1$.
Since $q a_{kl} q = 0$, we may replace $b_{kl}$ by $b_{kl} + a_{kl}$ in the sup formula 
above to deduce that 
 $$\|[\varphi_{ij} +
\widetilde{\psi_{ij}}]\|_n = \sup \{ \| [\varphi_{ij}(qb_{kl}q)+  \widetilde{\psi_{ij}}(qb_{kl}q) ] \|
: [b_{kl}] \in {\rm Ball}( M_m(B)) \} .$$

Continuing to follow \cite[Proposition 3.1]{Hay}, if $(e_t)$ is a net 
of contractions in $B$ with weak* limit $q$, and if $[b_{kl}] \in {\rm Ball}( M_m(B))$,
 then we have
$$\| [\varphi_{ij}(qe_t b_{kl} e_s q)+  \widetilde{\psi_{ij}}(e_t b_{kl} e_s) ] \|
\leq \| [\varphi_{ij}(q \cdot q) + \psi_{ij} ] \|_{M_n(B^*)} .$$ 
Taking a double weak* limit with $s$ and $t$ 
we see that 
$\| [\varphi_{ij}(qb_{kl}q)+  \widetilde{\psi_{ij}}(qb_{kl}q) ] \|
\leq \| [\varphi_{ij}(q \cdot q) + \psi_{ij} ] \|_{M_n(B^*)}$.
Thus $\|[\varphi_{ij} +
\widetilde{\psi_{ij}} |_{qBq} ]\|_n \leq \| [\varphi_{ij}(q \cdot q) + \psi_{ij} ] \|_{M_n(B^*)}$, giving $\|[\varphi_{ij} + (qXq)^\perp ] \| \leq \| [\varphi_{ij}(q \cdot q) + \psi_{ij} ] \|_{M_n(B^*)}$.
Taking the infimum, $$\|[\varphi_{ij} + (qXq)^\perp ] \|
\leq \| [\varphi_{ij}(q \cdot q) + X^\perp] \| .$$ 
This is the desired inequality establishing that 
the map $(qBq)^*/(qXq)^\perp \to B^*/X^\perp : \varphi + (qXq)^\perp \mapsto 
\varphi(q \cdot q) + X^\perp$ is a complete isometry.
\end{proof}

One of the conditions in the previous result simplifies if $A$ is a Jordan subalgebra:

\begin{lemma} \label{tfqin}  Let $A$ be a closed Jordan subalgebra of a unital $C^*$-algebra $B$, and let $q$ be a projection in $A^{\perp \perp}$.
Then $q \in A^{\perp \perp}$ if and only if $A^{\perp} \subset (qAq)_{\perp}$.  
\end{lemma}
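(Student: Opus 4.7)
The plan is to use the bipolar theorem to rephrase the condition $A^{\perp}\subset (qAq)_\perp$ as $qAq\subset A^{\perp\perp}$ (identifying $A^{\perp\perp}$ with the weak* closure of $A$ inside $B^{**}$). The lemma then reduces to showing that, for a projection $q \in B^{**}$, one has $q\in A^{\perp\perp}$ if and only if $qAq\subset A^{\perp\perp}$.

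For the forward implication, suppose $q\in A^{\perp\perp}=A^{**}$, which is a Jordan subalgebra of $B^{**}$ under the Arens product. For any $a\in A\subset A^{**}$, the Jordan product $q\circ a$ lies in $A^{**}$. Using the labelled Jordan identity $xyx=2(x\circ y)\circ x - x^{2}\circ y$ from \cite{BWj} with $x=q$, $y=a$, and the idempotent relation $q^{2}=q$, one obtains
\[
qaq \;=\; 2(q\circ a)\circ q \,-\, q\circ a \;\in\; A^{**}.
\]
Hence $qAq\subset A^{**}$, and the bipolar theorem delivers $A^{\perp}\subset(qAq)_\perp$.

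For the reverse implication, the hypothesis gives $qAq\subset A^{**}$ directly; the task is to promote this containment (which only sees $q$ sandwiched by $A$) to the membership $q\in A^{**}$ itself. The natural device is to run a J-cai $(e_t)$ of $A$ through the sandwich: each $qe_tq$ lies in $qAq\subset A^{**}$, and since $A^{**}$ is weak* closed in $B^{**}$ (as the image under $\iota^{**}$ of the inclusion $\iota:A\hookrightarrow B$), the weak* limit $q\cdot 1_{A^{**}}\cdot q$ again lies in $A^{**}$. The main obstacle is to identify this limit with $q$ itself: in the Jordan and possibly non-unital setting one cannot simply set $a=1_A$ inside $qAq$, so one must either pass through the unitization $A^{1}\subset B$ (exploiting $1\in B$ to write $q\cdot 1\cdot q=q$) or invoke the fact that $q$ is dominated by the identity of $A^{**}$. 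The latter is automatic in the intended applications of this lemma, e.g.\ when $q$ is closed in $B^{**}$ and $A^{\perp}\subset(qAq)_\perp$ as in Proposition \ref{gamlemma1}, where the support geometry of $A^{**}$ pins $q$ inside $A^{**}$.
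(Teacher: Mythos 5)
Your forward implication is correct and takes a genuinely different route from the paper's. The paper picks a net $(a_t)$ in $A$ with $a_t \to q$ weak*, observes that $a_s x a_t + a_t x a_s \in A$ for $x \in A$ (so any $\psi \in A^\perp$ kills it), and passes to an iterated weak* limit to conclude $\psi(qxq)=0$. You instead work entirely inside $A^{\perp\perp}$, which the paper has already noted is a Jordan subalgebra of $B^{**}$ for the Arens product, and extract $qaq = 2(q\circ a)\circ q - q\circ a \in A^{\perp\perp}$ from the identity $xyx = 2(x\circ y)\circ x - x^2\circ y$; the bipolar theorem (correctly applied) then converts $qAq \subset A^{\perp\perp}$ back into $A^\perp \subset (qAq)_\perp$. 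Your version avoids the iterated-limit bookkeeping and makes the algebraic mechanism explicit; the paper's argument is the same in spirit, since it is exactly the polarized form of your identity that puts $a_s x a_t + a_t x a_s$ into $A$, but it stays at the level of functionals and nets.

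On the reverse implication you have manufactured an obstacle that is not there. The hypothesis of the lemma already places $q$ in $A^{\perp\perp}$, so the converse is formally immediate; more usefully, that hypothesis hands you a net $(a_t)$ in $A$ with $a_t \to q$ weak*, whence $q a_t q \to q^3 = q$ weak* (multiplication by the fixed element $q$ is separately weak* continuous in $B^{**}$), and so $\langle q, \psi\rangle = \lim_t \langle q a_t q, \psi\rangle = 0$ for every $\psi \in A^\perp \subset (qAq)_\perp$. This is precisely the paper's argument. There is no need for a J-cai, for $A$ to be approximately unital, for the unitization, or for the unverified assertion that $q \leq 1_{A^{**}}$ ``in the intended applications'' --- and, as written, you carry out none of these, so your converse is left incomplete. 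With the observation above it closes in one line.
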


\begin{proof}  Suppose that $q \in A^{\perp \perp}$, and that $\varphi \in A^\perp$
and $(a_t)$ is a net in $A$ with weak* limit $q$.
Then $$\varphi(qxq) = \lim_t \lim_s \, \frac{1}{2} \varphi(a_s x a_t +
a_t x a_s) = 0, \qquad x \in A, $$ 
since $\frac{1}{2} (a_s x a_t +
a_t x a_s) \to qxq$ weak* in $B^{**}$ with $s, t$.   

Conversely, suppose that $A^{\perp} \subset (qAq)_{\perp}$ and $\psi \in A^{\perp}$.
 Then $$0 = \langle q a_t q , \psi \rangle \to \langle q  , \psi \rangle ,$$
since $q a_t q \to q^3 = q$ weak*.
 So $q \in A^{\perp \perp}$.  
\end{proof}

\begin{lemma}  \label{distH} Let $A$ be a closed Jordan subalgebra of a $C^*$-algebra $B$,
 let $p$ be an $A$-open projection in $A^{**}$, and let  $D = A \cap
p A^{**} p$
be the HSA in $A$ supported by $p$.   If $x \in A$ with
$[p,x,p^\perp] = 0$ then the distance $d(x, D)$ from $x$ to $D$ equals
$\| (1-p)x (1-p) \|$. \end{lemma}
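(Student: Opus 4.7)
The plan is to establish both inequalities $d(x,D) \geq \Vert p^{\perp}xp^{\perp}\Vert$ and $d(x,D) \leq \Vert p^{\perp}xp^{\perp}\Vert$ separately. First I would unpack the hypothesis $[p,x,p^{\perp}] = pxp^{\perp} + p^{\perp}xp = 0$: since $pB^{**}p^{\perp}$ and $p^{\perp}B^{**}p$ intersect in $\{0\}$ (if $z$ lies in both then $pz = z$ and $pz = 0$), the two summands must each vanish, so $x = pxp + p^{\perp}xp^{\perp}$ in $B^{**}$. Also the Jordan identity gives $pxp = 2(p \circ x) \circ p - p \circ x \in A^{**}$, whence $p^{\perp}xp^{\perp} = x - pxp \in A^{**}$ as well.

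For the lower bound, any $d \in D \subset pA^{**}p$ satisfies $d = pdp$, so $x - d = (pxp - d) + p^{\perp}xp^{\perp}$ is block-diagonal with respect to $p$ in $B^{**}$. The identity $\Vert a + b\Vert = \max(\Vert a\Vert,\Vert b\Vert)$ for $a \in pB^{**}p$ and $b \in p^{\perp}B^{**}p^{\perp}$ (which follows from $(a+b)(a+b)^* = aa^* + bb^*$ being a sum of orthogonal positives) yields $\Vert x - d\Vert \geq \Vert p^{\perp}xp^{\perp}\Vert$, and taking the infimum gives the desired bound.

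For the upper bound I would invoke the duality formula
$$d(x,D) = \sup\{\,|\psi(x)| : \psi \in A^*,\ \Vert\psi\Vert \leq 1,\ \psi|_D = 0\,\}.$$
Each such $\psi$ extends canonically to a weak*-continuous $\tilde\psi$ on $A^{**}$ of the same norm, vanishing on the weak*-closure $D^{\perp\perp}$ of $D$ in $A^{**}$. The crux is to verify $pxp \in D^{\perp\perp}$. Picking a J-cai $(e_t)$ in $D$ (which converges weak* to the support projection $p$), the Jordan inner-ideal property of $D$ in $A$---in the form $abc + cba \in D$ for $a,c \in D$, $b \in A$---gives $\tfrac{1}{2}(e_t x e_s + e_s x e_t) \in D$ for all $s,t$. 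Separate weak*-continuity of multiplication in the von Neumann algebra $B^{**}$ then produces $pxp$ as the iterated weak*-limit, so $pxp$ lies in the weak*-closure of $D$ in $B^{**}$, which coincides with $D^{\perp\perp} \subset A^{**}$ since $A^{**}$ is weak*-closed in $B^{**}$.

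Once $pxp \in D^{\perp\perp}$ we have $\tilde\psi(pxp) = 0$, hence $\psi(x) = \tilde\psi(pxp + p^{\perp}xp^{\perp}) = \tilde\psi(p^{\perp}xp^{\perp})$, giving $|\psi(x)| \leq \Vert\psi\Vert \cdot \Vert p^{\perp}xp^{\perp}\Vert \leq \Vert p^{\perp}xp^{\perp}\Vert$, and taking the supremum completes the proof. The main obstacle is precisely the inner-ideal step showing $pxp \in D^{\perp\perp}$; this is where the Jordan structure must be leveraged (the associative argument $e_t x \to px$ weak* plus $pxe_t \to pxp$ weak* is not directly available since neither $e_t x$ nor $pxe_t$ need lie in $D$), and everything else is formal once this is in hand.
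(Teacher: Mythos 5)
Your proof is correct, and it takes a genuinely different route from the paper's. The paper deduces the lemma from Proposition \ref{gamlemma1}: it sets $q=p^\perp$, forms the unital Jordan algebra $X=\{x\in A^1:[q,x,p]=0\}$, checks $q\in X^{\perp\perp}$, and reads off $d(x,D)=\|x+I\|=\|qxq\|$ from the isometry $X/I\cong qXq$, where $I=D$; that route leans on the duality computation in Proposition \ref{gamlemma1} (which at the matrix level invokes Brown's proximinality of $L+L^*$). You instead prove the two inequalities by hand: the lower bound $d(x,D)\geq\|p^\perp xp^\perp\|$ from the block-diagonal max-norm identity, and the upper bound by Hahn--Banach duality against ${\rm Ball}(D^\perp)$ once you know $pxp\in D^{\perp\perp}$ --- which you get from the inner-ideal property $e_txe_s+e_sxe_t\in D$ and an iterated weak* limit, exactly the trick the paper itself uses in Lemma \ref{tfqin}. (Two small remarks: you do not actually need a J-cai here --- any net $x_t=px_tp\to p$ in $D$, which exists by the definition of $A$-open, suffices for the double limit; and the identity $pxp=2(p\circ x)\circ p-p\circ x$ placing $pxp$ in $A^{**}$ is the right way to keep everything inside $A^{**}$, as you note.) What each approach buys: the paper's route packages the statement as the isometric identification $X/D\cong qXq$ of Proposition \ref{gamlemma1}/Corollary \ref{sepqr}, complete with matrix norms, and serves as the template for the much harder Theorem \ref{gamlemma2}; your argument is more elementary and self-contained, avoiding Proposition \ref{gamlemma1} and the proximinality machinery entirely, but delivers only the scalar distance formula and would need to be matricialized to recover the complete isometry.
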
  

\begin{proof}    Let $q = 1-p \in (B^1)^{**}$, where $1$ is the identity of $B^1$.    Note that 
$A^1$ is a closed Jordan subalgebra of $B^1$.      By the line above Corollary 3.1 in \cite{BWj} and the line below
Proposition 3.2 there 
we have that $p$ is $A^1$-open and is open in $B^1$.  Then $q$ is closed in $(B^1)^{**}$.
Let $X = \{ x \in A^1 : [q,x,p] = 0 \}$.   This is a unital Jordan
algebra, since $X = A^1 \cap (q (A^1)^{**} q + q^\perp (A^1)^{**} q^\perp)$,
which is clearly closed under squares.  Moreover $q \in X^{\perp \perp}$
since 
$$p = q^\perp \in (A \cap p (A^{1})^{**} p)^{\perp \perp}
\subset  (A^1 \cap q^\perp (A^1)^{**} q^\perp)^{\perp \perp}
\subset X^{\perp \perp}$$
by the definition of $A$-open projection.
So by Proposition \ref{gamlemma1},  $qXq$ is isometric to $X/I$ via the
map $x+I \mapsto qxq$, where $$I = \{ x \in X : qxq = 0 \} =
\{ x \in A^1 \cap q^\perp (A^1)^{**} q^\perp \} = D.$$    Since $d(x, D) = \| x + I \|$,
we are done.  \end{proof} 

\begin{lemma} \label{lemma2}  
Let $X$ be a closed subspace of a unital $C^*$-algebra 
$B$ and let $q$ be a  closed projection in $B^{**}$ such that 
$X^{\perp} \subset (qXq)_{\perp}$ as in Lemma {\rm 
\ref{gamlemma1}}.  Suppose that $h \in B$ is strictly positive and commutes 
with $q$, and suppose that $a \in X$ with  $a^* q a \leq h$. Then given 
$\epsilon >0$ there exists  $b \in X$ such that $qbq = qaq$ and $b^{\ast}b
 \leq h + \epsilon 1$. \end{lemma}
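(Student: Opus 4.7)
The plan is to reduce the operator inequality $b^*b \le h + \eps 1$ to a norm inequality by right-multiplying by $k^{-1}$, where $k = (h+\eps 1)^{1/2}$, and then to apply Proposition \ref{gamlemma1} to the translated subspace $Y = Xk^{-1}$ of $B$.

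Concretely, I would set $k = (h + \eps 1)^{1/2} \in B$, which is positive, invertible (since $h+\eps 1 \ge \eps 1$), with $k^{-1} \in B$, and which commutes with $q$ because $h$ does. Since right multiplication by the invertible $k^{-1}$ is a topological automorphism of $B$, the set $Y = X k^{-1}$ is a closed subspace of $B$. The key observation is that it suffices to find $d \in Y$ with $qdq = qak^{-1}q$ and $\|d\| \le 1$: then $b := dk$ lies in $X$, one has $qbq = (qdq)k = (qak^{-1}q)k = qaq$ (using that $k^{-1}$ commutes with $q$), and $b^*b = kd^*dk \le k^2 = h + \eps 1$ since $d^*d \le \|d\|^2 \cdot 1 \le 1$.

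The main preliminary step is to check that $Y$ satisfies the hypothesis of Proposition \ref{gamlemma1}, namely $Y^\perp \subset (qYq)_\perp$. Given $\varphi \in B^*$ annihilating $Y$, I would set $\psi(b) = \varphi(bk^{-1})$; this $\psi$ annihilates $X$, so $\psi \in (qXq)_\perp$ by hypothesis. Because $k^{-1}$ commutes with $q$ in $B^{**}$, one has $qxk^{-1}q = qxqk^{-1}$ for $x \in X$. Moreover the weak*-continuous functional $z \mapsto \tilde\varphi(zk^{-1})$ on $B^{**}$ agrees with $\tilde\psi$ on $B$, hence on all of $B^{**}$ by weak*-density. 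Therefore $\tilde\varphi(qxk^{-1}q) = \tilde\psi(qxq) = 0$, showing $\varphi \in (qYq)_\perp$.

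With Proposition \ref{gamlemma1} applied to $Y$, the map $y + I_Y \mapsto qyq$ is an isometry $Y/I_Y \to qYq$, where $I_Y = \{y \in Y : qyq = 0\}$. The element $d_0 = ak^{-1} \in Y$ satisfies $\|qd_0\|^2 = \|k^{-1}a^*qak^{-1}\| \le \|k^{-1}hk^{-1}\| = \|h(h+\eps 1)^{-1}\| \le \|h\|/(\|h\|+\eps) < 1$, so $\|qd_0q\| < 1$, and the coset $d_0 + I_Y$ in $Y/I_Y$ has norm strictly less than $1$. Picking any representative $d \in Y$ in this coset with $\|d\| \le 1$ and setting $b = dk$ finishes the construction. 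The only subtle point of the argument is the inheritance $Y^\perp \subset (qYq)_\perp$, which crucially uses that $h$, hence $k^{-1}$, commutes with $q$; beyond this no deeper Brown-type machinery is needed for the present lemma.
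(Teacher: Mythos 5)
Your argument is correct and is essentially the paper's own proof: both reduce the operator inequality to a norm estimate by right-multiplying by the inverse square root of a positive element commuting with $q$, verify that the annihilator hypothesis $X^\perp\subset (qXq)_\perp$ is inherited by the translated subspace, and then invoke the quotient isometry of Proposition~\ref{gamlemma1} to pick a good coset representative. The only (cosmetic) difference is that you scale by $(h+\epsilon 1)^{1/2}$ and take a representative of norm at most $1$, whereas the paper scales by $h^{-1/2}$ and absorbs the $\epsilon$ into a representative of norm at most $\sqrt{1+\epsilon}$ in its $h=1$ case.
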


\begin{proof}  Again we will be sketchy in places, since we are following
\cite[Proposition 3.2]{Hay} and 
\cite[Lemma II.12.4]{Gam}.  First suppose that $h=1$.
Suppose that $a \in X$ and $a^{\ast}qa \leq 1$.  
Hence $\|qaq \| \le \|qa \| \le 1$. 
Let $I = \{x \in X:qxq = 0\}$.
As in the just cited proofs, but using Lemma {\rm
\ref{gamlemma1}} and replacing several of the $qa$ and $qb$ in 
\cite[Proposition 3.2]{Hay} by $qaq$ and $qbq$, 
there exists $b \in X$ with $qaq = qbq$
and $b^* b \leq (1+\epsilon) 1 = h + \epsilon 1$. 

In the general case we have 
$h^{-1/2} a^* q a h^{-1/2} \leq 1$.  If $\psi \in (X h^{-1/2})^{\perp}$, 
then $\psi(\cdot h^{-1/2}) \in X^{\perp}$ and thus $\psi(\cdot h^{-1/2}) \in (qXq)_{\perp}$ by hypothesis. Hence $\psi$ lies in $(qX h^{-1/2}q)_{\perp}$. 
Thus by the $d = 1$ case, there exists  $b \in X$ such that $qbq h^{-1/2} = qaq
h^{-1/2}$ and $h^{-1/2} b^{\ast}b h^{-1/2}  \leq (1+\frac{\epsilon}{\| p \|}) 1$.
Hence $b^{\ast}b \leq h + \epsilon 1$.
\end{proof}
 
We will use the case of the previous lemma where $1_B \in X$ and $a = 1_B$.

\begin{lemma} \label{lemma3}
Let $A$ be a closed Jordan subalgebra of a unital $C^*$-algebra $B$ with $1_B \in A$,
and let $q$ be a closed projection in $B^{**}$ such that
$q \in A^{\perp \perp}$.
 Then there exists a doubly indexed net $(b^t_n)$ in $A$ 
(with $n \in \Ndb$)  such that $qb^t_{n}q = q$, $\| b^t_{n} \|^2 \le 1+\frac{1}{n}$, and $b^t_{n} \rightarrow q$ in the weak* topology with $t$ and $n \to \infty$. 
Furthermore $\| qb^t_{n}(1-q) + (1-q)b^t_{n}q \| \le \frac{1}{\sqrt{n}}$.  \end{lemma}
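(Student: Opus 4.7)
The plan is to combine Lemma \ref{lemma2} with a Goldstine-type weak* density argument, obtaining the off-diagonal norm bound automatically from the first two conditions via the Peirce decomposition.

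First, for each $n$ I would apply Lemma \ref{lemma2} with $X = A$, $a = 1_B \in A$, $h = 1_B$, and $\epsilon = 1/n$, obtaining a candidate $b_n \in A$ with $q b_n q = q$ and $b_n^* b_n \leq (1+1/n)\, 1$; in particular $\|b_n\|^2 \leq 1+1/n$. The hypothesis $A^{\perp} \subset (qAq)_\perp$ of Lemma \ref{lemma2} is supplied by Lemma \ref{tfqin} since $q \in A^{\perp\perp}$; the element $1_B$ lies in $A$ with $1_B^* q 1_B = q \leq 1_B$; and $h = 1_B$ is strictly positive and commutes with $q$.

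The off-diagonal estimate follows from the first two properties. Writing $b_n$ in its $2 \times 2$ Peirce matrix with respect to $q$ and $1-q$, the inequality $b_n^* b_n \leq (1+1/n)\,1$ forces the $(q,q)$-block $q + ((1-q) b_n q)^*((1-q) b_n q) \leq (1+1/n)\, q$, hence $\|(1-q) b_n q\|^2 \leq 1/n$. Since $\|b_n\|^2 = \|b_n^* b_n\| = \|b_n b_n^*\|$ in a $C^*$-algebra, also $b_n b_n^* \leq (1+1/n)\, 1$, and the analogous $(q,q)$-block computation gives $\|q b_n (1-q)\|^2 \leq 1/n$. The $C^*$-norm of the off-diagonal block matrix with entries $q b_n (1-q)$ and $(1-q) b_n q$ equals the maximum of the two block norms, yielding $\|q b_n (1-q) + (1-q) b_n q\| \leq 1/\sqrt{n}$.

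For the doubly indexed weak*-convergent net: Goldstine's theorem applied to $q \in {\rm Ball}(A^{\perp\perp}) = {\rm Ball}(A^{**})$ yields a net $(a_s)_s$ in ${\rm Ball}(A)$ with $a_s \to q$ weak*. For each $n$, let $T_n := \{b \in A : q b q = q,\ \|b\|^2 \leq 1 + 1/n\}$, a nonempty convex set by the first step. If I can show that $q$ lies in the weak* closure of $T_n$ in $A^{**}$, then for each $n$ I may pick a subnet $(b^s_n)_s$ in $T_n$ with $b^s_n \to q$ weak*, and combining over $n$ gives the doubly indexed net.

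The main obstacle is establishing $q \in \overline{T_n}^{w^*}$. The difficulty is that the liftings of $q$ in $A$ (the affine coset $1_B + I$ where $I = \{x \in A : q x q = 0\}$, by Proposition \ref{gamlemma1}) satisfying $\|b\|^2 \leq 1+1/n$ form a set whose weak* cluster points generally take the form $q + q^\perp \eta q^\perp$ for various contractions $\eta$, not just $q$ itself. To overcome this I would argue via Hahn-Banach separation: a hypothetical separating $\varphi \in A^*$, extended to $\hat\varphi \in B^*$ by norm-preserving Hahn-Banach, would be defeated by the $C^*$-analog elements $1 - h_\beta \in B$ with $h_\beta \nearrow 1 - q$ in ${\rm Ball}(B)_+$ (these satisfy $q h_\beta q = 0$ and hence $h_\beta q = 0$, so that $q(1-h_\beta)q = q$, $\|1-h_\beta\| \leq 1$, and $1 - h_\beta \to q$ weak* in $B^{**}$). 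Transferring the witness back from $B$ to $A$ then uses Lemma \ref{lemma2} applied to the Goldstine approximants $a_s$, together with the proximinality of $L + L^*$ in $B$ exploited in the proof of Lemma \ref{lemma2}, to produce elements of $T_n$ whose $\mathrm{Re}\,\varphi$-value is arbitrarily close to $\mathrm{Re}\,\tilde\varphi(q)$, contradicting the separation.
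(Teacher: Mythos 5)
Your first two paragraphs are correct and essentially reproduce the paper's estimates: Lemma \ref{tfqin} supplies the hypothesis of Lemma \ref{lemma2}, and the Peirce block computation from $q b_n q = q$ together with $b_n^* b_n \leq (1+\frac{1}{n})1$ and $b_n b_n^* \leq (1+\frac{1}{n})1$ does give $\| (1-q)b_n q \|^2 \leq \frac{1}{n}$, $\| q b_n (1-q) \|^2 \leq \frac{1}{n}$, and the stated off-diagonal bound. The genuine gap is in your third step, and you have correctly located it yourself: with $h = 1_B$ fixed, nothing in the constraints defining $T_n$ controls the corner $q^\perp b q^\perp$, so $q$ need not be a weak* cluster point of $T_n$. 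Your proposed Hahn--Banach repair does not close this. The witnesses $1 - h_\beta$ live in $B$, not in $A$, and ``transferring the witness back from $B$ to $A$'' is precisely the problem to be solved: Lemma \ref{lemma2} controls only $qbq$ and $b^*b$ of its output element and gives no handle on the value of a prescribed functional $\varphi \in A^*$ at that element, so you cannot conclude that $T_n$ contains elements with $\mathrm{Re}\,\varphi$-value near $\mathrm{Re}\,\tilde\varphi(q)$. As written, the separation step assumes the weak* approximation it is meant to establish.

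The missing idea is to exploit the full strength of Lemma \ref{lemma2}, which permits an arbitrary strictly positive $h$ commuting with $q$, and to run it along a net of such $h$ shrinking to $q$. Take an increasing cai $(x_t)$ for the hereditary subalgebra of $B$ supported by $q^\perp$ and set $f^s = \frac{1}{m} 1 + \frac{m-1}{m}(1-x_t)$: these are strictly positive norm-one elements of $B$ which commute with and dominate $q$, satisfy $f^s q = q f^s = q$, and converge weak* to $q$. Applying Lemma \ref{lemma2} with $h = f^s$ produces $b^s_n \in A$ with $q b^s_n q = q$ and $(b^s_n)^* b^s_n \leq f^s + \frac{1}{n}$; since $q f^s q = q$ and $\| f^s \| = 1$ the same corner estimates go through, but now in addition $q^\perp (b^s_n)^* b^s_n q^\perp \leq q^\perp (f^s + \frac{1}{n}) q^\perp \to 0$ weak*, which in the universal representation forces $b^s_n q^\perp \to 0$ strongly and hence $q^\perp b^s_n q^\perp \to 0$ weak*. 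Combined with the off-diagonal decay this yields $b^s_n = q + [q, b^s_n, q^\perp] + q^\perp b^s_n q^\perp \to q$ weak*, with no separation argument needed. This is exactly why Lemma \ref{lemma2} is stated with a general $h$ rather than only $h = 1$.
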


\begin{proof}  If $(x_t)$ is an increasing cai for the hereditary subalgebra defined by $q^\perp$, then as in the start of the proof of \cite[Theorem 2.2]{Bnpi} 
$f^s = \frac{1}{m} 1 + \frac{m-1}{m} (1-x_t)$ defines a net of strictly 
positive elements norm $1$ elements of $B$ which commute with and dominate $q$,  and 
satisfy $f^s q = q f^s = q$, and $f^s \to q$ weak*.
Since $q \in A^{\perp \perp}$ we have  $A^\perp \subset 
(qAq)_\perp$ by Lemma \ref{tfqin}.    
By Lemma \ref{lemma2}  for each $n \in \Ndb$ there exists $b^s_n \in A$ such that 
$q b^s_n q = q$ and $(b^s_n)^* b^s_n \leq f^s + \frac{1}{n}$.
Hence $\| b^s_n \|^2 \leq 1 + \frac{1}{n}$.
Writing $b_n$ for $b^s_n$ we have \begin{eqnarray*}
q + ((1-q) b_{n}q)^{\ast}(1-q)b_{n}q &=& qb^{\ast}_{n}qb_{n}q +
 q b^{\ast}_{n}(1-q)(1-q)b_{n}q \\
&= & qb_{n}^{\ast}b_{n}q\\ 
& \le & q + \frac{1}{n} . \end{eqnarray*} 
Hence $\| (1-q) b^s_{n}q \|^2 = \| ((1-q) b^s_{n}q)^{\ast}(1-q)b^s_{n}q \|
\leq \frac{1}{n}$.
Similarly,
$$ q + qb_{n}(1-q)( qb_{n}(1-q))^{\ast} = q b_{n} b_{n}^* q 
\leq (1 + \frac{1}{n}) q ,$$
and so $\| qb^s_{n}(1-q) \|^2 \leq \frac{1}{n}$. 
We have $$\| qb^t_{n}(1-q) + (1-q)b^t_{n}q \| = \max \{
\| qb^t_{n}(1-q) \| , \| (1-q)b^t_{n}q \| \} \leq \frac{1}{\sqrt{n}} .$$   Note that 
$$q^\perp (b^s_{n})^* b^s_n q^\perp \leq 
q^\perp (f^s + \frac{1}{n}) q^\perp \to 0$$
weak* with $s$ and $n \to \infty$.  In the universal 
representation of $B$ on a Hilbert space $H$ we have  
$$\| b^s_n q^\perp \xi \|^2 = \langle q^\perp (b^s_{n})^* b^s_n q^\perp \xi , \xi \rangle  \to 0 , \qquad \xi \in H .$$
Thus $b^s_n q^\perp \to 0$ strongly, and hence $q^\perp b^s_n q^\perp \to 0$ weak*.  
It follows that $b^s_n = q + [q,b^s_{n},q^\perp] + q^\perp b^s_n q^\perp \to q$ 
weak*.  \end{proof} 

\begin{corollary} \label{weakHay} Let $A$ be a closed Jordan subalgebra of
a $C^{\ast}$-algebra $B$ with $1_B \in A$, and let $q$ be a  closed projection in $B^{**}$ such that
$q \in A^{\perp \perp}$.
 Then there exists a bounded doubly indexed net $(a^t_n)$ in $A$
(with $n \in \Ndb$)  such that $q a^t_{n}q = 0$, 
$\| q a^t_{n}(1-q) + (1-q) a^t_{n}q \| \le \frac{1}{\sqrt{n}}$, and $a^t_{n} \rightarrow q^\perp$ in the weak* topology with $t$ and $n \to \infty$.
\end{corollary}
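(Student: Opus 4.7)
The plan is to deduce this corollary essentially immediately from Lemma \ref{lemma3} by setting $a^t_n = 1_B - b^t_n$, where $(b^t_n)$ is the net produced by that lemma. This is available because the hypothesis $1_B \in A$ ensures $a^t_n \in A$.

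The verifications are all short and algebraic. First, $q a^t_n q = q(1_B - b^t_n) q = q - q b^t_n q = q - q = 0$, which gives the first claim. Second, using $q(1-q) = 0 = (1-q)q$, we have
\[
q a^t_n (1-q) + (1-q) a^t_n q \;=\; -\bigl(q b^t_n (1-q) + (1-q) b^t_n q\bigr),
\]
so the norm bound $\tfrac{1}{\sqrt{n}}$ is inherited directly from Lemma \ref{lemma3}. Third, weak* convergence $b^t_n \to q$ forces $a^t_n = 1_B - b^t_n \to 1_B - q = q^\perp$. Finally, boundedness follows from $\|a^t_n\| \le 1 + \|b^t_n\| \le 1 + \sqrt{1 + \tfrac{1}{n}} \le 1 + \sqrt{2}$.

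There is no real obstacle here; the work has been done in Lemma \ref{lemma3}, and this corollary is simply the observation that the hypotheses are symmetric in $q$ and $q^\perp$ modulo the fact that $q^\perp$ is \emph{open} rather than closed. The only care required is to note that the subtraction $1_B - b^t_n$ makes sense inside $A$ itself (not just in $B$), which is why the statement asks that $1_B \in A$. Thus the proof will read as a single short paragraph performing the four verifications above.
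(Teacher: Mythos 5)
Your proposal is correct and is exactly the paper's argument: the paper's entire proof reads ``Just let $a^t_{n} = 1 - b^t_{n}$,'' and your four verifications are precisely the routine checks that this substitution works.
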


\begin{proof}  
Just let $a^t_{n} = 1 - b^t_{n}$ from the previous corollary.
 \end{proof}

\noindent We will improve
the previous corollary later.
For now we turn to 
some lemmas about proximinality in $C^{\ast}$-algebras.

 A hereditary subalgebra $D$ in a $C^*$-algebra $B$ need not be proximinal, as Brown showed in 3.12 in 
\cite{Brown}.  However we will prove that $D$ is proximinal in an important part of $B$, namely 
$L + L^* = \{ b \in B : qbq = 0 \}$, where $L = \overline{AD}$ is the left ideal associated with $D$, and $q^\perp$ is the support projection of $D$ in $B^{**}$.   That is, $L \cap L^*$ is proximinal in $L + L^*$.  
To do this we will use some of Brown's ideas for his proof that
$L +R$ is proximinal in $B$, together with some new ingredients.  Here $L$ (resp.\ $R$) is a closed  left 
(resp.\ right) ideal in $B$.     We recall that the precise distance $d(x, D) = \max \{ \| xq \|, \| qx \| \}$
(see \cite[p.\ 920]{Brown}), but 3.12 in \cite{Brown} shows that this distance is not always achieved.
Nonetheless our result shows that  this distance is  achieved if also $qx q = 0$.

We first adapt one of Brown's $W^*$-algebra results (\cite[Lemma 3.1]{Brown}) . 

\begin{lemma} \label{Brw}   Let $q$ be a projection in a $W^{\ast}$-algebra $M$. 
Let $\epsilon >0$ and suppose $x \in M$ with $qxq=0$. If $\| (1-q)xq +q x (1-q) \| \le 1$ and $\|x \| \le 1+ \epsilon$, 
then there exists  $y \in (1-q)M(1-q)$ such that $\|y \| \le 3\sqrt{2\epsilon + \epsilon^{2}}$ and $\|x-y \| \le 1$.  
 \end{lemma}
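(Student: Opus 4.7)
The plan is to decompose $x$ in Peirce form with respect to $q$: write $x = a + b + c$ with $a = qxq^{\perp}$, $b = q^{\perp}xq$, $c = q^{\perp}xq^{\perp}$ (using $qxq = 0$). The off-diagonal hypothesis becomes $\max(\|a\|,\|b\|) \le 1$ (since the off-diagonal block matrix $a+b$ has norm $\max(\|a\|,\|b\|)$), and computing $q^{\perp}x^*xq^{\perp}$ and $q^{\perp}xx^*q^{\perp}$ in the $2\times 2$ block picture, the bound $\|x\|^2 \le (1+\epsilon)^2$ yields the two key Peirce inequalities inside $q^{\perp}Mq^{\perp}$:
\begin{equation*}
a^*a + c^*c \le (1+\epsilon)^2 q^{\perp}, \qquad bb^* + cc^* \le (1+\epsilon)^2 q^{\perp}.
\end{equation*}

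Set $\delta = (1+\epsilon)^2 - 1 = 2\epsilon + \epsilon^2$, $g = (q^{\perp} - a^*a)^{1/2}$, and $h = (q^{\perp} - bb^*)^{1/2}$, which are positive elements of $q^{\perp}Mq^{\perp}$ by $\|a\|,\|b\| \le 1$. Rewriting the first Peirce inequality as $c^*c \le g^2 + \delta\, q^{\perp}$ and using operator monotonicity of the square root together with the scalar subadditivity $(g^2 + \delta)^{1/2} \le g + \sqrt{\delta}$ (valid since $g$ commutes with $\delta\, q^{\perp}$), one gets $|c| \le g + \sqrt{\delta}$, so $(|c| - g)_+ \le \sqrt{\delta}\, q^{\perp}$; symmetrically $(|c^*| - h)_+ \le \sqrt{\delta}\, q^{\perp}$. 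Using the polar decomposition $c = u|c|$ in $q^{\perp}Mq^{\perp}$, the natural candidate is $y := u(|c| - g)_+ \in q^{\perp}Mq^{\perp}$, which satisfies $\|y\| \le \sqrt{\delta}$, well within the target $3\sqrt{\delta}$.

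The main work is to verify $\|x - y\| \le 1$. In the Peirce block form of $(x-y)^*(x-y)$ the $(1,1)$-entry is $b^*b \le q$, while the $(2,2)$-entry is $a^*a + (c-y)^*(c-y)$; by the choice of $y$, $(c-y)^*(c-y)$ is a functional-calculus expression in $|c|$ that is to be bounded by $g^2 = q^{\perp} - a^*a$, making the diagonal fit inside $I$. The off-diagonal block $b^*(c-y)$ is then controlled by a Schur-complement reformulation: using the identity $a^*(q - aa^*)^{-1}a = a^*a(q^{\perp} - a^*a)^{-1}$ one rewrites $\|x-y\| \le 1$ as $(c-y)(q^{\perp} - a^*a)^{-1}(c-y)^* + bb^* \le q^{\perp}$, equivalently $c-y = hXg$ for some contraction $X$, which requires \emph{both} Peirce inequalities simultaneously. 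When the one-sided truncation $y = u(|c|-g)_+$ does not by itself force the companion bound $(c-y)(c-y)^* \le h^2$, one adds a second, symmetric left-polar truncation (using $(|c^*|-h)_+ \le \sqrt{\delta}$ and the left polar decomposition $c-y = |c-y|^* v$), producing a total correction of norm at most $3\sqrt{\delta}$ once one absorbs the small interaction term between the two truncations.

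The hard part is the non-commutativity of $|c|$ with $g$ (and $|c^*|$ with $h$): in the commuting case the inequality $(|c| - (|c|-g)_+)^2 = (|c| \wedge g)^2 \le g^2$ is transparent from the spectral theorem, giving the cleaner bound $\sqrt{\delta}$; in the general $W^*$-case one must estimate residual cross terms such as $gT_- + T_- g$ with $T_- = (|c|-g)_-$, and these residuals are precisely what inflates the constant from $\sqrt{\delta}$ to $3\sqrt{2\epsilon + \epsilon^2}$ through the two-step truncation argument.
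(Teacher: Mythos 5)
Your setup is sound, and you correctly identify the crux: writing $a=qxq^{\perp}$, $b=q^{\perp}xq$, $c=q^{\perp}xq^{\perp}$, $g=(q^{\perp}-a^*a)^{1/2}$, $h=(q^{\perp}-bb^*)^{1/2}$, the condition $\|x-y\|\le 1$ is equivalent (by the standard contractive-completion criterion) to $\|a\|,\|b\|\le 1$ together with a factorization $c-y=hXg$ for some contraction $X$ -- not merely to the two diagonal bounds $(c-y)^*(c-y)\le g^2$ and $(c-y)(c-y)^*\le h^2$, which are necessary but not sufficient. The gap is that your candidate $y=u(|c|-g)_+$ produces neither. With $T_-=(|c|-g)_-$ you have $c-y=u(g-T_-)$, and the inequality $(g-T_-)^2\le g^2$ needed even for the $(2,2)$ diagonal entry amounts to $T_-^2\le gT_-+T_-g$, which fails for noncommuting positive operators ($gT_-+T_-g$ need not be positive, and $T_-$ is not small: if $c=0$ then $T_-=g$). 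You concede exactly this, but the promised repair -- a second left-polar truncation that ``absorbs the small interaction term'' -- is never executed, and the interaction terms are not small, so this is not a routine completion: the candidate itself must be replaced. A secondary issue is that your Schur-complement reformulation uses $(q^{\perp}-a^*a)^{-1}$, which need not exist.

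For comparison, the paper (following Brown's Lemma 3.1) obtains the factorization directly rather than by truncation: from $xq^{\perp}(xq^{\perp})^*\le (1+\epsilon)^2-K$ with $K=q^{\perp}xqx^*q^{\perp}$ it factors $xq^{\perp}=((1+\epsilon)^2-K)^{1/2}t$ with $t$ a contraction, replaces the outer factor by $(1-K)^{1/2}$ at a cost of $\sqrt{2\epsilon+\epsilon^2}$, checks the corrected element $w$ is a contraction whose $q(\cdot)q^{\perp}$ corner is $(1+\epsilon)^{-1}qxq^{\perp}$, and then runs the symmetric argument on $(1+\epsilon)w$; the increments $\sqrt{\delta}+\epsilon+\sqrt{\delta}$ give $3\sqrt{2\epsilon+\epsilon^2}$. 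If you want to stay within your Parrott framework, the fix is: since $x/(1+\epsilon)$ is a contraction with vanishing $q(\cdot)q$ corner, write $c=\tfrac{1}{1+\epsilon}\tilde h X\tilde g$ with $\tilde g=((1+\epsilon)^2-a^*a)^{1/2}$, $\tilde h=((1+\epsilon)^2-bb^*)^{1/2}$ and a contraction $X$, and set $y=c-hXg$. Then $x-y$ is a contraction by construction, and $\|y\|\le \tfrac{1}{1+\epsilon}\|\tilde h-h\|\,\|\tilde g\|+\tfrac{1}{1+\epsilon}\|\tilde g-g\|+\tfrac{\epsilon}{1+\epsilon}\le 2\sqrt{\delta}+\epsilon\le 3\sqrt{2\epsilon+\epsilon^2}$, using $\|(A+\delta)^{1/2}-A^{1/2}\|\le\sqrt{\delta}$. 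As written, however, your proof has a genuine gap at its central step.
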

 
\begin{proof}  Since $qxq = 0$, $xx^{\ast} = (1-q)xqx^{\ast}(1-q) + x(1-q)x^{\ast} \le 1+2\epsilon + \epsilon^{2}$.
 Hence, 
\[
xq^\perp (x q^\perp)^{\ast} \le 1+2\epsilon + \epsilon^{2} - q^\perp xqx^{\ast} q^\perp . \] 
By the well known `majorization-factorization' principle in 
von Neumann algebras, it follows that 
\begin{eqnarray}\label{1}
x q^\perp = \sqrt{1+2\epsilon + \epsilon^{2} - q^\perp xqx^{\ast} q^\perp} \, t
\end{eqnarray} 
for some $t \in {\rm Ball}(M)$.
We may assume that $t = t q^\perp$.  Now let $z = t \, \sqrt{1- q^\perp xqx^{\ast}q^\perp}$, 
which exists since $\| q^\perp xq \| \le 1$ (note that 
$\|q^\perp xq +q x q^\perp \|  = \max \{ \| q^\perp xq \|, \|qx q^\perp \| \} \leq 1$). 
 It is clear that $\| x q^\perp - z \| \le \sqrt{2\epsilon + \epsilon^{2}}$ by functional calculus. Also, letting $w =x - (xq^\perp - z) =q^\perp xq + z q^\perp$, we have
\begin{eqnarray*}
ww^{\ast} & = & q^\perp xqx^{\ast} q^\perp  + z q^\perp z^{\ast} \\
                 &=& q^\perp x q x^{\ast} q^\perp + \sqrt{1- q^\perp xq x^{\ast}
q^\perp)}tt^{\ast}\sqrt{1- q^\perp xqx^{\ast} q^\perp)} \\
                 &\le& 1 .
 \end{eqnarray*}
Thus, $\|w \| \le 1$. Now from using a binomial series for the root in
equation (\ref{1}), and similarly for the root defining $z$,   we have
\begin{eqnarray*}
qx q^\perp  = (1+\epsilon)pt q^\perp = (1 + \epsilon)qz q^\perp = (1+\epsilon)qw q^\perp \end{eqnarray*}
and clearly $q^\perp xq =  q^\perp wq$ and $qwq = 0$. To recap, we have added an element $z - x q^\perp$ of norm $\sqrt{2\epsilon + \epsilon^{2}}$ to $x$ to obtain an element $w$ such that: 
\begin{enumerate}
\item $\|w \| \le 1$,
 \item $qwq =0$.
 \item $q^\perp xq = q^\perp wq$,
 \item $qx q^\perp = (1+\epsilon)qw q^\perp$,
\item $\| x- w \| \leq \sqrt{2\epsilon + \epsilon^{2}}$. \end{enumerate} 
Now consider $(1+\epsilon)w$, which, like $x$, has norm $\leq 1+\epsilon$ yet $\|q((1+\epsilon)w q^\perp \|= \|qx q^\perp \| \le 1 $.  By a symmetric argument to the one that produced the enumerated list above, there is an element $u = q^\perp u$ such that $\|u\| \le \sqrt{2\epsilon + \epsilon^{2}}$ and, letting $w^{\prime} = (1+\epsilon)w - u$, 
\begin{enumerate}
\item $\| w' \| \le 1$ and $\| (1+\epsilon)w - w^{\prime} \| \le \sqrt{2\epsilon + \epsilon^{2}}$,
 \item $qw^{\prime}q = 0$,
 \item $qw^{\prime} q^\perp = (1+\epsilon)qw q^\perp = qx q^\perp$,
\item $q^\perp w^{\prime}q = q^\perp xq$,
  \end{enumerate} 
\noindent noting for (4), as before, that $q^\perp w^{\prime}q(1+ \epsilon) =
q^\perp w(1+\epsilon)q = (1+\epsilon) q^\perp xq$.  

Finally, setting  $y = x - w^{\prime}$, we have $qyq = 0,$
and $[q,y q^\perp] = 0$ from (3) and (4).  So $y \in q^\perp M q^\perp$.
  Also  $\| x- y \| \le 1$, and  furthermore,
\begin{eqnarray*}
\| y \| = \| x - w^{\prime} \|  \le \|x - w \| + \|\epsilon w \| + \|(1+\epsilon)w - w^{\prime} \| \le 3\sqrt{2\epsilon + \epsilon^{2}} , \end{eqnarray*}
where we have used (1) and (5) in the earlier list, 
and (1) in the last list, in the last inequalities.  \end{proof}

\begin{lemma}\label{Br32prox}
Let $q$ be a closed projection in $B^{\ast\ast}$ for a  $C^{\ast}$-algebra $B$.
  If $\epsilon >0$ and $x \in B$ with $qxq = 0$ and $\| q x q^\perp 
+ q^\perp x q \| \le 1$ and $\|x\| \le 1 + \epsilon$,
 then for all $\delta > 0$ there exists  $y \in B$  such that 
$$y = q^\perp y q^\perp, \; \|y \| \le 3\sqrt{2\epsilon +\epsilon^{2}}, \; \textrm{and} \;  \|x-y \| < 1 + \delta.$$  \end{lemma}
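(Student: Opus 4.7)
The plan is to apply the $W^*$-algebra version Lemma \ref{Brw} inside the enveloping von Neumann algebra $B^{**}$, and then to descend from $B^{**}$ back to $B$ via a Goldstine / Hahn--Banach distance argument. First I would apply Lemma \ref{Brw} with $M = B^{**}$, the given $x \in B \subset B^{**}$, and the projection $q$. The three hypotheses $qxq = 0$, $\|qxq^\perp + q^\perp xq\| \le 1$, and $\|x\| \le 1+\epsilon$ transfer to $B^{**}$ since the embedding $B \hookrightarrow B^{**}$ is isometric. This produces $y_0 \in q^\perp B^{**} q^\perp$ with $\|y_0\| \le 3\sqrt{2\epsilon+\epsilon^2}$ and $\|x - y_0\| \le 1$.

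Next, set $r = 3\sqrt{2\epsilon+\epsilon^2}$ and let $D = q^\perp B^{**} q^\perp \cap B$, the hereditary subalgebra of $B$ with support projection $q^\perp$ (open, since $q$ is closed). Consider $C = \{d \in D : \|d\| \le r\}$, a norm-closed convex subset of $B$. The statement reduces to showing $d_B(x, C) \le 1$: once that is done, any $y \in C$ within $\delta$ of the infimum gives $\|x-y\| < 1 + \delta$, and automatically $y = q^\perp y q^\perp$ (being in $D$) with $\|y\| \le r$.

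To establish $d_B(x, C) \le 1$, I would use the standard fact that $D^{**}$ embeds into $B^{**}$ as a weak$^*$-closed subspace with image exactly $q^\perp B^{**} q^\perp$, the weak$^*$-topologies matching under this identification. Goldstine's theorem then gives that the weak$^*$-closure of $C$ in $B^{**}$ is the full $r$-ball of $q^\perp B^{**} q^\perp$, which contains $y_0$. Now apply the Hahn--Banach bipolar principle: for any convex $C \subset B$ and any $x \in B$, one has $d_B(x, C) = d_{B^{**}}(x, \overline{C}^{\,w*})$, because both distances can be expressed as a Hahn--Banach supremum over $\phi \in \mathrm{Ball}(B^*)$ of $\mathrm{Re}\,\phi(x) - \sup_{c \in C} \mathrm{Re}\,\phi(c)$ (for the weak$^*$ side one uses that weak$^*$-closed convex subsets of $B^{**}$ are separated from points by elements of $B^*$, and weak$^*$-continuity then gives $\sup_{c \in C}\phi(c) = \sup_{\eta \in \overline{C}^{\,w*}}\phi(\eta)$). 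Hence $d_B(x, C) \le \|x - y_0\| \le 1$, and selecting $y \in C$ within $\delta$ of the infimum finishes the proof.

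The main obstacle is precisely this descent from $B^{**}$ back to $B$. Direct norm-approximation of $y_0$ by elements of $D$ is generally unavailable, and Kaplansky density gives only strong$^*$-approximation, which does not bound $\|x - y\|$ since the norm is merely weak$^*$-lower semicontinuous (so weak$^*$-limits of $y_\alpha$ give the wrong inequality direction). The Goldstine/bipolar distance argument above sidesteps this obstruction by recasting the problem as a distance computation rather than a pointwise approximation, for which weak$^*$-density alone suffices.
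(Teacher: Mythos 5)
Your proposal is correct and is essentially the paper's own argument: the paper defers to Brown's Lemma 3.2, replacing his set $C$ by the ball of radius $3\sqrt{2\epsilon+\epsilon^2}$ in the HSA $D = q^\perp B^{**} q^\perp \cap B$, using Lemma \ref{Brw} in $B^{**}$ to produce the element you call $y_0$, and descending via the weak* density of that ball in the corresponding ball of $D^{\perp\perp} = q^\perp B^{**} q^\perp$ together with the convex separation/distance identity you spell out. Your Goldstine--bipolar phrasing of the descent is exactly the ``other ideas are the same'' step the paper leaves to Brown.
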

\begin{proof}  This is almost exactly like the proof of Lemma 3.2 in \cite{Brown},
but replacing $A$ there with our  $B$, $L+R$ there by 
$D = q^\perp B^{**} q^\perp \cap B$, $\sqrt{2\epsilon +\epsilon^{2}}$ there 
by $3\sqrt{2\epsilon +\epsilon^{2}}$, and relying on facts established in Lemma \ref{Brw}.
Note  $D$ is the HSA in $B$ supported 
by $q^\perp$. 
Also we replace  $C$ there with $\{ y \in D : \| y \| \leq
3\sqrt{2\epsilon +\epsilon^{2}} \}$.    We have $D^{\perp \perp} = q^\perp B^{**} q^\perp$, 
and so the weak* closure of $C$ is $\{ \eta \in q^\perp B^{**} q^\perp
: \| \eta \| \leq 3\sqrt{2\epsilon +\epsilon^{2}} \}$.   This is what 
is needed to adjust the proof  of \cite[Lemma 3.2]{Brown} to our
setting, the other
ideas are the same.
 \end{proof}

\begin{lemma}\label{Br33prox}
Let $q$ be a closed projection in $B^{\ast\ast}$ for a  $C^{\ast}$-algebra $B$. 
If $\epsilon >0$  and $x \in B$ with $qxq = 0$ and $\| q x q^\perp + q^\perp x q \| \le 1$, and if $\|x\| \le 1 + \epsilon$, then for all $\epsilon^{\prime} > 3\sqrt{2\epsilon +\epsilon^{2}}$ there exists  $y \in B$  such that 
$$y = q^\perp y q^\perp, \;
 \|y \| \le \epsilon^{\prime}, \; \textrm{and} \, \|x-y \| \le 1 .$$  \end{lemma}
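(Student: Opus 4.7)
The plan is to bootstrap Lemma \ref{Br32prox} by iteration, converting the $1+\delta$ slack in its conclusion into a slightly larger bound on the norm of the corrector $y$. Set $\eta = \epsilon' - 3\sqrt{2\epsilon + \epsilon^{2}} > 0$, and fix a sequence of positive reals $(\delta_n)_{n \ge 1}$ decaying fast enough that
\[
\sum_{n \ge 1} 3\sqrt{2\delta_n + \delta_n^{2}} \; < \; \eta
\]
(any geometric choice such as $\delta_n = 4^{-n}\eta^{2}/C$ for $C$ large works). Put $\delta_0 = \epsilon$ and $x_0 = x$.

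Inductively, given $x_{n-1} \in B$ with $q x_{n-1} q = 0$, $\|q x_{n-1} q^\perp + q^\perp x_{n-1} q\| \le 1$, and $\|x_{n-1}\| \le 1 + \delta_{n-1}$, apply Lemma \ref{Br32prox} to $x_{n-1}$ with slack parameter $\delta_n$ to produce $y_n \in B$ satisfying $y_n = q^\perp y_n q^\perp$, $\|y_n\| \le 3\sqrt{2\delta_{n-1} + \delta_{n-1}^{2}}$, and $\|x_{n-1} - y_n\| < 1 + \delta_n$. Let $x_n = x_{n-1} - y_n$. Since $y_n$ is supported on the $(q^\perp,q^\perp)$ corner, we have $q x_n q = 0$, $q x_n q^\perp = q x_{n-1} q^\perp$, and $q^\perp x_n q = q^\perp x_{n-1} q$, so all three inductive hypotheses persist at step $n$ with $\delta_{n-1}$ replaced by $\delta_n$ and the induction continues.

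By the choice of $(\delta_n)$, the series $y := \sum_{n \ge 1} y_n$ converges in norm; the partial sums lie in $B \cap q^\perp B^{**} q^\perp$, and so does $y$. Moreover,
\[
\|y\| \; \le \; \|y_1\| + \sum_{n \ge 2} \|y_n\| \; \le \; 3\sqrt{2\epsilon + \epsilon^{2}} + \sum_{n \ge 1} 3\sqrt{2\delta_n + \delta_n^{2}} \; < \; 3\sqrt{2\epsilon + \epsilon^{2}} + \eta \; = \; \epsilon'.
\]
Finally, $x - \sum_{n=1}^{N} y_n = x_N$ has norm at most $1 + \delta_N$, and $\delta_N \to 0$, so $\|x - y\| \le 1$, as desired.

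No step here is truly deep, since the heavy lifting has been done in Lemmas \ref{Brw} and \ref{Br32prox}; the only point to monitor is choosing $(\delta_n)$ so that both $\sum y_n$ converges in norm and the cumulative norm stays under the allotted slack $\eta$. This is the standard Banach-space device for upgrading an "approximate" statement ($\|x - y\| < 1 + \delta$) to an exact one ($\|x - y\| \le 1$) at the price of enlarging an auxiliary constant.
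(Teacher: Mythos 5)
Your proof is correct and is essentially the paper's argument: the paper simply defers to the proof of Theorem 3.3 in Brown's paper \cite{Brown}, which upgrades the $\|x-y\|<1+\delta$ conclusion of the preceding lemma to $\|x-y\|\le 1$ by exactly the iteration-and-summation scheme you describe (absorbing the accumulated corrections into the slack $\epsilon'-3\sqrt{2\epsilon+\epsilon^2}$). Your write-up is a correct self-contained version of that argument; all the inductive hypotheses do persist because each corrector $y_n$ lives in the $(q^\perp,q^\perp)$ corner, and the limit $y$ stays in the norm-closed subspace $B\cap q^\perp B^{**}q^\perp$.
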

\begin{proof}  Exactly like the proof of Theorem 3.3 of \cite{Brown} but using Lemma \ref{Br32prox} instead
of his variant of that result,
and replacing $L+R$ by $D$ as we did in the  proof of Lemma \ref{Br32prox}, and $\sqrt{2\epsilon +\epsilon^{2}}$ there  
by $3\sqrt{2\epsilon +\epsilon^{2}}$.
 \end{proof}

We now unravel the meaning and implications of Lemma \ref{Br33prox}.   

If $B$ is a $C^*$-algebra and $q$ is a closed
projection in $B^{**}$  
recall that 
$$\{ b \in B : qbq = 0 \} = B \cap 
(B^{**}_0(q) + B^{**}_1(q)) = 
L + L^*,$$  where $L$ is the closed left ideal supported by $q^\perp$.

\begin{proposition} \label{cl01d}   If $B$ is a $C^*$-algebra and $q$ is a closed
projection in $B^{**}$ supporting a closed left ideal $L$ in $B$, then 
$(L+L^*)/D \cong   I_B = \{ [q,b,q^\perp] : b \in L + L^* \}$
 completely isometrically, where $D = B^{**}_0(q) \cap B = L \cap L^*$ is the HSA supported by $q^\perp$.
Also,  $(L+L^*)^{**} /B^{**}_0(q) \cong 
B^{**}_1(q)$ completely isometrically.   
\end{proposition}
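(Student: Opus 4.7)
The plan is to obtain both isomorphisms as restrictions of the Peirce 1-projection $P_1(\cdot) = [q, \cdot, q^\perp]$.

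For the first statement I would consider the map $\Phi \colon (L+L^*)/D \to I_B$, $b + D \mapsto P_1(b)$. It is well-defined since $P_1$ vanishes on $D \subset B^{**}_0(q)$, surjective by the definition of $I_B$, and a complete contraction by contractivity of $P_1$ at every matrix level. The crux is to establish $d(b, D) = \|P_1(b)\|$ for every $b \in L + L^*$, together with its matrix analogue. The lower bound $d(b, D) \geq \|P_1(b)\|$ is immediate: for any $d \in D$ we have $P_1(b-d) = P_1(b)$ (since $P_1(d) = 0$), hence $\|b-d\| \geq \|P_1(b-d)\| = \|P_1(b)\|$.

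The hard direction is $d(b, D) \leq \|P_1(b)\|$, which I would extract from Lemma \ref{Br33prox} via the following scaling trick. Set $r := \|P_1(b)\|$; if $r = 0$, then $P_1(b) = 0$ together with $qbq = 0$ forces $b \in B \cap B^{**}_0(q) = D$, and the claim is trivial. Otherwise apply Lemma \ref{Br33prox} to $x := b/r$ with $\epsilon := \|b\|/r - 1 \geq 0$: the hypotheses $qxq = 0$, $\|P_1(x)\| = 1$, and $\|x\| \leq 1 + \epsilon$ hold by construction. The lemma produces $y \in B$ with $y = q^\perp y q^\perp$, hence $y \in B \cap B^{**}_0(q) = D$, and $\|x - y\| \leq 1$, so $ry \in D$ satisfies $\|b - ry\| \leq r$. (In the edge case $\epsilon = 0$, take $d = 0$.) For the complete isometry I apply the scalar result to the $C^*$-algebra $M_n(B)$ and the projection $q \otimes I_n \in M_n(B)^{**} = M_n(B^{**})$. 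This projection is closed because tensoring with $I_n$ an increasing net of positive contractions witnessing openness of $q^\perp$ yields such a net for $q^\perp \otimes I_n$. The left ideal supported is $M_n(L)$, the HSA is $M_n(D)$, and $M_n(L + L^*) = M_n(L) + M_n(L)^*$; the scalar formula then gives $\|[b_{ij}] + M_n(D)\| = \|[P_1(b_{ij})]\|$, which is the complete-isometry statement.

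For the second assertion I would use that $(L + L^*)^{\perp\perp} = B^{**} q^\perp + q^\perp B^{**} = B^{**}_0(q) + B^{**}_1(q)$, as recalled just before Lemma \ref{tfqin}, so $(L+L^*)^{**}$ is canonically identified with the Peirce $0+1$ subspace of $B^{**}$. The Peirce 1-projection restricted to this subspace is surjective onto $B^{**}_1(q)$ with kernel $B^{**}_0(q)$, and contractive at every matrix level. The reverse inequality at matrix level is obtained by noting that for $w \in M_n(B^{**}_0(q) + B^{**}_1(q))$ the choice $z := -(w - P_1^{\otimes n}(w)) \in M_n(B^{**}_0(q))$ gives $\|w + z\| = \|P_1^{\otimes n}(w)\|$, so the quotient norm equals $\|P_1^{\otimes n}(w)\|$. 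The main obstacle throughout is the sharp upper bound $d(b, D) \leq \|P_1(b)\|$ of the first statement; the trick of scaling by $\|P_1(b)\|$ rather than by $\|b\|$ is what allows Lemma \ref{Br33prox} to produce exactly the right distance estimate.
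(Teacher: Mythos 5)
Your proof is correct, but for the first assertion it takes a genuinely different route from the paper's. The paper's proof is entirely soft: it establishes the bidual statement first, exactly as in your final paragraph (Peirce decomposition of $(L+L^*)^{\perp\perp}=B^{**}_0(q)+B^{**}_1(q)$, with $P_1$ a completely contractive projection whose complementary kernel is $B^{**}_0(q)$), and then obtains the first assertion by restricting to $(L+L^*)/D$, using the canonical complete isometry $((L+L^*)/D)^{**}\cong (L+L^*)^{**}/D^{\perp\perp}$ together with $D^{\perp\perp}=B^{**}_0(q)$; it makes no use of Lemmas \ref{Brw}--\ref{Br33prox}. You instead prove the exact distance formula $d(b,D)=\Vert [q,b,q^\perp]\Vert$ inside $B$ itself by your scaling application of Lemma \ref{Br33prox}; this is legitimate and non-circular, since that lemma is proved before and independently of Proposition \ref{cl01d}, and your matricial bookkeeping ($q\otimes I_n$ closed, $M_n(L)$ the supported left ideal, $M_n(L+L^*)=M_n(L)+M_n(L)^*$) is the same device the paper itself uses in the proof of Theorem \ref{gamlemma2}. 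Your route proves strictly more --- the infimum is attained at $ry\in D$, which is precisely the proximinality statement the paper isolates as Theorem \ref{prox} --- but at the cost of making the proposition depend on the hard Brown-type machinery; the paper deliberately keeps Proposition \ref{cl01d} independent of Lemma \ref{Br33prox} and then uses the proposition to \emph{interpret} that lemma as Theorem \ref{prox}. Your treatment of the second assertion coincides with the paper's.
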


\begin{proof}  As we said in the proof of Lemma \ref{gamlemma1}, 
$(L + L^*)^{\perp \perp} = B^{**} q^\perp + q^\perp B^{**}$, the latter sum being 
weak* closed and equaling $B^{**}_0(q) + B^{**}_1(q) = \{ \eta \in B^{**} : q \eta q = 0 \}$.
Also  $$L+L^* = \{ b \in B : q b q = 0 \} = B \cap (B^{**} q^\perp + q^\perp B^{**})$$ is
weak* dense in the space in the last line.   

The canonical map $P_1 : B^{**}_0(q) + B^{**}_1(q) \to B^{**}_1(q): \eta \to [q ,\eta ,q^\perp]$
is a complete contraction with kernel $B^{**}_0(q)$.   Thus we obtain a complete isometry
$$(L + L^*)^{**} /B^{**}_0(q) \cong 
B^{**}_1(q)$$  via the map $b + D \mapsto [q,b,q^\perp]$, which 
maps onto $\{ [q,b,q^\perp] : b \in L + L^* \}$.  
\end{proof}

Thus the quantity  $\| q x q^\perp + q^\perp x q \|$ in Lemma \ref{Br33prox}
is exactly the distance of $x \in L + L^*$ from the hereditary subalgebra
$D$ in $B$ which is supported by $q^\perp$.  With this in mind, 
Lemma \ref{Br33prox} yields what seems to be
a new purely $C^*$-algebraic result:  

\begin{theorem} \label{prox}  A hereditary subalgebra $D$ in a (possibly nonunital)  
$C^*$-algebra $B$ is
 proximinal in
$L + L^*$, where $L = \overline{AD}$ is the left ideal associated with $D$.
\end{theorem}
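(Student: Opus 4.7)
My plan is to combine Proposition \ref{cl01d}, which identifies $d(x,D) = \|qxq^\perp + q^\perp x q\|$ for $x \in L+L^*$, with the norm estimate supplied by Lemma \ref{Br33prox}. Since the hard work is already packaged into those two statements, the proof should amount to a single normalization followed by one application of the lemma, with no limiting argument needed.

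Let $q^\perp \in B^{**}$ be the support projection of $D$, so that $D = q^\perp B^{**} q^\perp \cap B$ and $L+L^* = \{b \in B : qbq = 0\}$. Fix $x \in L+L^*$. The case $d(x,D) = 0$ is trivial since $D$ is closed in $B$, so after scaling I may assume $d(x,D) = 1$. First pick $y_0 \in D$ with $\|x - y_0\| < 2$ and set $x' = x - y_0$. Since $y_0 \in D \subset L \cap L^*$, translating by $y_0$ preserves both the property $qx'q = 0$ (so $x' \in L+L^*$) and the distance to $D$. Hence, by Proposition \ref{cl01d} applied to $x'$,
\[
\|q x' q^\perp + q^\perp x' q\| \;=\; d(x', D) \;=\; d(x, D) \;=\; 1,
\]
while $\|x'\| < 2$.

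These are exactly the hypotheses of Lemma \ref{Br33prox} with $\epsilon = 1$. Applying it (for any $\epsilon' > 3\sqrt{3}$) produces $z_0 \in B$ with $z_0 = q^\perp z_0 q^\perp$ and $\|x' - z_0\| \le 1$. Since $z_0 \in q^\perp B^{**} q^\perp \cap B = D$, the element $y_0 + z_0$ lies in $D$, and
\[
\|x - (y_0 + z_0)\| \;=\; \|x' - z_0\| \;\le\; 1 \;=\; d(x,D).
\]
The reverse inequality is automatic, so equality holds and $y_0 + z_0$ is a proximum for $x$.

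The main obstacle in this strategy is really Lemma \ref{Br33prox} itself (and the $W^*$-algebraic Lemma \ref{Brw} behind it), where Brown's technique is adapted so that the corrected element lies in the two-sided corner $q^\perp M q^\perp$ rather than in a sum $L + R$. Once that is in hand, the proof of proximinality on $L + L^*$ is essentially formal: the identification of the distance with the Peirce off-diagonal norm in Proposition \ref{cl01d} lets one turn an approximate proximum into an exact one by the addition of a small element of $D$. It is worth noting that one does not even need the smallness of $z_0$ here; any bound on $\|z_0\|$ suffices, since the conclusion $\|x - (y_0 + z_0)\| \le 1$ combined with $d(x,D) = 1$ already forces exact attainment of the distance.
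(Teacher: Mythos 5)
Your proof is correct, and it rests on the same two ingredients as the paper's argument: the identification $d(x,D)=\|qxq^\perp+q^\perp xq\|$ for $x\in L+L^*$ coming from Proposition \ref{cl01d}, and the approximation Lemma \ref{Br33prox}. Where you differ is in how the lemma is converted into proximinality. The paper reads Lemma \ref{Br33prox} as a quantitative ball property (there is $f(\epsilon)\to 0$ so that any $x\in L+L^*$ with $d(x,D)\le 1$ and $\|x\|\le 1+\epsilon$ admits $y\in D$ with $\|y\|\le f(\epsilon)$ and $\|x-y\|\le 1$) and then invokes the abstract characterization of proximinality in \cite[Proposition II.1.1]{HWW}. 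You instead apply the lemma exactly once, at the fixed value $\epsilon=1$: after normalizing $d(x,D)=1$ and subtracting a crude approximant $y_0\in D$ with $\|x-y_0\|<2$, the hypotheses of Lemma \ref{Br33prox} are met, and its conclusion $\|x'-z_0\|\le 1$ with $z_0=q^\perp z_0 q^\perp\in B$ (hence $z_0\in D$) already attains the distance. This is more elementary and self-contained, and, as you observe, it uses strictly less of Lemma \ref{Br33prox} than the paper does: the smallness of the correction is irrelevant for proximinality alone, though the stronger ball-property formulation the paper records is of independent interest and is the form Brown himself exploits. The one point worth flagging explicitly is that your one-shot argument works only because Lemma \ref{Br33prox} delivers $\|x'-z_0\|\le 1$ exactly rather than $\le 1+\delta$ (this sharpening is precisely what distinguishes it from Lemma \ref{Br32prox}); with only the weaker estimate, an iteration or the appeal to the characterization in \cite{HWW} would be unavoidable.
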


\begin{proof}     Let  $q^\perp$ be the support projection of $D$ in $B^{**}$.
Then $L + L^* = \{ b \in B : qbq = 0 \}$ as we have said before. 
Lemma \ref{Br33prox} may  then be  translated as saying that that there
is a function $f(\epsilon) \to 0$ as $\epsilon \to 0^+$, such that 
if $\epsilon >0$  and $x \in L + L^*$ with $d(x,D) \le 1$, and if 
$\|x\| \le 1 + \epsilon$, then there exists  $y \in D$  with
$\|y \| \le f(\epsilon)$ and $\|x-y \| \le 1$.
This kind of condition (which is also used in the Remark
after 3.4 of \cite{Brown}) is much stronger than some of the 
known `ball properties' that characterize
proximinality (see e.g.\ \cite[Proposition II.1.1]{HWW}).
\end{proof}

The following is a second, and much deeper, two-sided analogue of   \cite[Proposition 3.1]{Hay} (cf.\ Proposition \ref{gamlemma1}).   The case that $X$ is a HSA in $B$, and $q$ the complement 
of its support projection, is particularly interesting and will be used later.

\begin{theorem}\label{gamlemma2}   Let $X$ be a closed subspace of a
  unital $C^*$-algebra
  $B$.  Let $q \in B^{**}$ be a closed projection such that $qXq = \{0\}$ and such that if $\psi \in B^*$  annihilates
$X$,
then $\psi$ annihilates
$I_X$, where $I_X =  \{ [q,x, q^\perp] \in B^{**} : x \in X \}$ (that is, $\psi \in (I_X)_\perp$).
Let $J_X = \{ x \in X : [q,x, q^\perp]  = 0 \}$.   Then 
$X/J_X \cong I_X$ completely isometrically  via the map $x+J_X \mapsto [q,x,q^\perp]$. 
\end{theorem}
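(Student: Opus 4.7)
My plan is to mimic the proof of Proposition~\ref{gamlemma1}, with three systematic replacements: the Peirce 1-projection $P_1(x) = [q,x,q^\perp]$ takes the place of the one-sided compression $x \mapsto qxq$; the Peirce 1-space $B^{**}_1(q)$ with its subspace $I_B$ takes the place of $qBq$; and Theorem~\ref{prox} (proximinality of $D$ in $L+L^*$) together with Proposition~\ref{cl01d} takes the place of the proximinality of $L+L^*$ in $B$ used there.

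The map $T \colon X \to I_X$, $x \mapsto P_1(x)$, is a complete contraction (since $P_1$ is completely contractive from the $2 \times 2$ block picture applied at matrix level) with kernel exactly $J_X$, so it factors through a completely contractive surjection $\bar T \colon X/J_X \to I_X$. I will show $\bar T$ is a complete isometry by establishing the same property for its adjoint. Identifying $I_X^*$ with $B^{**}_1(q)^*/I_X^\perp$ and $X^*$ with $B^*/X^\perp$ (both via Hahn--Banach at matrix level), the adjoint is the map $\varphi + I_X^\perp \mapsto \Phi + X^\perp$, where $\Phi(b) := \varphi([q,b,q^\perp])$ for $b \in B$. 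The hypothesis $X^\perp \subset (I_X)_\perp$ is used dually: for any $[\psi_{ij}] \in M_n(X^\perp)$, the weak*-continuous extensions $\widetilde{\psi_{ij}} \in (B^{**})^*$ satisfy $[\widetilde{\psi_{ij}}|_{B^{**}_1(q)}] \in M_n(I_X^\perp)$, which makes the dual map well-defined and enables its matrix-norm computation.

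For the isometric inequality at matrix level, fix $[\varphi_{ij}] \in M_n(B^{**}_1(q)^*)$ and $[\psi_{ij}] \in M_n(X^\perp)$; it suffices to show $\|[\varphi_{ij}] + M_n(I_X^\perp)\|_n \leq \|[\Phi_{ij} + \psi_{ij}]\|_{M_n(B^*)}$. Given $[\eta_{kl}] = [P_1(x_{kl})] \in {\rm Ball}(M_m(I_X))$ with $x_{kl} \in X$, I use the matrix version of Theorem~\ref{prox} (which goes through verbatim by applying the proofs of Lemmas~\ref{Brw}--\ref{Br33prox} inside $M_m(B^{**})$) together with Proposition~\ref{cl01d} to pick $[d_{kl}] \in M_m(D)$ so that $[y_{kl}] := [x_{kl} - d_{kl}] \in M_m(L+L^*)$ has norm exactly $\|[\eta_{kl}]\| \leq 1$, while still satisfying $P_1(y_{kl}) = \eta_{kl}$. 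Fixing a cai $(e_t)$ in $D$ (so $e_t \leq q^\perp$ and $e_t \to q^\perp$ weak* increasingly), I introduce the two-sided Peirce-$1$ approximation
\[
u^t_{kl} := (1-e_t)\, y_{kl}\, e_t + e_t\, y_{kl}\, (1-e_t) \in B,
\]
the natural Peirce-$1$ analogue of the one-sided $e_t b_{kl} e_s$ used in Proposition~\ref{gamlemma1}. The identity $u^t_{kl} = \tfrac{1}{2}(y_{kl} - f_t y_{kl} f_t)$ with $f_t := 1-2e_t$ a contraction yields $\|[u^t_{kl}]\|_{M_m(B)} \leq \|[y_{kl}]\| \leq 1$; one also checks that $u^t_{kl} \to P_1(y_{kl}) = \eta_{kl}$ weak* in $B^{**}$ and $P_1(u^t_{kl}) = q y_{kl} e_t + e_t y_{kl} q$ (using $q e_t = e_t q = 0$ since $e_t \leq q^\perp$). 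Plugging $u^t_{kl}$ into the pairing yields
\[
\|[\varphi_{ij}(q y_{kl} e_t + e_t y_{kl} q) + \psi_{ij}(u^t_{kl})]\| \leq \|[\Phi_{ij} + \psi_{ij}]\|_{M_n(B^*)},
\]
and the double weak* limit in $t$ drives the $\psi$-term to $\widetilde{\psi_{ij}}(\eta_{kl}) = 0$ (by the hypothesis on $X^\perp$) and the $\varphi$-term to $\varphi_{ij}(\eta_{kl})$. Taking the infimum over $[\psi_{ij}] \in M_n(X^\perp)$ completes the required bound.

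The main obstacle is the passage of the $\varphi$-term to the weak* limit, since a generic element of $B^{**}_1(q)^*$ is not weak*-continuous on $B^{**}_1(q)$. This is precisely the delicate point handled in the proof of Proposition~\ref{gamlemma1}: there the identification of $(qBq)^*$ with the annihilator $(L+L^*)^\perp \subset B^*$ provides weak*-continuous representatives of each functional, and the analogous device in our setting is the identification of $(I_B)^*$ with an appropriate quotient of $D^\perp \subset B^*$ coming from the isomorphism $I_B \cong (L+L^*)/D$ of Proposition~\ref{cl01d}. Working with these weak*-continuous representatives (which exist for each coset in $I_X^*$) makes the double weak* limit argument carry through exactly as in Proposition~\ref{gamlemma1}, completing the proof.
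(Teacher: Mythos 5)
Your proposal is correct in outline and shares the paper's skeleton — the dualization to $\varphi + I_X^\perp \mapsto \varphi\circ P_1 + X^\perp$, and the matricial use of Theorem \ref{prox} together with Proposition \ref{cl01d} to realize ${\rm Ball}(M_m(I_X))$ as the $P_1$-image of elements of ${\rm Ball}(M_m(L+L^*))$ — but your final limiting step is genuinely different from the paper's. The paper does not use an approximate identity at all at that stage: it reduces to the ``Claim'' $\| \varphi \circ r + \tilde{\psi} \circ r \| \leq \| \varphi \circ r + \psi \|$ and proves it by a bidual computation, showing $r^{**} = r^{**} \circ P_1$ (i.e.\ $r^{**}$ annihilates $qB^{**}q + q^\perp B^{**} q^\perp$) and then using that canonical weak* continuous extensions preserve norms and that $P_1$ is a complete contraction; the authors even remark this gives an alternative proof of the last steps of Hay's Proposition 3.1. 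Your route instead transplants the Gamelin--Hay approximate-identity device, via the clever two-sided net $u^t = (1-e_t)ye_t + e_t y(1-e_t) = \frac{1}{2}(y - f_t y f_t)$, and it does close up, for the following reason which you gesture at but should make explicit: $\Phi = \varphi\circ P_1|_B$ annihilates $D$ (since $P_1$ kills $D$), hence its weak* continuous extension $\tilde\Phi$ annihilates $D^{\perp\perp} = B^{**}_0(q)$, so for $y \in L+L^*$ one gets $\tilde\Phi(P_1(y)) = \tilde\Phi(y - q^\perp y q^\perp) = \Phi(y) = \varphi(P_1(y))$; this is exactly what rescues the $\varphi$-term in the limit, in perfect analogy with the fact that $\varphi(q\cdot q) \in (L+L^*)^\perp$ rescues the corresponding term in Proposition \ref{gamlemma1}. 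Two small cautions: the single-index term $e_t y e_t$ does not converge weak* to $pyp$ for a general bounded net, so either use a doubly indexed $u^{s,t} = \frac{1}{2}(y - f_t y f_s)$ (which is still a contraction and converges by separate weak* continuity of multiplication), or justify the single-index limit via $\;(e_t-p)^2 \le p - e_t \to 0$ weak* and Cauchy--Schwarz for your increasing positive cai; and note that $\{b : P_1(b)=0\}\cap B$ need not be weak* dense in $qB^{**}q + q^\perp B^{**}q^\perp$ (cf.\ the Remark after Theorem \ref{supp}), which is why the argument must go through $D^{\perp\perp} = B^{**}_0(q)$ rather than through $\ker P_1$ itself. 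With those points supplied, your proof is a valid, more ``classical'' alternative to the paper's bidual argument.
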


\begin{proof}     Since $qXq = (0)$ we have that $X \subset L+L^* \subset B$  where $L$ is the closed left ideal supported by $q^\perp$, 
and $I_X \subset B_1^{**}(q) \subset (L+L^*)^{\perp \perp}$.   It follows that 
saying that if $\psi \in B^*$  annihilates $X$ then $\psi$ annihilates
$I_X$, is equivalent to saying that if $\psi \in (L+L^*)^*$  annihilates
$X$ then $\psi$ annihilates $I_X$.   We will work with the latter statement below.  

We will be a bit sketchy in the places where we are following the ideas in \cite[Proposition 3.1]{Hay}
and Lemma \ref{gamlemma1} 
in a straightforward way. As in those proofs, $J_X$ is the  kernel of the completely contractive map $r_X : x
  \mapsto  [q,x, q^\perp]$ on $X$, so
  that this map factors through the quotient $X/J_X$:
  $$X \overset{Q}{\rightarrow} X/J_X \rightarrow I_X,$$
  where $Q$ is the natural quotient map.  Taking adjoints, we find that the associated map from $I_X^*$ to $X^*$, is given by
  $\varphi \mapsto \varphi \circ r_X,$ for each $\varphi \in I_X^*$.  Write $I_B =  \{ [q,x, q^\perp] \in B^{**} : x \in L + L^* \}$, and let $r : L+L^* \to I_B \subset B^{**}$ be the map $r(b) = P_1(b) = [q,\hat{b}, q^\perp]$. 
  Identifying $I_X^*$ with $I_B^*/I_X^{\perp}$ and $X^*$ with
 $(L+L^*)^*/X^\perp$, the map above  takes an element $\varphi +
  I_X^\perp$ to the element $\varphi \circ r + X^\perp$, for $\varphi \in I_B^*$.
As    before, it suffices to show
  that  $\varphi +
  I_X^\perp \to \varphi \circ r + X^\perp$ is
  a complete isometry from $I_B^*/I_X^{\perp}$ to $(L+L^*)^*/X^\perp$, where $\varphi \in I_B^*$.    
We
recall from Proposition \ref{cl01d}  that $(L+L^*)/D \cong   I_B =  \{ [q,b,q^\perp] : b \in L+L^* \}$,  where $D =  B^{**}_0(q) \cap B$  is the HSA supported by $q^\perp$.  Recall that $D$ is proximinal in $L+L^*$ by 
Theorem  \ref{prox}.     In $M_n(B^{**}) \cong M_n(B)^{**}$ we consider $r = q \otimes I_n$, a closed projection.
With respect to this projection
$\{ [b_{ij}] \in M_n(B) : [qb_{ij} q] = 0 \} = M_n(L+L^*)$,
and Proposition \ref{cl01d} holds at the matricial level.   For example, $$M_n(L+L^*)/M_n(D) \cong   I_{M_n(B)} = \{ [[q,b_{ij},q^\perp]] :  [b_{ij}] \in M_n(L+L^*) \}.$$   This uses the fact that $r^\perp  M_n(B^{**})  r^\perp
\cap M_n(B) = M_n(D)$.  
By Theorem \ref{prox}  and the identifications above, the hereditary subalgebra $M_n(D)$  is 
 proximinal in  $M_n(L+L^*)$.
Hence if $\| [[q, b_{ij} , q^\perp]] \| \leq 1$ for some $[b_{ij}] \in M_n(L+L^*)$ 
then by proximinality there exists $[a_{ij}] \in M_n(D)$ such that 
$$\| [[q, b_{ij} , q^\perp]] \| = \| [ [q, b_{ij} +a_{ij} , q^\perp ]] \| = \|[  b_{ij}+D ] \| = \| [b_{ij}+a_{ij} ] \|.$$ 
If $\psi \in (L+L^*)^*$ annihilates $X$, then by hypothesis $\psi \in (I_X)_\perp$.  Let $\tilde{\psi}$ be the canonical 
weak* continuous extension of $\psi$ to a weak* continuous
functional on $(L+L^*)^{**} = B^{**}_1(q) + B^{**}_0(q)$.   By the last centered equation we see that
$\| [\varphi_{kl} + \widetilde{\psi_{kl}} |_{I_B} ] \|_{M_m(I_B^*)}$ equals 
 $$\sup \{ \| [ \varphi_{kl}(r(b_{ij})) + \widetilde{\psi_{kl}} (r(b_{ij})) ] \|
: [b_{ij}] \in {\rm Ball}(M_n(L+L^*) \} .$$    For simplicity we assume $m = n = 1$ henceforth; once we 
have completed that case it will be easy to 
see that the matricial case is similar.   We have shown that 
$\| \varphi  + \tilde{\psi} |_{I_B} \|_{(I_B)^*} \leq
\| \varphi \circ r + \tilde{\psi} \circ r  \|_{(L+L^*)^*}$.   
Claim: $$\| \varphi \circ r + \tilde{\psi} \circ r \|_{(L+L^*)^*} \leq
\| \varphi \circ r + \psi \|_{(L+L^*)^*}
.$$
If this were the case then $\| \varphi  + \widetilde{\psi} |_{I_B}  \|_{(I_B)^*} \leq
\| \varphi \circ r+ \psi \|_{(L+L^*)^*}$.
Thus $\| \varphi +I_X^\perp  \| \leq \| \varphi \circ r + \psi \|_{(L+L^*)^*}$.   Taking an infimum we get
$\| \varphi +I_X^\perp \| \leq \| \varphi \circ r+  X^{\perp} \|$, 
so that  $\varphi +   I_X^\perp \to \varphi \circ r + X^\perp$ is
  an isometry  as desired.   Similarly, it is a complete isometry.  

We now prove the Claim.
A variant of the argument below would give an alternative proof of the last steps of \cite[Proposition 3.1]{Hay}
and of Proposition \ref{gamlemma2}.
Let $\rho = \varphi \circ r$.
The claim then says that $\| \rho + \tilde{\psi} \circ r \|_{(L+L^*)^*} \leq
\| \rho + \psi \|_{(L+L^*)^*}$.   The canonical weak* continuous extension of $\rho$ to $(L+L^*)^{**}$ is 
$\tilde{\varphi} \circ r^{**}$, since $\tilde{\varphi} (r^{**}(\hat{b})) = \varphi(r(b)) = \rho (b)$ for $b \in B$.
We are viewing $r^{**}$ as a map into $(I_B)^{**} \subset B^{****}$.
The canonical weak* continuous extension of $\tilde{\psi} \circ r$ to $B^{**}$ is  
 $\tilde{\psi} \circ P_1$, since $\tilde{\psi}(P_1(\hat{b}) ) = \tilde{\psi}(r(b))$ for $b \in B$.
Thus
$$\| \rho + \tilde{\psi} \circ r \|_{(L+L^*)^*}  = \|(\tilde{\varphi} \circ r^{**} + \tilde{\psi} \circ P_1) |_{(L+L^*)} \|_{(L+L^*)^*} 
\leq  \| \tilde{\varphi} \circ r^{**} + \tilde{\psi} \circ P_1 \| .$$
We next show that $r^{**}  = r^{**}  \circ P_1$, or equivalently that $r^{**}$ annihilates
$qB^{**} q + q^\perp B^{**} q^\perp$.   
Writing $i_X$ for the canonical map of a space into its bidual, by weak* density  we must have 
$r^{**} = [i_{B^{**}}(q), (i_B)^{**}(\cdot) , i_{B^{**}}(q^\perp)]$, since both sides agree on $B$.
Indeed $[i_{B^{**}}(q), (i_B)^{**}(i_B(b)) , i_{B^{**}}(q^\perp)]$ equals
$$[i_{B^{**}}(q),i_{B^{**}}(i_B(b))  , i_{B^{**}}(q^\perp)] 
= i_{B^{**}}([q, i_B(b), q^\perp]) = r^{**}(i_B(b)), \qquad b \in B .$$
Since $r^{**} = [i_{B^{**}}(q), (i_B)^{**}(\cdot) , (i_{B^{**}}(q))^\perp]$ and 
$$(i_B)^{**}(q \eta q) = (i_B)^{**}(q) (i_B)^{**}(\eta) (i_B)^{**}(q),$$ 
it is now clear that $r^{**}$ annihilates $qB^{**} q$.   Similarly, it annihilates $q^\perp B^{**} q^\perp$.

We now have: 
$$\| \rho + \tilde{\psi} \circ r \|_{(L+L^*)^*}  \leq 
 \| (\tilde{\varphi} \circ r^{**} + \tilde{\psi}) \circ P_1 \| 
\leq  \| \tilde{\varphi} \circ r^{**} + \tilde{\psi}  \|
= \| \rho + \psi \| ,$$
as desired (we are using the fact that $\tilde{\varphi} \circ r^{**} + \tilde{\psi}$ is the 
canonical weak* continuous extension of $\rho + \psi$, so has the same norm).
\end{proof}  

\begin{corollary} \label{coh}    Let $X$ be a closed subspace of a
  unital $C^*$-algebra
  $B$.  Let $q \in B^{**}$ be a closed projection  such that $qXq = (0)$ and such that if $\psi \in B^*$
annihilates
$X$, 
then $\psi$ annihilates $\{ [q,x, q^\perp] \in B^{**} : x \in X \}$.
If $x \in X$ with $\| [q,x, q^\perp] \| \leq 1$, and if $\epsilon > 0$ then there is an element $y \in X$ with
$[q,x, q^\perp]  = [q, y , q^\perp]$ and $\| y \| \leq 1 + \epsilon$.   
\end{corollary}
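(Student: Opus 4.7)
The plan is that this corollary should be essentially immediate from Theorem \ref{gamlemma2}, since the hypotheses of the corollary are verbatim those of the theorem. The theorem gives the isometric identification $X/J_X \cong I_X$ implemented by the map $x + J_X \mapsto [q,x,q^\perp]$, where $J_X = \{x \in X : [q,x,q^\perp] = 0\}$.

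First, I would take $x \in X$ with $\|[q,x,q^\perp]\| \leq 1$. Under the isometry from $X/J_X$ onto $I_X$, this translates directly to $\|x + J_X\|_{X/J_X} \leq 1$. Then, by the very definition of the quotient norm as an infimum, for any given $\epsilon > 0$ there exists some $z \in J_X$ such that $\|x - z\| < 1 + \epsilon$.

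Now I would set $y = x - z$. Since $X$ is a subspace and $z \in J_X \subset X$, we have $y \in X$. By linearity of the Peirce map $P_1(\cdot) = [q,\cdot,q^\perp]$ and the defining property of $J_X$,
\[
[q,y,q^\perp] = [q,x,q^\perp] - [q,z,q^\perp] = [q,x,q^\perp] - 0 = [q,x,q^\perp],
\]
while $\|y\| = \|x - z\| \leq 1 + \epsilon$. This $y$ satisfies the conclusion.

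There is no real obstacle here beyond invoking Theorem \ref{gamlemma2}; the entire content of the corollary is a restatement of the fact that the quotient norm on $X/J_X$ is attained up to $\epsilon$ by representatives in $X$, transported through the isometry of the theorem. (One could alternatively phrase the proof as: the hypothesis guarantees the quotient map $X \to I_X$ is an isometric surjection, hence a metric surjection, which is exactly the assertion that preimages of balls of norm $\leq 1$ meet any $\epsilon$-enlargement of the unit ball of $X$.)
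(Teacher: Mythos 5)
Your proposal is correct and follows essentially the same route as the paper's own proof: both invoke Theorem \ref{gamlemma2} to identify $\| [q,x,q^\perp] \|$ with the quotient norm $\inf \{ \| x + z \| : z \in J_X \}$, and then pick a near-optimal representative $y = x + z$ (your $x - z$ is the same thing up to a sign convention on $z$). Nothing is missing.
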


\begin{proof}  By  Theorem \ref{gamlemma2},  $X/J_X \cong \{ [q,x, q^\perp] \in B^{**} : x \in X \}$ isometrically, where
$$J_X = \{ x \in X : [q, x , q^\perp] = 0 \} = \{ x \in X : x = q^\perp x q^\perp \} .$$     Since $\| [q, x , q^\perp] \| = \inf \{ \| x + z \|
: z \in J_X \} \leq 1$, there exists $z \in J_X$ with $\| x + z \| \le 1 + \epsilon$.  
Set $y = x+z$.    \end{proof} 

The last result is saying that any element of $X$ whose off-diagonal corners in its $2 \times 2$ matrix 
form with respect to the projection $q$, have norm $\leq 1$, has the same off-diagonal corners as another
element of $X$ whose norm is close to $1$.

\begin{theorem} \label{supp}  Let $A$ be a closed  Jordan subalgebra of a $C^{\ast}$-algebra $B$. Suppose that a projection $p$ in $A^{\perp \perp}$ is open in $B^{\ast\ast}$. There exists a net $a_{t} \in {\rm Ball}(A)$ (even in $\frac{1}{2} {\mathfrak F}_A$), such that $p a_{t} p = a_{t}$ and $a_{t} \to p$ in the weak* topology.
Thus $D = \{ a \in A : pap = a \}$ is a hereditary subalgebra of $A$ with support projection
$p$, and $p$ is $A$-open in the sense of {\rm \cite{BWj}}.   
\end{theorem}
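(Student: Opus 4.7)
The plan is to combine Corollary \ref{weakHay} with Theorem \ref{gamlemma2}. First pass to $A^1 \subset B^1$ so that we have a unit; $p$ remains open in $(B^1)^{**}$, and $q := 1 - p$ lies in $(A^1)^{\perp\perp}$ since both $1$ and $p$ do. Applying Corollary \ref{weakHay} to $A^1 \subset B^1$ and $q$ produces a bounded doubly indexed net $(c_n^t) \subset A^1$ with $q c_n^t q = 0$, $\| q c_n^t p + p c_n^t q \| \leq 1/\sqrt{n}$, $c_n^t \to p$ weak*, and $\|c_n^t\| \to 1$.

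The main step is to zero out the remaining small off-diagonals. Consider the closed subspace $X := \{x \in A^1 : q x q = 0\}$ of $B^1$, which contains each $c_n^t$ and trivially satisfies $qXq = 0$. To apply Theorem \ref{gamlemma2} to $X$ and $q$ one must verify that any $\psi \in (B^1)^*$ annihilating $X$ also annihilates $I_X = \{[q,x,q^\perp] : x \in X\}$; equivalently, that $pXp \subset X^{\perp\perp}$. Proposition \ref{gamlemma1} applies to $A^1$ (via Lemma \ref{tfqin}, since $q \in (A^1)^{\perp\perp}$), giving an isometric quotient $A^1/X \cong qA^1q$. Taking biduals and identifying $(qA^1q)^{**}$ with its weak* closure $q(A^1)^{\perp\perp}q$ inside $(B^1)^{**}$ yields
\[ X^{\perp\perp} = \{ \eta \in (A^1)^{\perp\perp} : q\eta q = 0 \}. \]
Since $(A^1)^{\perp\perp}$ is a Jordan subalgebra of $(B^1)^{**}$ containing $p$, for $x \in X$ the Jordan identity yields $pxp = 2(p \circ x) \circ p - p \circ x \in (A^1)^{\perp\perp}$; and $q(pxp)q = 0$ is immediate, so $pxp \in X^{\perp\perp}$, as required.

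Theorem \ref{gamlemma2} now gives the isometric isomorphism $X / J_X \cong I_X$ via $x + J_X \mapsto [q,x,q^\perp]$, where $J_X = \{x \in A^1 : pxp = x\}$. Since $\|[q,c_n^t,q^\perp]\| \leq 1/\sqrt{n}$, for each $n,t$ one may select $d_n^t \in J_X$ with $\|c_n^t - d_n^t\| \leq 2/\sqrt{n}$; then $d_n^t \to p$ weak* and $pd_n^t p = d_n^t$. Writing $d_n^t = b_n^t + \mu_n^t 1$ with $b_n^t \in A$, the character of $A^1$ killing $A$ (whose weak* extension vanishes on $A^{\perp\perp} \ni p$) forces $\mu_n^t \to 0$; moreover, the equation $pb_n^t p - b_n^t = \mu_n^t(1-p)$ combined with $pb_n^t p \in A^{\perp\perp}$ forces $\mu_n^t = 0$ whenever $1 \notin A^{\perp\perp}$, so $d_n^t \in A$ automatically in that case. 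In the approximately unital case, a small Jordan-algebraic adjustment using a cai of $A$ produces a replacement net in $A$ with the same support property. Rescaling and passing to a subnet places the net in ${\rm Ball}(A)$, and a standard real-positivity trick (as in \cite{BWj}) refines it into $\frac{1}{2}\mathfrak{F}_A$. The set $D = \{a \in A : pap = a\}$ is then a Jordan inner ideal (for $a, c \in D$ and $b \in A$, $p(aba+cbc)p = aba+cbc$) containing this Jordan cai, hence a HSA with support projection $p$, which by definition means $p$ is $A$-open.

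The hardest part is verifying the annihilator hypothesis of Theorem \ref{gamlemma2}, which requires the bidual identification $X^{\perp\perp} = \{\eta \in (A^1)^{\perp\perp} : q\eta q = 0\}$ coming from Proposition \ref{gamlemma1} together with the Jordan structure on $(A^1)^{\perp\perp}$. The secondary technicality is extracting honest elements of $A$ from the constructed elements of $A^1$, which is immediate in the non-approximately-unital setting and requires only a Jordan manipulation with a cai in the approximately unital one.
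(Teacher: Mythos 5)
Your overall architecture matches the paper's: use Corollary \ref{weakHay} to produce a net in $A^1$ with vanishing $q$-corner and off-diagonal corners of size $O(1/\sqrt{n})$, then apply Theorem \ref{gamlemma2} to $X=\{x\in A^1: qxq=0\}$ to replace each term by a nearby element of $\{a: pap=a\}$, and finish with the HSA machinery of \cite{BWj}. Your reduction of the annihilator hypothesis of Theorem \ref{gamlemma2} to the single inclusion $pXp\subset X^{\perp\perp}$, and the use of the Jordan identity to see $pxp\in (A^1)^{\perp\perp}$, are correct and tidy. But there is a genuine gap at the sentence ``Taking biduals and identifying $(qA^1q)^{**}$ with its weak* closure $q(A^1)^{\perp\perp}q$ inside $(B^1)^{**}$ yields $X^{\perp\perp}=\{\eta\in(A^1)^{\perp\perp}:q\eta q=0\}$.'' That identification is not a formality: $qA^1q$ is a subspace of the dual space $(B^1)^{**}$, and the canonical map from the abstract bidual $(qA^1q)^{**}$ onto the weak* closure of $qA^1q$ in $(B^1)^{**}$ is in general only a weak* continuous contraction, not an isometric isomorphism. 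The equality $X^{\perp\perp}=\{\eta\in (A^1)^{\perp\perp}: q\eta q=0\}$ is precisely ``Claim 1'' in the paper's proof of this theorem, and it is where the bulk of the work lies: the paper establishes it through Claims 1--4, using that $A^{**}/X^{\perp\perp}$ is a unital operator space together with the metric characterization of unital operator spaces from \cite{BNmetric}. As written, your sentence assumes essentially what has to be proved.

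The inclusion you need can be reached without ever forming $(qA^1q)^{**}$: by Proposition \ref{gamlemma1} (via Lemma \ref{tfqin}) one has $\|a+X\|=\|qaq\|=\|a+(L+L^*)\|$, so the inclusion $A^1/X\to B^1/(L+L^*)$ is isometric; its bidual is the isometry $\eta+X^{\perp\perp}\mapsto \eta+(L+L^*)^{\perp\perp}$, and since $(L+L^*)^{\perp\perp}=\{w\in (B^1)^{**}: qwq=0\}$, any $\eta\in (A^1)^{\perp\perp}$ with $q\eta q=0$ is annihilated by this isometry and hence lies in $X^{\perp\perp}$. Some argument of this kind (or the paper's Claims 1--4) must be supplied; without it the appeal to Theorem \ref{gamlemma2} is unjustified. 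The remaining steps (extracting elements of $A$ from $A^1$, rescaling, upgrading to a net in $\frac{1}{2}{\mathfrak F}_A$) are workable but somewhat laborious; the paper's cleaner route is to note that the set $D=p(A^1)^{**}p\cap A^1=pA^{**}p\cap A$ is a HSA and then quote \cite{BWj} for a partial cai in $\frac{1}{2}{\mathfrak F}_A$ converging weak* to $p$, rather than massaging the approximating net itself.
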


\begin{proof}  We let $q = p^\perp$, a closed projection in $B^{**}$.
First assume that $A$ is a  Jordan unital-subalgebra of a unital  $C^{\ast}$-algebra $B$.  
 From Corollary \ref{weakHay} we have a net $a^t_{n} \in A$, such that such that $qa^t_{n}q = 0$, $a^t_{n} \to q^\perp$ in the weak* topology, and $$\|qa^t_{n} q^\perp + q^\perp a^t_{n}q \| \le \frac{1}{ \sqrt{n}},
\qquad n \in \Ndb.$$  Write $[q,A^{\ast\ast}  , q^\perp]$ for $\{ [q, \eta,  q^\perp] : \eta \in A^{\ast\ast} \}$.  
Claim 1: if we set
$$Z = A \cap ([q,A^{\ast\ast}, q^\perp ] + q^\perp A^{\ast\ast} q^\perp)
= \{ a \in A : qaq = 0 \},$$ then $$Z^{\perp\perp} = [q,A^{\ast\ast}, q^\perp ] 
+ q^\perp A^{\ast\ast} q^\perp = \{ \eta \in A^{**} : q \eta q = 0 \}.$$  
If Claim 1 were true, then if $\psi \in Z^\perp$ in $(L+L^*)^*$ then 
$\psi \in (Z^{\perp \perp})_\perp$.   In particular $\psi$ annihilates 
$[q,A^{\ast\ast}, q^\perp ]$, and also the space $I_Z$ in  Theorem \ref{gamlemma2}.
Thus  the hypotheses of Theorem 
\ref{gamlemma2} hold with $X$ there replaced by $Z$.  
In the notation of that theorem, $J_Z = \{ x \in Z : [q, x , q^\perp] = 0 \}
= \{ a \in A : a = p a p \}$, which we will see is the HSA $D$ in $A$ with support projection $p$.  
By that theorem,  
$$\{ a \in A : qaq = 0 \}/D \cong \{ [p , a , q ]  \in B^{**} : a \in A,  qaq = 0 \} $$
completely isometrically, via the map $a + D \mapsto  [p , a , q ]  \in B^{**}$.
By Corollary \ref{coh}, since 
$$\| [q, a^t_n , q^\perp] \| = \inf \{ \| a^t_n + x \|
: x \in J_Z \} \leq  \frac{1}{ \sqrt{n}} ,$$ 
 there exists $j^t_{n} \in A$ with $q^\perp j^t_{n} q^\perp = j^t_{n}$ such that $\|a^t_{n} - j^t_{n} \| \le \frac{1}{n}$.  Hence $j^t_{n} \rightarrow 1-q = p$ in the weak* topology as desired.   This net may not be contractive, however
note that $D = \{ a \in A : p a p = a \}$ is a hereditary subalgebra of $A$ by the definition
at the start of 
\cite[Section 
3]{BWj}.  Hence by facts at the start of 
\cite[Section 
3]{BWj} it has a 
partial cai $(e_t)$, even  in $\frac{1}{2} {\mathfrak F}_A$, and $(e_t)$ will have weak* limit $p$.

We now prove Claim 1.   By Lemma \ref{tfqin}, we have
$A^\perp \subset (qAq)_\perp$.   It follows from Proposition \ref{gamlemma1} that $A/Z \cong qAq$ completely
isometrically.  Thus we obtain 
a complete isometry $A^{**}/Z^{\perp \perp} \to (qAq)^{**}$.   Note that 
$q +Z^{\perp \perp}$ maps to $q$ under this isometry.   One way to see 
this is to note that by the first lines of the present proof, $q^\perp \in Z^{\perp \perp}$.
Clearly   $q + Z^{\perp \perp} = q + 1-q  +Z^{\perp \perp} = 1 +Z^{\perp \perp}$, and this element 
maps to $q$ under the isometry above.

Clearly $Z^{\perp \perp} \subset \{ \eta \in A^{**} : q \eta q = 0 \}$.     Since $A$ is unital, $qAq$ is a unital
operator space (a unital-subspace of 
the unital
operator space $qA^{**}q$), and hence so is $(qAq)^{**}$.    Thus $A^{**}/Z^{\perp \perp}$
is a unital
operator space.
Suppose that $\eta \in 
(1-q)A^{\ast\ast}(1-q)$ but $\eta \notin Z^{\perp \perp}$.    Suppose that $\| \eta \| \leq 1$.
Then $$\| q + i^n \eta + Z^{\perp \perp} \| \leq \| q + i^n \eta \| = 1 < \sqrt{1 + \| \eta + Z^{\perp \perp} \|},
\qquad n \in \Ndb.$$
This contradicts the characterization of unital operator spaces in \cite{BNmetric}, since we 
know $A^{**}/Z^{\perp \perp}$ is a unital operator space.
Thus $(1-q)A^{\ast\ast}(1-q) \subset  Z^{\perp \perp}$.  

Now suppose that $\eta = [q , y , q^\perp ]$ for $y \in A^{**}$ but $\eta \notin Z^{\perp \perp}$.    Suppose that $\| \eta \| \leq 1/2$.
Then $$\| q + i^n \eta + Z^{\perp \perp} \| \leq \inf \{ \| q + i^n \eta + z_1 \| : 
z_1 \in Z^{\perp \perp} \cap A_1^{**}(q) \},$$
where $A_1^{**}(q) = \{ [q , y , q^\perp ] : y \in A^{**} \}$.   Claim 2: for any projection
$q$ and any operator $z$ on a Hilbert space, we have 
$$\| q + [q,z,q^\perp] \|   \leq \sqrt{1 +  \|  [q,z,q^\perp]  \|^2} .$$
Indeed $$(q + [q,z,q^\perp] ) (q + [q,z,q^\perp] )^* = q +[q,z,q^\perp] [q,z,q^\perp]^*
\leq 1 + \|  [q,z,q^\perp]  \|^2 .$$
It follows that 
$$\| q + i^n \eta + Z^{\perp \perp} \| \leq \inf \{ \sqrt{1 +  \|  \eta + z_1 \|^2} : z_1  \in Z^{\perp \perp} \cap A_1^{**}(q) \} .$$
Since we may assume that $ \|  \eta + z_1 \| \leq 1$, the last inequality is also true with $\|  \eta + z_1 \|^2$ 
replaced by $\|  \eta + z_1 \|$.   We make 
Claim 3: 
$$\inf \{  \|  \eta + z_1 \|  : z_1  \in Z^{\perp \perp} \cap A_1^{**}(q) \} 
=  \inf \{  \|  \eta +  z \| : z \in  Z^{\perp \perp} \} .$$
Indeed $\leq$ is obvious here.   On the other hand,  write $z  \in  Z^{\perp \perp} \subset \{ \eta \in A^{**} : q \eta q = 0 \}$ as 
$z = r + s$ with $r = [q,z,q^\perp]$ and $s = q^\perp z  q^\perp.$   Since 
$q^\perp A^{**}  q^\perp \subset Z^{\perp \perp}$, we have $s  \in  Z^{\perp \perp}$, and hence 
$r = z-s  \in  Z^{\perp \perp} \cap A_1^{**}(q)$.      Since $\eta \in A_1^{**}(q)$ it is
easy to see that $\|  \eta +  z \| \geq \|  \eta + r \| \geq \inf \{  \|  \eta + z_1 \|  : z_1  \in Z^{\perp \perp} \cap A_1^{**}(q) \}$.   Taking the infimum over such $z$ yields Claim 3.   

We now have 
$$\| q + i^n \eta + Z^{\perp \perp} \| \leq \sqrt{1 + \inf  \{  \|  \eta +  z \| : z \in  Z^{\perp \perp} \} }
= \| \eta + Z^{\perp \perp} \| , 
\qquad n \in \Ndb.$$
 Again this contradicts the characterization of unital operator spaces in \cite{BNmetric}, similarly to the above.
   To rule out the equality 
case of that characterization note that 
 in a unital operator space $X$ we have Claim 4: 
$\max_{n \in \Ndb} \, \| 1 + i^n x \| = \sqrt{1 + \| x \|}$ if and  only if $x = 0$.
Indeed tracing through the proof of that characterization  from \cite{BNmetric} 
yields that $$\sqrt{1 + \| x \|} = 
\max \{ \Vert \left[ \begin{array}{ccl} 2 & 2 x \\
0 & 2 \end{array}
 \right] \Vert , \Vert \left[ \begin{array}{ccl} 0 & 0 \\ 2x & 0 \end{array}
 \right] \Vert \}.$$
From equation
(2.3) in that paper, we deduce that
$$1 + \| x \|  
= \frac{1}{2}(2 + \Vert x \Vert^2  + \Vert x \Vert
 \sqrt{\Vert x \Vert^2 + 4}).$$
Solving this gives $\| x \| = 0$ and Claim 4.

Thus $\eta \in  Z^{\perp \perp}$.    Hence for any $y \in  A^{**}$ with $q \eta q = 0$ we
 have $y = [q , y , q^\perp ] + q^\perp y q^\perp \in Z^{\perp \perp}$.   Thus
$Z^{\perp \perp} = \{ \eta \in A^{**} : q \eta q = 0 \}$, proving Claim 1.

 If $A$ is unital but $1_A$ is not an identity for $B$ then replace $B$ by $C = C^*(A)$.
Then $p$ being open in $B^{**}$ implies that $p$ is open in $C^{**}$,  so that $p$ is $A$-open in 
$A$.  If $A$ is not unital
then consider $A^1$ as a Jordan unital-subalgebra of $B^1$ where $B^1 =B$ if $B$ is unital.
Then $p$ being open in $B^{**}$ implies that $p$ is open in $(B^1)^{**}$, so that $p$ is $A^1$-open in 
$A^1$.    That is, $p (A^1)^{**} p \cap A^1 = p A^{**} p \cap A^1 = p A^{**} p \cap A$ is
a hereditary subalgebra in $A^1$ and hence also in $A$.    (Here we are using the
fact that $A^{\perp \perp} \cap A^1 = A$.)  So $p$ is $A$-open.  \end{proof}

\begin{remark}   (1) \ At the end of the proof we used `open-ness' with respect to $A^1$ or with respect to 
a containing $C^*$-algebra $B$ (where $A^1 \subset B^1$).   This 
raises a concern since $A^1$ is not uniquely defined up to
complete isometry \cite{BWj2}.    The same concern arises at several later points in the 
present paper where $A^1$ is used.   However note that for a projection $p \in A^{**}$ the proof above 
will yield that if (the image  $\pi^{**}(p)$ of) $p$ is open in $B^{**}$  for an isometric Jordan homomorphism 
$\pi : A \to B$ for a $C^*$-algebra $B$, then $\pi^{**}(p)$ is $\pi(A)$-open in the sense of {\rm \cite{BWj}},
and hence $p$ is $A$-open.    The converse is much easier, if $p$ is $A$-open then $\pi^{**}(p)$  is open in $B^{**}$.
We may also replace $B$ by $B^1$ in the last lines.   Indeed  $p$ is $A$-open  iff $p$ is $A^1$-open for any
(or for every) unitization $A^1$.    Thus although matrix norms (i.e.\ operator space structure)
were used in the last proof, they are not usually relevant to 
statements of theorems in  the
noncommutative topology of Jordan operator algebras.

\smallskip 

(2)  \ We do not know when $q^\perp A^{**} q^\perp + q A^{**} q$ is the weak* closure of its 
intersection with the canonical copy of 
$A$ in $A^{**}$, for a (non-central) projection $q$ in $A^{**}$ (or if there are any interesting cases 
when this happens).  
\end{remark}

\begin{corollary} \label{sup}  Let $A$ be a closed Jordan subalgebra of a $C^{*}$-algebra $B$.  A projection $p$ in $A^{\perp \perp}$ is open in Akemann's sense in $B^{**}$ if and only if $p$ is $A$-open in the sense of {\rm \cite{BWj}}.   
\end{corollary}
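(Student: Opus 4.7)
The plan is to handle the two directions separately. The forward direction (open in $B^{**}$ implies $A$-open) is already accomplished by Theorem \ref{supp}: that theorem produces a net $(a_t)$ in $\frac{1}{2} {\mathfrak F}_A$ with $p a_t p = a_t$ and $a_t \to p$ weak* in $A^{**}$, which is precisely the defining condition of $A$-openness recalled in the introduction. So the nontrivial content of the corollary is entirely the converse direction.

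For the converse, suppose $p$ is $A$-open and fix a net $(x_t) \subset A$ with $x_t = p x_t p$ and $x_t \to p$ weak* in $A^{**}$. Multiplying $x_t = p x_t p$ by $p$ on either side gives $p x_t = x_t = x_t p$, so each $x_t$ lies in $D_B := \{ b \in B : p b p = b \} = p B^{**} p \cap B$. I would check that $D_B$ is a $C^*$-subalgebra of $B$: it is selfadjoint because $p^* = p$, and closed under products because for $b, c \in D_B$ one has $p(bc)p = (bp)(pc) = bc$, using $bp = b$ and $pc = c$.

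Next, let $q \in B^{**}$ denote the support projection of $D_B$, which is automatically open in Akemann's sense. The containment $D_B \subset p B^{**} p$ yields $D_B^{\perp \perp} \subset p B^{**} p$, so $q = p q p \leq p$. On the other hand $(x_t) \subset D_B$ converges weak* to $p$ in $B^{**}$, since weak* convergence in $A^{**}$ passes to $B^{**}$ by restricting functionals in $B^*$ to $A$; hence $p \in D_B^{\perp \perp}$, forcing $p = q p q \leq q$. Therefore $p = q$, and $p$ is open in $B^{**}$.

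The main conceptual obstacle in the equivalence is locked inside Theorem \ref{supp}, which in turn rests on the $C^*$-algebraic proximinality machinery of Section 4. The converse direction above is essentially a bookkeeping argument showing that the $C^*$-algebra $p B^{**} p \cap B$ has $p$ as its support projection exactly when $A$ contains a weak* approximation of $p$ of the indicated form.
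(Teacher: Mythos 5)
Your proposal is correct. The forward direction is exactly the paper's: Corollary \ref{sup} is stated with no separate proof precisely because the hard implication (open in $B^{**}$ $\Rightarrow$ $A$-open) is the content of Theorem \ref{supp}, and the paper merely asserts in the Remark following that theorem that the converse is ``much easier.'' What you have done is supply that omitted converse, and your route is a clean one: you package the net $(x_t)$ with $x_t = p x_t p$ inside the $C^*$-subalgebra $D_B = pB^{**}p \cap B$, observe that the support projection $q$ of any $C^*$-subalgebra is open (its increasing cai is an increasing net of positives in $B$ converging weak* to $q$), and then squeeze $q = p$ from the two containments $D_B^{\perp\perp} \subset pB^{**}p$ and $p \in D_B^{\perp\perp}$. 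The paper's implicit alternative (the route suggested by Theorem \ref{bhn29} and the conventions inherited from \cite{BWj}) would instead pass through the partial cai of the HSA $D = pA^{**}p \cap A$ and the associated HSA $\overline{DBD^*}$ of $B$; your argument avoids any appeal to approximate identities or real positivity and works directly from the bare definition of $A$-openness, which is a mild simplification. One small point worth making explicit if you write this up: the identification of weak* convergence in $A^{**}$ with weak* convergence in $B^{**}$ uses the standard weak*-homeomorphic identification $A^{**} \cong A^{\perp\perp} \subset B^{**}$ coming from $A^* \cong B^*/A^{\perp}$, which you correctly invoke.
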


\begin{theorem} \label{bhn29}  Let $A$ be a closed Jordan subalgebra of a $C^{\ast}$-algebra $B$.  There is a bijective correspondence between HSA's in $A$  and HSA's in $B$ with support projection in $A^{\perp \perp}$.
This correspondence  takes a  HSA $D$  in $A$ to the HSA $\overline{DBD^*}$ in $B$.   The inverse bijection is simply 
intersecting with $A$.     
\end{theorem}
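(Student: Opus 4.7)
The plan is to run the bijection through support projections, using the bridge provided by Corollary \ref{sup}. Earlier in the paper HSAs in $A$ are characterized as the sets $\{a \in A : pap = a\}$ for $A$-open projections $p \in A^{\perp\perp}$ (via Theorem \ref{supp}), and HSAs in $B$ are analogously in bijection with open projections of $B^{**}$ via their support projections. Corollary \ref{sup} identifies the $A$-open projections in $A^{\perp\perp}$ with exactly those projections of $A^{\perp\perp}$ that are open in $B^{**}$ in Akemann's sense. Composing these three bijections gives a bijection between HSAs in $A$ and HSAs in $B$ whose support lies in $A^{\perp\perp}$, keyed by a common support projection $p$.

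To match the explicit formulas in the theorem I would argue as follows. Given a HSA $E$ in $B$ with support $p \in A^{\perp\perp}$, Corollary \ref{sup} makes $p$ an $A$-open projection, so by Theorem \ref{supp} the set $D := \{a \in A : pap = a\}$ is the HSA in $A$ with support $p$; combining $E = pB^{**}p \cap B$ with $A = A^{\perp\perp} \cap B$ immediately yields $E \cap A = \{a \in A : pap = a\} = D$. In the other direction, given a HSA $D$ in $A$ with support $p$, set $F := \overline{DBD^*}$. A short direct computation (using $(d_1 b_1 d_2^*)(d_3 b_2 d_4^*) = d_1 (b_1 d_2^* d_3 b_2) d_4^*$, the analogous rearrangement showing $F B F \subset F$, and $(d_1 b d_2^*)^* = d_2 b^* d_1^*$) shows $F$ is a norm-closed selfadjoint subalgebra of $B$ with $F B F \subset F$, hence a hereditary $C^*$-subalgebra of $B$. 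The inclusions $D, D^* \subset p B^{**} p$ give $F \subset p B^{**} p$, so the support projection of $F$ is at most $p$; conversely the J-cai $(e_t)$ of $D$ lies in $F$ and converges weak$^*$ to $p$, so the support is at least $p$. Hence $F$ has support exactly $p$ and is the HSA in $B$ matching $D$ under our bijection.

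The main technical obstacle has already been cleared by the time we reach this theorem: it is Corollary \ref{sup}, whose proof absorbs the proximinality and $L+L^*$ machinery developed in Theorems \ref{prox}, \ref{gamlemma2}, and \ref{supp}. Once that Jordan analogue of Hay's theorem is in hand, the present theorem reduces to the routine bookkeeping above. The only point deserving a moment of care is that the J-cai of $D$, by the partial-cai property from \cite{BWj}, acts as a genuine contractive approximate identity in any $C^*$-subalgebra of $B$ that it generates; this is what lets us pin down the support of $F = \overline{DBD^*}$ as exactly $p$ and so identify $F$ with $E$ without further work.
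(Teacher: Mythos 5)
Your proposal is correct and follows essentially the same route as the paper: one direction is exactly the paper's computation $D' \cap A = pB^{**}p \cap B \cap A = pA^{**}p\cap A = D$ via Theorem \ref{supp} and Corollary \ref{sup}, and the other pins down the support projection of $\overline{DBD^*}$ as $p$ using the partial cai, just as the paper does by taking double weak* limits of $e_s b e_t^*$. The only cosmetic difference is that you certify $\overline{DBD^*}$ is a hereditary $C^*$-subalgebra by direct algebra and locate its support via $D \subset \overline{DBD^*}$ (which itself rests on the norm limits $e_s d e_r^* \to d$ supplied by the partial-cai property you flag), whereas the paper obtains $(DBD^*)^{\perp\perp} = pB^{**}p$ in one stroke.
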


\begin{proof}  If $D'$ is a  HSA in $B$ with support projection $p$ in $A^{\perp \perp}$   then 
by Theorem \ref{supp} and its proof there is a partial cai $(e_t)$ for a HSA $D$ in $A$ with the same support projection
$p$.
Also
$$D' \cap A = p B^{**} p \cap B \cap A =  p A^{**} p \cap  A = D.$$

Conversely, if $D$ is a HSA in $A$  with partial cai $e_t \to p$ weak*, then $DBD^*$ and $(DBD^*)^{\perp \perp}$ are contained in $pB^{**} p$.   Conversely, since $e_s b e_t^* \in DBD^*$  for $b \in B$, in the double weak* limit we see that
$p B p \subset (DBD^*)^{\perp \perp}$.  hence also $pB^{**} p \subset (DBD^*)^{\perp \perp}$, so that
$pB^{**} p = (DBD^*)^{\perp \perp}$.
Thus $\overline{DBD^*}$ has the same support projection $p$ as $D$.  
\end{proof}

\begin{corollary} \label{sepqr}   Let $A$ be a closed approximately unital Jordan subalgebra of a $C^{\ast}$-algebra $B$. Suppose that a projection $p$ in $A^{\perp \perp}$ is open in $B^{\ast\ast}$, and
let  $D = \{ a \in A : pap = a \}$ be a hereditary subalgebra of $A$ with support projection
$p$.   If $q = p^\perp$ then
$$\{ a \in A : qaq = 0 \}/D \cong \{ [p , a , q ]  \in B^{**} : a \in A,  qaq = 0 \}$$
completely isometrically, via the map $a + D \mapsto  [p , a , q ]  \in B^{**}$.
 \end{corollary}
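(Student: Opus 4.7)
The plan is to observe that the desired complete isometry is essentially already established inside the proof of Theorem \ref{supp}, where it appears as an application of Theorem \ref{gamlemma2} to the subspace $Z := \{a \in A : qaq = 0\}$. The corollary amounts to extracting that intermediate conclusion and verifying that it is not disturbed by passing from the unital to the approximately unital setting.

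First I would reduce to the unital case by replacing $A$ and $B$ by their unitizations $A^1 \subset B^1$ when necessary. By the reduction at the end of the proof of Theorem \ref{supp}, $p$ remains open in $(B^1)^{**}$. Because $A$ is approximately unital, the identity of $A^{**}$ lies in $A^{\perp\perp}$ and $A^{\perp\perp} \cap A^1 = A$; moreover for $a \in A$ the expressions $qaq$ and $[p,a,q]$ are the same whether the complement of $p$ is computed inside $A^{**}$, $(A^1)^{**}$, $B^{**}$, or $(B^1)^{**}$. Consequently the sets $Z$ and $D$, the map $a \mapsto [p,a,q]$, and the image $I_Z := \{[p,a,q] : a \in Z\}$ are all unchanged under this unitization.

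Next I would verify the hypotheses of Theorem \ref{gamlemma2} with $X = Z$ inside the unital ambient algebra $B^1$. The condition $qZq = \{0\}$ is immediate from the definition of $Z$. The remaining hypothesis, that every $\psi \in (B^1)^*$ annihilating $Z$ also annihilates $I_Z$, is equivalent to the inclusion $I_Z \subset Z^{\perp\perp}$ in $(B^1)^{**}$, which is precisely the content of Claim 1 in the proof of Theorem \ref{supp}: combining Lemma \ref{tfqin}, Proposition \ref{gamlemma1}, and the characterization of unital operator spaces from \cite{BNmetric}, one obtains
\[
Z^{\perp\perp} = \{\eta \in (A^1)^{**} : q\eta q = 0\},
\]
which certainly contains $I_Z$. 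Theorem \ref{gamlemma2} then yields a complete isometry $Z/J_Z \cong I_Z$ via $a + J_Z \mapsto [p,a,q]$. Since
\[
J_Z = \{a \in A : qaq = 0 \text{ and } [p,a,q] = 0\} = \{a \in A : pap = a\} = D,
\]
this is exactly the stated isomorphism.

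The main obstacle is the verification of $I_Z \subset Z^{\perp\perp}$, i.e.\ Claim 1 from the proof of Theorem \ref{supp}. That step is the one that hides the deep input, combining the relatively easy Lemma \ref{tfqin} and Proposition \ref{gamlemma1} with the delicate unital-operator-space computations (the Claims 2--4 there) and the characterization from \cite{BNmetric}; the underlying $C^*$-proximinality muscle supplied by Theorem \ref{prox} is then consumed inside Theorem \ref{gamlemma2}. Once that inclusion is in hand, everything else is bookkeeping about how unitization affects the relevant sets.
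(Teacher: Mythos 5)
Your proposal is correct and follows essentially the same route as the paper: the authors likewise observe that the isomorphism is exactly what is established in the lines following Claim 1 in the proof of Theorem \ref{supp} (applying Theorem \ref{gamlemma2} to $Z=\{a\in A: qaq=0\}$, with Claim 1 supplying the annihilation hypothesis and $J_Z=D$), and they reduce the general case to the unital one by noting, as you do, that $qaq$, $D$, and $[p,a,q]$ are unaffected by passing to unitizations. Nothing further is needed.
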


\begin{proof}      We proved this   in  
the several lines after the statement of Claim 1 in Theorem \ref{supp} 
if  $A$ is a  Jordan unital-subalgebra of a unital  $C^{\ast}$-algebra $B$.
If the latter is not the case  then we can reduce to this case by the argument in the last paragraph of the proof
of  Theorem \ref{supp} (exercise).   Indeed note that the 
quantities $(1-p)a(1-p), D,$ and $[p , a , 1-p ]$ are unaffected by replacing $A$ or $B$ with unitizations.
 \end{proof}  

\section{Some distance  limit formulae}    
\begin{lemma} \label{decrin} In a $C^*$-algebra, if $\| b \| \leq 1$ and $\| ab  \| \leq \eps$
and $1 \geq a \geq c \geq 0$ then $\| c b \| \leq \sqrt{\eps}$.  A similar result holds 
with $ab$ replaced by $ba$.  \end{lemma}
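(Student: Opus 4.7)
My plan is to reduce the statement to a computation with positive elements by exploiting the $C^{\ast}$-identity $\|cb\|^{2} = \|b^{\ast} c^{2} b\|$, and then chain a sequence of operator inequalities to bring $a$ and $ab$ into play. The key observation is that $c$ is not only dominated by $a$ but is itself a contraction, and combining these two facts is what produces a \emph{square root} of $\eps$ rather than a worse bound.

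First I would write $\|cb\|^{2} = \|b^{\ast} c^{2} b\|$ using that $c = c^{\ast}$. Because $0 \le c \le 1$, the element $c - c^{2} = c^{1/2}(1-c)c^{1/2}$ is positive, so $c^{2} \le c$; and because $c \le a$, conjugation by $b$ preserves the order, giving $b^{\ast} c^{2} b \le b^{\ast} c b \le b^{\ast} a b$. Passing to norms of positive elements, $\|cb\|^{2} \le \|b^{\ast} a b\|$.

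Next I would estimate $\|b^{\ast} a b\|$ directly from the hypothesis $\|ab\| \le \eps$. Writing $b^{\ast} a b = b^{\ast}(ab)$ and applying submultiplicativity of the norm together with $\|b^{\ast}\| = \|b\| \le 1$ gives $\|b^{\ast} a b\| \le \|b^{\ast}\|\,\|ab\| \le \eps$. Combining with the previous paragraph yields $\|cb\|^{2} \le \eps$, i.e.\ $\|cb\| \le \sqrt{\eps}$, as required. The analogous case with $ba$ replaced by $ab$ is identical after replacing $\|cb\|^{2} = \|b^{\ast} c^{2} b\|$ with $\|bc\|^{2} = \|b c^{2} b^{\ast}\|$ and using $b c^{2} b^{\ast} \le b a b^{\ast}$ and $\|b a b^{\ast}\| \le \|ba\|\,\|b^{\ast}\| \le \eps$.

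There is no real obstacle here; the one point to notice is that one must avoid the temptation to use $c^{2} \le a^{2}$ (which fails in general for non-commuting positive operators), and instead exploit $c \le 1$ separately from $c \le a$ to reduce $c^{2}$ to $c$ before invoking domination by $a$. The loss of a square root is intrinsic and comes from the single application of the $C^{\ast}$-identity at the beginning.
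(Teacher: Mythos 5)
Your proof is correct and is essentially the paper's own argument: the paper proves the lemma by exactly the chain $\| b^* c^2 b \| \le \| b^* c b \| \le \| b^* a b \| \le \| ab \| \le \eps$, and your write-up just supplies the justifications (using $c \le 1$ to get $c^2 \le c$, order-preservation under conjugation, and submultiplicativity) that the paper leaves implicit. No differences worth noting.
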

\begin{proof}  This is because $\| b^* c^2 b \|  \leq 
\| b^* c b \| \leq \| b^* a b \| \leq \| ab  \| \leq \eps.$
 \end{proof}  

\begin{lemma}\label{incai}  Let $p$ be a open projection in $B^{\ast\ast}$ for a  $C^{\ast}$-algebra $B$.   
\begin{itemize} \item  [(1)] Suppose that $(f_t)_{t \in \Lambda}$ 
is an increasing  net in $B_+$ with weak* limit $p$.
 Then $(f_t)$ is a cai for the hereditary subalgebra of $B$ with support projection $p$.
\item  [(2)]  Suppose that $(e_t)_{t \in \Lambda}$ 
is a net in $\frac{1}{2}{\mathfrak F}_B$ with ${\rm Re} \, e_s \leq {\rm Re} \, e_t$ if $s \leq t$, 
and $e_t \to p$ weak*.   Then $(e_t)$ is a cai for the hereditary subalgebra of $B$ with support projection $p$.
\end{itemize} 
 \end{lemma}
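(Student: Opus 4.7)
The plan is to establish (1) first and then derive (2) from it by passing to real parts.

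For (1), the key initial observation is that monotonicity forces $f_t \leq p$ in $B^{**}$: for $s \leq t$ we have $f_t - f_s \geq 0$, and taking the weak$^*$ limit in $t$ yields $p - f_s \geq 0$. Hence each $f_t$ lies in the HSA $D := pB^{**}p \cap B$ (and $\|f_t\| \leq 1$), so we are really dealing with a bounded increasing net in $D_+$ with weak$^*$ limit $1_{D^{**}} = p$. For $d \in D_+$ (which suffices, since $D$ is linearly spanned by its positive part), the identity $pd = d = dp$ combines with $(1-f_t)^2 \leq 1-f_t$ (from $0 \leq f_t \leq 1$) to give
\[\|f_t d - d\|^2 = \|d(1-f_t)^2 d\| \leq \|d^2 - d f_t d\|,\]
and the net $(d^2 - df_t d)$ is bounded, decreasing, lies in $D_+$, and has weak$^*$ limit $d^2 - dpd = 0$. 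One then invokes the Dini-type fact for $C^*$-algebras that such a bounded decreasing net in $D_+$ with weak$^*$ limit $0$ in $D^{**}$ must converge to $0$ in norm; the one-sided limit $\|df_t - d\| \to 0$ is obtained symmetrically (apply the argument to $d^*$).

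For (2), I would pass to real parts and apply (1). From $e_t \in \frac{1}{2}{\mathfrak F}_B$, that is, $\|1 - 2e_t\| \leq 1$, one obtains both
\[e_t^* e_t \leq \tfrac{1}{2}(e_t + e_t^*) = \mathrm{Re}\, e_t \quad \text{and} \quad e_t e_t^* \leq \mathrm{Re}\, e_t,\]
the second via the $C^*$-identity $\|(1-2e_t)(1-2e_t)^*\| = \|(1-2e_t)^*(1-2e_t)\| \leq 1$. In particular $\mathrm{Re}\, e_t \in B_+$, the net $(\mathrm{Re}\, e_t)$ is increasing by hypothesis, and $\mathrm{Re}\, e_t \to \mathrm{Re}\, p = p$ weak$^*$; so Part (1) applies and $(\mathrm{Re}\, e_t)$ is a cai for $D$. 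For $d \in D$,
\[\|e_t d - d\|^2 = \|d^*(1 - 2\mathrm{Re}\, e_t + e_t^* e_t)d\| \leq \|d^*(1 - \mathrm{Re}\, e_t)d\| \to 0\]
by the cai property (using $e_t^* e_t \leq \mathrm{Re}\, e_t$). The analogous estimate for $\|de_t - d\|$ uses $e_t e_t^* \leq \mathrm{Re}\, e_t$ in the expansion of $(1-e_t)(1-e_t)^*$.

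The main obstacle is the Dini-type fact invoked in (1). A direct proof goes by contradiction: if $\|c_t\| \not\to 0$, monotonicity gives $\|c_t\| \geq \epsilon > 0$ for all $t$; pick states $\phi_t$ of $D$ with $\phi_t(c_t) \geq \epsilon/2$, and let $\phi$ be a weak$^*$-cluster point of $(\phi_t)$ (using Banach--Alaoglu in $\mathrm{Ball}(D^*)$). For any fixed $s$, the monotonicity relation $c_s \geq c_t$ for $t \geq s$ gives $\phi_t(c_s) \geq \phi_t(c_t) \geq \epsilon/2$ whenever $t \geq s$, and passing to the cluster limit yields $\phi(c_s) \geq \epsilon/2$ for every $s$, contradicting the weak$^*$ hypothesis $\phi(c_s) \to 0$. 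Everything else in the proof is routine $C^*$-manipulation.
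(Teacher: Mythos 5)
Your proof is correct. Part (2) coincides with the paper's argument: both pass to $f_t = \mathrm{Re}\, e_t$, apply part (1), and then use $e_t^* e_t \leq \mathrm{Re}\, e_t$ (and its adjoint version from $\|(1-2e_t)(1-2e_t)^*\| \leq 1$) to transfer the cai property from $(\mathrm{Re}\, e_t)$ to $(e_t)$. In part (1) you take a genuinely different route at the one nontrivial step, namely upgrading the weak* convergence of $a^*(1-f_t)a$ to norm convergence. The paper invokes Mazur's theorem: a convex combination $f$ of the $f_t$ satisfies $\|a^*(1-f)a\| \leq \epsilon$, and then monotonicity gives $a^*(1-f_t)a \leq a^*(1-f)a$ for all $t$ dominating the finitely many indices occurring in $f$. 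You instead isolate a Dini-type lemma --- a bounded decreasing net in $D_+$ with weak* limit $0$ is norm-null --- and prove it by a state/cluster-point contradiction. Both devices are standard and both exploit the monotonicity in an essential way; Mazur's theorem makes for a shorter write-up, while your Dini lemma is a reusable general principle that avoids manipulating convex combinations, and your verification of it (norms of positive decreasing nets decrease, states attain the norm, cluster points in ${\rm Ball}(D^*)$) is complete. Two minor remarks: your observation that $f_t \leq p$, so that $f_t \in D$ and $\|f_t\| \leq 1$, is left implicit in the paper but is worth making explicit as you do, since $(1-f_t)^2 \leq 1-f_t$ requires it; and your reduction to $d \in D_+$ is unnecessary, since the identity $\|(1-f_t)d\|^2 = \|d^*(1-f_t)^2 d\|$ works for arbitrary $d \in D$, which is how the paper handles both one-sided limits at once.
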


\begin{proof}    Item  (1) is no doubt well known, but since we do not recall where it may be found we give a short proof.
Let $D$ be  the hereditary subalgebra of $B$ with support projection $p$.  
Given $F = \{ a_1, \cdots , a_n \} \subset D$ and $\eps > 0$, since $a^* (1-f_t) a \to 0$ weakly for each $a \in F$, by Mazur's theorem there exists a
convex combination $f = \sum_{k=1}^m \, \alpha_k \, f_{t_k}$ with $\| a^* (1-f) a \| \leq \eps$.  Choose $t_F^{\eps}$
dominating $t_1, \cdots, t_n$, then if $t \geq t_F^{\eps}$ we have $f_t \geq  \sum_{k=1}^m \, \alpha_k f_{t_k} = f$,
and $$\| a^* (1-f_t)^2 a \| \leq \| a^* (1-f_t) a \| \leq \| a^* (1-f) a \| \leq \eps .$$
It follows that $(f_t)$ is a left cai for $D$, and it is also a right cai by taking adjoints.

For (2),  let $f_t = {\rm Re} \, e_t$, then $f_t \nearrow p$ weak* too, so that  $(f_t)$ is a cai for $D$ by (1). 
That $e_t \in \frac{1}{2}{\mathfrak F}_B$ implies that
$e_t^* e_t \leq  \frac{1}{2}(e_t + e_t^*) = f_t$.  Then $$a^* (1-e_t)^* (1-e_t) a =
a^* (1-e_t^* - e_t + e_t^* e_t) a \leq a^*  (1 - f_t) a \to 0 .$$
It follows that $(e_t)$ is a left cai for $D$, and a symmetric argument 
shows that it is also a right cai. 
\end{proof}  

The last lemma is closely related the next results, but we gave a proof of it separately since it seems of independent interest.

\begin{theorem} \label{netprox}
Let $p$ be an open projection in $B^{\ast\ast}$ for a  $C^{\ast}$-algebra $B$.    Suppose that $(f_t)_{t \in \Lambda}$ 
is an increasing  cai for the hereditary subalgebra $D$ of $B$ with support projection $p$.   Let $q = p^\perp$. 
 If $x \in B$ with $qxq = 0$ then $(1-f_t) x (1-f_t) \to 0$.    
 \end{theorem}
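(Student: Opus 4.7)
The plan is to exploit the fact that, since $qxq = 0$, the element $x$ lives in $L + L^{*}$ where $L$ is the closed left ideal of $B$ supported by $p$ (that is, $L^{\perp\perp} = B^{**}p$, or equivalently $L = \{b \in B : b = bp \text{ in } B^{**}\}$). This identification $\{b \in B : qbq = 0\} = L + L^{*}$ was already invoked several times earlier in the paper (see the discussion preceding Proposition~\ref{cl01d}). Thus I would begin by writing $x = \ell_1 + \ell_2^{*}$ for some $\ell_1, \ell_2 \in L$.

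The second step is to verify that $(f_t)$, being an increasing cai for $D = L \cap L^{*}$, is automatically a right cai for $L$. For any $\ell \in L$, the fact that $\ell p = \ell$ gives $p\ell^{*}\ell p = \ell^{*}\ell$, so $\ell^{*}\ell \in pB^{**}p \cap B = D$. Hence
\[
\|\ell(1-f_t)\|^{2} = \|(1-f_t)\ell^{*}\ell(1-f_t)\| \longrightarrow 0
\]
because $(f_t)$ is a (two-sided) approximate identity for $D$ applied to the element $\ell^{*}\ell \in D$. Taking adjoints shows that $(f_t)$ is also a left cai for $L^{*}$, so $(1-f_t)\ell_2^{*} \to 0$ in norm.

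For the conclusion, simply estimate
\[
\|(1-f_t)x(1-f_t)\| \leq \|1-f_t\|\,\|\ell_1(1-f_t)\| + \|(1-f_t)\ell_2^{*}\|\,\|1-f_t\|.
\]
Since $\|f_t\| \leq 1$ the factor $\|1-f_t\|$ is uniformly bounded (by $2$), while both $\|\ell_1(1-f_t)\|$ and $\|(1-f_t)\ell_2^{*}\|$ tend to $0$ by the previous paragraph. This forces $(1-f_t)x(1-f_t) \to 0$ in norm, as required.

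The only conceptual point that needs care is the decomposition $x = \ell_1 + \ell_2^{*}$; I would probably just cite the earlier appearance of the identity $\{b \in B : qbq = 0\} = L + L^{*}$ in the paper (derived from $(L+L^{*})^{\perp\perp} = B^{**}q^{\perp} + q^{\perp}B^{**}$). The remainder is a standard right-cai argument for left ideals, which is the main reason the theorem is truly a lemma despite the suggestive statement about two-sided shrinking of an element living off the corner of $p$.
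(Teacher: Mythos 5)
Your proof is correct, and it takes a genuinely different --- and considerably shorter --- route than the paper's. Both arguments ultimately rest on the same nontrivial input, namely that $L+L^*$ is norm closed, so that $\{b \in B : qbq=0\} = L+L^*$ with both summands realized inside $B$ (the Brown/Combes fact the paper invokes in the proof of Proposition \ref{gamlemma1} and again, via Proposition \ref{cl01d}, in its own proof of this theorem). But where you use that fact once, to decompose $x=\ell_1+\ell_2^*$ with $\ell_1,\ell_2\in L$, and then run the standard observation that a cai for the hereditary subalgebra $D=L\cap L^*$ is automatically a right cai for $L$ (via $\Vert \ell(1-f_t)\Vert^2=\Vert (1-f_t)\ell^*\ell(1-f_t)\Vert$ and $\ell^*\ell\in D$), the paper instead splits $x$ into its Peirce corners $pxp$ and $[p,x,q]$ in $B^{**}$, controls the off-diagonal corner by a Mazur's-theorem/weak*-convergence argument combined with the monotonicity trick of Lemma \ref{decrin}, and controls the diagonal corner by producing an approximating element of $D$ from Proposition \ref{cl01d}. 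Your route buys two things: it is more elementary once the closedness of $L+L^*$ is granted, and it never uses the hypothesis that $(f_t)$ is increasing, so it actually establishes the conclusion for an arbitrary cai of $D$. The only step to state with care is precisely that $\ell_1,\ell_2$ may be chosen in $B$ rather than merely in $B^{**}$ (multiplying by $p$ and $q$ does not do this); that is exactly what the closedness of $L+L^*$ provides, and your citation of the identity $\{b\in B: qbq=0\}=L+L^*$ covers it.
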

\begin{proof}     
   We have $f_t \to p$ weak*.    Represent
$B$ nondegenerately on a Hilbert space $H$ in such a way that $B^{**}$ is a von Neumann 
algebra on $H$ and equal to the second commutant of $B$ (eg.\ the universal representation).   Then
$p$ becomes a projection on $H$, and is the WOT limit of $(e_t)$.    Since $D$ is
represented nondegenerately on $pH$ it follows that $f_t \to p$ SOT on $pH$ and hence also on $H$. 
Fix $x \in {\rm Ball}(B)$ with
$qxq = 0$, we have $(1-f_t) x (1-f_t) \to 0$ strongly, hence weak*.
Thus by Mazur's theorem as in the proof of Lemma \ref{incai},  given $\eps > 0$
there is convex combination $\sum_{k=1}^m \, \alpha_k \, (1-f_{t_k})x (1-f_{t_k})$ of norm $\leq \epsilon$.
Multiplying on the left by $q$ we get $\| q x (1-f) \| \leq \epsilon$ where
$f = \sum_{k=1}^m \, \alpha_k f_{t_k}$.   Choose $t_{\eps}$
dominating $t_1, \cdots, t_n$, then if $t \geq t_{\eps}$ we have $f_t \geq  \sum_{k=1}^m \, \alpha_k f_{t_k} = f$.

By  Lemma \ref{decrin}, since $\| q x (1-f) \| \leq \epsilon$, we deduce that $\| q x (1- f_t) \| \leq \sqrt{\eps}$ for $t \geq t_{\eps}$.
Similarly  there exists $t'_{\eps}$, with
$\| (1-f_t) x q \| \leq \sqrt{\eps}$ for $t \geq t_{\eps}$.  We may assume $t_{\eps} = t'_{\eps}$.

Write $x = x_0 + x_1$ where $x_0 = pxp$ and $x_1 = [p,x,q]$.   Then 
$$(1-f_t) x (1-f_t) = (1-f_t) x_0 (1-f_t) + (1-f_t) x_1 (1-f_t),$$ and 
$(1-f_t) x_0 (1-f_t) \in pB^{**}p$ and $(1-f_t) x_1 (1-f_t) \in B^{**}_1(q)$.  
We have
$$\| (1-f_t) x_1 (1-f_t) \| = \| (1-f_t) (q x + xq) (1-f_t) \| 
\leq 2 \sqrt{\eps} , \qquad t\geq t_{\eps} .$$  
Setting $z = (1-f_t) x (1-f_t) \in B$ we have $qzq = qxq = 0$, so that $z \in L+L^*$ in the notation 
seen in the second paragraph of the proof of Proposition \ref{gamlemma1}.
  Also $$[p,z,q] = p(1-f_t) x_1 (1-f_t)q + q (1-f_t) x_1 (1-f_t) p = (1-f_t) (px_1q + q x_1 p)(1-f_t)$$
which is simply $(1-f_t) x_1 (1-f_t) .$
Thus $\| [p,z,q]  \| \leq 2 \sqrt{\eps}$.   By Proposition \ref{cl01d}   there exists 
$d \in D$ with $\| z + d \| \leq 3 \sqrt{\eps}$.  Pre- and post-multiplying by $p$ we obtain
$\| (1-f_t) x_0 (1-f_t) + d \| \leq 3 \sqrt{\eps}$, for all $t \geq t_{\eps}$.   By Lemma
\ref{incai} there exists $t'_{\eps}$, with
$\| d (1-f_s)  \| \leq \sqrt{\eps}$ for $s \geq t_{\eps}$.  We may assume $t_{\eps} = t'_{\eps}$.  Thus for $s, t \geq t_{\eps}$ we have
$$\| (1-f_t) x_0 (1-f_t) (1-f_s) \|
\leq  \| (1-f_t) x_0 (1-f_t) (1-f_s) + d  (1-f_s) \| + \sqrt{\eps}  \leq 4 \sqrt{\eps}.$$ 
By Lemma \ref{decrin}  we obtain 
$$\| (1-f_s) x_0 (1-f_t) (1-f_s) \|
\leq  2 \epsilon^{\frac{1}{4}} , \qquad s \geq t \geq t_{\eps}.$$ 
Setting $t = t_{\eps}$, there exists $\lambda_{\eps} \geq t_{\eps}$ with
$\| f_{t_{\eps}} (1-f_s)  \| \leq \eps^{\frac{1}{4}}$ for $s \geq \lambda_{\eps}$.  
Thus
$$\|  (1-f_s) x_0  (1-f_s) \| \leq \| (1-f_s) x_0 (1-f_{t_{\eps}}) (1-f_s) \| +  \| (1-f_s) x_0 f_{t_{\eps}} (1-f_s) \|
\leq 3 \eps^{\frac{1}{4}} ,$$ for $s \geq \lambda_{\eps}.$ 
We have already seen that 
$$\| (1-f_t) x_1 (1-f_t) \| = \| (1-f_t) (q x + xq) (1-f_t) \| 
\leq 2 \sqrt{\eps} , \qquad t\geq t_{\eps} .$$  
Thus we have 
$$\| (1-f_t) x (1-f_t) \| = \| (1-f_t) (q x + xq) (1-f_t) \| 
\leq 3 \eps^{\frac{1}{4}} + 2 \sqrt{\eps} , \qquad t\geq \lambda_{\eps} .$$  
This means that $(1-f_t) x (1-f_t) \to 0$ in norm.  
\end{proof}

\begin{corollary} \label{coofwasl}  
Let $q$ be a closed projection in $B^{\ast\ast}$ for a  $C^{\ast}$-algebra $B$.    Suppose that $(e_t)_{t \in \Lambda}$ 
is a net in $\frac{1}{2}{\mathfrak F}_B$ with ${\rm Re} \, e_s \leq {\rm Re} \, e_t$ if $s \leq t$, 
and $e_t \to q^\perp$ weak*.   If $x \in B$ with
$qxq = 0$ then $(1-e_t) x (1-e_t) \to 0$.
 \end{corollary}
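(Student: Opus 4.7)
The plan is to reduce the claim to Theorem~\ref{netprox} by comparing $1-e_t$ with $(1-f_t)^{1/2}$, where $f_t = \mathrm{Re}\, e_t$. Let $p = q^\perp$ and let $D$ denote the hereditary subalgebra of $B$ supported by $p$. Since $e_t \in \tfrac12\mathfrak{F}_B$, the real parts $f_t$ form an increasing net of positive contractions with weak* limit $p$, so by Lemma~\ref{incai}(1) they form a cai for $D$. Recall (as noted in the proof of Lemma~\ref{incai}(2)) that $e_t \in \tfrac12\mathfrak{F}_B$ also forces $e_t^* e_t \le f_t$, and symmetrically $e_t e_t^* \le f_t$.

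These inequalities yield the two operator inequalities
\[
(1-e_t)^*(1-e_t) = 1 - 2f_t + e_t^* e_t \le 1 - f_t, \qquad (1-e_t)(1-e_t)^* \le 1-f_t.
\]
Standard $C^*$-manipulations then give $\|(1-e_t)y\|^2 \le \|y^*(1-f_t)y\|$ and $\|z(1-e_t)\|^2 \le \|z(1-f_t)z^*\|$ for any operators $y,z$. Choosing first $z = (1-e_t)x$ and then $y = x(1-f_t)^{1/2}$ produces the sandwich
\[
\|(1-e_t) x (1-e_t)\| \;\le\; \|(1-e_t) x (1-f_t)^{1/2}\| \;\le\; \|(1-f_t)^{1/2} x (1-f_t)^{1/2}\|.
\]

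Finally, set $h_t = 1 - (1-f_t)^{1/2}$. Since $f_t \nearrow p$ strongly, continuous functional calculus applied monotonically gives $(1-f_t)^{1/2} \searrow (1-p)^{1/2} = q$ strongly, so $(h_t)$ is an increasing net of positive contractions with $h_t \to p$ weak*. By Lemma~\ref{incai}(1), $(h_t)$ is also a cai for $D$, and Theorem~\ref{netprox} applied to this cai and to our element $x$ (which satisfies $qxq = 0$) yields
\[
\|(1-f_t)^{1/2} x (1-f_t)^{1/2}\| = \|(1-h_t) x (1-h_t)\| \longrightarrow 0.
\]
Combined with the sandwich above, this gives $\|(1-e_t)x(1-e_t)\| \to 0$, as required.

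The main conceptual step is finding the correct domination: although $(e_t)$ itself is not positive or monotone in a form that Theorem~\ref{netprox} can digest, the defining inequalities $e_t^* e_t, e_t e_t^* \le f_t$ of $\tfrac12\mathfrak{F}_B$ let us squeeze $(1-e_t)x(1-e_t)$ between two copies of the positive contraction $(1-f_t)^{1/2}$, and then passing to the genuinely increasing positive cai $h_t = 1 - (1-f_t)^{1/2}$ brings us into the scope of Theorem~\ref{netprox}; the remaining work is bookkeeping.
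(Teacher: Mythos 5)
Your proof is correct and follows essentially the same route as the paper: both arguments hinge on Theorem \ref{netprox} together with the inequalities $(1-e_t)^*(1-e_t)\le 1-f_t$ and $(1-e_t)(1-e_t)^*\le 1-f_t$ that come from $e_t\in\frac{1}{2}{\mathfrak F}_B$. The only (cosmetic) difference is that the paper applies Theorem \ref{netprox} directly to the increasing cai $(f_t)$ and then bootstraps twice to replace each factor $1-f_t$ by $1-e_t$, whereas you apply it to the auxiliary increasing cai $1-(1-f_t)^{1/2}$, at the small extra cost of checking operator monotonicity of the square root and the strong convergence $(1-f_t)^{1/2}\to q$.
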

\begin{proof}    
  Let $f_t = {\rm Re} \, e_t$, then $f_t \nearrow p= q^\perp$ weak*.   By the previous result,
$(1-f_t) x (1-f_t) \to 0$ in norm.   Hence as in the last lines of Lemma \ref{incai},
 $$(1-f_t)^* x^* (1-e_t)^*(1-e_t) x (1-f_t) 
\leq (1-f_t)^* x^* (1-f_t) x (1-f_t) \to 0 .$$
Thus $(1-e_t) x (1-f_t)  \to 0$.   Hence, and by a similar argument,
$$(1-e_t) x (1-e_t)(1-e_t)^* x^* (1-e_t)
\leq (1-e_t) x (1-f_t) x^* (1-e_t) \to 0 .$$
Thus $(1-e_t) x (1-e_t) \to 0$.  
 \end{proof}

In connection with the next result we note that it is known that if $J$ is a closed right ideal in a $C^*$-algebra $B$
and $a \in B$ then the distance from $a$ to $J$ equals $\lim_t \| (1-f_t) a \|$ where $(f_t)$ is a left approximate
identity 
for $J$ (or equivalently, an approximate
identity 
for the  hereditary subalgebra  $J \cap J^*$).   See e.g.\ \cite[Lemma 3.12]{Lin}.   We give some variants of this fact:
 
\begin{corollary} \label{qnf} 
Let $p$ be an open projection in $B^{\ast\ast}$ for a  $C^{\ast}$-algebra $B$,
and let $q = p^\perp$ in $B^{**}$.  
\begin{itemize} \item  [(1)] Suppose that $(f_t)_{t \in \Lambda}$
is an increasing cai for the hereditary subalgebra of $B$ with support projection $p$.
If $I = \{ b \in B : qbq = 0 \}$ and if $x \in B$ then
the distance from $x$ to $I$
equals $\lim_t \, \| (1-f_t) x (1-f_t) \| = \| q x q \|$.
\item  [(2)]  Suppose that $A$ is a Jordan subalgebra of $B$ with $q \in A^{\perp \perp}$.
Suppose that $(e_t)$ is a partial 
cai for $A$ in $\frac{1}{2}{\mathfrak F}_A$ with ${\rm Re} \, e_s \leq {\rm Re} \, e_t$ if $s \leq t$ 
(this can always be arranged by {\rm \cite[Proposition 4.6]{BWj}}).  
Or more generally suppose that $(e_t)$ is a
net in $\frac{1}{2}{\mathfrak F}_A$ with weak* limit $p$,
and with $({\rm Re} \, e_t)$   increasing in $B$.
If $I = \{ a \in A : qaq = 0 \}$ and if $x \in A$ then
the distance from $x$ to $I$
equals $\lim_t \, \| (1-e_t) x (1-e_t) \| = \| q x q \|$.
\end{itemize}
\end{corollary}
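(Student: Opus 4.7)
The plan is to prove each of (1) and (2) by the same three-step squeeze: first, identify the distance $d(x,I)$ with $\|qxq\|$ via an isometric quotient; second, for the upper bound, approximate $x$ by an arbitrary $y \in I$ and invoke Theorem \ref{netprox} (respectively Corollary \ref{coofwasl}) to kill the remainder $\|(1-f_t) y (1-f_t)\|$ in the limit; third, for the lower bound, use that $(1-f_t) x (1-f_t)$ converges to $qxq$ weak* combined with weak* lower semicontinuity of the norm.

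For the distance identification in (1), the subspace $I$ equals $L + L^{*}$, where $L$ is the closed left ideal of $B$ supported by $p$ (as recalled in the proof of Proposition \ref{gamlemma1}), and the canonical map $B/(L+L^{*}) \to qBq$ is a complete isometry, yielding $d(x,I) = \|qxq\|$. For (2) I would apply Proposition \ref{gamlemma1} with $X = A$; its hypothesis $A^{\perp} \subset (qAq)_{\perp}$ is precisely equivalent to the given $q \in A^{\perp\perp}$ by Lemma \ref{tfqin}, and the conclusion gives an isometry $A/I \cong qAq$, hence $d(x,I) = \|qxq\|$ for $x \in A$.

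For the upper bound, in each case $\|1 - f_t\| \leq 1$; in (2) this uses $e_t \in \frac{1}{2}{\mathfrak F}_A$, since $1-e_t = \tfrac{1}{2}\cdot 1 + \tfrac{1}{2}(1-2e_t)$ has norm at most $1$. Hence for every $y \in I$,
\[
\|(1-f_t) x (1-f_t)\| \; \leq \; \|x - y\| \; + \; \|(1-f_t) y (1-f_t)\| .
\]
In (1), Theorem \ref{netprox} applied to $y$ (note $qyq = 0$) gives $\|(1-f_t) y (1-f_t)\| \to 0$. In (2), Lemma \ref{incai}(2) first upgrades the hypotheses on $(e_t)$ to the statement that $(e_t)$ is a cai for the HSA of $B$ supported by $p$ (so in particular $p$ is open in $B^{**}$), placing us in the setting of Corollary \ref{coofwasl}, which then delivers the same vanishing for $y \in I$. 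The subtlety here is reading the "partial cai for $A$" alternative in (2) as the partial cai for the HSA in $A$ determined by $p$, so that $e_t \to p = q^\perp$ weak* and the second, more general alternative applies uniformly. Taking $\limsup_t$ and then the infimum over $y \in I$ yields $\limsup_t \|(1-f_t) x (1-f_t)\| \leq d(x,I) = \|qxq\|$.

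For the lower bound, in the universal representation $B \subset B(H)$ the cai $(f_t)$ of the HSA supported by $p$ converges strongly to $p$ on $H$ (the same observation used at the start of the proof of Theorem \ref{netprox}), so $(1-f_t) x (1-f_t) \to qxq$ strongly, and hence weak*, in $B^{**}$. Weak* lower semicontinuity of the norm then gives $\|qxq\| \leq \liminf_t \|(1-f_t) x (1-f_t)\|$, and combining with the upper bound shows that the common limit equals $\|qxq\| = d(x,I)$. The main technical inputs are really Theorem \ref{netprox}/Corollary \ref{coofwasl} plus Lemma \ref{incai}(2); once those are in hand, the only remaining hurdle is to check their hypotheses in the Jordan setting of (2), which is exactly what Lemma \ref{incai}(2) accomplishes.
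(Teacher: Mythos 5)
Your argument is correct and is essentially the paper's: the paper likewise gets $d(x,I)=\|qxq\|$ from Proposition \ref{gamlemma1} together with Lemma \ref{tfqin}, and obtains the upper bound by choosing a near-best approximant $d\in I$ and killing $(1-e_t)d(1-e_t)$ via Corollary \ref{coofwasl} (the paper proves only (2) and notes (1) is the special case $A=B$, as you in effect observe). The one point where you diverge is the lower bound: the paper simply compresses, $\|(1-e_t)x(1-e_t)\|\geq \|q(1-e_t)x(1-e_t)q\|=\|qxq\|$ for every $t$ (using that $e_t=pe_tp$, which follows from $e_t\in\frac{1}{2}{\mathfrak F}_A$ and ${\rm Re}\,e_t\leq p$, so $qe_t=e_tq=0$), whereas you pass to the universal representation, show $(1-e_t)x(1-e_t)\to qxq$ weak*, and invoke weak* lower semicontinuity of the norm; both work, but the compression gives the inequality at every index with no convergence argument, and in case (2) your route needs the extra (routine but unstated) check that the non-selfadjoint $e_t$ and $e_t^*$ converge strongly to $p$, e.g.\ from $e_t^*e_t\leq {\rm Re}\,e_t$ and $e_te_t^*\leq {\rm Re}\,e_t$. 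Your reading of the first alternative in (2) so that $e_t\to p$ weak* is the right way to make Corollary \ref{coofwasl} applicable, and the detour through Lemma \ref{incai}(2) is harmless though not needed, since the hypotheses of Corollary \ref{coofwasl} are verified directly.
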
 

\begin{proof}   We just prove (2) since (1) follows from (2).
 By Proposition \ref{gamlemma1} with $X = A$ (and using Lemma \ref{tfqin}),
 if $I = \{ x \in A : qxq = 0 \}$ and $a \in A$, the distance $d(a, I)$ from $a$ to $I$ equals $\| q aq \|$.
Given $\eps > 0$ there exists $d \in I$ with $$\| qa q \| > \| a + d \| - \epsilon \geq
\| (1-e_t) (a+d) (1-e_t) \|   - \epsilon .$$
By Corollary \ref{coofwasl}
we have $(1-e_t) d (1-e_t) \to 0$.   Hence there exists $t_0$ such that 
$$\| qa q \| > \| (1-e_t) a (1-e_t) \|   - 2 \epsilon  \geq \| q  (1-e_t) a (1-e_t) q \|   - 2 \epsilon
= \| q a q \|   - 2 \epsilon,$$ for $t \geq t_0 .$   Thus $\| (1-e_t) a (1-e_t) \| 
\to \| q a q \|$.
 \end{proof}

 There is a variant of Corollary \ref{qnf} with $I$ replaced by a HSA
$D$.    

\begin{corollary} \label{qnf2} 
Let $p$ be an open projection in $B^{\ast\ast}$ for a  $C^{\ast}$-algebra $B$,
and let $q = p^\perp$ in $B^{**}$.  
\begin{itemize} \item  [(1)]   Suppose that $a \in B$ with $[q , a , q^\perp ] = 0$.    By the $C^*$-algebra 
case of Lemma  \ref{distH}  the distance $d(a, D)$ from $a$ to $D$ equals $\| qx q \|$.
By Corollary \ref{qnf} this equals $\lim_t \, \| (1-f_t) x (1-f_t) \|$ for 
any increasing cai for $D$. 
\end{itemize}
\end{corollary}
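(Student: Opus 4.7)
The plan is simply to chain together the two results cited in the statement. The corollary is essentially a composition of Lemma \ref{distH} (in the special case where the Jordan subalgebra $A$ coincides with the ambient $C^*$-algebra $B$) with Corollary \ref{qnf}(1); the hypothesis $[q, a, q^\perp] = 0$ is exactly what makes the first of these applicable.

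First I would rewrite the hypothesis in the form required by Lemma \ref{distH}: since $q^\perp = p$ and the symmetric triple product $[x, y, z] = xyz + zyx$ is unchanged when its two outer arguments are swapped, the condition $[q, a, q^\perp] = 0$ is identical to $[p, a, p^\perp] = 0$. Taking $A = B$ in Lemma \ref{distH}, the $A$-openness hypothesis on $p$ reduces to openness of $p$ in $B^{**}$ (which we have by assumption), and $D = A \cap p A^{**} p$ becomes the standard hereditary subalgebra of $B$ supported by $p$. The lemma then yields $d(a, D) = \|(1-p) a (1-p)\| = \|q a q\|$.

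Second, Corollary \ref{qnf}(1), applied with $x = a$, gives $\|q a q\| = \lim_t \|(1-f_t) a (1-f_t)\|$ for any increasing cai $(f_t)$ of the hereditary subalgebra $D$. Chaining the two equalities produces the desired identity.

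There is no genuine obstacle here. The work has already been done in the earlier results: Lemma \ref{distH} supplies the distance formula in terms of a Peirce corner, and Corollary \ref{qnf} recasts that corner norm as an approximate-identity limit. The sole role of the hypothesis $[q, a, q^\perp] = 0$ is to permit the invocation of Lemma \ref{distH}; once that is done, the limit formula follows instantly from Corollary \ref{qnf}(1), with no further ingredients required.
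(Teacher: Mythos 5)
Your proposal is correct and is exactly the argument the paper gives (the paper's justification is embedded in the statement itself): apply Lemma \ref{distH} with $A=B$, using that $[q,a,q^\perp]=[p,a,p^\perp]$, to get $d(a,D)=\|qaq\|$, then invoke Corollary \ref{qnf}(1) for the limit formula. No differences worth noting; the $x$'s in the paper's statement are simply typos for $a$.
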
 

  Similarly, if $A$ is a Jordan subalgebra of $B$ with $q \in A^{\perp \perp}$, and if $D$ is the HSA in $A$ 
with support projection $p = q^\perp$,
then for any net $(e_t)$ as in (2) of the corollary,
and for any $a \in A$  with $[q , a , q^\perp ] = 0$,
the distance $d(a, D)$ from $a$ to $D$ equals $\| qx q \|$ by Lemma  \ref{distH},
which equals $\lim_t \, \| (1-e_t) a (1-e_t) \|$
by  Corollary \ref{qnf}.

\section{Initial consequences} \label{consq}

 It follows from some of 
the results in Section \ref{Jcompl} that essentially all, with (possibly) a very few 
exceptions, of the results on noncommutative topology,
noncommutative peak sets,  noncommutative peak interpolation, etc,  from the papers \cite{BHN, BRI, BRII, BRord, BNII,Bnpi, Hay} and 
others, generalize very literally to  Jordan operator algebras.    We describe these results in more or less
chronological order, and this will comprise almost all of the remainder of our paper.   

 We have already discussed in Section 4 the Jordan  generalizations of Hay's main results from \cite{Hay}.  
In particular we recollect Corollary \ref{sup} from Section 4:
 if $A$ is a closed Jordan subalgebra of a $C^{*}$-algebra $B$, then a projection $p$ in $A^{\perp \perp}$ is open in Akemann's sense in $B^{**}$ if and only if $p$ is $A$-open in the sense of {\rm \cite{BWj}} (that is, there exists a net in $A$ with
$x_t = p x_t p \to p$ weak* in $A^{**}$). 
If $A$ is an approximately unital  Jordan operator algebra then we recall that a projection $q$ in $A^{**}$ is {\em closed} if 
$1-q$ is open in $A^{**}$, where $1$ is the identity of $A^{**}$.   This is equivalent to $q$ being closed in Akemann's sense in $B^{**}$.

  The generalization of results from \cite{Hay} on peak projections will be done in Section \ref{Peak}.    

  We next 
discuss the  Jordan variants of results from paper  \cite{BHN} that were not covered in \cite{BWj}.  We
note that the just stated Corollary \ref{sup} is the Jordan variant
of  \cite[Theorem 2.4]{BHN}: thus what we called $A$-open projections in \cite{BWj}
are just the open projections with respect to (any) containing $C^*$-algebra $B$
which lie in $A^{\perp \perp}$.    Therefore,
 as in the associative operator algebra case we will drop the $A$- prefix,
and simply call them open projections with respect to $A$, or open projections in $A^{**}$, 
or simply open projections if there is little chance of confusion.   Hence all
the $A$- prefixes to `open' in results in \cite{BWj} may be dropped, and the proofs in \cite{BWj} of such 
results can often be simplified.   Thus for example part of Lemma 3.12 in \cite{BWj} becomes:
the supremum in $A^{**}$ (or in $B^{**}$) of any collection of  open projections in $A^{**}$
is  open in $A^{**}$.   And the now simple proof of the latter is simply that the supremum in $B^{**}$
is open in $B^{**}$ by Akemann's $C^*$-theory, and it is also in $A^{\perp \perp}$ (since as we said
towards the end of Section 1 in \cite{BWj}, $A^{\perp \perp}$ is closed under meets and joins of projections).   
Theorem 2.9 in \cite{BHN}
was generalized in Theorem \ref{bhn29}.     Similarly \cite[Theorem 3.15]{BWj}, together with the fact above that open and $A$-open projections coincide, becomes 
the generalization to an approximately unital  Jordan operator algebra $A$ of  \cite[Theorem 4.1]{BHN}, namely 
it gives the link between open projections, and weak* closed faces and lowersemicontinuity in the 
 quasistate space $Q(A)$.  If $A$ is unital then there is a similar result and proof using the state space $S(A)$.
Indeed we also have:

\begin{lemma} \label{mds}  Let $A$ be an approximately unital Jordan operator algebra,
and let $q$ be a projection in $A^{\perp \perp}$.
Then
$$\{ \varphi \in S(A) : \varphi(q) = 1 \} = qA^* q \cap S(A) ,$$
and this is a face in the state space of $A$.     If we write this face as $F_q$, then 
we have $q_1 \leq q_2$ if and only if $F_{q_1} \subset F_{q_2}$, for projections $q_1, q_2 \in A^{**}$.
\end{lemma}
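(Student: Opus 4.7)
The plan is to treat the three assertions separately: the alternative description of \(F_q\), the face property, and the order correspondence.

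For the equality \(\{\varphi \in S(A): \varphi(q) = 1\} = qA^*q \cap S(A)\), I would begin with the forward inclusion. Extend \(\varphi \in S(A)\) with \(\varphi(q) = 1\) to a state \(\hat\varphi\) on a generating \(C^*\)-algebra \(B \supset A\) via \cite[Section 2.7]{BWj}. By uniqueness of weak*-continuous extensions, \(\hat\varphi\) extended to \(B^{**}\) restricts on \(A^{**} \subset B^{**}\) to \(\tilde\varphi\), so \(\hat\varphi(q) = 1\). The standard Cauchy--Schwarz inequality in \(B^{**}\) applied to \(1 - q \geq 0\) then yields \(\hat\varphi((1-q) b) = 0 = \hat\varphi(b(1-q))\), hence \(\hat\varphi(b) = \hat\varphi(qbq)\) for every \(b \in B^{**}\). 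Restricting to \(a \in A\) and noting that \(qaq \in A^{**}\) by the Jordan triple identity \(aba = 2(a \circ b) \circ a - a^2 \circ b\) recalled from the introduction, gives \(\varphi = \tilde\varphi(q \cdot q)|_A \in qA^*q\). Conversely, if \(\varphi \in qA^*q \cap S(A)\), the same identity with \(a = q\), \(b = e := 1_{A^{**}}\) yields \(qeq = 2q - q = q\), so \(\tilde\varphi(q) = \tilde\varphi(qeq) = \tilde\varphi(e) = 1\).

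The face property follows by a short convexity argument: for \(\varphi = \alpha \varphi_1 + (1-\alpha) \varphi_2 \in F_q\) with \(\varphi_i \in S(A)\) and \(\alpha \in (0,1)\), each \(\tilde\varphi_i\) is a state of \(A^{**}\), so \(\varphi_i(q) \in [0, 1]\) by positivity of \(q\); convexity then forces \(\varphi_i(q) = 1\).

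The order characterization is the main technical step. One direction is immediate: if \(q_1 \leq q_2\) then \(1 = \varphi(q_1) \leq \varphi(q_2) \leq 1\), so \(\varphi \in F_{q_2}\). For the converse I would argue by contrapositive: assuming \(q_1 \not\leq q_2\), the element \(x := q_1 - q_1 q_2 q_1 = q_1(1-q_2) q_1\) lies in \(A^{**}\) by the Jordan identity, is positive as \([(1-q_2)^{1/2} q_1]^* [(1-q_2)^{1/2} q_1]\), and is nonzero (since \(x = 0\) gives \((1 - q_2)^{1/2} q_1 = 0\) and hence \(q_1 = q_2 q_1 \leq q_2\)). Since normal states of the \(W^*\)-algebra \(B^{**}\) separate positive elements, I pick a normal state \(\psi\) on \(B^{**}\) with \(\psi(x) > 0\); then \(\psi(q_1) \geq \psi(x) > 0\) because \(x \leq q_1\). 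Define \(\phi(a) := \psi(q_1 a q_1) / \psi(q_1)\) for \(a \in A^{**}\). This is positive, weak*-continuous (the map \(a \mapsto q_1 a q_1\) is weak*-continuous on \(B^{**}\) and sends \(A^{**}\) into itself by the Jordan identity), and satisfies \(\phi(e) = \psi(q_1 e q_1)/\psi(q_1) = 1\) using \(q_1 e q_1 = q_1\). Hence \(\phi\) is a normal state of \(A^{**}\), and \(\varphi := \phi|_A\) is a state of \(A\) with \(\varphi(q_1) = 1\) and \(\varphi(1 - q_2) = \psi(x)/\psi(q_1) > 0\), so \(\varphi \in F_{q_1} \setminus F_{q_2}\). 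The most delicate point is verifying that \(\phi\) is genuinely a state of \(A^{**}\) rather than merely of the ambient \(B^{**}\): the identity \(e = 1_{A^{**}}\) need not equal \(1_{B^{**}}\), and without the Jordan identity \(q_1 e q_1 = q_1\) the normalization \(\phi(e) = 1\) would fail.
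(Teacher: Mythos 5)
Your proof is correct, and for the set equality, the face property, and the easy direction of the order statement it runs essentially parallel to the paper's (the paper compresses your Cauchy--Schwarz computation into the phrase ``$q$ is in the multiplicative domain of $\hat\varphi$,'' and does not even bother to write out the convexity argument for the face property). The one place where you genuinely diverge is the hard direction of the order characterization. The paper argues directly: it realizes $B^{**}$ concretely as a von Neumann algebra on $H$ via the universal representation, observes that $q_1\not\leq q_2$ means ${\rm ran}(q_1)\not\subset{\rm ran}(q_2)$, picks a unit vector $\xi=q_1\xi$ with $q_2\xi\neq\xi$, and uses the vector state $\langle\,\cdot\,\xi,\xi\rangle$ (nondegeneracy of the action gives $\lim_t\langle e_t\xi,\xi\rangle=1$, so this is a state of $A$). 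You instead argue by contrapositive at the level of the abstract $W^*$-algebra: you exhibit the nonzero positive element $x=q_1(1-q_2)q_1\in A^{**}$, separate it from zero by a normal state $\psi$ of $B^{**}$, and compress-and-normalize to $\phi=\psi(q_1\,\cdot\,q_1)/\psi(q_1)$. Your vector state is of course a special case of such a compression, so the two constructions are close cousins; what yours buys is independence from any particular spatial representation, at the modest cost of having to verify the norm bound $\|\phi\|\leq 1$ (which needs the Cauchy--Schwarz estimate $|\psi(q_1aq_1)|\leq\|a\|\psi(q_1)$, a line you elide but which is routine) and the normalization $\phi(e)=1$ via $q_1eq_1=q_1$ --- a point you correctly flag as the delicate one, since $e=1_{A^{**}}$ need not be $1_{B^{**}}$. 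Both proofs are sound; there is no gap in yours.
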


\begin{proof} Let $A$ be a closed approximately unital Jordan subalgebra of a $C^*$-algebra $B$, and suppose that $B$ is generated by $A$.    Suppose that
$\varphi \in S(A)$ and if $\varphi(q) = 1$.   If $\hat{\varphi}$ is a state extension of $\varphi$ to  $B$, viewed as a state of $B^{**}$,
then $q$ is
in the multiplicative domain of $\hat{\varphi}$ and so $\varphi(qxq) = \varphi(x)$ for all $x$.   That is, $\varphi \in qA^* q \cap S(A)$.   Conversely, if $\varphi \in qA^* q \cap S(A)$ then $1 = \varphi(1) = \varphi(q)$.   Thus $qA^* q \cap S(A) = \{ \varphi \in S(A) : \varphi(q) = 1 \}$. 

The final `iff'  is stated  in \cite[Proposition 3.16]{BWj} for open projections with a very sketchy proof.  For convenience we give details.  If $q_1 \leq q_2$ are projections in $A^{**}$ and $\varphi \in F_{q_1}$ then $1 = \hat{\varphi}(q_1)
=   \hat{\varphi}(q_1 q_2 q_1) = \hat{\varphi}(q_2)$ by the last paragraph.   So $F_{q_1} \subset F_{q_2}$.
Conversely, suppose that  $F_{q_1} \subset F_{q_2}$  but  $q_1$ is not dominated
by $q_2$.   Suppose  that $B$ is a nondegenerate $*$-subalgebra of $B(H)$ with 
$B^{**} \subset B(H)$ as a von Neumann algebra.   The range of $q_1$ in $H$ is not a
subset of the range of $q_2$.   Choose a unit vector $\xi$ in the range of $q_1$ 
but not in the range of $q_2$, and define $\varphi(x) = \langle x \xi , \xi \rangle$ for $x \in A$.   The unique weak* continuous extension 
of $\varphi$ to $A^{**}$ is of the same form. 
Since $A$ acts nondegenerately, if $(e_t)$ is a cai for $A$ then
$\langle e_t \xi , \xi \rangle \to 1$.  So $\varphi \in S(A)$.
We have $\varphi(q_1) = \langle \xi , \xi \rangle  =1$, and so $\varphi \in F_{q_1} \subset F_{q_2}$. 
Hence $\varphi(q_2) = \langle q_2 \xi , \xi \rangle  =1$,  
so  $q_2 \xi = \xi$. 
This is a contradiction.  So $q_1 \leq q_2$. 
 \end{proof}

The last statement of the last result is true with $S(A)$ replaced by $Q(A)$ in the definitions, with the same proof.

We will use the last result in our proof of another interesting characterization of 
closed (and hence open) projections in an approximately unital  Jordan operator algebra, 
one that does not reference approximation of the projection by elements in $A$.   This result may be new even
if $A$ is a $C^*$-algebra.   It is a variant of \cite[Remark 2.5 (ii)]{BHN}.

\begin{theorem} \label{qbsq}  Let $A$ be an approximately unital  Jordan operator algebra.   A projection $q$ in $A^{**}$
is 
closed  (that is, $q^\perp$ is open  in $A^{**}$)
 if  and only if $qA^* q$  is weak* closed in $A^*$.  
 \end{theorem}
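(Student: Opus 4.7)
The plan is to establish the identity $qA^*q = I^\perp$, where $I := \{a \in A : qaq = 0\}$; since $I^\perp$ is always weak*-closed in $A^*$, this will give the equivalence with weak*-closedness of $qA^*q$. The inclusion $qA^*q \subseteq I^\perp$ is automatic, because for $q\psi q \in qA^*q$ and $a \in I$ one has $(q\psi q)(a) = \tilde\psi(qaq) = 0$. So ($\Rightarrow$) will reduce to the reverse inclusion, and ($\Leftarrow$) will come from the face correspondence for closed projections.

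For ($\Rightarrow$), I would assume $q$ is closed, so that $p := q^\perp$ is open in $A^{**}$ by Corollary \ref{sup}, and invoke Theorem \ref{supp} to produce the HSA $D = \{a \in A : pap = a\}$ with a partial cai $(e_t) \subset D$ converging weak* to $p$. Since $p e_t p = e_t$, we have $q e_t = e_t q = 0$, so for any $a \in A$ the Jordan product $a \circ e_t = \frac{1}{2}(a e_t + e_t a) \in A$ satisfies $q(a \circ e_t) q = 0$, i.e., $a \circ e_t \in I$. Passing to the weak* limit gives $a \circ e_t \to a \circ p = pap + \frac{1}{2}[p, a, q]$, which therefore lies in the weak* closure $I^{\perp\perp}$ of $I$ in $A^{**}$. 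Since $D \subseteq I$ implies $pA^{**}p = D^{\perp\perp} \subseteq I^{\perp\perp}$, the term $pap$ already lies in $I^{\perp\perp}$; subtracting forces $[p,a,q] \in I^{\perp\perp}$, and hence $a - qaq = pap + [p,a,q] \in I^{\perp\perp}$ for every $a \in A$. Then for $\varphi \in I^\perp$, the weak*-continuous extension $\tilde\varphi$ to $A^{**}$ vanishes on $I^{\perp\perp}$, so $\tilde\varphi(a - qaq) = 0$, giving $\varphi(a) = \tilde\varphi(qaq) = (q\varphi q)(a)$. Thus $\varphi = q\varphi q \in qA^*q$, establishing $I^\perp \subseteq qA^*q$.

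For ($\Leftarrow$), I would assume $qA^*q$ is weak*-closed in $A^*$. By Lemma \ref{mds}, $F_q \cup \{0\} = qA^*q \cap Q(A)$, and this is the intersection of a weak*-closed subspace with the weak*-compact quasistate space $Q(A)$, hence weak*-closed in $Q(A)$. A routine check shows that $F_q \cup \{0\}$ is a face of $Q(A)$ containing $0$. The Jordan analogue of the Akemann--Effros correspondence between closed projections in $A^{**}$ and weak*-closed faces of $Q(A)$ containing $0$, obtained from \cite[Theorem 3.15]{BWj} together with Corollary \ref{sup}, then forces $q$ to be closed.

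The main obstacle will be the reverse inclusion $I^\perp \subseteq qA^*q$ in the forward direction, which in turn hinges on showing $[p,a,q] \in I^{\perp\perp}$ for every $a \in A$ (not merely for $a \in I$). The key trick is that while neither $ae_t$ nor $e_ta$ need belong to $A$, their symmetric average $a \circ e_t$ does (since $A$ is a Jordan algebra) and in fact lies in $I$; its weak* limit decomposes as $pap + \frac{1}{2}[p,a,q]$, with the first summand already in $I^{\perp\perp}$ via the HSA $D$, and this is exactly what is needed to conclude.
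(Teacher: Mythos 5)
Your proposal is correct, but it takes a genuinely different route from the paper's, most visibly in the forward direction. The paper proves ($\Rightarrow$) by quoting Claim 1 from the proof of Theorem \ref{supp} (that $Z^{\perp\perp}=\{\eta\in A^{**}: q\eta q=0\}$ for $Z=\{a\in A: qaq=0\}$), whose proof there is a delicate argument via the metric characterization of unital operator spaces, and then needs a separate Krein--Smulian/Alaoglu reduction to handle non-unital $A$. Your argument reaches the same conclusion $I^\perp=qA^*q$ (equivalently $I^{\perp\perp}=\{\eta\in A^{**}: q\eta q=0\}$, by the bipolar theorem) much more cheaply: since $q$ is assumed closed, $p=q^\perp$ is $A$-open, so a net $(e_t)$ in the supported HSA $D$ with $e_t\to p$ weak* is available, and the observation that $a\circ e_t\in I$ with weak* limit $pap+\frac{1}{2}[p,a,q]$, combined with $D^{\perp\perp}=pA^{**}p\subset I^{\perp\perp}$, does the rest; this also treats the unital and non-unital cases uniformly. (This shortcut is available only because openness of $p$ in $A^{**}$ is the hypothesis here; it could not replace Claim 1 inside the proof of Theorem \ref{supp} itself, where the net in $D$ is precisely what is being constructed, so there is no circularity.) For ($\Leftarrow$) the paper descends to the containing $C^*$-algebra $B$: it identifies $qA^*q=Z^\perp$, deduces that the face $F_q\subset S(B)$ is weak* closed, and invokes the Akemann--Pedersen face theorem from \cite{APfaces} in $B^{**}$ together with Corollary \ref{sup}; you instead stay inside $A$ and appeal to the Jordan analogue of \cite[Theorem 4.1]{BHN} (i.e.\ \cite[Theorem 3.15]{BWj} plus Corollary \ref{sup}). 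That is legitimate given that the paper asserts this analogue in Section 6, though your conclusion hinges on that correspondence containing the implication that a weak* closed face forces the projection to be closed, together with the injectivity from Lemma \ref{mds}, whereas the paper's route needs only the classical $C^*$-algebraic fact. One small imprecision: $qA^*q\cap Q(A)$ is not $F_q\cup\{0\}$ but the union of the segments $\{t\varphi: 0\leq t\leq 1\}$ over $\varphi\in F_q$; this does not affect your argument, since that set is still a weak* closed face of $Q(A)$ containing $0$.
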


\begin{proof}      
If  $q$ in $A^{**}$ then it is an exercise to show that $qA^*q = W_\perp$ where
$W = \{ \eta \in A^{**} : q \eta q = 0 \}$.     Let $p = q^\perp$.

Suppose that $qA^* q$  is weak* closed in $A^*$. 
Then  $qA^* q$ equals $E^{\perp}$ for some subspace $E$ of $A$.   
Hence $E^{\perp \perp} = (qA^*q )^\perp = (W_\perp)^\perp =  W$, and so $E = W \cap A = Z$ where $Z$ is 
the space in the last paragraph of  the proof of Theorem \ref{supp}.  

Let  $B$ be a $C^*$-algebra containing and generated by $A$ as a closed Jordan subalgebra.    To show
that  $q$ is closed in $A^{**}$
 it suffices by
our variant Corollary \ref{sup} of Hay's theorem to show that $q$ is closed in $B^{**}$.    It follows from the lemma and
facts at  the end of 
 \cite[Section 2]{APfaces} that $q$ is a closed projection in $B^{**}$ if  and only if the face 
$$F_q =  \{ \varphi \in S(B) : \varphi(q) = 1 \} = qB^* q \cap S(B)$$ is weak* closed  in $S(B)$.   
 So let $\varphi_t \in F_q$,
with $\varphi_t \to \varphi  \in S(B)$ weak* in $B^*$.  
Since  $\varphi_t \in Z^\perp$, we have $\varphi \in Z^\perp = W_\perp$.
Thus $\varphi(p) = 0$ since  $p \in W$. 
It follows that $\varphi \in F_q$, and so $F_q$  is weak* closed in $S(B)$ as desired.

Conversely, suppose that  $q$ 
is closed.  First suppose that $A$ is unital.  By Claim 1 in the proof of Theorem \ref{supp}, if $Z$ is the space defined in that Claim then
$Z^{\perp \perp} = W$.   Hence $qA^*q = W_\perp = Z^{\perp}$ by the bipolar theorem, so that
$qA^* q$  is weak* closed in $A^*$ as desired.  

Now assume that $A$ is  nonunital.      Let $r = 1-p \in (A^1)^{**}$,  then $r (A^1)^* r$ is weak* closed in
$(A^1)^*$ by the last paragraph.  We wish to show that $qA^* q$  is weak* closed in $A^*$. 
Suppose that $\varphi_t = q \varphi_t q \in qA^* q$ and that 
$\varphi_t \to \varphi \in A^*$ weak*.     By the Krein-Smulian theorem we may suppose that $(\varphi_t)$ is bounded.
 Define $\psi_t \in (A^1)^*$ by $\psi_t(x) = \varphi_t(qxq)$.   
Suppose by Alaoglu's theorem that a  subnet $(\psi_{t_\nu})$ converges weak* to $\psi \in (A^1)^*$.
 Then $(\psi_{t_\nu})$, and hence $\psi$, is in  $r (A^1)^* r$.   Indeed since $q \leq r$  we have $\psi_t(rxr) = \varphi_t(qrxrq) = \varphi_t(qxq)
= \psi_t(x)$.   Also, $$\psi_{t_\nu}(a + \lambda 1) = \varphi_{t_\nu}(qaq) + \lambda \varphi_{t_\nu}(q)
\to \varphi(a) + \lambda \psi(1), \qquad a \in A, \lambda \in \Cdb.$$
Thus $\varphi(a) = \psi(a)$, so $\varphi = \psi$ on $A^{**}$.
Hence $\varphi(a) = \psi(rar) = \psi(qaq) =   \varphi(qaq)$ as desired.
\end{proof} 

\begin{remark}   We remark that there is a typo in a related result in \cite{BNII}.   Namely in \cite[Proposition 4.3 (3)]{BNII} the last $S(A)$ should be $Q(A)$.   In the third last line of that proof $S(B)$ should be $Q(B)$.  \end{remark}

Section 6 from \cite{BHN} is largely concerned with peak projections and some 
of this will be generalized in Section 
\ref{Copeak} below.  We will not return in the present paper to explicitly  generalizing particular results from  
 \cite[Section 6]{BHN} since some of this is subsumed by later theory, and also because 
many things about peak projections are clear from the $C^*$-algebra
case, or follow easily from computations in the (associative) operator algebra generated
by the real positive element which is peaking, so need no Jordan variant.   This is often the
case in results involving the projections $s(\cdot)$ and $u(\cdot)$, as we said in the introduction.

We now turn to Jordan versions of results from \cite{BRI}.  The fact from  \cite{BRI} that open projections in $A^{**}$ are simply the suprema of support
projections of elements in ${\mathfrak F}_A$ (or equivalently of real positive  elements in $A$,
by e.g.\ \cite[Corollary 3.6]{BRII}), is generalized in \cite[Lemma 3.12 (1)]{BWj}, if we use the fact above that 
open and $A$-open projections coincide.

We recall that a projection $e$ is a Jordan operator algebra $A$ commutes with an element $x$ if
$e \circ x = exe$.   If  $A$ is a Jordan subalgebra of a $C^*$-algebra $B$ then by the first labelled equation in \cite{BWj} these relations
force $e$ to commute with $x$ in $B$: $ex = xe$.

\begin{proposition} \label{commop}
Let  $A$ be an approximately unital Jordan operator algebra.
The  product of two commuting open projections in $A^{**}$  is open, and this product is the support of the intersection $J \cap K$ of the
matching hereditary subalgebras $J, K$. 
\end{proposition}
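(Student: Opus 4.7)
\medskip\noindent\textbf{Proof plan.} My plan is to reduce this statement to the corresponding classical fact for $C^*$-algebras via Corollary~\ref{sup} (our Jordan variant of Hay's theorem), and then identify the intersection HSA by a direct computation with the support projection.

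First, I would embed $A$ as a closed Jordan subalgebra of a containing $C^*$-algebra $B$. By Corollary~\ref{sup}, the commuting open projections $p,q \in A^{\perp\perp}$ are open in Akemann's sense in $B^{**}$. Their product $pq$ is a projection since $p$ and $q$ commute, and it lies in $A^{\perp\perp}$ because the latter is closed under meets and joins of projections (as noted in Section~1, using that $\Delta(A^{**})$ is a JW*-algebra).

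Second, I would invoke the classical result of Akemann (see e.g.\ \cite{Ake2}; cf.\ \cite{Brown}) that in the bidual of any $C^*$-algebra the product of two commuting open projections is open. Applied to $B^{**}$ this gives that $pq$ is open in $B^{**}$, and then Corollary~\ref{sup} in the converse direction yields that $pq$ is $A$-open, i.e.\ open in $A^{**}$.

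Finally, I would identify the matching HSA. By Theorem~\ref{supp}, the HSA of $A$ with support projection $pq$ is
\[
D \; = \; \{\, a \in A : (pq)\,a\,(pq) = a \,\} ,
\]
while $J = \{ a \in A : pap = a\}$ and $K = \{a \in A : qaq = a\}$. To see $D = J \cap K$, note first that since $p,q$ are projections, $pap = a$ is equivalent to $pa = ap = a$, and similarly for $q$. If $a \in J \cap K$ then $pa = ap = qa = aq = a$, and so $(pq)a(pq) = a$, giving $a \in D$. Conversely, if $(pq)a(pq) = a$, then using $pq = qp$ and $p^2 = p$,
\[
pa \; = \; p\,(pq)a(pq) \; = \; (pq)a(pq) \; = \; a,
\]
and symmetrically $ap = qa = aq = a$, so $pap = a$ and $qaq = a$, i.e.\ $a \in J \cap K$.

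The only substantial input is the $C^*$-algebraic fact invoked in the second step, which is classical. All the Jordan-theoretic content is packaged into the two appeals to Corollary~\ref{sup}; the final identification $D = J \cap K$ is elementary bookkeeping with commuting projections.
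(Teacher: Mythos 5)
Your proposal is correct and follows essentially the same route as the paper: pass to a containing $C^*$-algebra $B$ via Corollary~\ref{sup}, use the classical fact that the product of commuting open projections in $B^{**}$ is open together with the closure of $A^{\perp\perp}$ under meets of projections, apply Corollary~\ref{sup} again to return to $A$, and then identify the HSA supported by $pq$ with $J\cap K$ by an elementary computation with commuting projections. The only cosmetic difference is that the paper performs the final identification at the level of $A^{**}$ (showing $ef A^{**} ef = (eA^{**}e)\cap(fA^{**}f)$ and intersecting with $A$) whereas you work directly with elements of $A$; both computations are equivalent.
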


\begin{proof}  Let $e, f$ be commuting open projections in $A^{**}$, with 
matching HSA's $J, K$.     If $A$ is a Jordan subalgebra of a $C^*$-algebra $B$ then 
$e, f,$ and $ef$ are open in $B^{**}$, and  $ef \in A^{\perp \perp} \subset B^{**}$ (see e.g.\
the last paragraphs of \cite[Section 1]{BWj}).    Thus $ef$ is open in $A^{**}$ by Corollary \ref{sup}.  
Note that $ef A^{**} ef \subset (e A^{**} e) \cap (f  A^{**}  f)$.  Conversely, if 
$x \in (e A^{**} e) \cap (f  A^{**}  f)$ then $ef x fe = x$.   Thus 
$ef A^{**} ef = (e A^{**} e) \cap (f  A^{**}  f)$, and so also
$$ef A^{**} ef \cap A = (e A^{**} e) \cap (f  A^{**}  f) \cap A = J \cap K.$$
Thus $J \cap K$ is a HSA in $A$ with support projection $ef$. 
\end{proof} 

The Urysohn lemma in \cite[Theorem 2.24]{BRI} will be discussed  in Section 
\ref{Copeak} below, and some generalizations of the results in Sections 3, 4, and 7 of \cite{BRI} will be given in 
\cite{BWj2,ZWdraft}. Concerning Section 6 from \cite{BRI}, the first result there is generalized in
\cite[Proposition 3.28]{BWj}.  We also have the analogues of part of 6.2 and 6.3 from \cite{BRI}.   But first we
prove an analogue of \cite[Proposition 3.1]{BRoyce}:

\begin{lemma} \label{broyce}  Let $J$ be an approximately unital  closed Jordan ideal in a Jordan operator algebra $A$.
Then $A/J$ is approximately unital if and only if $A$ is  approximately unital.  
\end{lemma}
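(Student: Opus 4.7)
The plan is to follow the associative treatment in \cite[Proposition 3.1]{BRoyce}, with the main new Jordan ingredient being that the support projection of $J$ in $A^{**}$ is \emph{central}. One direction is immediate: if $(e_t)$ is a J-cai for $A$, then the images under the quotient map $A \to A/J$ form a J-cai for $A/J$. For the converse, suppose $A/J$ is approximately unital, and let $p$ be the support projection of $J$, i.e., the Jordan identity of $J^{\perp\perp} \subset A^{**}$ (which exists because $J$ is approximately unital).

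The crucial claim is that $p$ is central in $A^{**}$ in the sense of Section 1. Indeed, $J^{\perp\perp}$ is a weak*-closed Jordan ideal of $A^{**}$: by weak*-separate continuity of the Jordan product inherited from any ambient $C^*$-algebra $B \supset A$ (and Goldstine), taking $j_t \in J$ and $a_s \in A$ with $j_t \to \mu \in J^{\perp\perp}$ and $a_s \to \eta \in A^{**}$ weak*, we pass to the double limit of $j_t \circ a_s \in J$ to deduce $\mu \circ \eta \in J^{\perp\perp}$. Hence $p \circ a \in J^{\perp\perp}$ for every $a \in A^{**}$, and applying the identity $p$ gives $p \circ (p \circ a) = p \circ a$. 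Expanding in $B^{**}$ yields
\[ \tfrac{1}{4}(pa + 2pap + ap) = \tfrac{1}{2}(pa + ap), \]
i.e., $pap = \tfrac{1}{2}(pa + ap) = p \circ a$, which is the centrality criterion. Centrality forces the Peirce 1-part of $p$ to vanish, giving the Jordan direct sum $A^{**} = pA^{**}p \oplus (1-p)A^{**}(1-p)$ with $J^{\perp\perp} = pA^{**}p$. The weak*-continuous extension of $A \to A/J$ identifies $(A/J)^{**}$ with $A^{**}/J^{\perp\perp}$ as Jordan algebras, which by the decomposition is Jordan isomorphic to $(1-p)A^{**}(1-p)$.

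Since $A/J$ is approximately unital, $(A/J)^{**}$ carries a Jordan identity, which transports to an identity $q \in (1-p)A^{**}(1-p) \subset A^{**}$; a short Peirce computation confirms that $p + q$ is then a Jordan identity for $A^{**}$. A standard Goldstine-Mazur argument now yields a J-cai for $A$: pick a bounded net in $\mathrm{Ball}(A)$ converging weak* to $p+q$, observe that $e_t \circ a \to a$ weakly for every $a \in A$, and pass to norm convex combinations. By \cite[Section 2.4]{BWj} this J-cai upgrades to a partial cai, so $A$ is approximately unital. The main obstacle is the centrality of $p$; once that Jordan-specific step is secured the remainder is a clean Peirce-decomposition argument paralleling the associative case, with the secondary technicality of checking that the identifications $(A/J)^{**} \cong A^{**}/J^{\perp\perp} \cong (1-p)A^{**}(1-p)$ are genuine Jordan isomorphisms, which follows from weak* separate continuity of the Jordan product on $A^{**} \subset B^{**}$.
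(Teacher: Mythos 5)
Your proposal is correct and follows essentially the same route as the paper: one direction by pushing a J-cai through the quotient map, and the converse via the centrality of the support projection $p$ of $J$, the resulting decomposition $A^{**}=pA^{**}p\oplus^\infty (1-p)A^{**}(1-p)$ with $(A/J)^{**}\cong A^{**}/J^{\perp\perp}\cong (1-p)A^{**}(1-p)$, and the equivalence of $A^{**}$ being unital with $A$ being approximately unital. The only difference is that you supply the (correct) proof that $p$ is central via $p\circ(p\circ a)=p\circ a$, whereas the paper simply asserts this fact, citing the background from \cite{BWj}.
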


\begin{proof} ($\Rightarrow$) \   If $A$ has a J-cai $(e_t)$ then clearly $(e_t + J)$ is a J-cai in $A/J$.   Thus the result 
follows from \cite[Lemma 2.5]{BWj}.   

($\Rightarrow$) \   Suppose that $A/J$ is approximately unital.   Let $p$ be the support projection of $J$ in
$A^{**}$, which is central.   Then $1-p$ is a contractive projection in $(A^1)^{**}$.
The canonical projection $A^{**} \to A^{**} (1-p)$ given by right multiplication by $p$ is a Jordan morphism
 with kernel $A^{**} p = J^{\perp \perp}$.   Thus  $A^{**} / A^{**} p \cong A^{**} (1-p)$.
Also $(A/J)^{**} \cong A^{**} / A^{**} p$ as Jordan algebras, so that the latter, and hence also $A^{**} (1-p)$, 
is unital.   Thus $A^{**} =  A^{**} p \oplus^\infty A^{**} (1-p)$ is unital.    Hence $A$ is approximately 
unital by \cite[Lemma 2.6]{BWj}.   \end{proof}  

\begin{proposition} \label{bri62}    Let $J$ be an approximately unital  closed Jordan ideal in a Jordan operator algebra $A$, and let $q : 
A \to A/J$ be the canonical quotient map.   
\begin{enumerate}
\item  [(1)]  Any closed approximately unital Jordan subalgebra $D$ in $A/J$ is the image under $q$ of 
a closed approximately unital Jordan subalgebra of $A$.  Indeed $q^{-1}(D)$ will serve here.
\item    [(2)]   The HSA's in $A/J$ are exactly the images under  $q$ of HSA's in
$A$.
\item    [(3)]   The open projections in $(A/J)^{**}$ are exactly the $q^{**}(p)$, for  open projections $p$ in
$A^{**}$.
\end{enumerate}  
\end{proposition}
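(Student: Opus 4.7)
The plan is to dispose of (1) with an appeal to Lemma \ref{broyce}, then intertwine (2) and (3) via open-projection and HSA lifts. Throughout I will use the identification $\ker q^{**} = J^{\perp\perp}$ together with the fact (from the proof of Lemma \ref{broyce}) that the support projection $r \in A^{**}$ of $J$ is central; as in that proof one then obtains $J^{\perp\perp} = rA^{**}$ and $q^{**}$ is identified with multiplication by $1-r$ onto $A^{**}(1-r) \cong (A/J)^{**}$.

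For (1), note $q^{-1}(D)$ is a closed Jordan subalgebra of $A$ containing the approximately unital ideal $J$, and $q^{-1}(D)/J$ is isometrically Jordan isomorphic to $D$ and hence approximately unital. Lemma \ref{broyce} applied to $q^{-1}(D)$ (with ideal $J$) then shows $q^{-1}(D)$ is itself approximately unital, and $q(q^{-1}(D)) = D$ by surjectivity.

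For (3) forward and (2) forward I would proceed in parallel. Given an open $p \in A^{**}$ with net $(x_t) \subset \frac{1}{2}\mathfrak{F}_A$ satisfying $p x_t p = x_t \to p$ weak* (Theorem \ref{supp}), the Jordan-homomorphism identity $q^{**}(pap) = q^{**}(p)\, q^{**}(a)\, q^{**}(p)$ applied at $a = x_t$ gives $q(x_t) = q^{**}(p)\, q(x_t)\, q^{**}(p)$, while weak* continuity of $q^{**}$ yields $q(x_t) \to q^{**}(p)$; so $q^{**}(p)$ is open in $(A/J)^{**}$ (by Corollary \ref{sup}), proving (3) forward. For (2) forward and the reverse of (3), given a HSA $D \subseteq A/J$ the set $\tilde{D} := q^{-1}(D)$ is closed and approximately unital by (1), and the same Jordan identity together with the inner-ideal property of $D$ shows $aAa \subseteq \tilde{D}$ for each $a \in \tilde{D}$; so $\tilde{D}$ is a HSA in $A$ with $q(\tilde{D}) = D$. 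Its open support $p$ then satisfies $q^{**}(p) = e$ by comparing the weak* limit $p$ of a J-cai $(f_t)$ of $\tilde{D}$ with the weak* limit $e$ of the image J-cai $(q(f_t))$ in $D$.

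For (2) reverse let $E$ be a HSA in $A$ with open support $p$, set $e = q^{**}(p)$ (open by (3) forward), and let $D \subseteq A/J$ be the HSA supported by $e$. The containment $q(E) \subseteq D$ is immediate from $e\, q(a)\, e = q^{**}(pap) = q(a)$ for $a \in E$. The main obstacle is the reverse containment, for which I plan to establish $q^{-1}(D) = E + J$: given $a \in q^{-1}(D)$, the computation $q^{**}(a - pap) = q(a) - e\, q(a)\, e = 0$ places $a - pap$ in $\ker q^{**} \cap A = J^{\perp\perp} \cap A$, and a standard Hahn--Banach argument (the weak* closure of $J$ in $A^{**}$ meets $A$ only in the norm closure of $J$) identifies this intersection with $J$. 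Hence $a - pap \in J$, so $pap = a - (a - pap) \in A$ with $p(pap)p = pap$, forcing $pap \in E$. Thus $a \in E + J$, giving $D = q(q^{-1}(D)) = q(E + J) = q(E)$, and in particular $q(E)$ coincides with the HSA $D$, completing (2).
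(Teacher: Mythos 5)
Your treatment of (1), of the direction of (2) that lifts a HSA $D$ of $A/J$ to the HSA $q^{-1}(D)$ of $A$, and of both directions of (3) is in substance the same as the paper's (the paper cites \cite[Proposition 6.3]{BRI} for the forward half of (3) instead of re-running the net argument, and gets $q^{**}(p)=r$ by observing that $p$ and $r$ are the identities of the respective bidual algebras, but these are the same computations as yours). The divergence is in the remaining direction of (2) --- that the image $q(E)$ of a HSA $E$ in $A$ is a HSA in $A/J$ --- which the paper disposes of with the one-line remark that it is easy since $q$ is a contractive surjective Jordan morphism; there your argument has a genuine gap.

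The problem is the step placing $a-pap$ in $\ker q^{**}\cap A$. The computation $q^{**}(a-pap)=q(a)-e\,q(a)\,e=0$ only shows $a-pap\in\ker q^{**}=J^{\perp\perp}$; there is no reason for $a-pap$ (equivalently, for $pap$) to lie in the canonical copy of $A$ inside $A^{**}$, since $p$ is merely an open projection in $A^{**}$ and compression by it does not map $A$ into $A$. Concretely, take $A=C([0,1])$, $J=C_0([0,\frac12))$, and let $E$ be the HSA of functions vanishing off $(\frac14,\frac34)$, so $p=\chi_{(1/4,3/4)}$; if $a\in A$ has $a(\frac14)\neq 0$ and $a$ vanishes on $[\frac34,1]$ then $q(a)\in D$ but $pap=a\chi_{(1/4,3/4)}$ is discontinuous at $\frac14$, so $a-pap\notin A$ and one cannot conclude $a-pap\in J$ (even though $D=q(E)$ does hold here, by extending elements of $D$ by hand). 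What the argument actually needs is that $q(E)=(E+J)/J$ is norm closed, i.e.\ that $E+J$ is closed; granted that, $q(E)$ is a closed inner ideal of $A/J$ with a J-cai converging weak* to $e$, hence equals the HSA supported by $e$ by the bijection of Theorem \ref{supp} and Corollary \ref{sup}. Closedness of $E+J$ requires a cai argument: for instance, $E\cap J$ is the HSA supported by $pr$ ($r$ the central support of $J$, using Proposition \ref{commop}), and for $x\in E$ one checks $[pr,x,(pr)^\perp]=0$ so that Lemma \ref{distH} gives $d(x,E\cap J)=\|(1-pr)x(1-pr)\|=\|xr^\perp\|=d(x,J)$; thus $E/(E\cap J)\to A/J$ is isometric and $q(E)$ is closed. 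Some such step is unavoidable and is the content hidden in the paper's ``easy to see.''
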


\begin{proof}  (1) \ Note that $J$ is an  approximately unital  closed Jordan ideal in the closed Jordan subalgebra
$q^{-1}(D)$ of $A$.   Moreover $q^{-1}(D)/J \cong D$ isometrically, and $D$ is  approximately unital.   
Hence $q^{-1}(D)$ is approximately unital by Lemma \ref{broyce}. 

(2)  \ 
Since $q$ is a contractive surjective  Jordan morphism it is easy to see that   the image under  $q$ of a HSA in
$A$ is a HSA.     Conversely if $D$ is a  HSA in $A/J$ then $q^{-1}(D)$ is an approximately unital 
subalgebra of $A$ by (1).   Moreover if $q(a) \in D$ and $b \in A$ then $q(aba) = q(a) q(b) q(a) \in D$,
so that $aba \in q^{-1}(D)$.   Hence $q^{-1}(D)$ is a HSA in $A$.

(3) \ If $p$ is open in $A^{**}$ then $q^{**}(p)$ is open as in the proof of \cite[Proposition 6.3]{BRI}. 
Conversely if $r$ is an open projection in $(A/J)^{**}$  then $r$ supports a HSA $D$ 
in $(A/J)^{**}$.   By (2) there is a HSA $E$ in $A$ with $q(E) = D$.   If $p$ is the support projection
of $E$ then 
$q^{**}(pA^{**}p) = r (A/J)^{**} r$ and 
$q^{**}(p) = r$ since $p$ and $r$ are the identities of these bidual Jordan algebras 
and $q^{**}$ is a Jordan morphism.  \end{proof} 

This result will be used in Section \ref{sec9}.

\section{Compact projections}
 \label{Copeak}     
Throughout this section $A$ is a Jordan operator algebra, and  $B$ is a $C^*$-algebra containing $A$ as a closed Jordan subalgebra.   
If $A$ is approximately unital  then a  closed projection $q\in A^{**}$ will be called {\em compact} in $A^{**}$ if there exists $a\in {\rm Ball}(A)$ with $q=
q \circ a$.   It is easily seen by considering the $2 \times 2$ matrix of $a$ with respect to $q$ that the latter is equivalent to $q = qaq,$ and also to $q = aq$ (or $q = qa$) in $B$.    
We say such $q$ is {\em positively compact} in $A^{**}$ if the element $a$ above may be chosen in $\frac{1}{2}{\mathfrak F}_A.$    Note that by taking $n$-th roots of this $a$ we can ensure that it is as close as we like to 
the set ${\rm Ball}(B)_+$ (that is, {\em nearly positive} in the sense e.g.\ of \cite{BRord}).   Any compact projection $q$ in $A^{\ast\ast}$ is compact  in $B^{\ast\ast}.$ Clearly any closed projection is compact and positively compact  in $A^{\ast\ast}$ if $A$ is unital. 
Any closed projection dominated by a compact projection in $A^{\ast\ast}$ is compact.
 If $q$ is a compact projection with $q = qaq$ for $a\in {\rm Ball}(A)$, and if $(e_t)$ is a partial cai in $\frac{1}{2} {\mathfrak F}_A$
for the HSA supported by $e-q$ where $e=1_{A^{\ast\ast}},$ then $e-e_t\to q$ weak*. Let $y_t=a \circ (e-e_t)\in {\rm Ball}(A).$ Then $y_t\to a \circ (e-(e-q))=a \circ q=q$ weak*.  We have 
$q y_t q=\frac{1}{2}(q a(q-e_tq)+ q(1-e_t)aq)=q,$ and similarly $qy_t=q.$   

\begin{theorem} \label{1Ur} {\rm 
(A first noncommutative Urysohn lemma for approximately unital Jordan operator algebras)} \ Let $A$ be an approximately unital Jordan operator algebra, a closed Jordan subalgebra of $C^*$-algebra $B,$ and let $q$ be a compact projection in $A^{**}.$ Then for any open projection $u\in B^{**}$ with $u\leq q,$ and any $\varepsilon>0,$ there exist an $a\in {\rm Ball}(A)$ with $qaq=q$ and $\Vert a(1-u)\Vert <\varepsilon$ and $\Vert (1-u) a\Vert< \varepsilon.$ 
(The latter products are in $B$.)   \end{theorem}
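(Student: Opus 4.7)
The plan is to produce a net in ${\rm Ball}(A)$ which weak* approximates $q$ while already satisfying $q y q = q$ exactly, and then convert this weak* approximation into norm smallness of the off-$u$ parts by Mazur's theorem applied to suitable quotients of $B$. The hypothesis will be used in the form $q \leq u$, equivalently $q(1-u) = (1-u)q = 0$, which is what makes the conclusion nonvacuous.

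First, since $q$ is compact in $A^{**}$, pick $a_0 \in {\rm Ball}(A)$ with $q a_0 q = q$; since $q$ is closed, $1-q$ is open in $A^{**}$, and the HSA of $A$ it supports admits a partial cai $(e_t)$ in $\frac{1}{2}{\mathfrak F}_A$. The discussion just before the theorem then delivers $y_t := a_0 \circ (1-e_t) \in {\rm Ball}(A)$ with $q y_t q = q$, in fact $q y_t = y_t q = q$, and $y_t \to q$ weak* in $A^{**}$, hence also in $B^{**}$.

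Let $R_u = \{b \in B : b u = b\}$ and $L_u = \{b \in B : u b = b\}$ be the closed right and left ideals of $B$ supported by $u$, so that $\|b(1-u)\| = d(b, R_u)$ and $\|(1-u)b\| = d(b, L_u)$. Their weak* closures in $B^{**}$ are $R_u^{\perp\perp} = B^{**} u$ and $L_u^{\perp\perp} = u B^{**}$, and $q \leq u$ places $q$ in each. Hence any functional in $R_u^\perp$ or $L_u^\perp$ vanishes at $q$, so $(y_t + R_u,\, y_t + L_u) \to 0$ weakly in $(B/R_u) \oplus^\infty (B/L_u)$. By Mazur's theorem one selects a convex combination $a := \sum_i \alpha_i y_{t_i} \in {\rm Ball}(A)$ with $\|a(1-u)\| < \varepsilon$ and $\|(1-u)a\| < \varepsilon$; the identity $q y_{t_i} q = q$ for each $i$ together with $\sum_i \alpha_i = 1$ forces $q a q = q$.

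The only mild obstacle is this Mazur upgrade: $y_t \to q$ lives in $A^{**}$ and the products $y_t(1-u), (1-u)y_t$ only converge to zero weak* in $B^{**}$, not in any weak topology of $B$ itself. Rerouting through the quotients $B/R_u$ and $B/L_u$, whose duals are precisely those functionals on $B$ whose weak*-continuous extensions to $B^{**}$ annihilate $B^{**}u$ or $uB^{**}$ (and in particular annihilate $q$), brings us back to genuine weak convergence in a Banach space where Mazur applies directly, avoiding any further $C^*$-algebraic interpolation in $B$ or delicate lifting back to $A$.
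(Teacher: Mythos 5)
Your proposal is correct and follows essentially the same route as the paper: build the net $(y_t)$ in ${\rm Ball}(A)$ with $qy_tq=q$ and $y_t\to q$ weak* (exactly the construction in the lines preceding the theorem), then extract a convex combination killing the off-$u$ parts — the paper delegates this last step to the convex-combination argument of \cite[Theorem 2.1]{BNII}, which is precisely the Mazur-in-the-quotients device you spell out via $B/R_u$ and $B/L_u$ and the identities $\|b(1-u)\|=d(b,\{c\in B: cu=c\})$, $\|(1-u)b\|=d(b,\{c\in B: uc=c\})$. You also correctly read the hypothesis as $q\leq u$ (the statement's ``$u\leq q$'' is a typo, as the paper's own proof confirms), and your observation that $qaq=q$ survives convex combinations closes the argument.
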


\begin{proof}
Let $q\in A^{\perp\perp},$ let $u$ be an open projection with $q\leq u,$ and let $\varepsilon>0$ be given. By e.g.\ the lines above the theorem, there exists a net $(y_t)$ in ${\rm Ball}(A)$ with $q y_t q=q$ and $y_t\to q$ weak*. As in the proof of \cite[Theorem 2.1]{BNII},
given $\varepsilon>0$ there is a convex combination $a$ of the $y_t$ with
$qaq=q$ and $\Vert a(1-u)\Vert < \varepsilon$ and $\Vert (1-u)a\Vert <\varepsilon.$  
\end{proof}

This result and the following one were found by the first author and Zhenhua Wang.

\begin{theorem}  \label{chcomp}  Let $A$ be an approximately unital closed Jordan subalgebra of a $C^*$-algebra $B.$ If $q$ is a projection in $A^{**}$ then the following are equivalent:
\begin{enumerate}
\item [(i)] $q$  is compact in $B^{**}$ in the sense of Akemann,
\item [(ii)] $q$ is a closed projection in $(A^1)^{**},$
\item [(iii)] $q$ is compact in $A^{\ast\ast},$
\item [(iv)] $q$ is positively  compact in $A^{**}.$
\end{enumerate}
        \end{theorem}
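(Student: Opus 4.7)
The plan is to prove the cycle $(iv) \Rightarrow (iii) \Rightarrow (i) \Leftrightarrow (ii) \Rightarrow (iv)$. The implication $(iv) \Rightarrow (iii)$ is immediate since $\frac{1}{2}{\mathfrak F}_A \subset {\rm Ball}(A)$. For $(iii) \Rightarrow (i)$, I invoke the $2\times 2$-matrix picture recalled just before the theorem: if $q = q \circ a$ for $a \in {\rm Ball}(A)$, then $qa = aq = q$ in $B$, so adjoining yields $qa^* = a^*q = q$, and hence $b := a^*a \in {\rm Ball}(B)_+$ satisfies $qb = q$. Combined with $q$ closed in $A^{**}$ (part of the definition of compactness in $A^{**}$) and Corollary~\ref{sup} giving $q$ closed in $B^{**}$, this shows $q$ is compact in $B^{**}$ in Akemann's sense.

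For $(i) \Leftrightarrow (ii)$: by definition, $q$ is compact in $B^{**}$ iff $q$ is closed in $(B^1)^{**}$. Since $q \in A^{**} \subset (A^1)^{**}$ and $A^1$ is a closed Jordan subalgebra of $B^1$, applying Corollary~\ref{sup} to this inclusion translates this into $1-q$ being open in $(A^1)^{**}$, i.e., (ii).

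The central implication is $(ii) \Rightarrow (iv)$. From (ii), $1-q$ is open in $(A^1)^{**}$, and $e := 1_{A^{**}}$ is clopen in $(A^1)^{**}$ (being the support of the approximately unital subalgebra $A$ of its unitization $A^1$). So the commuting product $e - q = e(1-q)$ is open in $(A^1)^{**}$ (cf.\ Proposition~\ref{commop}), and since $e - q \in A^{\perp\perp}$, Corollary~\ref{sup} gives that $e - q$ is open in $A^{**}$. Hence $e - q$ supports a HSA $D \subset A$ with a partial cai $(e_t) \subset \frac{1}{2}{\mathfrak F}_A$ converging weak* to $e - q$, and $qe_t = e_tq = 0$. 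To produce the desired single element $a \in \frac{1}{2}{\mathfrak F}_A$ with $q \circ a = q$, I parallel the construction in the paragraph just before the theorem: pick a partial cai $(g_s) \subset \frac{1}{2}{\mathfrak F}_A$ of $A$ itself and consider $y_{s,t} = g_s \circ (e - e_t) = g_s - g_s \circ e_t \in A$. The estimate $\|e - e_t\| \leq 1$, which follows from $(e-e_t)^*(e-e_t) \leq e - (e_t+e_t^*) + e_t^* e_t \leq e$ using $e_t + e_t^* \geq 2 e_t^* e_t$ (a consequence of $e_t \in \frac{1}{2}{\mathfrak F}_A$), keeps $y_{s,t} \in {\rm Ball}(A)$, and a convexity/compactness argument combined with the Jordan identities should let us extract a single $\frac{1}{2}{\mathfrak F}_A$-element satisfying the exact equation $q \circ a = q$.

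The main obstacle will be enforcing the exact equation $q \circ a = q$ simultaneously with $a \in \frac{1}{2}{\mathfrak F}_A$: the natural witnesses in unital algebras (such as $a_0 = 1$, or $a_0 = 1 - g$ for $g$ in a cai of the HSA of $A^1$ supported by $1-q$) live in $A^1$ with scalar part tending to but never equaling $1$, which obstructs a naive restriction to $A$. Overcoming this will likely require either a Jordan peak interpolation argument (invoking Theorem~\ref{1Ur} together with the already-established $(i)$) to extract an element of $A$ peaking above $q$, then refining it to $\frac{1}{2}{\mathfrak F}_A$ via the $y_{s,t}$ construction above, or a direct convex-combination argument that simultaneously controls the $\frac{1}{2}{\mathfrak F}_A$-norm and preserves the exact $qa = q$ condition using the $A$-openness of $e - q$.
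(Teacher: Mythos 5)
Your easy implications are fine, but the hard direction (ii) $\Rightarrow$ (iv) is not actually proved, and this is precisely where the content of the theorem lies. Your construction $y_{s,t}=g_s\circ(e-e_t)$ satisfies $q\,y_{s,t}\,q = q g_s q$ (since $q(e-e_t)=(e-e_t)q=q$), and $qg_sq$ only converges weak* to $q$ -- it never equals $q$, and no convex combination of the $y_{s,t}$ will repair this. Of your two proposed escape routes, the peak-interpolation one is circular: Theorem \ref{1Ur} takes as hypothesis that $q$ is compact in $A^{**}$, which is exactly (iii), the statement you are trying to reach from (ii). The paper's actual argument (following the associative case in \cite[Theorem 2.2]{BNII}) supplies the missing idea, and it is the one your "main obstacle" paragraph is circling: with $e=1_{A^{**}}$ and $f=1-e$ central in $(A^1)^{**}$ with $fA^1f\subset\Cdb f$, one takes the HSA $C$ of $A^1$ supported by the open projection $1-q$ (not merely $e-q$), observes that the HSA $D$ of $A$ supported by $e(1-q)=e-q$ is an approximately unital Jordan \emph{ideal} in $C$ whose quotient $C/D\cong fC^{**}f=\Cdb f$ is one-dimensional, and then \emph{lifts} the identity of this quotient (via the Jordan analogue of the lifting result, \cite[Proposition 3.28]{BWj}) to an element $b\in \frac{1}{2}{\mathfrak F}_{A^1}\cap C$ with $fbf=f$, i.e.\ with scalar part exactly $1$. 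Then $a=1-b$ lies in $A$ (the scalar parts cancel exactly, resolving your "tending to but never equaling $1$" obstruction), $a\in\frac{1}{2}{\mathfrak F}_A$, and $qaq=q-qbq=q$ since $b\in C$ forces $qbq=0$. Without this quotient-and-lift step your argument does not close.

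Two smaller points. First, $e$ is not clopen in $(A^1)^{**}$ when $A$ is nonunital ($f(A^1)^{**}f\cap A^1=\{0\}$, so $f$ is not open there); you only need $e$ \emph{open} together with Proposition \ref{commop} to get $e-q$ open, so the conclusion survives, but the justification should be corrected. Second, your route (iii) $\Rightarrow$ (i) deduces "$q$ closed in $B^{**}$" from "$q$ closed in $A^{**}$" via Corollary \ref{sup}, but that corollary controls $e-q$, not $1_{B^{**}}-q$; the paper instead proves (iii) $\Rightarrow$ (ii) directly, using the net $y_t=a\circ(e-e_t)$ together with the identity $1-y_t=(1-q)(1-y_t)(1-q)$ to exhibit $1-q$ as open in $(A^1)^{**}$, and then gets (i) from (ii) by Corollary \ref{sup} applied to $A^1\subset B^1$, which avoids this issue.
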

\begin{proof}   We merely discuss the emendations that need to be made to the
proof of \cite[Theorem 2.2]{BNII}--we will omit mentioning
arguments which are identical.  
As in that proof we may assume that $A$ is not unital (if 
$A$ is unital then compact is the same as closed, and then this
follows from Corollary \ref{sup}), and that 
$1_{A^1} = 1_{B^1}$.  The proof that (iii) implies (ii)
relies as in \cite{BNII} on the lines above Theorem \ref{1Ur} concerning
$(y_t)$ together
with the fact that $1-y_t = (1-q) (1-y_t) (1-q)$,
which is clear since $q y_t = y_t q = q$ as we said after the definition of 
compact  projections above.  Thus taking the limit with $t$,
we see that $1-q \in ((1-q) (A^1)^{**} (1-q) \cap A^1)^{\perp \perp}$,
which says that $1-q$ is open and so $q$ is closed in $(A^1)^{**}$.  
 If $q$ is closed in $(A^1)^{**}$ 
then $q$ is closed in $A^{**}$  
since $e-q = e(1-q)$ is open by Proposition \ref{commop}.
Here $e = 1_{A^{**}}$.
   The hard direction is that 
(ii) implies (iv).  Following the argument for this
in \cite[Theorem 2.2]{BNII} we obtain that $f = 1-e$ is central
and $f A^1 f \in \Cdb f$.   Also $e(1-q)$ is open as we said above,
so supports a HSA $D$ in $A^1$ which by the argument we are following is
an approximately unital Jordan ideal (by e.g.\ \cite[Theorem 3.25]{BWj}) 
in the HSA $C$ in $A^1$ supported by $1-q$.  Since  multiplying
by the central projection $f$ on $C^{**}$ has kernel $D^{\perp \perp}$
we obtain the centered equation in the proof we are following.
At some point we have to appeal to \cite[Proposition 3.28]{BWj} in place of the
reference to \cite{BRI} there.  The last line of the
proof we are following should be changed to $q(1-b)q = q$, and this holds because
$qbq = 0$ since $b \in C$.  
        \end{proof}

It follows just as in \cite[Lemma 6.1]{BRII} that a closed projection $q$ for an approximately unital Jordan
operator algebra $A$
is compact in $A^{**}$  if and only if $q = qx$ (or $q=xq$)
for some $x \in A$.   The product here is in any $C^*$-algebra  containing $A$ as a closed Jordan subalgebra.

If $A$ is not approximately unital  then a  projection $q\in A^{**}$ will be called {\em compact} in $A^{**}$ if it is
closed in $(A^1)^{**}$ with respect to $A^1$.   This is equivalent,  if 
$A$ is a  Jordan subalgebra of a $C^*$-algebra $B$, to the projection $q\in A^{**}$ being
compact in $B^{**}$.  For by Theorem \ref{chcomp}, $q$  is compact in $A^{**}$  if and only if 
$q$ is closed in $(B^1)^{**}$, which by the $C^*$-theory happens 
 if and only if  $q$  is compact in $B^{**}$.

\begin{corollary} \label{bnc23} Let $A$ be a Jordan operator algebra.  The infimum of any family of compact projections in $A^{**}$ is
compact  in $A^{**}$.  The supremum of two commuting compact projections in $A^{**}$ is
compact  in $A^{**}$.
\end{corollary}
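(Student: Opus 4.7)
The plan is to reduce both assertions to the corresponding classical facts in Akemann's $C^*$-algebraic noncommutative topology, via the equivalence already in hand. Concretely, pick any $C^*$-algebra $B$ containing $A$ as a closed Jordan subalgebra (e.g.\ $B=C^*(A)$). By Theorem \ref{chcomp} together with the paragraph after it that handles the non-approximately-unital case, a projection $q\in A^{\perp\perp}$ is compact in $A^{**}$ if and only if $q$ is compact in $B^{**}$ in Akemann's sense. So all that remains is to check that meets and commuting joins of such projections, computed in $B^{**}$, land back in $A^{\perp\perp}$, and to quote the $C^*$-algebraic versions of both conclusions.

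For the infimum, let $\{q_\alpha\}$ be a family of compact projections in $A^{**}$. By Theorem \ref{chcomp} each $q_\alpha$ is compact in $B^{**}$, and Akemann's theorem (see, e.g., \cite{Ake2,APfaces}) gives that $q:=\bigwedge_\alpha q_\alpha$ is compact in $B^{**}$. As recalled at the end of Section 1, $\Delta(A^{**})$ is a JW*-algebra and hence $A^{\perp\perp}$ is closed under meets of projections, so $q\in A^{\perp\perp}$; and any projection in $A^{\perp\perp}$ dominated by every $q_\alpha$ is a projection in $B^{**}$ dominated by the $B^{**}$-meet, so the meets computed in the two biduals coincide. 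Reversing the equivalence in Theorem \ref{chcomp}, $q$ is compact in $A^{**}$.

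For the supremum of two commuting compact projections $q_1,q_2\in A^{**}$, I would simply use $q_1\vee q_2 = q_1+q_2-q_1q_2$. Commutation gives $q_1q_2=q_1\circ q_2\in A^{\perp\perp}$, so this join lies in $A^{\perp\perp}$ and agrees with the $B^{**}$-join of $q_1$ and $q_2$. The $C^*$-algebraic fact that the supremum of two commuting compact projections in $B^{**}$ is compact (Akemann again) combined with Theorem \ref{chcomp} then yields compactness of $q_1\vee q_2$ in $A^{**}$.

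The main obstacle, such as it is, is the verification that meets and commuting joins computed inside $A^{\perp\perp}$ really agree with those computed in the ambient $B^{**}$. For the commuting join this is immediate from the explicit polynomial formula above; for arbitrary meets the argument is the short domination check just indicated. All the genuinely hard work has been done in Theorem \ref{chcomp}, so nothing further is needed beyond invoking the established $C^*$-algebraic results.
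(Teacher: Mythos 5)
Your proposal is correct and follows essentially the same route as the paper: reduce to Akemann's $C^*$-algebraic statements via the equivalence of compactness in $A^{**}$ and in $B^{**}$ (Theorem \ref{chcomp} and the remarks after it), and use the fact that $A^{\perp\perp}$ is closed under meets and joins of projections. The extra details you supply (the domination check that the two meets coincide, and the formula $q_1\vee q_2=q_1+q_2-q_1q_2$) are fine but are left implicit in the paper's one-line argument.
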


\begin{proof}  Let $A$ be a  Jordan subalgebra of a $C^*$-algebra $B$,
then this follows from the $C^*$-algebra case of the present result,
the fact above Corollary \ref{bnc23} that compactness of $q$ in  $A^{**}$
is equivalent to  being compact in $B^{**}$, and the 
fact towards the end 
of Section 1 in \cite{BWj} that $A^{\perp \perp}$ is closed under meets and joins of projections.  \end{proof}

\begin{corollary} \label{bnc24}  Let $D$ be a closed Jordan subalgebra of
a  Jordan
operator algebra $A$.
A projection $q \in D^{\perp \perp}$ is compact in $D^{**}$  if and only if $q$  is
compact  in $A^{**}$.    
\end{corollary}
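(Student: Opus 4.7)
The plan is to reduce both conditions to compactness in a common enveloping $C^*$-algebra, using the characterization established just above Corollary \ref{bnc23}.

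First I would embed $A$ as a closed Jordan subalgebra of some $C^*$-algebra $B$, giving a chain $D \subset A \subset B$.  The canonical weak*-continuous extensions of these inclusions identify $D^{**}$ with $D^{\perp\perp}$ sitting inside $A^{**} \cong A^{\perp\perp}$ sitting inside $B^{**}$, so the projection $q \in D^{\perp\perp}$ may be viewed as a projection in all three biduals at once.

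Next, I would invoke the characterization stated in the paragraph preceding Corollary \ref{bnc23}: for any Jordan subalgebra $E$ of $B$ and any projection $q \in E^{\perp\perp}$, compactness of $q$ in $E^{**}$ is equivalent to compactness of $q$ in $B^{**}$ in Akemann's sense.  In the approximately unital case this is exactly the equivalence (i)$\Leftrightarrow$(iii) of Theorem \ref{chcomp}.  In the general case, $q$ compact in $E^{**}$ means by definition that $q$ is closed in $(E^1)^{**}$, and since $E^1 \subset B^1$ and $E^1$ is (trivially) approximately unital, Theorem \ref{chcomp} applied to this pair (together with the $C^*$-algebra fact that closedness of $q$ in $(B^1)^{**}$ is the same as compactness of $q$ in $B^{**}$) gives the desired equivalence.

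Applying this equivalence with $E = D$ and with $E = A$ respectively, we conclude that $q$ is compact in $D^{**}$ iff $q$ is compact in $B^{**}$ iff $q$ is compact in $A^{**}$, which is what we wanted.  Since the hard work has already been done in Theorem \ref{chcomp}, there is no genuine obstacle here; the only thing to be slightly careful about is the non-approximately-unital case, where one must remember that compactness was defined via closedness in the unitization, and confirm that passing from $A^1 \subset B^1$ back down to $A \subset B$ does not disturb the conclusion.
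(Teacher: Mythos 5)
Your proposal is correct and is essentially the paper's own proof: both reduce compactness of $q$ in $D^{**}$ and in $A^{**}$ to compactness in a common containing $C^*$-algebra $B^{**}$, via the equivalence recorded just above Corollary \ref{bnc23} (itself resting on Theorem \ref{chcomp} and the definition via closedness in the unitization in the non-approximately-unital case). No further comment is needed.
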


\begin{proof}   Use the fact above Corollary \ref{bnc23} that compactness of $q$ in $D^{**}$ or  $A^{**}$
is equivalent,  if 
$A$ is a  Jordan subalgebra of a $C^*$-algebra $B$, to  being
compact in $B^{**}$. 
  \end{proof}
Corollary 2.5 of \cite{BNII} is clearly misstated (it is correct in the ArXiV version).  
It is generalized by  the following result, which is immediate from Corollary
\ref{bnc24}:
 
\begin{corollary} \label{bnc25}   Let $A$ be a Jordan operator algebra.
If a projection $q$ in $A^{**}$ is dominated by an
open projection $p$ in $A^{**}$, then
$q$ is compact in $p A^{**} p$ (viewed as the
second dual of the HSA supported by $p$),  if and only if $q$ is compact in $A^{**}$.
\end{corollary}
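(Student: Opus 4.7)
The plan is to reduce this directly to Corollary \ref{bnc24} by choosing the right subalgebra. Let $D$ denote the hereditary subalgebra of $A$ supported by the open projection $p$; its existence and the identification $D^{\perp\perp} = pA^{**}p$ follow from Theorem \ref{supp} (so in particular $D$ is approximately unital, with $p$ serving as the identity of $D^{**}$). Since $q \leq p$ we have $q = pqp \in pA^{**}p = D^{\perp\perp}$, so it makes sense to ask whether $q$ is compact in $D^{**}$.

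Now $D$ is a closed Jordan subalgebra of $A$ (indeed a HSA), and $q$ is a projection in $D^{\perp\perp}$, so Corollary \ref{bnc24} applies verbatim: $q$ is compact in $D^{**}$ if and only if $q$ is compact in $A^{**}$. Since the statement's ``compact in $pA^{**}p$ (viewed as the second dual of the HSA supported by $p$)'' is by definition compactness in $D^{**}$ under the canonical identification $D^{**} \cong pA^{**}p$, this is precisely what we wanted.

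The only step needing a moment's thought is that the definition of compactness in $D^{**}$ indeed coincides with the condition being checked in ``$pA^{**}p$'': since $D$ is approximately unital this just means there exists $a \in {\rm Ball}(D)$ with $q = q \circ a$, or equivalently $q = qap = paq$, which is the natural notion of compactness inside the unital Jordan algebra $pA^{**}p$ with unit $p$. With this identification in hand there is no obstacle, and the corollary follows immediately.
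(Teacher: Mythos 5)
Your proposal is correct and is exactly the paper's intended argument: the paper states that Corollary \ref{bnc25} "is immediate from Corollary \ref{bnc24}," and your reduction via the HSA $D$ supported by $p$, using $D^{\perp\perp}=pA^{**}p$ from Theorem \ref{supp}, is precisely that reduction spelled out. The only cosmetic quibble is that in your final paragraph compactness in $D^{**}$ also requires $q$ to be closed there, not merely that $q=q\circ a$ for some $a\in{\rm Ball}(D)$, but this does not affect the argument since Corollary \ref{bnc24} is applied to the full notion of compactness.
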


The word `intrinsic' is used to describe the next result because, in contrast to Theorem
\ref{1Ur} both projections $p, q$ are in $A^{**}$, and no containing 
$C^*$-algebra $B$  need be  mentioned in the
statement.

\begin{theorem} \label{bnc26}   {\rm (Intrinsic noncommutative  Urysohn lemma for 
Jordan operator algebras.) } \  Let  $A$ be a Jordan 
operator algebra.    Whenever a compact projection $q$ in $A^{**}$ is dominated by an
open projection $p$ in $A^{**}$, then there exists
$b \in \frac{1}{2} {\mathfrak F}_A$ with $q = qbq, b = pbp$.    Moreover, $q \leq u(b) \leq s(b)
\leq p$.
\end{theorem}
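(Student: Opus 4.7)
The strategy is to localize the construction to the hereditary subalgebra $D$ of $A$ supported by $p$ (which exists by Theorem \ref{supp}), so that the requirement $b = pbp$ becomes automatic from membership in $D$.

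First I would use Corollary \ref{bnc25} to transfer compactness: since $q$ is a compact projection of $A^{**}$ dominated by the open projection $p$, it is also compact when viewed as a projection of $D^{**} = p A^{**} p$. The HSA $D$ is approximately unital, so Theorem \ref{chcomp} applies to $D$ and upgrades compactness to \emph{positive} compactness in $D^{**}$. This produces an element $b \in \frac{1}{2}{\mathfrak F}_D$ satisfying $q = q \circ b = q b q$, and since $b$ is a contraction the elementary observation recalled in Section 1 strengthens this to $q b = b q = q$.

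To return to $A$, I would invoke the identifications ${\mathfrak F}_D = {\mathfrak F}_A \cap D$ from Section 1 and $D = \{a \in A : p a p = a\}$ from Theorem \ref{supp}: these immediately give $b \in \frac{1}{2}{\mathfrak F}_A$ and $b = p b p$, proving the main statement. For the `moreover' clause, all three inclusions $q \leq u(b) \leq s(b) \leq p$ follow by passing to weak* limits inside the commutative associative algebra $\mathrm{oa}(b)$: the containment $s(b) \leq p$ holds because each root $b^{1/n}$ lies in $D$, hence in $p A^{**} p$, and the weak* limit stays there; the middle inclusion $u(b) \leq s(b)$ is the standard comparison recalled at the end of Section 1 for elements of $\frac{1}{2}{\mathfrak F}_A$; and $q \leq u(b)$ follows from $q b^n = b^n q = q$ for every $n \in \Ndb$, which persists in the weak* limit defining $u(b)$.

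The main substantive step is the reduction to positive compactness in $D^{**}$ via Theorem \ref{chcomp}, where the new Jordan-algebraic technology of Section 4 actually enters; everything else is bookkeeping, and I do not anticipate any genuine obstacle beyond verifying that these pieces fit together.
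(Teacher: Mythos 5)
Your proposal is correct and follows essentially the same route as the paper: the paper's proof defers to \cite[Theorem 2.6]{BNII} with Corollary \ref{bnc25} substituted in, and that argument is precisely your localization to the HSA $D$ supported by $p$, followed by the upgrade from compact to positively compact in $D^{**}$ via Theorem \ref{chcomp} and the identifications ${\mathfrak F}_D = {\mathfrak F}_A \cap D$, $D = \{a \in A : pap = a\}$. Your justification of the `moreover' clause (working in ${\rm oa}(b)$, using $qb^nq=q$ and $b^k = pb^kp$) is also exactly what the paper's closing sentence compresses.
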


\begin{proof}  Just as in the proof of
\cite[Theorem 2.6]{BNII}, but using our Corollary \ref{bnc25} in place of the variation from \cite{BNII} used there.  The last statement follows from the
associative operator algebra case, since it may be viewed as a statement
in the operator algebra generated by $b$.    \end{proof}

By taking $n$th roots of the element $b$ in the last proof we may assume that $b$ is
`nearly positive' in the the sense of \cite{BRord} (c.f.\ Theorem 4.1 there).

\section{Peak projections and peak interpolation}
 \label{Peak}

We refer the reader to \cite{Bnpi} for a survey on noncommutative peak interpolation and peak projections 
for associative operator algebras.

As in \cite[Section 3]{BNII} if $A$ is  Jordan
operator algebra and $x \in {\rm Ball}(A)$, then we say that  $x$
{\em peaks} at a nonzero projection $q \in A^{**}$,
or that $q$ is a {\em peak projection} for $A$,
 if $A$ is a closed Jordan subalgebra of
a $C^{\ast}$-algebra $B$ and $x$ peaks at $q$ in the $C^*$-algebraic sense
at the start of \cite[Section 3]{BNII} with respect to $B$:
for example there exists a $x \in {\rm Ball}(B)$ such that $qxq = q$ (which is equivalent to
$xq = q$ since $x \in {\rm Ball}(A)$), and $\varphi(x^*x) < 1$ for all $\varphi \in Q(B)$
with $\varphi(q)  =0$.   
This is essentially Hay's definition
of a peak projection \cite{Hay}, and it forces 
$q$ to be compact with respect to that $C^*$-algebra $B$ as explained
at the start of \cite[Section 3]{BNII},
and hence also compact with respect to $A$ as we said
above Corollary \ref{bnc23}.
 In \cite[Lemma 3.1]{BNII} some equivalent conditions
are given, including that the tripotent mentioned towards the end
of the introduction of \cite{BNII}, namely the weak* limit of $x(x^*x)^n$,
is a projection.   The latter projection is $q$ in the notation at the start of
this paragraph, and $x^n \to q$ weak*.
If $q$ is a peak for $x$ as above then we write $q = u(x)$.
  This latter definition is independent of the 
particular containing $C^{\ast}$-algebra $B$.  Indeed we have:

\begin{proposition}  \label{peakare}  If $A$ is a Jordan operator algebra,
then the peak projections for $A$ are the weak* limits of $a^n$ for
$a \in {\rm Ball}(A)$, in the case such weak* limit exists.
\end{proposition}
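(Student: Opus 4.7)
The plan is to pass to any containing $C^*$-algebra $B$, invoke the characterization of peak projections from \cite[Lemma 3.1]{BNII} inside $B$, and check that the property ``$a^n \to q$ weak*'' transfers cleanly between $A^{**}$ and $B^{**}$.

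For the forward direction, suppose $q$ is a peak projection for $A$. By the definition recalled just before the proposition, there is some $C^*$-algebra $B$ containing $A$ as a closed Jordan subalgebra and some $a \in {\rm Ball}(A)$ such that $q = u(a)$ in the $C^*$-sense with respect to $B$. The equivalences in \cite[Lemma 3.1]{BNII} then give $a^n \to q$ weak* in $B^{**}$. The canonical map $\iota : A^{**} \to B^{**}$ is weak*-weak* continuous, and any $\varphi \in A^*$ extends by Hahn--Banach to some $\tilde\varphi \in B^*$ satisfying $\hat\varphi(\eta) = \tilde\varphi(\iota(\eta))$ for every $\eta \in A^{**}$. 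Since $q \in A^{**}$ and each $a^n \in A$, this shows $a^n \to q$ weak* in $A^{**}$ as well.

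For the converse, suppose $a \in {\rm Ball}(A)$ and $a^n \to q$ weak* in $A^{**}$ with $q$ a nonzero projection. Fix any $C^*$-algebra $B$ containing $A$ as a closed Jordan subalgebra. By the same compatibility of weak* topologies, $a^n \to q$ weak* in $B^{**}$; since $q$ is a nonzero projection in $B^{**}$, \cite[Lemma 3.1]{BNII} applied to $a \in {\rm Ball}(B)$ forces $q$ to be a peak projection for $a$ in the $C^*$-sense with respect to $B$, and hence a peak projection of $A$ by definition. A side benefit is the independence of the containing $C^*$-algebra asserted immediately after the definition: the property ``$a^n$ has weak* limit $q$'' is intrinsic to $A^{**}$, so if $q$ peaks at $a$ relative to one containing $C^*$-algebra $B_1$ it does so relative to any other $B_2$. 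I do not anticipate any serious obstacle; the technical content lives entirely in the associative result \cite[Lemma 3.1]{BNII}, and only the compatibility of weak* topologies on the two biduals needs to be tracked.
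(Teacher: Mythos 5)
Your argument is correct in substance but routes through a different associative result than the paper does. The paper's proof is a one-liner: it observes that both the hypothesis ``$a^n \to q$ weak*'' and the conclusion can be checked inside ${\rm oa}(a)$, the closed commutative associative operator algebra generated by $a$ --- which, as noted in the introduction, coincides with the closed Jordan algebra generated by $a$ --- and then the statement is literally \cite[Lemma 1.3]{BRII}. You instead go up to the containing $C^*$-algebra $B$ and invoke the characterization in \cite[Lemma 3.1]{BNII} there, using the compatibility of the weak* topologies of $A^{**}$ and $A^{\perp\perp}\subset B^{**}$ to transport the convergence in both directions. Both reductions work, and both correctly isolate the only Jordan-specific content as this weak*-topology bookkeeping; your version has the side benefit, which you note, of making explicit the independence of the definition from the choice of $B$, while the paper's version is shorter because \cite[Lemma 1.3]{BRII} is exactly the associative form of the proposition being proved.

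Two small patches are needed in your converse direction. First, \cite[Lemma 3.1]{BNII} is stated under the standing hypothesis that $q$ is a nonzero projection with $aq=q$ (equivalently $qaq=q$); before invoking it you should record that $aq=q=qa$ follows from comparing the weak* limits of $a^{n+1}=a\cdot a^n$ and of $a^{n+1}$ itself, using separate weak* continuity of multiplication in $B^{**}$. Second, the proposition as stated does not assume the weak* limit is a projection --- that it automatically is one is part of the content (and of \cite[Lemma 1.3]{BRII}): from $qa=q$ one gets $qa^n=q$ for all $n$, and letting $n\to\infty$ gives $q^2=q$, so $q$ is an idempotent of norm at most one and hence an orthogonal projection. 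With those two lines added your proof is complete.
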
 

\begin{proof}   This is as in \cite[Lemma 1.3]{BRII}, the main part of which is
a computation in the associative
operator algebra generated by $a$.   \end{proof}

It follows from the last results that if  $q \in A^{**}$  and  
 $q$ is a peak projection for $A$ then  $q$ is a peak projection for $A^1$
(we shall prove the converse of this in Proposition \ref{BRII64}), and for 
  $x\in  {\rm Ball}(A)$ we have that $x$ peaks at $q$ with respect 
to $A$    if and only if $q$ is a peak for $x$ with respect 
to $A^1$ (c.f.\ \cite[Corollary 3.2]{BNII}).    
If $a \in \frac{1}{2} {\mathfrak F}_A$ then $(a^n)$ converges weak*
to a projection $q$ at which $a$ peaks.   More generally all the statements
in \cite[Corollary 3.3]{BNII} will be true in the Jordan
operator algebra case since they follow from the same
statements in the (associative) operator algebra generated by $a$. 
As we said in the introduction if $x \in {\rm Ball}(A)$ peaks at  $u(x)$ 
and $A$ is unital  then we have the relation $u(x) = 1 - s(1-x)$.  

The following is a variant on  \cite[Proposition 5.5]{Hay}, and may be new in its present generality even for 
associative operator algebras:

\begin{corollary} \label{cipeak}  If $A$ is a Jordan operator algebra,
then the infimum of any countable collection of peak projections for $A$, if it is nonzero,
 is a peak projection
for $A$.    Indeed if $(a_n)$ is a sequence of contractions in $A$ each of which have a nonzero 
peak projection $u(a_n)$ then $\wedge_n \, u(a_n) = u(a)$ where the latter
is the peak of $a = \sum_n \,
\frac{a_n}{2^n}$.    \end{corollary}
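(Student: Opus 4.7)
The plan is to reduce to the associative $C^{\ast}$-algebra case and invoke \cite[Proposition 5.5]{Hay}, of which the present corollary is explicitly a variant. Embed $A$ as a closed Jordan subalgebra of a $C^{\ast}$-algebra $B$. The series $a = \sum_{n} 2^{-n} a_{n}$ is norm-convergent since each $a_n$ is a contraction, so $a \in {\rm Ball}(A)$. It suffices to prove the more precise second assertion, since any countable family of peak projections for $A$ arises as $(u(a_n))$ for some sequence of contractions $a_n \in A$.

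I would then transfer everything to $B^{\ast\ast}$. By the definition of peak projections for a Jordan operator algebra adopted at the start of this section, each $u(a_n)$ is a peak projection in $B^{\ast\ast}$ in the $C^{\ast}$-algebraic sense, and by Proposition \ref{peakare} it coincides with the weak* limit of $a_n^{k}$ as $k \to \infty$, a limit that is the same whether computed in $A^{\ast\ast}$ or in $B^{\ast\ast}$. Set $q := \bigwedge_n u(a_n)$, which is nonzero by hypothesis. Since $A^{\perp\perp} = A^{\ast\ast}$ is closed under meets of projections (recalled at the end of Section 1), $q \in A^{\ast\ast}$.

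Applying \cite[Proposition 5.5]{Hay} to the sequence $(a_n)$ inside the $C^{\ast}$-algebra $B$ produces the peak projection of $a$ in $B^{\ast\ast}$ and identifies it with $q$; equivalently, $a^{k} \to q$ weak* in $B^{\ast\ast}$. Because this weak* limit is computed identically in the subspace $A^{\ast\ast} \subset B^{\ast\ast}$, Proposition \ref{peakare} identifies $q$ as the peak projection of $a$ with respect to $A$ as well. This simultaneously gives the stated formula and (choosing $a$ from any prescribed countable family of peaks) the first assertion of the corollary.

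The main obstacle is really the invocation of Hay's $C^{\ast}$-algebraic result; granting that, the two inputs enabling the reduction, namely the intrinsic nature of peak projections (Proposition \ref{peakare}) and the closure of $A^{\perp\perp}$ under meets, are already in hand. If one prefers to avoid a direct appeal to \cite{Hay}, one can first use weak*-continuity of one-sided multiplication by the fixed element $a_n$ to deduce $a_n q = q a_n = q$ for each $n$, hence $qaq = q$ and $a = q \oplus (1-q)a(1-q)$ in the $2\times 2$ Peirce decomposition at $q$; the convergence $a^k \to q$ weak* then reduces to showing that the powers of $(1-q)a(1-q)$ vanish weak*, which is a computation inside the associative commutative $C^{\ast}$-subalgebra ${\rm oa}(a) \subset B$ together with the definition of $q$ as the meet of the $u(a_n)$.
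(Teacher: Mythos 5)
Your reduction to a containing unital $C^*$-algebra $B$ is exactly the paper's first move, and your preliminary observations are all sound: $q:=\wedge_n u(a_n)$ lies in $A^{\perp\perp}$ because the latter is closed under meets; $a_nq=qa_n=q$ for each $n$, whence $aq=qa=q$ and $a=q+(1-q)a(1-q)$; and Proposition \ref{peakare} makes the conclusion independent of the ambient algebra. The difficulty is that neither of your two routes actually proves the one substantive claim, namely that $a^k\to q$ weak*. The paper pointedly remarks that this corollary ``may be new in its present generality even for associative operator algebras,'' and accordingly it does \emph{not} cite \cite[Proposition 5.5]{Hay}; instead it reruns the \emph{argument} of \cite[Proposition 1.1]{BNII} inside the unital $C^*$-algebra $B$, with ${\rm Ball}(A^*)$ replaced by the state space, together with the observation that a state $\psi$ satisfies $\psi(u(a_n))=1$ for all $n$ if and only if $\psi(\wedge_n u(a_n))=1$. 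A bare appeal to Hay's proposition therefore begs the question: that result is not stated for arbitrary contractions $a_n$ with the exact identification $\wedge_n u(a_n)=u\bigl(\sum_n 2^{-n}a_n\bigr)$, and even if it produced \emph{some} element of $B$ peaking at $q$, that would not suffice here --- one needs the peaking element to be the specific $a\in{\rm Ball}(A)$, since $A$ is only a Jordan subalgebra of $B$.

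Your fallback route contains the same gap in different clothing. Having written $a=q+b$ with $b=(1-q)a(1-q)$, the assertion $a^k\to q$ weak* is \emph{equivalent} to $b^k\to 0$ weak*, so this is not a reduction; and it is not ``a computation inside ${\rm oa}(a)$.'' The projection $q$ is a meet of the projections $u(a_n)$, which live in the biduals of the separate algebras ${\rm oa}(a_n)$, and nothing intrinsic to ${\rm oa}(a)$ rules out $b$ having an isometric or unitary-like part whose powers fail to converge weak* to $0$ (powers of a general contraction need not converge weak* at all). The missing ingredient is the state-space/strict-convexity step that is the actual content of the adapted \cite[Proposition 1.1]{BNII} argument: if $\psi$ is a state of $B$ with $\psi(a^*a)=1$, then since $a$ is the absolutely convergent convex combination $\sum_n 2^{-n}a_n$ of contractions, Cauchy--Schwarz forces $\psi(a_m^*a_n)=1$ for all $m,n$, in particular $\psi(a_n^*a_n)=1$ for every $n$; by the characterization of peaking in \cite[Lemma 3.1]{BNII} this gives $\psi(u(a_n))=1$ for every $n$, hence $\psi(q)=1$; combined with $aq=q$ this verifies the peaking criterion for the pair $(a,q)$ and yields $u(a)=q$. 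That chain is exactly what your proposal omits, and without it the proof is incomplete.
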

  
\begin{proof}   If  $A$ is a closed Jordan subalgebra of a unital $C^*$-algebra $B$, then we may replace
$A$ by $B$.   Thus this corollary is a result about (unital) $C^*$-algebras, and  then the result
follows from the argument 
for \cite[Proposition 1.1]{BNII}, but with Ball$(A^*)$ replaced by the state space.   Note that 
if $\psi(u(a_n)) = 1$ for all $n$ if  and only if $\psi(\wedge_n \, u(a_n) ) = 1$.    Indeed the one direction of this
is obvious.   For the other    direction, if $\psi(u(a_n)) = 1$ for all $n$, viewing $\psi$ as a weak* continuous
state on the bidual, it is well known that $\psi(\wedge_n \, u(a_n) ) = 1$.  The argument we are following
then gives $\wedge_n \, u(a_n) = u(a)$.    
\end{proof}

\begin{lemma} \label{Hayhm}   Let $A$ be a Jordan operator algebra, let $q$ be  a projection
 in $A^{**},$ and let $a \in  {\rm Ball}(A)$ with $qaq=q$.   If $a$ peaks at $u(a)$,
 then $q\leq u(a).$       If in addition $A$ is unital then $q\leq u(\frac{1}{2}(1+a))$ even if $a$ has no peak.
\end{lemma}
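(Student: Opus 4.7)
The plan is to reduce everything to the weak* continuity of left multiplication by $q$ in a containing von Neumann algebra, then bootstrap the unital case by replacing $a$ with $\frac{1}{2}(1+a)$, which is forced into $\frac{1}{2}\mathfrak{F}_A$ and therefore automatically peaks.

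First I would fix a $C^*$-algebra $B$ containing $A$ as a closed Jordan subalgebra. The hypothesis $qaq = q$ together with $\|a\|\leq 1$ falls directly into the preliminary fact recorded in Section 1 (that for a contraction, $qxq = q$ forces $qx = xq = q$), so I get $qa = aq = q$ in $B^{**}$, and hence $qa^n = q$ for every $n\geq 1$. By Proposition \ref{peakare}, the assumption that $a$ peaks at $u(a)$ means $a^n \to u(a)$ weak* in $B^{**}$. Since $B^{**}$ is a von Neumann algebra, left multiplication by the fixed element $q$ is weak* continuous on bounded sets, so $q\cdot u(a) = \lim_n q a^n = q$. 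As $q$ and $u(a)$ are both projections, $qu(a) = q$ is equivalent to $q\leq u(a)$, which is the conclusion.

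For the second assertion, set $b = \frac{1}{2}(1+a)$ in the unital algebra $A$. Then $\|1 - 2b\| = \|a\|\leq 1$, so $b \in \frac{1}{2}\mathfrak{F}_A$, and as recalled in the last paragraph of the introduction, $u(b)$ exists as the weak* limit of $b^n$ and is a projection in $A^{**}$. The hypothesis $qaq = q$ gives $qbq = \frac{1}{2}(q + qaq) = q$, and if $q\neq 0$ this forces $\|b\|\geq 1$ hence $\|b\| = 1$, so $u(b)\neq 0$ (the case $q = 0$ is trivial). Applying the first part of the lemma with $a$ replaced by $b$ yields $q\leq u(b) = u(\frac{1}{2}(1+a))$.

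The only delicate point is the weak* continuity step: one must be careful that multiplication in $B^{**}$ is only separately (not jointly) weak* continuous, but here only a single fixed factor $q$ is being multiplied across the limit, so this suffices. Everything else is formal, and the unital case is a one-line reduction once the peaking case is handled. I do not foresee any real obstacle beyond recording these points cleanly.
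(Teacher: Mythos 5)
Your proposal is correct and follows essentially the same route as the paper: pass from $qaq=q$ to $qa^n=q$ via the contraction fact from Section 1, take the weak* limit against the fixed projection $q$ to get $q\leq u(a)$, and for the unital case substitute $\frac{1}{2}(1+a)\in\frac{1}{2}{\mathfrak F}_A$, which is guaranteed to peak. Your extra remarks on separate weak* continuity and on the trivial case $q=0$ are harmless elaborations of what the paper leaves implicit.
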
  \begin{proof} As we said $qaq=q$
is equivalent to $q = qa$, so that $qa^n=q = qa^nq$, and in the limit
$q u(a) q = q$.    So $q\leq u(a).$   For the last statement replace $a$ by $\frac{1}{2}(1+a)$ in
the last argument,   $\frac{1}{2}(1+a)$ does have a peak
by a fact above Corollary \ref{cipeak}  since it is
in $\frac{1}{2} {\mathfrak F}_A$.  \end{proof}

Some of the following generalization of \cite[Theorem 3.4]{BNII} was found
by the first author and Zhenhua Wang.

\begin{theorem}  \label{inco}
If $A$ is an approximately unital Jordan operator algebra, then
\begin{enumerate} 
\item [{\rm (1)}]
  A
projection $q \in A^{**}$ is  compact  if and only if it is  a decreasing
limit of peak projections for $A$.  This is equivalent to $q$ being the infimum
of a set of peak projections for $A$.
\item [{\rm (2)}] 
If $A$  is separable, then the compact
projections in $A^{**}$ are precisely the peak projections.
\item  [{\rm (3)}]  
A projection in $A^{**}$
is a peak projection in $A^{**}$
 if and only if it is of form $u(a)$
for some $a \in \frac{1}{2} {\mathfrak F}_{A}$.
\end{enumerate}
\end{theorem}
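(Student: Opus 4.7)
The plan is to prove the three parts in order, using as main tools the intrinsic noncommutative Urysohn lemma (Theorem~\ref{bnc26}), the closure of compact projections under arbitrary infima (Corollary~\ref{bnc23}), and the preservation of peaks under countable meets (Corollary~\ref{cipeak}). For part~(1), the backward implication is automatic: every peak projection is compact (by the discussion at the start of this section), and an infimum of compact projections is compact by Corollary~\ref{bnc23}. For the forward implication, given a compact $q \in A^{**}$, I would invoke Theorem~\ref{bnc26} once for every open projection $p$ in $A^{**}$ with $q \leq p$, obtaining $b_p \in \frac{1}{2}{\mathfrak F}_A$ with $q \leq u(b_p) \leq s(b_p) \leq p$. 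Since $q$ is closed, and $A^{\perp\perp}$ is closed under arbitrary meets and joins of projections (Section~1), Akemann-type duality in the enveloping von Neumann algebra of $A^{**}$ gives $q = \inf\{p : p \text{ open in } A^{**},\, q \leq p\}$, and hence $q = \inf_p u(b_p)$. The downward directed family of finite meets of the $u(b_p)$ is a decreasing net of compact projections (again by Corollary~\ref{bnc23}) converging weak* to $q$, which realises $q$ as a decreasing limit of peak projections in the sense that it is the infimum of the set of peaks $\{u(b_p)\}$.

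For part~(2), when $A$ is separable so is $\mathfrak{r}_A$, and because every open projection in $A^{**}$ is the supremum of support projections of real positive elements of $A$ (cf.\ \cite[Lemma 3.12]{BWj} and Section~\ref{consq}), one may extract a countable subfamily $(p_n)$ of the open projections dominating $q$ that is cofinal from above and whose infimum is still $q$. Applying Theorem~\ref{bnc26} to each such $p_n$ produces $b_n \in \frac{1}{2}{\mathfrak F}_A$ with $q = \inf_n u(b_n)$; this is now a countable meet of peaks, and Corollary~\ref{cipeak} concludes that $q$ itself is a peak, realised concretely as $q = u\bigl(\sum_n 2^{-n} b_n\bigr)$.

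Part~(3)'s backward direction is a restatement of the facts on $u(a)$ for $a \in \frac{1}{2}{\mathfrak F}_A$ recorded in the introduction. For the forward direction, suppose $q = u(y)$ is a peak for $A$ with $y \in {\rm Ball}(A)$. My plan is to reduce to the associative case by passing to the closed associative operator subalgebra $\mathrm{oa}(y) \subseteq A$: since $q$ is the weak* limit of $y^n$, which lies in $\mathrm{oa}(y)^{**}$, $q$ is a peak projection for $\mathrm{oa}(y)$, and the associative version of the theorem (\cite[Theorem 3.4]{BNII}) applied in $\mathrm{oa}(y)^1 \subseteq A^1$ yields $a_0 \in \frac{1}{2}{\mathfrak F}_{\mathrm{oa}(y)^1}$ with $u(a_0) = q$. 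The main obstacle is then to convert $a_0$ into an element of $A$ itself, with $\frac{1}{2}{\mathfrak F}_A$-membership and peak $q$ both intact: here I would pick a positively-compact witness $c \in \frac{1}{2}{\mathfrak F}_A$ for $q$ (available by Theorem~\ref{chcomp}(iv), since $q$ is compact in $A^{**}$), combine $c$ with $a_0$ through the Jordan identity $aba = 2(a\circ b)\circ a - a^2 \circ b$ from Section~1 to force membership in $A$, and verify using the identity ${\mathfrak F}_A = {\mathfrak F}_{A^1} \cap A$ (Section~1) that the resulting element still lies in $\frac{1}{2}{\mathfrak F}_A$ with the correct peak. This final Jordan-specific passage is the only substantive new ingredient beyond the tools assembled in Sections~\ref{Jcompl}--\ref{Copeak}.
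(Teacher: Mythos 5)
Your treatment of the easy directions is fine, but the engine of your proofs of (1) and (2) --- the assertion that a compact projection $q$ satisfies $q=\inf\{p : p \ \text{open in}\ A^{**},\, q\le p\}$ --- is false in general, and with it the squeeze $q\le\inf_p u(b_p)\le\inf_p p=q$ collapses. The corresponding statement is true for the open projections of $B^{**}$ dominating $q$ (every open projection of $B^{**}$ is a supremum of compact projections, so every closed projection is an infimum of open ones), but Theorem \ref{bnc26} requires $p$ to be open in $A^{**}$, i.e.\ to lie in $A^{\perp\perp}$, and there may be far too few of these. Concretely, take $A$ the disc algebra inside $B=C({\mathbb T})$ and $q=\chi_{\{1\}}$, a compact (indeed peak) projection in $A^{**}$. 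Using the F.\ and M.\ Riesz theorem one checks that an open projection $\chi_U$ of $C({\mathbb T})^{**}$ lies in $A^{\perp\perp}$ only if $U$ is empty or of full Lebesgue measure; hence every open projection in $A^{**}$ dominating $q$ pairs with Lebesgue measure $m$ to give $1$, and since $m$ is a normal state on $C({\mathbb T})^{**}$ the infimum of this downward directed family also pairs with $m$ to give $1\ne 0=\langle q,m\rangle$. So $q$ is not the infimum of the open projections of $A^{**}$ above it, and separability of $A$ does not rescue (2) either. The fact that $A^{\perp\perp}$ is closed under meets and joins only guarantees that infima of open projections stay in $A^{\perp\perp}$; it does not produce enough open projections above $q$. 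The paper's route avoids this entirely: for (1) it starts from a compactness witness $x\in{\rm Ball}(A)$ with $q=qx$ and follows the construction of \cite[Theorem 3.4 (1)]{BNII} in $B^{**}$ to manufacture a decreasing net of peak projections $u(z_t)\ge q$ directly, replacing the associative products there by $a_t=z_t\circ x$ and checking $u(z_t)a_t=u(z_t)$ by tripotent computations; for (2) it uses separability to write $1-q=s(x)$ for a single $x\in\frac{1}{2}{\mathfrak F}_{A^1}$ via \cite[Corollary 3.21]{BWj}, so that $q=u(1-x)$ is a peak for $A^1$, and then passes into $A$ with the Jordan product of $1-x$ against the compactness witness.

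Part (3) has a secondary problem of the same flavour: the step ``combine $c$ with $a_0$ through the Jordan identity \dots\ and verify that the resulting element still lies in $\frac{1}{2}{\mathfrak F}_A$ with the correct peak'' is precisely the substantive content of the proof and is left unargued. Note that $\frac{1}{2}{\mathfrak F}$ is not stable under products, and preservation of $u(\cdot)$ under such combinations is exactly what must be established; the paper does this by showing that $d=rb$ peaks at $q$ when viewed in $B$ and then that $x=dr=rbr\in A$ peaks at $q$ as well. Until the (1)/(2) mechanism is replaced and the (3) combination step is carried out, the proposal does not constitute a proof.
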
  \begin{proof}  Suppose that $A$ is a Jordan subalgebra
of a $C^*$-algebra $B$.

(2) \ We said above that peak projections
are compact.   Conversely suppose that $q$ is compact,
with $q = q x q$ for $x \in {\rm Ball}(A)$.
Then $1-q$ is open in $(A^1)^{**}$.
If $A$, and therefore also $A^1$, is separable, 
then $1-q = s(x)$ for some $x \in \frac{1}{2}{\mathfrak F}_{A^1}$
by \cite[Corollary  3.21]{BWj} (one may also have to use e.g.\ 
Lemma 3.7 there).   Thus if $z = 1-x \in {\rm Ball}(A^1)$ then by the fundamental relation
between $s(\cdot)$ and $u(\cdot)$,  
 $q = u(z) = u(z) z u(z)$.
Let $a = z \circ x \in {\rm Ball}(A)$. Working in $B^{**}$ we have
$$qa=q\frac{zx+xz}{2}=\frac{qx+qz}{2}=q.$$
For any compact projection $p \leq 1-q$  in $B^{**}$ we have
$$\Vert pa\Vert = \Vert p(\frac{zx+xz}{2}) \Vert
\leq \| \frac{pz}{2} \| + \| \frac{px}{2} \| < \frac{1}{2} + \| \frac{px}{2} \|
\leq 1$$
by \cite[Lemma 3.1]{BNII}.
 So $q = u(a)$ by that same lemma.

(1) \ The one direction of the first `iff' follows from
Corollary \ref{bnc23}.  For the other, if 
$q \in A^{**}$ is compact, with $q = q x q = qx$ for some $x \in {\rm Ball}(A)$,
 Then working in $B^{**}$ we can use the arguments 
in the proof of \cite[Theorem 3.4 (1)]{BNII} to produce elements
$z_t \in {\rm Ball}(A)$ with $q \leq u(z_t) \leq q_t$, and $u(z_t) \leq u(x)$,
 and $(u(z_t))$ decreasing projections.  Let $a_t = z_t \circ x$.  Then   
$$u(z_t) x = u(z_t)^* x = u(z_t)^* u(z_t) u(x)^* x  = u(z_t)^* u(z_t) u(x)^* u(x)
u(x)^* x = u(z_t)^* u(x)$$
since $u(z_t) \leq u(x)$ (we are using basic facts about the tripotent $u(x)$ 
that were used in the proof we are copying).
Hence $u(z_t) x = u(z_t)^* u(z_t) u(z_t)^* u(x) = u(z_t)$.
Using this and similar arguments it is then easy to see
 that $u(z_t) a_t = u(z_t) \frac{z_t x+xz_t}{2}  = u(z_t)$.  
As in the end of the proof in (1) for any compact projection $p \leq 1-u(z_t)$ 
we have $\| pa\Vert < 1$.  We may now finish as in the proof we are following.

(3) \ The one direction follows from a
fact stated after Lemma \ref{Hayhm}, and for the other direction again 
we may follow the proof of \cite[Theorem 3.4 (3)]{BNII}.  The main change 
is that we view the product $d = rb$ there in $B$.  The argument shows
that $d$ peaks at $q$.   Then $x = dr = rbr \in A$
and the argument shows   
that $x$ peaks at $q$.  The algebra $D$ there is an associative 
operator algebras so the rest of the argument for \cite[Theorem 3.4 (3)]{BNII}
is unchanged until the final line, which clearly holds too
in the Jordan case.     \end{proof}

The following is the Jordan generalization of \cite[Proposition 6.4]{BRII}:

\begin{proposition} \label{BRII64}  
Let $A$ be a nonunital Jordan operator algebra.
\begin{itemize}
\item [(1)] If $q$  is a nonzero projection in $A^{**}$ and $q = u(x)$   
for some $x \in {\rm Ball}(A^1)$, then $q$  is a peak projection with respect to 
$A$ (thus $q = u(a)$
for some $a \in {\rm Ball}(A)$).
\item [(2)] If $A$ is separable then the compact projections in $A^{**}$
 are precisely the peak projections in $A^{**}$ (the projections $u(x)$
for some $x \in A$). \end{itemize}   
\end{proposition}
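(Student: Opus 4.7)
The plan is to prove (1) first and obtain (2) as a straightforward corollary. For (1), I would first apply Theorem \ref{inco}(3) to the unital algebra $A^1$ to reduce to the case $x \in \frac{1}{2} {\mathfrak F}_{A^1}$. Writing $x = \lambda 1 + b$ with $b \in A$ and $\lambda \in {\mathbb C}$, the condition $u(x) = q \in A^{**}$ constrains $\lambda$: in the decomposition $(A^1)^{**} = A^{**} \oplus^{\infty} {\mathbb C}(1 - e)$ (where $e = 1_{A^{**}}$ is central in $(A^1)^{**}$ when $A$ is approximately unital), the $(1-e)$-component of $x^n$ is $\lambda^n(1-e)$, so $x^n \to q \in A^{**}$ forces $\lambda^n \to 0$, hence $|\lambda| < 1$ or $\lambda = 0$. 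In the latter case $x = b \in A$ and we are done with $a = x$.

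For $\lambda \neq 0$ the plan is to exploit the compactness of $q$ in $A^{**}$. By Theorem \ref{chcomp}, $q$ is positively compact in $A^{**}$, so there exists $c \in \frac{1}{2} {\mathfrak F}_A$ with $q c q = q$, and an easy $2 \times 2$ matrix argument shows this forces $cq = qc = q$ in any containing $C^*$-algebra. The natural candidate $a := cxc$ lies in $A$ (because $A$ is a Jordan inner ideal in $A^1$, or explicitly $cxc = \lambda c^2 + cbc \in A$), is a contraction, and satisfies $aq = qa = q$. It would then remain to show $a^n \to q$ weak*. An alternative approach that bypasses this verification goes via Corollary \ref{cipeak}: construct a sequence $(b_n) \subset \frac{1}{2} {\mathfrak F}_A$ with $\bigwedge_n u(b_n) = q$ by iterating Theorem \ref{bnc26} applied to a descending sequence of open projections $p_n \searrow q$ in $A^{**}$, using support projections of elements of $A$ sitting between $q$ and $s(x)$; then $a := \sum_n b_n/2^n \in A$ would satisfy $u(a) = q$ directly by Corollary \ref{cipeak}.

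For (2), if $A$ is separable then so is $A^1$. Any compact projection $q \in A^{**}$ is by definition closed in $(A^1)^{**}$, hence compact in $(A^1)^{**}$. Applying Theorem \ref{inco}(2) to the separable unital algebra $A^1$, we obtain $q = u(x)$ for some $x \in {\rm Ball}(A^1)$, and then part (1) gives $q = u(a)$ for some $a \in {\rm Ball}(A)$. The converse direction (that peak projections are compact) is immediate from the definitions. The main obstacle I anticipate is in (1): all of the candidate constructions for $a$ easily satisfy $aq = q$, but rigorously verifying the full peaking property $a^n \to q$ (as opposed to convergence to some strictly larger projection) is the delicate point, and one likely must combine information about the original peaking of $x$ with the compactness of $q$ either in a carefully chosen single product like $cxc$, or in a countable combination as above.
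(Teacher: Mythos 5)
Your overall architecture matches the paper's: part (2) is deduced from part (1) exactly as you say (compactness of $q$ gives closedness in $(A^1)^{**}$, Theorem \ref{inco}(2) applied to the separable unital algebra $A^1$ gives $q=u(x)$ with $x\in{\rm Ball}(A^1)$, then (1) finishes), and for (1) the paper likewise starts from the observation that $q=u(x)$ is compact in $A^{**}$ and extracts $b\in{\rm Ball}(A)$ with $q=qbq$ (via Theorem \ref{brii62}, which unlike Theorem \ref{chcomp} does not require $A$ to be approximately unital --- your appeals to \ref{chcomp} and to the decomposition $(A^1)^{**}=A^{**}\oplus^\infty\Cdb(1-e)$ silently assume approximate unitality, though this is repairable by instead pushing $x^n$ through the weak* continuous quotient map onto $(A^1)^{**}/A^{\perp\perp}\cong\Cdb$).

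The genuine gap is exactly the step you flag and do not close: exhibiting an element of ${\rm Ball}(A)$ that actually \emph{peaks} at $q$ rather than merely fixing it. Neither of your candidates resolves it. For $a=cxc$ the standard peaking criterion (\cite[Lemma 3.1]{BNII}: $qa=q$ together with $\|pa\|<1$ for every compact projection $p\le 1-q$ forces $u(a)=q$) does not engage, because in $p(cxc)$ the projection $p$ never meets the peaking factor $x$, so one only gets $\|p(cxc)\|\le\|pc\|\le 1$ with no strict inequality. Your alternative via Corollary \ref{cipeak} presupposes a decreasing sequence of \emph{open projections in $A^{**}$} with infimum $q$; the open projections one can naturally manufacture from $x$ live in $(A^1)^{**}$ and need not lie in $A^{\perp\perp}$, so this route is circular as described. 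The missing idea, which is the point of the paper's proof (following the amendment of \cite[Theorem 3.4]{BNII} made in Theorem \ref{inco}(2)), is to take the \emph{Jordan product} of the two witnesses rather than the sandwich: with $b\in{\rm Ball}(A)$ satisfying $qb=bq=q$, the element $a=x\circ b=\tfrac12(xb+bx)$ lies in ${\rm Ball}(A)$ (the scalar part of $x$ is absorbed into $b\in A$), satisfies $qa=q$, and for every compact $p\le 1-q$ one has $\|pa\|\le\tfrac12\|px\|\,\|b\|+\tfrac12\|pb\|\,\|x\|<\tfrac12+\tfrac12=1$, the strict inequality coming from the peaking of $x$ at $q$ via \cite[Lemma 3.1]{BNII}; that same lemma then yields $u(a)=q$. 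This averaging trick is precisely what converts ``$a$ fixes $q$'' into ``$a$ peaks at $q$'', and without it the proof is incomplete.
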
 \begin{proof} (1) \ 
As in the proof of \cite[Proposition 6.4]{BRII},
 $u(x)$ is compact in $A^{**}$, hence by
Theorem \ref{brii62} 
 there exists an element $b \in {\rm Ball}(A)$
with $q = qbq$.
As in  the last lines of \cite[Theorem 3.4 (2)]{BNII} (see also
the proof of Theorem 3.4 (3) there),
but  according to how that proof was amended for  Theorem \ref{inco},
we have $q = u((1-b) \circ b)$, and $a = (1-b) \circ b \in {\rm Ball}(A)$.

(2) \ Follows from (1) as in \cite[Proposition 6.4]{BRII}.     \end{proof}  
  
\begin{corollary}  \label{jmet}  If $a, b \in {\mathfrak r}_{A}$
for a Jordan operator algebra $A$, and if  $s(a)$ and $s(b)$ commute then their infimum   is
of form $s(c)$ for some $c \in \frac{1}{2} {\mathfrak F}_{A}$.  Similarly,
the supremum of two commuting
peak projections
 in $A^{**}$,
 is a peak projection  
 in $A^{**}$. 
\end{corollary}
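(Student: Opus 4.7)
The plan is to reduce both statements to the separable approximately-unital case via a generated-subalgebra trick, and then invoke the strictly-positive-element results from \cite{BWj}.

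For the first statement, set $p = s(a)$ and $q = s(b)$ in $A^{**}$. Both are open projections by \cite[Lemma 3.12]{BWj}, and they commute by hypothesis, so $pq = p \wedge q$ is open by Proposition \ref{commop}. To exhibit it as a single support $s(c)$, I will pass to the closed Jordan subalgebra $E \subset A$ generated by $a$ and $b$. Since $E$ is generated by real positive elements, \cite[Proposition 4.4]{BWj} says $E$ is approximately unital; moreover $E$ is plainly separable. The support projections $s(a), s(b)$ computed inside $E^{**}$ agree with $p, q$ (all three are the weak* limit of $a^{1/n}, b^{1/n}$ inside any containing $C^*$-bidual), so they remain commuting open projections of $E^{**}$. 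Theorem \ref{supp} then furnishes a HSA $D' \subset E$ with support $pq$. Being a HSA of a separable approximately-unital Jordan operator algebra, $D'$ is itself separable and approximately unital, so by \cite[Corollary 3.21]{BWj} it has a strictly positive element $c \in \frac{1}{2}{\mathfrak F}_{D'}$; that is, $s(c)$ equals the identity of $(D')^{**}$, namely $pq$. Since $D' \subset A$ we have $c \in \frac{1}{2}{\mathfrak F}_A$ and $s(c) = s(a) \wedge s(b)$, as required.

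For the second statement, I would deduce it from (1) via the complementation $u(x) = 1 - s(1-x)$ available in unital algebras. Let $u(x), u(y)$ be the two commuting peak projections; by Theorem \ref{inco}(3) applied to $A^1$, combined with the remark that peak projections in $A^{**}$ coincide for $A$ and $A^1$, I may take $x, y \in \frac{1}{2}{\mathfrak F}_{A^1}$. Then $1-x, 1-y \in \frac{1}{2}{\mathfrak F}_{A^1} \subset {\mathfrak r}_{A^1}$, and their support projections $s(1-x) = 1 - u(x)$ and $s(1-y) = 1 - u(y)$ commute. Applying part (1) inside $A^1$ yields $c \in \frac{1}{2}{\mathfrak F}_{A^1}$ with $s(c) = s(1-x) \wedge s(1-y)$. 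By de Morgan,
\[
u(x) \vee u(y) \;=\; 1 - s(c) \;=\; u(1-c),
\]
a peak projection for $A^1$. Since $A^{\perp\perp}$ is closed under joins of projections, this supremum already lies in $A^{**}$, and Proposition \ref{BRII64}(1) then promotes it to a peak projection for $A$ itself.

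The main obstacle is simply bookkeeping: making sure that support projections, open-projection status, and peak-projection status all transfer cleanly between $E$, $A$ and $A^1$. Each such transfer is handled by embedding into a common containing $C^*$-bidual and invoking Corollary \ref{sup} (openness is intrinsic for projections in $A^{\perp\perp}$) and Proposition \ref{BRII64}(1) (peak status passes from $A^1$ to $A$ whenever the projection lies in $A^{\perp\perp}$).
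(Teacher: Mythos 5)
Your argument is correct and follows essentially the same route as the paper: the first assertion is settled by reducing to a separable approximately unital setting where \cite[Corollary 3.21]{BWj} applies (the paper outsources exactly these details to the proof of \cite[Corollary 3.5]{BNII}, with that same substitution of \cite[Corollary 3.21]{BWj} for the reference to \cite{BRI}), and the second assertion uses the identical complementation $u(x)=1-s(1-x)$ in $A^1$, de Morgan, and Proposition \ref{BRII64} (1). The only cosmetic difference is that you make the separability reduction explicit by passing to the closed Jordan subalgebra generated by $a$ and $b$, which is a clean way to justify the appeal to \cite[Corollary 3.21]{BWj}.
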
    

\begin{proof}    We may assume that $a, b \in \frac{1}{2} {\mathfrak F}_{A}$
by \cite[Lemma 3.7]{BWj}.  Then the  proof of the first assertion is just as in \cite[Corollary 3.5]{BNII},
but replacing the appeal to \cite{BRI} to an appeal to \cite[Corollary 3.21]{BWj}.  

There is a mistake in the argument for the second assertion in \cite[Corollary 3.5]{BNII}.  
This is fixed   by working in $A^1$ and noting that
$$u(a) \vee u(b) = 1 - (u(a)^\perp \wedge u(b)^\perp) = 1 - s(1-a) \wedge s(1-b)
= 1 - s(c)$$ for some $c \in \frac{1}{2} {\mathfrak F}_{A^1}$ 
by the first assertion. This equals 
$u(1-c)$ by the relation  $u(x) = 1 - s(1-x)$ mentioned earlier.  By Proposition \ref{BRII64} (1) 
we have $u(a) \vee u(b) = u(1-c)$ is a peak projection 
 in $A^{**}$.
 \end{proof}

Propositions 4.1,  4.3, 4.4 in \cite{BNII} on weak* closed faces in the state  
space generalize easily to the Jordan operator algebra case, using the 
fact that states on $A$ can be extended to states on the containing $C^*$-algebra
$B$.  One uses the same ideas, but suitably modified by replacing results cited
in \cite{BNII} by the matching results from
 \cite{BWj} or the 
present paper.

 We now turn to peak interpolation.   Our best result here  for associative operator algebras 
in \cite[Theorem 3.4]{Bnpi} works
in the Jordan case, but for deep reasons.  It is a vast generalization of a famous result of E. Bishop (and also generalizes Gamelin's  variant of Bishop's result in \cite[II.12.5]{Gam}).    Probably the best way to describe its value is to say that
it (and also our Urysohn lemmas)  allows us to build elements in $A$ which have certain prescribed properties or behavior, as we mentioned
in the introduction.  

\begin{theorem} \label{peakthang22}   Suppose that $A$ is a Jordan  operator algebra
(not necessarily approximately unital), a Jordan subalgebra of a unital $C^*$-algebra $B$.
Identify $A^1 = A + \Cdb 1_B$.  Suppose
 that  $q$ is a closed projection in $(A^1)^{**}$.
   If $b \in A$ with $b q= qb$ in $B$,
 and $q b^* b q  \leq q d$ in $B$ for an invertible positive $d \in B$ which commutes with $q$,
then  there exists an element $g \in A$ with $g q =  q g = b q$, and $g^* g \leq d$ (all these formulae
interpreted in $B$).   
\end{theorem}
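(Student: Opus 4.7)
The plan is to mirror the proof of the associative peak interpolation theorem \cite[Theorem 3.4]{Bnpi}, adapting each step to the Jordan setting using the technical apparatus of Section \ref{Jcompl}. The crux of the Jordan case is that $gq = qg = bq$ is a \emph{two-sided} condition: $g$ must be block diagonal with respect to $q$ with $(1,1)$-corner equal to $qbq$, whereas the associative version requires only matching on one side. First I check that the Section \ref{Jcompl} machinery applies. Since $q$ is closed in $(A^1)^{**}$, Corollary \ref{sup} gives $q$ closed in $B^{**}$, so the $C^*$-algebraic tools (HSA's, proximinality, $L+L^*$) are available in $B$. The trivial inclusion $q \in (A^1)^{\perp\perp}$ combined with Lemma \ref{tfqin} yields $(A^1)^\perp \subset (qA^1q)_\perp$. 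The hypothesis $bq = qb$ makes $b = qbq + q^\perp b q^\perp$ block diagonal, and since $d$ commutes with $q$ we have $b^*qb = qb^*bq \leq qd \leq d$, verifying the input to Lemma \ref{lemma2}.

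Second, I apply Lemma \ref{lemma2} with $X = A^1$, $h = d$, and $a = b$, producing for each $\epsilon > 0$ an element $g_\epsilon \in A^1$ with $q g_\epsilon q = qbq$ and $g_\epsilon^* g_\epsilon \leq d + \epsilon 1$. I then need to upgrade $g_\epsilon$ so that it lies in $A$ (not just $A^1$) and is block diagonal with respect to $q$, while retaining the corner condition and the bound. Reformulating, writing $g = b + h$ with $h$ in the HSA $D$ of $A$ supported by $q^\perp$, the identity $b^*b = qb^*bq + c^*c$ with $c = q^\perp b q^\perp$ shows that the desired inequality $g^*g \leq d$ reduces to finding $h \in D$ with $(c+h)^*(c+h) \leq q^\perp d q^\perp$. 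The off-diagonals of $g_\epsilon$ are controlled in norm by $\sqrt{\|d\|+\epsilon}$ via the bound $g_\epsilon^* g_\epsilon \leq d + \epsilon$; applying Corollary \ref{coh} to the subspace $Z = \{x \in A^1 : qxq = 0\}$ (whose bipolar is $\{\eta : q\eta q = 0\}$ by Claim~1 in the proof of Theorem \ref{supp}) lets me subtract from $g_\epsilon$ an element of $A^1$ having the same off-diagonal and arbitrarily small norm, producing an element whose off-diagonals vanish. The proximinality of $D$ in $L + L^*$ (Theorem \ref{prox}) is then invoked to absorb the remainder into the block-diagonal part without disturbing the corner $qbq$. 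An iteration, each step reducing the off-diagonal norm geometrically, yields $\tilde{g}_\epsilon \in A$ with $q\tilde{g}_\epsilon q = qbq$, $[q, \tilde{g}_\epsilon, q^\perp] = 0$, and $\tilde{g}_\epsilon^* \tilde{g}_\epsilon \leq d + \epsilon'$ with $\epsilon' \to 0$.

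Third, I take a weak* cluster point $g$ of the bounded net $(\tilde{g}_\epsilon)$ in $B^{**}$. The constraints $qgq = qbq$, $[q, g, q^\perp] = 0$, and $g^*g \leq d$ are all weak*-closed, so $g \in A^{\perp\perp}$ satisfies them. To conclude $g \in A$, I combine the fact that the $\tilde{g}_\epsilon$ lie in $A$ with a convexity argument (Mazur's theorem, or Krein--Smulian applied to the weak*-closed convex set of candidate limits intersected with a norm-ball) to select an element of $A$ itself with the required properties. The main obstacle will be the iterated off-diagonal cancellation in the previous paragraph: each invocation of Corollary \ref{coh} risks reintroducing error at the corner $qbq$ and in the positivity bound, and delicate geometric control is required to ensure that both errors decay summably. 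This is precisely the point where the two-sided results of Section \ref{Jcompl} — unavailable from the purely associative theory — become essential, justifying the technical investment made there.
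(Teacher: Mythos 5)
Your route is genuinely different from the paper's, and as it stands it has a gap that I do not think can be closed by the tools you invoke. The fatal step is the last one. Your scheme produces, for each $\epsilon'>0$, an element $\tilde{g}_{\epsilon'} \in A$ with $q\tilde{g}_{\epsilon'}q = qbq$, $[q,\tilde{g}_{\epsilon'},q^\perp]=0$ and $\tilde{g}_{\epsilon'}^*\tilde{g}_{\epsilon'} \leq d+\epsilon'$, and you then want to pass to the limit to get $g \in A$ with $g^*g \leq d$. A weak* cluster point of this bounded net lives in $A^{\perp\perp} \subset B^{**}$, not in $A$. Mazur's theorem does not apply: it upgrades \emph{weak} convergence in $B$ to norm convergence of convex combinations, but a bounded net in $B$ has weak* cluster points in $B^{**}$, not weak cluster points in $B$, and $A$ is not a dual space so Krein--Smulian says nothing here. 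The sets $K_{\epsilon'} = \{g \in A : qgq=qbq,\ [q,g,q^\perp]=0,\ g^*g \leq d+\epsilon' \}$ are norm-closed, convex and nonempty, but they are not weakly compact, and there is no reason their intersection over $\epsilon'>0$ is nonempty. This is exactly the difficulty that the paper's proof is built to avoid: following \cite[Theorem 3.4]{Bnpi}, one sets $f=d^{-\frac{1}{2}}$, forms the Jordan subalgebra $C$ of $A^1$ generated by $\tilde{D}$, $b$ and $1$ (so that $q$ commutes with everything in $C$), and proves that $Df$ is an $M$-ideal in $X=(C\cap A)f$ by checking that $X^\perp$ is invariant under the $L$-projection of multiplication by $q$. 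Since $M$-ideals are proximinal, the distance $d(bf,Df)=\Vert qbfq\Vert \leq 1$ is \emph{achieved} by an honest element $h\in D\subset A$, and $g=b+h$ satisfies $g^*g\leq d$ exactly and $gq=qg=bq$ because $h=q^\perp h q^\perp$. No limiting process, and no $\epsilon$ to remove.

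The middle of your argument is also not in working order. Applying Lemma \ref{lemma2} with $X=A^1$, $h=d$, $a=b$ is legitimate (your verification $b^*qb=qb^*bq\leq qd\leq d$ is fine), but the output $g_\epsilon$ lies in $A^1$ rather than $A$, and your correction step does not preserve the bound against $d$. If you apply Corollary \ref{coh} to $z=g_\epsilon-b \in Z=\{x\in A^1: qxq=0\}$, you obtain $y\in Z$ with the same off-diagonal part and $\Vert y\Vert \leq \Vert [q,g_\epsilon,q^\perp]\Vert+\epsilon$; but $\Vert [q,g_\epsilon,q^\perp]\Vert$ is only bounded by $\sqrt{\Vert d\Vert+\epsilon}$, which is not small, so after subtracting $y$ you control only $\Vert g_\epsilon-y\Vert$ and have lost the operator inequality $(\cdot)^*(\cdot)\leq d+\epsilon$ entirely (a norm bound yields only a comparison with a multiple of $1$, not with $d$). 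No mechanism in your sketch produces the claimed geometric decay, and the descent from $A^1$ to $A$ is never carried out. The classical iterative proof in \cite[II.12.5]{Gam} does succeed along roughly these lines, but only because the weight $d$ is woven into the iteration from the outset; the paper instead absorbs $d$ once and for all via $f=d^{-\frac{1}{2}}$ and lets proximinality of the $M$-ideal finish the job. I would recommend reworking your argument around that proximinality statement rather than an $\epsilon$-removal.
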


\begin{proof}      
As in the proof of   \cite[Theorem 3.4]{Bnpi} let $\tilde{D}$ be the HSA supported by
$1-q$ in $A^1$, let $f = d^{-\frac{1}{2}}$, 
let $C$ be the closed Jordan subalgebra of $A^1$ generated by $\tilde{D}, b,$ and $1$, and let 
$$D = \tilde{D} \cap A \cap C = \{ x \in A \cap C : q \circ x = 0 \} \subset A .$$
    (Note that
$q \circ x = 0$ if  and only if $x = (1-q) x (1-q)$ as we said in the introduction.)   We have
$\tilde{D}^{\perp \perp} \subset q^\perp C^{\perp \perp} q^\perp$, and conversely
$q^\perp C^{\perp \perp} q^\perp \subset  q^\perp (A^1)^{\perp \perp} q^\perp  = \tilde{D}^{\perp \perp}$.
So $q^\perp \in \tilde{D}^{\perp \perp} = q^\perp C^{\perp \perp} q^\perp$.   Thus
$q \in C^{\perp \perp}$.    Working in $B^{**}$ the centered equation in the proof of \cite[Theorem 3.4]{Bnpi} 
holds (note that $q$ commutes with all terms mentioned there).   The expression $((C \cap A) f)^\perp$ in that 
proof is a subset of $(Cf)^*$, and $q$ commutes with elements of $Cf$ and $C$.   Set
$X = (C \cap A) f$.   Note $b \in X$.  If $(c_t)$ is a net in $C$
with weak* limit $q$, and if $d \in C \cap A$
and $\varphi \in X^\perp$, then  $c_t \circ d \in C \cap A$ so that $\varphi((c_t \circ d) f)= 0$.
Since 
$$\frac{1}{2}(c_t df + dc_t f) \to \frac{1}{2}(q d f + d q f) = q df q$$ we have 
$\langle q df q , \varphi \rangle = 0.$  That is, $q \varphi q = q \varphi \in  X^\perp$.  So as in the argument 
we are following,  
$X^\perp$ is invariant under (the $L$-projection of) multiplication by $q$ on $(Cf)^*$, which yields as in that proof that $Df= X \cap \tilde{D}f$ is an $M$-ideal in $X$.   The fact that $d(x,Df) = \| q x \| = \| q x q \|$ for
$x \in  X$, that this distance is achieved, and indeed the remainder of  the proof we are following, then follows verbatim as in that proof (no results from the present paper are needed in
these lines).  \end{proof}

Note that the analogous theorem \cite[Theorem 3.4]{Bnpi}  generalizes \cite[Corollary 2.2 and (most of) 
Theorem 5.1]{BRII}, and \cite[Lemma 2.1]{Bnpi}.   The same will be the case for Jordan operator algebras.  The following
is the Jordan version of \cite[Theorem 5.1]{BRII}, and also contains a Jordan generalization
of   \cite[Theorem  2.1]{BRII}:

\begin{corollary} \label{dist}    Suppose that $A$ is a Jordan  operator algebra
(not necessarily approximately unital), a Jordan subalgebra of a unital $C^*$-algebra $B$. Suppose
 that  $q$ is a closed projection in $(A^1)^{**}$.
   If $b \in A$ with $b q= qb$ in $B$ (or equivalently, 
$[q,b,q^\perp] = 0$ in $(A^1)^{**}$), then  $b$ achieves its distance to the intersection of $A$
with the HSA in $A^1$ supported by $1-q$ (this intersection is a HSA in $A$ if 
$A$ is approximately unital by Proposition {\rm \ref{commop}}).   If further
 $\| q b q  \| \leq 1$ then  there exists an element $g \in {\rm Ball}(A)$ with $g q =  q g = b q$.
 \end{corollary}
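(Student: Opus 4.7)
The plan is to invoke Theorem \ref{peakthang22} with a judiciously chosen invertible positive majorant $d$. Set $p = 1 - q \in (A^1)^{**}$, which is open since $q$ is closed; let $\tilde D$ be the HSA in $A^1$ supported by $p$, and put $D := \tilde D \cap A$, the intersection appearing in the statement. The assumption $bq = qb$ in $B$ is equivalent to $[p, b, p^\perp] = 0$ (a direct computation using $qbq = bq = qb$), so Lemma \ref{distH} applied with $A^1 \subseteq B$ in place of $A \subseteq B$ yields
\[
d(b, \tilde D) = \|(1-p) b (1-p)\| = \|qbq\|,
\]
and since $D \subseteq \tilde D$ we already have $d(b, D) \geq \|qbq\|$. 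It remains to produce an element of $D$ realizing this distance.

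For the first assertion, the case $\|qbq\| = 0$ is trivial: $bq = qb$ forces $b = q^\perp b q^\perp$, so $b \in pB^{**}p \cap A$, which by Theorem \ref{bhn29} (applied to $A^1 \subseteq B$) coincides with $\tilde D \cap A = D$. When $\|qbq\| > 0$, I apply Theorem \ref{peakthang22} with $d := \|qbq\|^2 \cdot 1_B$, an invertible positive element of $B$ which commutes with $q$. The key observation validating the hypothesis of the theorem is that the commutation of $b$ with $q$ factors the corner:
\[
qb^*bq = (bq)^*(bq) = (qbq)^*(qbq) \leq \|qbq\|^2\, q = qd.
\]
The theorem then supplies $g \in A$ with $gq = qg = bq$ and $g^*g \leq \|qbq\|^2 \cdot 1_B$, whence $\|g\| \leq \|qbq\|$. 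Set $h := b - g$. Then $qh = qb - qg = bq - bq = 0$, and similarly $hq = 0$, so $h = q^\perp h q^\perp$ with $h \in A$; by Theorem \ref{bhn29} this forces $h \in pB^{**}p \cap A^1 \cap A = \tilde D \cap A = D$. The chain
\[
\|qbq\| \leq d(b, D) \leq \|b - h\| = \|g\| \leq \|qbq\|
\]
collapses to equality, so $h$ realizes the distance from $b$ to $D$.

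For the final assertion, the hypothesis $\|qbq\| \leq 1$ permits the simplest choice $d = 1_B$ in Theorem \ref{peakthang22}: from $qb^*bq = (qbq)^*(qbq) \leq \|qbq\|^2 q \leq q$, the theorem delivers $g \in A$ with $gq = qg = bq$ and $g^*g \leq 1_B$, which is to say $g \in {\rm Ball}(A)$.

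The heart of the argument is Theorem \ref{peakthang22}, so there is no real obstacle. The one conceptual ingredient is the observation that $b$ commuting with $q$ turns $qb^*bq$ into $(qbq)^*(qbq)$, so the hypothesis of that theorem is verified by a scalar multiple of $1_B$; the only additional bookkeeping is to confirm, via Theorem \ref{bhn29}, that the candidate $h = b - g$ actually lies in the HSA $D$ of $A$ rather than merely in the corner $q^\perp B^{**} q^\perp \cap A$.
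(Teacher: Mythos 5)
Your proof is correct. For the final assertion you do exactly what the paper does: feed the hypotheses into Theorem \ref{peakthang22} with the constant majorant $d = 1_B$ (so $f=1$), using the identity $qb^*bq = (qbq)^*(qbq) \leq q$ which follows from $bq=qb$. For the distance-achievement assertion, however, your route is genuinely different in packaging. The paper does not apply the \emph{statement} of Theorem \ref{peakthang22} here; it reaches into the \emph{proof} of that theorem (the $M$-ideal/proximinality step showing $d(x,Df)=\|qxq\|$ is attained) and combines it with the last lines of the proof of Theorem 5.1 of \cite{BRII}, together with Lemma \ref{distH} to identify the distance as $\|qbq\|$. You instead treat Theorem \ref{peakthang22} as a black box and apply it with the rescaled majorant $d = \|qbq\|^2 1_B$: the resulting $g$ satisfies $\|g\|\leq\|qbq\|$ and agrees with $b$ on the $q$-corners, so $h=b-g$ lands in $D$ and witnesses $d(b,D)=\|qbq\|$ (with the degenerate case $\|qbq\|=0$ handled separately, correctly, since the zero majorant is not invertible). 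This is a cleaner, self-contained derivation that avoids re-opening the proof of Theorem \ref{peakthang22}; what it costs is nothing of substance, though it does quietly reprove the formula $d(b,D)=\|qbq\|$ rather than quoting Lemma \ref{distH} for the lower bound (either works, since any $h'\in D$ satisfies $qh'q=0$). Your use of Theorem \ref{bhn29} to place $h$ in $D$ is mild overkill --- $h=php$ with $h\in A$ already puts $h$ in $\tilde D\cap A$ by the description of $\tilde D$ in Theorem \ref{supp} --- but it is not wrong.
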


\begin{proof}     The final assertion follows from Theorem \ref{peakthang22}, and in the present case $f = 1$
in the proof of that result.   However we will give a different, second proof of this
in the next paragraph.    The assertion about achieving the distance from $b$ follows by combining the fact that the 
 distance from $b$ is achieved 
towards the end of the proof of Theorem \ref{peakthang22}, with the idea in the last three lines of the proof of \cite[Theorem 5.1]{BRII}. For the latter we need to know
that the distance from $b$ to  the HSA in $A^1$ supported by $1-q$ is $\| q b q \|$, but this
is assured by Lemma \ref{distH}.  

For a second proof of the final assertion, replace $J$ in the proof of  \cite[Theorem 5.1]{BRII} by $J = \{a \in A: q \circ a = 0\} = \{a \in A :a= q^\perp a q^\perp \}$.   The first part of that proof (first paragraph) showing $d(b,J) = \| qbq
\|$ (note $qb = qbq$ here) follows from our Proposition \ref{gamlemma1}. The next paragraph and a half leading to $d(b,D) = \| qbq \|$ follows easily, noting that $q$ commutes with $C$.  Indeed, in that proof as in 
the proof of Theorem \ref{peakthang22}, the argument involving 
 left multiplication by $q^\perp$ (which equals Jordan multiplication by $q^\perp$, and equals multiplication on both
sides by $q^\perp$) works, and gives an $M$-ideal.    The remaining three lines of that proof we have discussed in the 
first paragraph of the present proof, but to avoid using part of the
  proof of Theorem \ref{peakthang22}  use the formula $d(b,D) = \| qbq \|$ from a couple of lines
back.
\end{proof}

We are not sure if there is a Jordan variant of (Lemma 5.3 and) Theorem 5.4 in \cite{BRII}.
Indeed this proof seems very one-sided.   However since  Theorem 5.4 in \cite{BRII}  is a variant of Theorem 5.2 there,
and  since we generalized the latter to Jordan algebras in our
Theorem \ref{peakthang22}, we are not sure if it is worth searching out.      In any case this is one of the 
very  few results from the papers we are generalizing which seems obstinate.  

\section{Miscellaneous results in noncommutative topology} \label{sec9}

Turning to the final Section 6 in \cite{BRII}, we discussed Lemma 6.1 there after our 
Theorem  \ref{chcomp}.
The following is the Jordan variant of Theorem 6.2 there, and is a nonunital version of our earlier Theorems \ref{chcomp}
and \ref{inco} (1): 

\begin{theorem} \label{brii62}   If $A$ is a Jordan  operator algebra, a
closed Jordan subalgebra of a $C^*$-algebra $B$, and $q$ is a projection in $A^{**}$  then 
 the following are equivalent:
\begin{enumerate}
\item [(i)] $q$  is compact in  $B^{**}$ in the sense of Akemann,
\item [(ii)] $q$  is compact in  $A^{**}$ (that is, $q$ is a closed projection in $(A^1)^{**}),$ 
\item [(iii)]  $q$ is a closed projection in $(A^1)^{**},$ and there exists an element $a \in {\rm Ball}(A)$
with $q = qaq$,
\item [(iv)] $q$ is  a decreasing limit of peak projections for $A$.
\end{enumerate}
\end{theorem}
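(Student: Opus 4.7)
The plan is to prove the equivalences via direct implications in a cyclic pattern. The equivalence $(\mathrm{i}) \Leftrightarrow (\mathrm{ii})$ is essentially already in hand: in the approximately unital case it is Theorem~\ref{chcomp}, and in the non-approximately-unital case it is the very definition of compactness in $A^{**}$ combined with the observation after Theorem~\ref{chcomp} that being closed in $(A^1)^{**}$ is equivalent, via Theorem~\ref{chcomp} applied to $A^1 \subset B^1$, to being compact in $B^{**}$. The implication $(\mathrm{iii}) \Rightarrow (\mathrm{ii})$ is immediate since the first clause of $(\mathrm{iii})$ is literally $(\mathrm{ii})$. For $(\mathrm{iv}) \Rightarrow (\mathrm{ii})$, peak projections are compact (as noted above Corollary~\ref{bnc23}), and Corollary~\ref{bnc23} gives that the infimum of compact projections is compact.

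The substantive work lies in $(\mathrm{ii}) \Rightarrow (\mathrm{iii})$. For approximately unital $A$ this is nothing but the definition of compactness via Theorem~\ref{chcomp}. In the non-approximately-unital case, I would apply Theorem~\ref{chcomp}(iv) to the unital algebra $A^1$ to obtain $a' \in \frac{1}{2}\mathfrak{F}_{A^1}$ with $a'q = qa' = q$, and then write $a' = \lambda \cdot 1 + a$ with $a \in A$ and $\lambda = \chi(a') \in \mathbb{C}$, where $\chi : A^1 \to \mathbb{C}$ is the character vanishing on $A$. The constraint $\|1 - 2a'\| \leq 1$ forces $\lambda$ into the closed disk $\{z \in \mathbb{C} : |z - \frac{1}{2}| \leq \frac{1}{2}\}$. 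Provided $\lambda \neq 1$, the element $b = (1-\lambda)^{-1} a \in A$ commutes with $q$ in $B$ and has $qbq = q$, and Corollary~\ref{dist} (the last assertion) then yields $g \in {\rm Ball}(A)$ with $gq = qg = q$, which is the desired $a$ in $(\mathrm{iii})$. The residual case $\lambda = 1$, in which $a' = 1 - c$ for some $c \in \frac{1}{2}\mathfrak{F}_A$ with $cq = qc = 0$ so that the $A$-component of $a'$ annihilates $q$, is the main obstacle. My plan for this case would be to use the fact that $q \in A^{**}$ forces $\tilde{\chi}(q) = 0$, and to exploit the convex structure of $\{a' \in \frac{1}{2}\mathfrak{F}_{A^1} : qa'q = q\}$ under $\chi$ to show one can always choose $a'$ with $\chi(a') \neq 1$; alternatively, to produce an open projection $p \in A^{**}$ with $q \leq p$, so that Theorem~\ref{bnc26} applied to $A^1$ yields $b \in \frac{1}{2}\mathfrak{F}_{A^1}$ with $b = pbp$, forcing $b \in pA^{**}p \cap A^1 \subseteq A$.

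Finally, for $(\mathrm{ii}) \Rightarrow (\mathrm{iv})$, the approximately unital case is Theorem~\ref{inco}(1). In the non-approximately-unital case, once $(\mathrm{iii})$ is established, I would apply Theorem~\ref{inco}(1) to the unital algebra $A^1$ to express $q$ as a decreasing infimum of peak projections $u(x_\alpha)$ with $x_\alpha \in {\rm Ball}(A^1)$, and promote each such peak to one arising from an element of $A$ by the trick at the end of the proofs of Theorem~\ref{inco}(2)--(3): for the element $b \in {\rm Ball}(A)$ supplied by $(\mathrm{iii})$ applied to $u(x_\alpha)$, the product $(1-b)\circ b \in {\rm Ball}(A)$ satisfies $u((1-b)\circ b) = u(x_\alpha)$. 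The central hurdle throughout is the $\lambda = 1$ case of $(\mathrm{ii}) \Rightarrow (\mathrm{iii})$.
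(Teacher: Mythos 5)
Your overall architecture is sound and the easy implications are handled correctly (your route $(\mathrm{iv})\Rightarrow(\mathrm{ii})$ via Corollary \ref{bnc23} is in fact slightly slicker than the paper's, which reruns the argument of Theorem \ref{inco}). The genuine gap is exactly where you flag it: the case $\lambda=\chi(a')=1$ in $(\mathrm{ii})\Rightarrow(\mathrm{iii})$, and neither of your proposed workarounds is carried out. The second one (finding an open $p\in A^{**}$ with $q\le p$ and invoking Theorem \ref{bnc26}) is itself doubtful, since for a non-approximately-unital $A$ there is no evident open projection in $A^{**}$ dominating an arbitrary compact $q$ without already knowing something like $(\mathrm{iii})$. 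Your first idea, however, is essentially right and can be completed by one observation you are missing: do not take an arbitrary $a'\in{\rm Ball}(A^1)$ with $qa'q=q$, but one drawn from a net converging weak* to $q$. Concretely, since $q$ is compact in $(A^1)^{**}$, the discussion preceding Theorem \ref{1Ur} (applied to $A^1$) produces a net $(y_t)$ in ${\rm Ball}(A^1)$ with $qy_t=y_tq=q$ and $y_t\to q$ weak*. Because $q\in A^{\perp\perp}=\ker\chi^{**}$, weak* continuity gives $\chi(y_t)\to\chi^{**}(q)=0$, so for large $t$ we have $\lambda_t=\chi(y_t)\ne 1$; then $b=(1-\lambda_t)^{-1}(y_t-\lambda_t 1)\in A$ commutes with $q$, satisfies $qbq=q$, and Corollary \ref{dist} converts $b$ into the required contraction. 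This is, in substance, what the paper does: its proof of $(\mathrm{ii})\Rightarrow(\mathrm{iii})$ is the first paragraph of the proof of Theorem 6.2 of \cite{BRII} with Corollary \ref{dist} replacing the interpolation theorem invoked there, i.e.\ precisely the two-step scheme you describe (first a possibly non-contractive element of $A$ fixing $q$, then interpolation down to the ball).

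A secondary soft spot: in your $(\mathrm{ii})\Rightarrow(\mathrm{iv})$ you apply Theorem \ref{inco}(1) to $A^1$ and then wish to apply $(\mathrm{iii})$ to each intermediate peak projection $u(x_\alpha)$; but these are peak projections for $A^1$ and need not lie in $A^{**}$ (one needs $\chi^{**}(u(x_\alpha))=0$, which fails when $\chi(x_\alpha)=1$), so $(\mathrm{iii})$ does not apply to them as stated. The paper avoids this by rerunning the construction of Theorem \ref{inco}(1) with the element $a\in{\rm Ball}(A)$ already supplied by $(\mathrm{iii})$, so that the peaking elements $z_t\circ a$ land in the ideal $A$ of $A^1$ and the resulting peak projections are automatically peak projections for $A$ lying in $A^{\perp\perp}$.
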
 \begin{proof}  We said after Theorem \ref{chcomp} that (i) and (ii) 
were equivalent.  That (iii) implies (iv) and (iv) implies (ii) is as in the proof of \cite[Theorem 3.4 (1) and (2)]{BNII}, 
 according to how those proofs were amended for  Theorem \ref{chcomp}.   That (ii) implies (iii) is as in the first paragraph of the 
proof of  \cite[Theorem 6.2]{BRII}, but using our Corollary \ref{dist} in the place where
 \cite[Theorem 5.1]{BRII} is invoked. \end{proof}

We remark that the Jordan version of Proposition 5.1 of \cite{BNII} clearly true with essentially the same proof.

\begin{corollary} \label{limba} Let $A$ be a Jordan operator algebra.
Then  $q\in A^{**}$ is compact if and only if  $q$ is a weak* limit of a net $(a_t)$ in ${\rm Ball}(A)$ with $q a_t q=q$ for all $t.$ \end{corollary}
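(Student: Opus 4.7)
The plan is to derive Corollary \ref{limba} as a fairly direct consequence of Theorem \ref{brii62}, using the same construction $y_t = a \circ (1 - e_t)$ that appears in the paragraph just above Theorem \ref{1Ur}. The key fact to exploit in both directions is the equivalence (for contractions) $qaq = q \iff qa = aq = q$ recorded in Section 1.

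For the forward direction, I start with $q$ compact in $A^{**}$. By Theorem \ref{brii62}(iii), $q$ is closed in $(A^1)^{**}$ and there is an $a \in {\rm Ball}(A)$ with $qaq = q$, hence $qa = aq = q$ in a containing $C^*$-algebra. Because $1-q$ is then open in $(A^1)^{**}$, it supports a hereditary subalgebra $D$ in $A^1$ with a partial cai $(e_t) \subset \frac{1}{2}{\mathfrak F}_{A^1}$ converging weak* to $1-q$. I would set $y_t = a \circ (1-e_t)$. The norm bound $\|y_t\| \leq \|1 - e_t\| \leq 1$ is immediate from $1 - e_t \in \frac{1}{2}{\mathfrak F}_{A^1}$, and $y_t \to a \circ q = q$ follows by separate weak* continuity of the Jordan product. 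The identities $qa = aq = q$ and $qe_t = e_tq = 0$ (the latter because $e_t \in D^{\perp\perp} = (1-q)(A^1)^{**}(1-q)$) yield $qy_tq = q$ by direct expansion. The one mildly subtle step is checking that $y_t$ actually lies in $A$ and not merely in $A^1$: writing $e_t = b_t + \lambda_t \cdot 1$ with $b_t \in A$, one has $a \circ e_t = a \circ b_t + \lambda_t a \in A$ since $A$ is a Jordan subalgebra, so $y_t = a - a \circ e_t \in A$.

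For the converse, suppose such a net $(a_t)$ exists. Contractivity of $a_t$ promotes $qa_tq = q$ to $qa_t = a_tq = q$, so $1 - a_t = (1-q)(1 - a_t)(1-q)$ lies in $A^1 \cap (1-q)(A^1)^{**}(1-q)$. Since $a_t \to q$ weak* in $A^{**}$, and the embedding $A^{**} \hookrightarrow (A^1)^{**}$ is weak*-to-weak* continuous, we also have $1 - a_t \to 1 - q$ weak* in $(A^1)^{**}$. This exhibits $1-q$ as $A^1$-open, so $q$ is closed in $(A^1)^{**}$, which by the definition of compactness recalled immediately before Theorem \ref{brii62} means $q$ is compact in $A^{**}$.

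There is no serious obstacle here; the main conceptual point is the systematic use of $A^1$ rather than $A$ when $A$ may fail to be approximately unital, together with the one-line verification that the Jordan product of $a \in A$ with a unitized element of $A^1$ stays inside $A$. Once these are in place, the computation is essentially the one given above Theorem \ref{1Ur}.
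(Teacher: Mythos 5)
Your argument is correct. The backward direction is exactly the paper's: $q^\perp(1-a_t)q^\perp = 1-a_t \to 1-q$ weak*, so $1-q$ is open in $(A^1)^{**}$ and $q$ is compact. The forward direction, however, takes a genuinely different route through Theorem \ref{brii62}: the paper uses clause (iv), writing $q$ as a decreasing limit of peak projections and following \cite[Corollary 6.3]{BRII}, where the approximating net is extracted from powers $x_s^n$ of peaking elements (these satisfy $qx_s^n q = q$ since $q \leq u(x_s)$ and $u(x_s)x_s = u(x_s)$) by an iterated weak* limit. You instead use clause (iii), taking a single $a \in {\rm Ball}(A)$ with $qaq=q$ and transplanting to $A^1$ the construction $y_t = a \circ (1-e_t)$ displayed just above Theorem \ref{1Ur}, with $(e_t)$ a partial cai in $\frac{1}{2}{\mathfrak F}_{A^1}$ for the HSA of $A^1$ supported by $1-q$. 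Your version is more self-contained --- it avoids the external reference and the unspecified modifications to it --- and the two points that genuinely need checking in the nonunital setting (that $y_t$ lands in $A$ rather than merely $A^1$, via the decomposition $e_t = b_t + \lambda_t 1$, and that $\Vert 1-e_t \Vert \leq 1$ because $1-e_t \in \frac{1}{2}{\mathfrak F}_{A^1}$) are both handled correctly. What the paper's route buys is consistency with the surrounding peak-projection machinery and a literal Jordanization of the associative proof; what yours buys is a shorter, explicit net attached to a single witness of compactness.
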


\begin{proof}  ($\Leftarrow$) \ Note that $q^\perp (1-a_t) q^\perp = 1-a_t \to 1-q$ weak*,
so that $q^\perp$ is open in $(A^1)^{**}$, so that $q$ is closed there.  Hence $q$
is compact in $A^{**}$.

($\Rightarrow$) \ This follows as in \cite[Corollary 6.3]{BRII}, using Theorem \ref{brii62} (iv) and adding some $q$'s 
on the right of some of the expressions there.
\end{proof}

As on p.\ 1065 of \cite{BRII}, if $A$ is a 
non-approximately unital operator algebra, we  define 
a ${\mathfrak F}$-{\em peak projection} for $A$ to be $u(x)$, the weak* limit
of the powers $x^n$, for 
some $x \in \frac{1}{2} {\mathfrak F}_A$.   See 
the lines above Corollary \ref{cipeak}  for the fact that this
weak* limit exists and is a projection, which is 
nonzero if $\Vert x \Vert = 1$.  We define a
projection in $A^{**}$ to be ${\mathfrak F}$-compact if it is a decreasing 
limit of ${\mathfrak F}$-peak projections.  

\begin{proposition} \label{cohm}  If $A$ is any
non-approximately unital Jordan  operator algebra, then 
\begin{itemize}  
\item [(1)]  A projection in $A^{**}$  is
${\mathfrak F}$-compact in the sense above if  and only if it is a compact projection
with respect to the approximately unital Jordan  operator algebra 
 $A_H$ (the latter was defined above Corollary {\rm 4.2} in {\rm \cite{BWj}}).   
\item [(2)]  A projection in $A^{**}$  is an ${\mathfrak F}$-peak projection in the sense above if  and only if it is a peak projection
 for $A_H$.  
\item [(3)] If $A$ is separable then every 
${\mathfrak F}$-compact projection in $A^{**}$ is an ${\mathfrak F}$-peak projection.  
 \end{itemize} \end{proposition}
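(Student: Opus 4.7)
My plan is to deduce all three parts from Theorem \ref{inco}, applied to the approximately unital Jordan operator algebra $A_H$, after observing the key identity $\frac{1}{2}\mathfrak{F}_A = \frac{1}{2}\mathfrak{F}_{A_H}$. The inclusion $\mathfrak{F}_A \subset A_H$ holds because, by construction, $A_H$ is the closed Jordan subalgebra of $A$ containing all the real positive elements; the reverse inclusion $\mathfrak{F}_{A_H} \subset \mathfrak{F}_A$ follows from the general fact, recorded in Section~1, that $\mathfrak{F}_C \cap D = \mathfrak{F}_D$ whenever $D$ is a Jordan subalgebra of $C$. I would also note that the weak* topology on $A_H^{**} \cong A_H^{\perp\perp} \subset A^{**}$ is just the restriction of the weak* topology of $A^{**}$, so that for any $x \in A_H$ the weak* limit of $(x^n)$ computed in either bidual is the same projection, and a decreasing net of projections in $A_H^{\perp\perp}$ has the same infimum in either bidual.

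First I would prove (2): given $x \in \frac{1}{2}\mathfrak{F}_A = \frac{1}{2}\mathfrak{F}_{A_H}$, the $\mathfrak{F}$-peak projection $u(x)$ of $A$ coincides with the weak* limit of $(x^n)$ in $A_H^{**}$, and by Theorem~\ref{inco}(3) applied to $A_H$ this is precisely a peak projection for $A_H$. Conversely, every peak projection for $A_H$ has the form $u(a)$ with $a \in \frac{1}{2}\mathfrak{F}_{A_H} = \frac{1}{2}\mathfrak{F}_A$, hence is $\mathfrak{F}$-peak for $A$. Part (1) then follows immediately: an $\mathfrak{F}$-compact projection of $A$ is by definition a decreasing limit (equivalently, an infimum) of $\mathfrak{F}$-peak projections, which by (2) are the peak projections of $A_H$; by Theorem~\ref{inco}(1), such decreasing limits (equivalently infima) in $A_H^{**}$ are exactly the compact projections with respect to $A_H$, and these coincide with the same projections viewed in $A^{**}$ via the embedding $A_H^{\perp\perp} \hookrightarrow A^{**}$.

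For (3), if $A$ is separable then so is its closed Jordan subalgebra $A_H$, so Theorem~\ref{inco}(2) yields that every compact projection of $A_H^{**}$ is already a peak projection of $A_H^{**}$. Combining with (1) and (2), every $\mathfrak{F}$-compact projection of $A^{**}$ is an $\mathfrak{F}$-peak projection. The only point I expect to require a little care is the verification that $\mathfrak{F}_A = \mathfrak{F}_{A_H}$ and that the various weak* limits and infima transfer cleanly between $A_H^{**}$ and $A^{**}$; once those bookkeeping points are settled, the proposition is a direct translation of Theorem~\ref{inco}.
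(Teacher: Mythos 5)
Your proposal is correct and is essentially the paper's own argument: the paper proves this by following \cite[Proposition 6.5]{BRII}, using precisely the fact that ${\mathfrak F}_A = {\mathfrak F}_{A_H}$ (from the proof of \cite[Corollary 4.2]{BWj}) and replacing the appeals to \cite{BNII} with Theorem \ref{inco} applied to the approximately unital algebra $A_H$, which is exactly what you do. The bookkeeping points you flag (the transfer of weak* limits and infima through $A_H^{\perp\perp}\hookrightarrow A^{**}$) are handled the same way and present no difficulty.
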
 \begin{proof}   As in \cite[Proposition 6.5]{BRII},
but using the fact from the proof of Corollary {\rm 4.2} in {\rm \cite{BWj}} that 
${\mathfrak F}_A = {\mathfrak F}_{A_H}$, and replacing appeals to \cite{BNII}
with the matching results in the present paper and \cite{BWj}.   
\end{proof}

We will treat \cite[Theorem 6.6]{BRII} below in our discussion of the 
Urysohn lemmas from \cite[Section 4]{BRord}.
Turning to the latter paper, the noncommutative topology there begins in earnest with Lemma 3.6 there.  The
reader is reminded there that a {\em $\sigma$-compact} projection in $B^{**}$ 
for a $C^*$-algebra $B$,
is an open projection $p \in B^{**}$ which is 
the supremum (or weak* limit)
of an increasing sequence in $B_+$.      The Jordan 
variant of  \cite[Lemma 3.6]{BRord} is as follows:

\begin{lemma} \label{cssigc}  If $A$ is a closed Jordan subalgebra of a $C^*$-algebra $B$, and if $p$ is 
an open projection in $A^{**}$ then the following are equivalent:
\begin{itemize} \item [(i)] $p$ is the support projection of
a HSA in $A$
 with a countable  cai.
\item [(ii)]  $p$ is $\sigma$-compact  in $B^{**}$ in the sense above.
\item [(iii)]  $p$ is the support projection of
a HSA  in $A$ of the form 
$\overline{xAx}$ for some $x \in {\mathfrak r}_A$.  That is, $p = s(x)$ 
for some $x \in {\mathfrak r}_A$. 
\item [(iv)]  There is a
sequence $x_n \in {\mathfrak r}_A$ with $x_n = p x_n p \to p$ weak*. 
\item [(v)] $p$ is the support projection of a strictly real  
positive element (in the sense defined after Lemma  {\rm 3.11} in {\rm \cite{BWj}}) of the hereditary subalgebra defined by $p$.
\end{itemize} 
If these hold then the sequence $(x_n)$
in {\rm (iv)} can be chosen with $(x_n + x_n^*)$ increasing,
and the $x_n$, and the element $x$ in {\rm (iii)}, can be chosen to be
in $\frac{1}{2} {\mathfrak F}_A$.
  \end{lemma}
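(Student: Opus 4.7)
The plan is to establish the chain of equivalences $\mathrm{(v)} \Leftrightarrow \mathrm{(iii)} \Leftrightarrow \mathrm{(iv)} \Leftrightarrow \mathrm{(i)} \Leftrightarrow \mathrm{(ii)}$, using Corollary \ref{sup} throughout to identify openness of $p$ in $A^{**}$ with openness in $B^{**}$, so that $p$ already supports a HSA $D = A \cap pA^{**}p$ in $A$ with a partial cai $(e_t) \subset \tfrac{1}{2}\mathfrak{F}_A$. The equivalence (iii) $\Leftrightarrow$ (v) is tautological: $x \in \mathfrak{r}_A$ with $s(x) = p$ generates the HSA $\overline{xAx} = D$ and is strictly real positive there. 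For (iii) $\Rightarrow$ (iv), take $x_n = x^{1/n}$ in $\mathrm{oa}(x)$, giving a sequence in $\mathfrak{r}_A \cap pA^{**}p$ with $x_n = p x_n p$ and $x_n \to s(x) = p$ weak*. Conversely, for (iv) $\Rightarrow$ (iii), after normalizing into $\tfrac{1}{2}\mathfrak{F}_A$ via \cite[Lemma 3.7]{BWj}, form $x = \sum_n 2^{-n} x_n \in \tfrac{1}{2}\mathfrak{F}_A \cap D$: since $\mathrm{Re}(x) \geq 2^{-n} \mathrm{Re}(x_n)$ in $B$, the $C^*$-support calculus gives $s(x) \geq s(x_n)$, and with $q = s(x) \leq p$, the relation $x_n = q x_n q$ combined with $x_n \to p$ weak* forces $p = qpq \leq q$, whence $s(x) = p$. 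Moreover (i) $\Rightarrow$ (iv) is immediate (a countable cai in $\tfrac{1}{2}\mathfrak{F}_A$ serves), and (i) $\Rightarrow$ (ii) follows from \cite[Proposition 4.6]{BWj} (as invoked in Corollary \ref{qnf}), arranging a countable partial cai with real parts $f_n = \mathrm{Re}(e_n) \in B_+$ increasing to $p$ weak*.

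The main obstacle is (ii) $\Rightarrow$ (i): given $\sigma$-compactness of $p$ in $B^{**}$, construct a countable J-cai for the HSA $D$ in $A$. By Akemann--Pedersen's $C^*$-theory, there is $b \in B_+$ with $s(b) = p$, so $(b^{1/k})$ is a countable cai for the $C^*$-HSA $D_B = \overline{pBp} \cap B$, but $b$ lives in $B$, not in $A$. The plan is to ``Jordanize'' $b$ via the partial cai $(e_t) \subset D \cap \tfrac{1}{2}\mathfrak{F}_A$: since $e_t \to p$ weak* in $B^{**}$ and left and right multiplication by $b^{1/k} \in B$ are weak*-continuous on $B^{**}$, we have $e_t b^{1/k} \to b^{1/k}$ and $b^{1/k} e_t \to b^{1/k}$ weak*, hence weakly in $B$. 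Applying Mazur's theorem in a suitable Cartesian power of $B$, for each $n$ one extracts a convex combination $y_n \in \mathrm{co}\{e_t\} \subset D \cap \tfrac{1}{2}\mathfrak{F}_A$ satisfying $\|y_n b^{1/k} - b^{1/k}\| + \|b^{1/k} y_n - b^{1/k}\| < 1/n$ for all $k \leq n$. Since $(b^{1/k})$ is a cai for $D_B$ and $(y_n)$ is uniformly bounded, $y_n d \to d$ and $dy_n \to d$ in norm for every $d \in D_B$, hence $y_n \circ d \to d$ for every $d \in D \subset D_B$; this gives a countable J-cai for $D$, establishing (i).

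The delicate points are: first, the transfer from weak* convergence of $e_t$ in $B^{**}$ to Mazur-type norm approximation of convex combinations in $B$, which uses that $e_t b^{1/k} \in B$ and the weak* limit $b^{1/k}$ itself lies in $B$; and second, the standard diagonalization needed to extract a single sequence $(y_n)$ working simultaneously on the countable cai $(b^{1/k})$ of $D_B$. The ``furthermore'' clause is handled in passing: in (iv), the sequence $(x_n)$ may be chosen with $(x_n + x_n^*)$ increasing by invoking \cite[Proposition 4.6]{BWj} on the HSA generated by any witness in $\mathfrak{r}_A$; and in (iii), the element $x$ can be rescaled by $x \mapsto x(1+x)^{-1}$ computed in $\mathrm{oa}(x)$ to land in $\tfrac{1}{2}\mathfrak{F}_A$ without changing $s(x)$.
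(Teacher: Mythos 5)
Your proof, as structured, does not establish the full equivalence. You prove (iii) $\Leftrightarrow$ (v), (iii) $\Leftrightarrow$ (iv), (i) $\Leftrightarrow$ (ii), and (i) $\Rightarrow$ (iv) --- but you never supply an implication from the block $\{$(iii), (iv), (v)$\}$ back into the block $\{$(i), (ii)$\}$. The announced chain (v) $\Leftrightarrow$ (iii) $\Leftrightarrow$ (iv) $\Leftrightarrow$ (i) $\Leftrightarrow$ (ii) needs (iv) $\Rightarrow$ (i) (or (iii) $\Rightarrow$ (i), or (iii) $\Rightarrow$ (ii)), and none of these appears: the only arrow you give between the two blocks points the wrong way. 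So what you have actually shown is that (i) and (ii) are equivalent and imply the other three, and that (iii), (iv), (v) are mutually equivalent --- which is not the lemma. The gap is genuine but easily repaired. From (iii), with $x \in \frac{1}{2}{\mathfrak F}_A$ and $s(x)=p$, the sequence $(x^{1/n})$ is a countable J-cai for $D=\overline{xAx}$ (for $a = xbx$ one has $x^{1/n}\circ a = \frac{1}{2}(x^{1+1/n}bx + xbx^{1+1/n}) \to a$ in norm, and such $a$ are dense), which gives (iii) $\Rightarrow$ (i); alternatively $s(x)$ equals the support projection of the positive element ${\rm Re}\, x$, so $(({\rm Re}\, x)^{1/n})$ is an increasing sequence in $B_+$ with weak* limit $p$, giving (iii) $\Rightarrow$ (ii). Either arrow closes the cycle.

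The implications you do prove are essentially sound, and your Mazur/diagonalization argument for (ii) $\Rightarrow$ (i) is a welcome explicit version of a step the paper leaves as an exercise. The paper instead routes (i) $\Leftrightarrow$ (ii) through Theorem \ref{bhn29}: a countable cai for the HSA $J$ of $A$ is a countable cai for $I = \overline{JBJ^*}$, whence $I$ is $\sigma$-unital and $p$ is $\sigma$-compact; this also yields (i) $\Rightarrow$ (ii) without your appeal to \cite[Proposition 4.6]{BWj}, where you would still need to justify that the partial cai with increasing real parts produced there can be taken to be a \emph{sequence}. Your (iv) $\Rightarrow$ (iii) matches the paper's argument in substance (the paper invokes \cite[Corollary 3.17]{BWj} to get $s(x)x_ns(x)=x_n$ rather than your ${\rm Re}(x) \geq 2^{-n}{\rm Re}(x_n)$ route, but the conclusion $p = s(x)ps(x) \leq s(x)$ is the same). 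Once the missing arrow is supplied, the argument is complete.
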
  \begin{proof}   This follows the lines of proof of  \cite[Lemma 3.6]{BRord}
replacing appeals to various facts 
with the matching results in the present paper and \cite{BWj}, with the following exceptions:
In the proof of (v) one needs to  show that $D = \overline{x A x}$ of course, but this is similar
to the proof we are copying (there is a typo in that proof, $xAD$ should read $x D D$).  
When proving (iv) implies (iii) we replace right ideals by HSA's, 
and  let $J = \{ a \in A : a = pap \}$, a HSA by definition.   Note that this contains
$\overline{x_n A x_n}$ for each $n$.   We replace each $x_n$ by ${\mathfrak F}(x_n)$,
which does not change  $\overline{x_n A x_n}$.    Thus assume $x_n \in \frac{1}{2} {\mathfrak F}_A$.
Then   
$\overline{x_n A x_n} \subset \overline{x A x} \subset J$ where
$x = \sum_n \, \frac{x_n}{2^n}$ by
\cite[Corollary 3.17]{BWj}.     By that result we also have $s(x) x_n s(x)= x_n$, so that in the limit
$s(x) p s(x) = p$.   Thus $p \leq s(x)$ and so $\overline{x A x} = J$.

Finally, in the equivalence of (i) and (ii), we will have $I = JBJ^*$ by Theorem \ref{bhn29}, where $I$ and $J$
are the HSA's supported by $p$ in $B$ and $A$ respectively.   So a countable cai for $J$ will be a countable cai
for  $I$
using e.g.\ \cite[Lemma 2.1.6]{BLM}.   Conversely if $I$ has a countable cai, we leave it as an exercise to modify
the proof in   \cite[Lemma 3.6]{BRord} that 
$J$ has a countable cai.
\end{proof}

If $A$ is a Jordan operator algebra then
an open projection $p \in A^{**}$ is called {\em $\sigma$-compact} with respect to $A$ if
it satisfies the equivalent conditions in the previous result.  This is all used in the 
`strict Urysohn lemma' below the next result (part (1) of which also is a Urysohn lemma):

\begin{theorem} \label{urys}  Let $A$ be an approximately unital Jordan 
subalgebra of a  $C^*$-algebra $B$, and let
$q \in A^{\perp \perp}$ be a compact projection.
\begin{itemize}
\item [(1)]  If $q$ is dominated by an open projection  $u \in B^{**}$
then for any $\epsilon > 0$,
there exists an $a \in 
\frac{1}{2} {\mathfrak F}_A$ with $q \circ a =  q$,
and $\Vert a (1-u) \Vert < \epsilon$ and $\Vert (1-u)  a \Vert < \epsilon$.
Indeed this can be done with in addition $a$ nearly positive
(thus the numerical range of $a$ in $B^1$ within
a horizontal cigar centered on the line segment $[0,1]$ 
in the
$x$-axis, of height $< \epsilon$).  
\item [(2)]  $q$ is a weak* limit of a net $(y_t)$ of nearly
positive elements in $\frac{1}{2}
\, {\mathfrak F}_A$ 
with $y_t \circ q =  q$.
\end{itemize} \end{theorem}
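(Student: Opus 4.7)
The plan is to establish part (2) first and then deduce (1) from (2) via a Mazur-type convex-combination argument.

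To prove (2), first apply Theorem~\ref{bnc26} with the open projection $p = e = 1_{A^{**}}$ (open since $A$ is approximately unital) to produce $b \in \frac{1}{2}\mathfrak{F}_A$ with $qbq = q$. Replacing $b$ by a sufficiently high $n$-th root $b^{1/n} \in \operatorname{oa}(b) \subset A$ (which exists since $b \in \mathfrak{r}_A$) makes $b$ nearly positive, as in the remark after Theorem~\ref{bnc26}; since $\|b\| \leq 1$ and $qbq = q$, this forces $qb = bq = q$ and hence $q$ commutes with $b$. Because $q$ is compact in $A^{**}$, the projection $e-q$ is open in $A^{**}$, and the HSA $D \subset A$ it supports admits a partial cai $(e_s)$ that I choose inside $\frac{1}{2}\mathfrak{F}_A$ and nearly positive, with $qe_s = e_s q = 0$ and $e_s \to e-q$ weak*.

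The candidate net is
\[
y_s = (1-e_s)\,b\,(1-e_s) = b - 2(e_s \circ b) + e_s b e_s,
\]
which lies in $A$ because the sandwich $e_s b e_s = 2(e_s \circ b) \circ e_s - e_s^2 \circ b$ is Jordan-expressible. Three things must be checked: (a) $qy_s q = q$, immediate from $qe_s = e_s q = 0$ and $qbq = q$; (b) $y_s \to q$ weak*, which follows from $e_s \to e-q$ in $A^{**}$, the fact that $fa = af = 0$ for every $a \in A$ where $f = 1_{(A^1)^{**}} - e$ is the central complement of $e$, and the resulting identity $(f+q)b(f+q) = qbq = q$; and (c) $y_s \in \frac{1}{2}\mathfrak{F}_A$ and $y_s$ is nearly positive. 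In the ideal case of truly positive $b$ and $e_s$, $y_s = \big((1-e_s)b^{1/2}\big)\big((1-e_s)b^{1/2}\big)^*$ is a positive contraction and (c) is automatic. In general, $b$ and $e_s$ can be made as nearly positive as desired by further root-taking, making $y_s$ close in norm to such a positive contraction, and then a small functional-calculus or rescaling adjustment within $\operatorname{oa}(y_s) \subset A$ produces elements lying exactly in $\frac{1}{2}\mathfrak{F}_A$ while preserving the weak* limit.

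To derive (1) from (2), fix $\epsilon > 0$ and an open projection $u \in B^{**}$ with $q \leq u$. Since $q(1-u) = 0$, the net $(y_s)$ from (2) satisfies $y_s(1-u) \to 0$ and $(1-u)y_s \to 0$ weak* in $B^{**}$. Mazur's theorem applied to the map $y \mapsto (y(1-u), (1-u)y) \in B \oplus B$ then yields a convex combination $a = \sum_i \lambda_i y_{s_i}$ with both $\|a(1-u)\|$ and $\|(1-u)a\|$ strictly less than $\epsilon$. Convexity of $\frac{1}{2}\mathfrak{F}_A$, preservation of near-positivity under convex combinations, and the identity $qaq = \sum_i \lambda_i q = q$ together furnish the desired element.

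The main obstacle is step (c) of the net construction: ensuring that $y_s$ lies \emph{exactly} in $\frac{1}{2}\mathfrak{F}_A$ and is \emph{exactly} nearly positive in the noncommutative, nonself-adjoint Jordan setting, where the raw sandwich $(1-e_s)b(1-e_s)$ is only approximately in $\frac{1}{2}\mathfrak{F}_A$ with tolerance controlled by the degree of near-positivity of $b$ and $e_s$. Handling this cleanly requires either a functional-calculus adjustment (available within $\operatorname{oa}(y_s) \subset A$ once the numerical range of $y_s$ is known to lie in a sufficiently thin cigar around $[0,1]$) or a rescaling scheme with scale factors tending to $1$ along the net.
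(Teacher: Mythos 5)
Your derivation of (1) from (2) is essentially the paper's: substitute the net from (2) into the convex-combination (Mazur) argument of Theorem \ref{1Ur}. The problem is in your construction for (2). The net $y_s = (1-e_s)b(1-e_s)$ does satisfy $qy_sq=q$, and the weak* convergence $y_s \to q$ can be rescued (the sandwich is not handled by separate weak* continuity alone, but $e_s \to e-q$ SOT in the universal representation since $(e_s)$ is a partial cai for the HSA, and that gives the limit). The genuine gap is exactly the step you flag as ``the main obstacle'': there is no reason for $y_s$ to lie in $\frac{1}{2}{\mathfrak F}_A$, or even to be accretive. Writing $g = 1-e_s$, one has ${\rm Re}(gbg) = \frac{1}{2}(gbg + g^*b^*g^*)$, which is \emph{not} $g^*({\rm Re}\,b)g$ unless $g=g^*$; so real positivity of $b$ and $e_s$ does not transfer to the sandwich in the nonselfadjoint setting. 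Your two proposed repairs do not close this. Rescaling $y_s \mapsto \lambda_s y_s$ destroys the exact identity $q y_s q = q$, which the statement of (2) requires for every $t$ (not just in the limit). A functional-calculus adjustment inside ${\rm oa}(y_s)$ presupposes that $y_s$ is at least accretive (so that roots exist), which is precisely what has not been established; and ``close in norm to a positive contraction'' does not place an element in the closed but thin set $\frac{1}{2}{\mathfrak F}_A$ while preserving $qy_sq=q$.

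The paper avoids this difficulty by a different reduction. First it treats the case $q = u(x)$ for some $x \in \frac{1}{2}{\mathfrak F}_A$: there one may replace $A$ by the \emph{commutative associative} algebra ${\rm oa}(x)$ and quote the associative result \cite[Theorem 4.2]{BRord}, where the required nearly positive elements of $\frac{1}{2}{\mathfrak F}$ are built by explicit manipulations of powers and roots of the single generator -- exactly the functional calculus that is unavailable for your $y_s$. The general case then follows from Theorem \ref{inco}(1): a compact projection is a decreasing limit of peak projections $u(x_s)$ with $x_s \in \frac{1}{2}{\mathfrak F}_A$, and one splices the nets $y^s_t$ obtained for each $u(x_s)$, using $q\,u(x_s)\,q = q$ to keep the exact identity $q y^s_t q = q$. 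If you want to salvage your approach, you would need to route the construction through a singly generated (hence commutative associative) subalgebra in this way rather than through the two-sided sandwich by the cai of the complementary HSA.
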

 \begin{proof}    (2) \  First assume that $q = u(x)$
  for some $x \in \frac{1}{2} {\mathfrak F}_A$. As in
the proof of \cite[Theorem 4.2]{BRord} we may replace $A$ by the commutative 
associative operator algebra ${\rm oa}(x)$.
Then we may simply appeal to the argument there
to obtain what is stated in (2).   

Next, for an arbitrary compact projection $q \in  A^{\perp \perp}$, by 
Theorem \ref{inco} 
there exists a net $x_s \in \frac{1}{2} {\mathfrak F}_A$  with $u(x_s) \searrow q$.
By the case in the last paragraph there exist nets $y_t^s \in \frac{1}{2} \, {\mathfrak F}_A$
with $u(x_s) y_t^s \, u(x_s) = u(x_s)$, and $y_t^s \to u(x_s)$ weak*.
Then $$q y_t^s q = q u(x_s) \, y_t^s \, u(x_s) \, q = q u(x_s) \, q = q,$$ for each $t, s$.  
As in the proof we are following the $y_t^s$ yields
 a net weak* convergent to $q$.

(1) \ The first assertion  follows from (2) as in the proof of the approximately 
unital case of  \cite[Theorem 4.2 (1)]{BRord}.  Namely 
by substituting the net $(y_t)$ from (2) into
the proof of Theorem \ref{1Ur} one obtains the
first assertion of (1).  The other  assertions of (1)
are as in \cite[Theorem 4.2 (1)]{BRord}.
  \end{proof}  
 
   The Jordan
variant of  \cite[Lemma 4.4]{BRord} is as follows:

\begin{lemma} \label{sigcp} Suppose that $A$ 
is an approximately
unital Jordan operator algebra, with $e = 1_{A^{**}}$, that $q \in A^{**}$ is compact,
and that $p = q^{\perp} = e-q$ is $\sigma$-compact in $A^{**}$. Then $q$ is a 
peak projection for $A$, indeed $q = u(x)$ for some  $x \in \frac{1}{2} {\mathfrak F}_A$.  
\end{lemma}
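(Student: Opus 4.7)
The plan is to reduce to the unital case, where a one-line argument using the relation $u(x) = 1 - s(1-x)$ finishes the proof.

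First, suppose $A$ is unital with $e = 1_A$. Since $p = e - q$ is open and $\sigma$-compact in $A^{**}$, Lemma \ref{cssigc} (iii) gives $z \in \frac{1}{2}{\mathfrak F}_A$ with $s(z) = p$ and $z = pzp$. Set $x = 1 - z \in A$. Then $\|1 - 2x\| = \|2z - 1\| \leq 1$, so $x \in \frac{1}{2}{\mathfrak F}_A \subseteq {\rm Ball}(A)$. The projections $u(x)$ and $s(z) = s(1-x)$ live in the associative, in fact commutative, operator algebra ${\rm oa}(z) \cup \{1\}$ generated by $z$ and $1$, so the standard relation $u(x) = 1 - s(1-x)$ recalled in the introduction applies and yields
$$u(x) \;=\; 1 - s(z) \;=\; 1 - p \;=\; q.$$

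Now suppose $A$ is approximately unital but not unital, and apply the unital case to $A^1$. Since $A$ is approximately unital we may identify $(A^1)^{**} \cong A^{**} \oplus \mathbb{C}$ with $1_{A^1} = (e,1)$; the projection $q \in A^{**}$ becomes $(q,0)$ in $(A^1)^{**}$, and its complement there is $1_{A^1} - q = (p, 1)$. Since $q$ is compact in $A^{**}$ it is closed in $(A^1)^{**}$. To apply the unital case to $A^1$ I need $(p,1)$ to be $\sigma$-compact in $(A^1)^{**}$; for this, set $w = z + \tfrac{1}{2} \cdot 1_{A^1} \in A^1$. A direct computation shows $\|1_{A^1} - 2w\| = \max(\|e - 2z\|, 0) \leq 1$, so $w \in \frac{1}{2}{\mathfrak F}_{A^1}$, and the $n$th roots of $w$ (computed in the commutative $A^1$-operator algebra generated by $z$) converge weak* to $(p,1)$, giving $s(w) = (p,1)$. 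By Lemma \ref{cssigc} again, $(p,1)$ is $\sigma$-compact in $(A^1)^{**}$.

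Now the unital case applied to $A^1$ produces $y \in \frac{1}{2}{\mathfrak F}_{A^1} \subseteq {\rm Ball}(A^1)$ with $u(y) = q$ in $(A^1)^{**}$. By Proposition \ref{BRII64} (1), the existence of such $y \in {\rm Ball}(A^1)$ forces $q$ to be a peak projection for $A$; and by Theorem \ref{inco} (3), any peak projection in $A^{**}$ is of the form $u(x)$ for some $x \in \frac{1}{2}{\mathfrak F}_A$. This completes the proof.

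There is no real obstacle here: the main point is that the classical formula $u(x) = 1 - s(1-x)$ carries over intact to the Jordan unital case because both projections can be computed inside the associative operator algebra generated by $x$ and $1$, and the passage from $A$ to $A^1$ merely requires exhibiting a strictly real positive element of $A^1$ whose support is the complement of $q$, which is achieved by $z + \tfrac{1}{2} \cdot 1_{A^1}$.
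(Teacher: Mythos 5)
Your unital case is correct, and it is a genuinely shorter route than the paper's: there one simply takes $z \in \frac{1}{2}{\mathfrak F}_A$ with $s(z)=p$ from Lemma \ref{cssigc} and reads off $q = 1 - s(z) = u(1-z)$ inside the commutative operator algebra generated by $z$ and $1$. (The paper instead follows \cite[Lemma 4.4]{BRord}, building $d=rb$ and $rbr$ from a $\sigma$-compactness witness $r$ \emph{and} a compactness witness $b$, as in the proof of Theorem \ref{inco} (3).) The problem is your nonunital reduction, specifically the claim that $w = z + \frac{1}{2}1_{A^1}$ witnesses the $\sigma$-compactness of $1_{A^1}-q$ in $(A^1)^{**}$. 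This fails twice over. First, $1_{A^1}-2w = -2z$, so $\|1_{A^1}-2w\| = 2\|z\|$, which is typically $2$; your formula $\max(\|e-2z\|,0)$ would be the norm of $1_{A^1} - 2\bigl(z + \frac{1}{2}(1_{A^1}-e)\bigr)$, but $z + \frac{1}{2}(1_{A^1}-e)$ does not lie in $A^1$ because $e \notin A^1$. Second, and fatally, ${\rm Re}\, w = {\rm Re}\, z + \frac{1}{2}1_{A^1} \geq \frac{1}{2}1_{A^1}$, so $w$ is strictly real positive (indeed invertible) in $A^1$, whence $s(w) = 1_{A^1}$ rather than $1_{A^1}-q$.

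The missing idea is that compactness of $q$ must enter this step in an essential way, not merely through the statement that $q$ is closed in $(A^1)^{**}$. Indeed $1_{A^1}-q = p + (1_{A^1}-e)$, and no nonzero element of $A^1$ is supported under $1_{A^1}-e$; so any $v \in A^1$ with $s(v) = 1_{A^1}-q$ must have nonzero scalar part $\lambda 1_{A^1}$, and then $qvq=0$ forces $qaq = -\lambda q$ for the $A$-part $a$ of $v$ --- which is precisely (positive) compactness of $q$. One repair: take $b \in \frac{1}{2}{\mathfrak F}_A$ with $qbq=q$ (Theorem \ref{chcomp}) and set $v = \frac{1}{2}\bigl(z + (1_{A^1}-b)\bigr) \in \frac{1}{2}{\mathfrak F}_{A^1}$; then $qv=vq=0$ gives $s(v) \leq 1_{A^1}-q$, while $s(v) \geq s(z) \vee s(1_{A^1}-b) \geq p \vee (1_{A^1}-e) = 1_{A^1}-q$ by \cite[Corollary 3.17]{BWj} together with $q \leq u(b)$ and $1_{A^1}-e \leq u(1_{A^1}-b)$, so $s(v)=1_{A^1}-q$. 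With that inserted, your appeals to the unital case, to Proposition \ref{BRII64} (1), and to Theorem \ref{inco} (3) do complete the argument; as written, however, there is a genuine gap.
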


\begin{proof} 
This follows by the idea of proof of \cite[Lemma 4.4]{BRord}
suitably modified by using the variant found in the argument for our
Theorem \ref{inco} (3).  Namely one shows in the notation of those proofs
$d = rb \in B$ satisfies $u(d) = q$, and $w = dr = rdr \in A$
satisfies $u(w) = u(dr) = q$.
The remainder of the proof is as for \cite[Lemma 4.4]{BRord}
but using matching results from \cite{BWj} or the present paper
whenever various other results are invoked. \end{proof}  
The remark in the last line of this proof also applies to the proofs of the 
following variants of \cite[Corollary 4.5 and Theorem 4.6]{BRord}.

\begin{corollary} \label{ux} Suppose $A$ is a closed Jordan subalgebra of a $C^*$-algebra $B$.
If a peak projection for $B$ lies in $A^{\perp \perp}$ then it is also a peak projection for $A$.  
\end{corollary}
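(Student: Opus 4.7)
The plan is to reduce matters to Lemma \ref{sigcp} applied to the unitization $A^1$, and then descend the resulting peak for $A^1$ back to $A$ via Proposition \ref{BRII64}(1). First, by Theorem \ref{inco}(3) applied to the $C^*$-algebra $B$ I would choose $b \in \frac{1}{2}\mathfrak{F}_B$ with $q = u(b)$. In the unital $C^*$-algebra $B^1$ the identity $u(b) = 1 - s(1-b)$ recalled in the introduction gives $1 - q = s(1-b)$ in $(B^1)^{**}$, and since $b \in \frac{1}{2}\mathfrak{F}_B$ a short numerical-range check forces $1-b \in \mathfrak{r}_{B^1}$. By the implication (iii) $\Rightarrow$ (ii) of Lemma \ref{cssigc}, this exhibits $1-q$ as $\sigma$-compact in $(B^1)^{**}$.

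Next I would transfer this $\sigma$-compactness from $B^1$ to $A^1$. Since $q$ is a peak for $B$ it is compact in $B^{**}$, and by hypothesis $q \in A^{\perp\perp}$, so Theorem \ref{brii62} gives that $q$ is compact in $A^{**}$, i.e., closed in $(A^1)^{**}$; hence $1-q$ is open in $(A^1)^{**}$. Applying Lemma \ref{cssigc} a second time, now to the inclusion $A^1 \subset B^1$ with $p = 1-q$, the equivalence of (ii) with (iii) produces $y' \in \mathfrak{r}_{A^1}$ with $1-q = s(y')$; equivalently, via (i), the HSA in $A^1$ supported by $1-q$ has a countable cai. This descent of $\sigma$-compactness from the ambient $C^*$-algebra $B^1$ to the Jordan unitization $A^1$, via Lemma \ref{cssigc} and (ultimately) the HSA correspondence of Theorem \ref{bhn29}, is the main substance of the proof.

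Finally, $A^1$ is unital, $q$ is compact in $(A^1)^{**}$, and $1-q$ is $\sigma$-compact there, so Lemma \ref{sigcp} supplies $y \in \frac{1}{2}\mathfrak{F}_{A^1}$ with $q = u(y)$, exhibiting $q$ as a peak projection for $A^1$. If $A$ is unital this is already what we want; otherwise Proposition \ref{BRII64}(1) converts the peak for $A^1$ into a peak for $A$, completing the proof. Once the two applications of Lemma \ref{cssigc} are in place the remainder is purely formal, so the genuine obstacle is entirely packaged inside that lemma (and inside the Jordan variant of Hay's theorem on which it rests).
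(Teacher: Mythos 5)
Your argument is correct and is essentially the second proof the paper itself records in the remark immediately following Corollary \ref{ux}: a peak projection for $B$ has $\sigma$-compact complement, Lemma \ref{cssigc} descends this to $A^1$ to give $q^\perp = s(x)$ for some $x \in \frac{1}{2}{\mathfrak F}_{A^1}$, and then $q$ is a peak projection for $A^1$ and hence for $A$ by Proposition \ref{BRII64}(1). The only cosmetic difference is that you finish by invoking Lemma \ref{sigcp}, whereas the paper uses the identity $q = 1 - s(x) = u(1-x)$ directly (as in the last lines of the proof of Corollary \ref{jmet}); both routes are fine.
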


   Conversely, any peak projection for $A$ is also a peak projection for $B$.
We also remark that a different proof of Corollary \ref{ux} was given in \cite{blueda}.  In the Jordan situation this
proof would go as follows: a peak projection $q$ for $B$ is a limit of a decreasing sequence of positive elements of $B$ (namely $(b^n)$ if
a positive contraction $b$ peaks at $q$).   So $q^\perp$ is a limit of an increasing sequence in $B^+$.
Thus $q^\perp$ is $\sigma$-compact in the sense above Lemma \ref{cssigc}, and by that result 
$q^\perp = s(x)$ 
for some real positive $x$ which we may rechoose in $\frac{1}{2} {\mathfrak F}_{A^1}$.      One may then follow the last lines of the proof of Corollary \ref{jmet} to see that $q$ is a  peak projection for $A$.

\begin{theorem} \label{sul} {\rm (A strict noncommutative Urysohn lemma for 
Jordan operator
algebras)} \ Suppose that $A$ is any Jordan operator algebra
and that 
$q$ and $p$ are respectively compact   
and open projections in $A^{**}$ with $q \leq p$, and
$p - q$ $\sigma$-compact (note that the latter is automatic if $A$ is separable). 
Then there exists $x \in \frac{1}{2} {\mathfrak F}_{A}$ such that $qx q = q$ and $p x p =
 x$,
and such that $x$ peaks at $q$ (that is, $u(x) = q$) and $s(x) = p$, and $1-x$ peaks at
$1-p$ with respect to $A^1$
(that is, $u(1-x) = 1-p$). The latter identities imply
that $x$ is real strictly positive
in the hereditary subalgebra $C$ associated with $p$, and $1-x$ is real strictly positive
in the hereditary subalgebra in $A^1$ associated with $1-q$. Also, $s(x(1-x)) = p-q$,
so that $x(1-x)$ is real strictly positive
in the hereditary subalgebra in $A$ associated with $p-q$. We can also have $x$ `almost
positive', in the sense that 
if $\epsilon > 0$ is given one can choose $x$ as above but also satisfying 
${\rm Re} (x) \geq 0$ and $\Vert x - {\rm Re} (x) \Vert < \epsilon$.  
\end{theorem}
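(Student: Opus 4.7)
The plan is to mimic the strict Urysohn argument from the associative operator algebra setting (compare \cite[Theorem 6.6]{BRII} and the strict Urysohn lemmas of \cite[Section 4]{BRord}), systematically replacing each $C^*$- or associative-operator-algebraic ingredient with the Jordan analogue developed in Sections \ref{Jcompl}--\ref{Peak}. First I would reduce to the HSA $C$ of $A$ supported by $p$: by the results of Section \ref{Jcompl}, $C$ is an approximately unital Jordan operator algebra with $1_{C^{**}}=p$, and by Corollary \ref{bnc25} the compact projection $q\le p$ is still compact in $C^{**}$. The complement of $q$ in $C^{**}$ is $1_{C^{**}}-q=p-q$, which is $\sigma$-compact by hypothesis (an intrinsic property of the HSA supported by $p-q$, unchanged by passing from $A^{**}$ to $C^{**}$).

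Applying Lemma \ref{sigcp} inside $C$ then furnishes $x_0\in \tfrac12\mathfrak{F}_C\subset \tfrac12\mathfrak{F}_A$ with $u(x_0)=q$; automatically $px_0p=x_0$ and $qx_0q=q$, which together with the contractivity of $x_0$ forces $qx_0=x_0q=q$ in any containing $C^*$-algebra $B$. In parallel, Lemma \ref{cssigc}(iii) applied to the open $\sigma$-compact projection $p-q$ yields $h\in \tfrac12\mathfrak{F}_E$ with $s(h)=p-q$, where $E$ is the HSA of $A$ supported by $p-q$; in particular $ph=hp=h$ and $qh=hq=0$. The key step is to glue these into a single element
\[
x \;=\; x_0 \;+\; \delta\bigl(h - 2(x_0\circ h) + x_0hx_0\bigr),
\]
for a sufficiently small $\delta>0$, where the bracketed expression is just the Jordan-admissible rewriting of $(p-x_0)h(p-x_0)$ using $ph=hp=h$ and the identity $aba=2(a\circ b)\circ a-a^2\circ b$ recalled in Section 1, and where every term lies in $A$ by the inner-ideal property. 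Because $q(p-x_0)=(p-x_0)q=0$, the correction vanishes between the copies of $q$, so $qxq=q$; its support equals $s(h)=p-q$, hence $s(x)=s(x_0)\vee(p-q)\ge q\vee(p-q)=p$, while $s(x)\le p$ trivially, giving $s(x)=p$.

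For $\delta>0$ sufficiently small I would verify that $x\in \tfrac12\mathfrak{F}_A$ and that the peak survives the perturbation, $u(x)=u(x_0)=q$, via a direct norm estimate inside $B$ as in the associative model; from this all remaining conclusions are automatic. The relation $u(z)=1-s(1-z)$ recalled above Proposition \ref{peakare}, applied in $A^1$, gives $u(1-x)=1-s(x)=1-p$. Strict real positivity of $x$ in $C$, and of $1-x$ in the HSA of $A^1$ supported by $1-q$, is then just a rephrasing of $s(x)=p$ and $s(1-x)=1-q$, while $s(x(1-x))=s(x)\wedge s(1-x)=p\wedge(1-q)=p-q$ is computed inside the commutative associative subalgebra $\mathrm{oa}(x)\subset A$. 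Finally, the almost-positive refinement is obtained by replacing $x$ with $x^{1/n}\in \mathrm{oa}(x)$ for sufficiently large $n$, as at the close of the proof of \cite[Theorem 4.2]{BRord}: this leaves $u(\cdot)$ and $s(\cdot)$ unchanged (since $u(x^{1/n})=u(x)$ and $s(x^{1/n})=s(x)$) while driving the skew-hermitian part to zero.

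The hard part will be the perturbation estimate showing simultaneously $x\in \tfrac12\mathfrak{F}_A$ and $u(x)=u(x_0)=q$ for small $\delta>0$. In the $C^*$-algebraic or associative model one handles both by direct two-sided norm manipulations with respect to $q$; here every bilateral product must be rerouted through Jordan operations via $aba=2(a\circ b)\circ a-a^2\circ b$, and membership in $A$ checked at each stage using the inner-ideal property of the HSA's $C$ and $E$. The weak* convergence $x^n\to q$ required for $u(x)=q$ reduces, via the strict orthogonality relations $qx_0=x_0q=q$ and $qh=hq=0$, to the known convergence $x_0^n\to q$ together with control of the binomial cross-terms, each of which is supported on $p-q$ and can be made arbitrarily small in norm by choice of $\delta$.
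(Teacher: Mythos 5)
Your reduction to the HSA $C$ supported by $p$, and the observation that $q$ stays compact there (Corollary \ref{bnc25}) while $p-q$ stays $\sigma$-compact, is exactly the right first move and matches the route the paper takes (its proof is a direct transplant of \cite[Theorem 4.6]{BRord}). But the paper does not then glue two separately-constructed elements: in the BRord argument the single application of the $\sigma$-compactness lemma (here Lemma \ref{sigcp}) inside $C$ is engineered so that the element it produces already satisfies $u(x)=q$ \emph{and} $s(1-x)=1_{C^1}-q$, whence $s(x)\geq p-q$ and so $s(x)=p$ comes for free. Your additive gluing $x=x_0+\delta\,(p-x_0)h(p-x_0)$ is where your argument breaks down, in three places.

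First, membership in $\tfrac12\mathfrak{F}_A$: since $u(x_0)=q\neq 0$ forces $\Vert x_0\Vert=1$, the element $1-2x_0$ already has norm $1$, and $\Vert 1-2x_0-2\delta c\Vert\leq 1$ for an essentially arbitrary accretive-looking $c$ is not a smallness statement --- no choice of small $\delta$ helps, and the set $\tfrac12\mathfrak{F}_A$ is not stable under additive perturbation (a convex combination would preserve it, but would destroy $qxq=q$ because $q(p-x_0)h(p-x_0)q=0$). The scalar computation that makes this work for positive functions does not transfer to accretive operators. Second, your support computation $s(x)=s(x_0)\vee(p-q)$ invokes the join formula for supports of sums, which is only available for \emph{real positive} summands; $(p-x_0)h(p-x_0)$ is not self-adjointly conjugated (as $p-x_0$ is not self-adjoint), so it is not shown to be real positive and its ``support'' is not even defined. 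Third, the claim $u(x)=u(x_0)$ because ``the binomial cross-terms can be made small by choice of $\delta$'' cannot work as stated: $u(x)$ is the weak* limit of $x^n$ as $n\to\infty$, and the number of cross-terms in $x^n$ grows with $n$ for fixed $\delta$, so a term-by-term smallness estimate gives no control of the limit; one must instead verify the state-space/compactness characterization of peaking from \cite[Lemma 3.1]{BNII}. The clean repair is to abandon the gluing and instead carry out the proof of Lemma \ref{sigcp} inside $C$, keeping track (as in \cite[Theorem 4.6]{BRord}) of the fact that the element $x$ built there from an $r$ with $s(1-r)=1_{C^1}-q$ automatically has $s(x)=p$ and $u(1-x)=1-p$; your closing steps (the identity $u(z)=1-s(1-z)$, the computation of $s(x(1-x))$ in ${\rm oa}(x)$, and the $n$th-root trick for near positivity) are then all correct.
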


This is proven just as in the proof of the strict noncommutative Urysohn lemma 
in \cite[Theorem 4.6]{BRord}.  

The Jordan variant of the Remark 1 after  \cite[Theorem 4.6]{BRord} also seems
to hold, simply using matching results from \cite{BWj} or the present paper
whenever various other results are invoked. 

We also obtain the `lifting of projections' application as in \cite[Corollary 4.8]{BRord}:

\begin{corollary} \label{liftp}  Let $A$ be a Jordan operator algebra containing a closed two-sided ideal $J$ with a countable cai, or
equivalently, with a $\sigma$-compact  support
projection $p$. Also, suppose that $q$ is a projection in $A/J$. Then there exists an 
almost
positive $x \in \frac{1}{2} {\mathfrak F}_A$ such
that $x + J = q$.
Also, the peak 
$u(x)$ for $x$ equals the canonical copy
of $q$ in $(1-p) A^{**} (1-p)$. \end{corollary}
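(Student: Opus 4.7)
The plan is to apply the strict noncommutative Urysohn lemma, Theorem \ref{sul}, to the pair of projections $\tilde{q}\leq\tilde{q}+p$ in $A^{**}$, where $p$ is the support projection of $J$ in $A^{**}$ and $\tilde{q}$ is the canonical lift of $q$. Because $J$ is a two-sided Jordan ideal, $p$ is central in $A^{**}$, and by Lemma \ref{cssigc} it is open and $\sigma$-compact. Centrality identifies $(A/J)^{**}$ with $(1-p)A^{**}$ via $\pi^{**}$, and $\tilde{q}\in (1-p)A^{**}(1-p)$ is the projection corresponding to $q$; in particular $\tilde{q}p=0$, so $\tilde{q}+p=\tilde{q}\vee p$. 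If $\tilde{q}$ is compact in $A^{**}$ and $\tilde{q}+p$ is open in $A^{**}$, then since $(\tilde{q}+p)-\tilde{q}=p$ is $\sigma$-compact, Theorem \ref{sul} produces an almost positive $x\in\frac{1}{2}{\mathfrak F}_A$ with $\tilde{q}x\tilde{q}=\tilde{q}$, $x=(\tilde{q}+p)x(\tilde{q}+p)$, $u(x)=\tilde{q}$, and $s(x)=\tilde{q}+p$.

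To show $\tilde{q}+p$ is open, I would consider the HSA $q(A/J)q$ in $A/J$ and its preimage $E=\pi^{-1}(q(A/J)q)$. Because $\pi(a)=q\pi(a)q$ forces $q\pi(a)=\pi(a)=\pi(a)q$ for $a\in E$, a direct check shows $E$ is a Jordan inner ideal of $A$; since $J\subset E$ and $E/J\cong q(A/J)q$ is unital, Lemma \ref{broyce} gives that $E$ is approximately unital. Thus $E$ is a HSA in $A$, and its support projection $r$ is open in $A^{**}$ with $\pi^{**}(r)=q$ and $r\geq p$; combining these relations yields $r=\tilde{q}+p$.

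The main obstacle is verifying that $\tilde{q}$ is compact in $A^{**}$, which (by Theorem \ref{chcomp}, or by definition when $A$ is not approximately unital) amounts to showing that $1_{A^1}-\tilde{q}$ is open in $(A^1)^{**}$. In the unital Jordan algebra $(A/J)^1\cong A^1/J$, the projection $1-q$ is the identity of the HSA $(1-q)(A/J)^1(1-q)$, hence $1-q$ is open in $((A/J)^1)^{**}$. Applying Proposition \ref{bri62}(3) to the Jordan operator algebra $A^1$ with ideal $J$ (whose support in $(A^1)^{**}$ remains $p$) produces an open projection $r_0\in (A^1)^{**}$ whose image in $(A^1/J)^{**}\cong(1_{A^1}-p)(A^1)^{**}$ is $1-q$, equivalently $(1_{A^1}-p)r_0=(1_{A^1}-p)-\tilde{q}$. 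Since $p$ is also open in $(A^1)^{**}$ and commutes with $r_0$ by centrality, the join $r_0\vee p$ is open, and a short calculation shows $r_0\vee p=1_{A^1}-\tilde{q}$, as required.

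Finally, invoking Theorem \ref{sul} yields the $x$ described above; the identity $u(x)=\tilde{q}$ is exactly the desired identification of the peak of $x$ with the canonical copy of $q$ in $(1-p)A^{**}(1-p)$. To confirm $x+J=q$, I would expand $x=(\tilde{q}+p)x(\tilde{q}+p)=\tilde{q}x\tilde{q}+\tilde{q}xp+px\tilde{q}+pxp$; by centrality of $p$ together with $\tilde{q}p=0$, the cross terms collapse since $\tilde{q}xp=\tilde{q}px=0$ and $px\tilde{q}=p\tilde{q}x=0$, leaving $x=\tilde{q}+pxp$. Multiplying by $1-p$ gives $(1-p)x=\tilde{q}$, i.e., $\pi(x)=q$, completing the verification.
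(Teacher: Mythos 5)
Your proof is correct and takes essentially the same route as the paper's: the paper's terse proof defers to the Jordan variant of \cite[Lemma 4.7]{BRord} (which is exactly your verification, via Proposition \ref{bri62} and centrality of $p$, that $\tilde{q}$ is compact in $A^{**}$ and $\tilde{q}+p$ is open) and then copies \cite[Corollary 4.8]{BRord}, i.e.\ applies the strict Urysohn lemma (Theorem \ref{sul}) to $\tilde{q}\leq \tilde{q}+p$ with $\sigma$-compact difference $p$. Your final computation showing $(1-p)x=\tilde{q}$ matches the paper's remark that centrality of $p$ lets the argument of \cite{BRord} go through with the products suitably symmetrized.
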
  

\begin{proof}  One first needs to check that the Jordan variant of \cite[Lemma 4.7]{BRord}
holds with essentially unchanged proof, but citing our
Proposition \ref{bri62} instead of the references to \cite{BRI}. Then  one copies the proof of
 \cite[Corollary 4.8]{BRord}.   There may be a typo in the reference to \cite{BRI} there, that probably should
be to Proposition 6.1 there, a result that we generalized to Jordan algebras in \cite[Proposition 3.28]{BWj}.
Since $p$ is a central projection the proof we are copying proceeds without issue, merely
replacing $yr,xr$ and $x(p+r)$ there by $ryr, rxr$
and $(p+r)x(p+r)$.
  \end{proof}  

Proposition 4.9 in \cite{BRord} is also valid for approximately
unital Jordan operator algebras.

\begin{remark} There may be a gap in the proof of Theorem 4.10  in \cite{BRord}, we do not follow why in the 
fifth line of the proof $D$ is approximately unital if $I$ is not approximately unital.    Also, 
  Lemma 4.11 in \cite{BRord} seems to have an issue in the Jordan case, since we do not know 
at the time of writing if the quotient of a Jordan operator algebra by a closed ideal $D$  is a Jordan operator algebra (and actually believe this is false).   However the latter
is true if $D$ is approximately unital by \cite[Proposition 3.27]{BWj}, so that \cite[Lemma 4.11]{BRord}
is certainly valid for unital Jordan operator algebras $C, D$ and closed Jordan subalgebras $A, B$ provided that the 
kernel of the $q$ there is approximately unital, and provided we drop the word `completely'.

The Tietze theorem application Theorem 4.12 in \cite{BRord} of 
\cite[Lemma 4.11]{BRord} is thus endangered in the Jordan case.   However we remark that 
where we actually apply Lemma 4.11 in \cite{BRord} there, the ideal $D$ we are quotienting by
is actually an $M$-ideal, hence proximinal, so the proof looks like it may be salvageable.   We hope
to visit these topics at a later time.   \end{remark}

Finally, we remark on  the Jordan algebra case of some results from \cite[Section 2]{blueda}.
We have already generalized the two lemmas in \cite[Section 2]{blueda} in Corollary \ref{cipeak}
and  Corollary \ref{ux} and the remark after it.     Theorem 2.5 of  \cite{blueda} holds if $A$ is a Jordan subalgebra of $M$, with the same proof. The proof of the following result is very similar to 
the proof in \cite[Section 2]{blueda}.  We leave the adaption as an exercise (since this is a selfadjoint result one would
replace real positive elements by elements that are positive in the usual sense).   

 \begin{proposition} \label{cipeak3}  If $B$ is a JC*-algebra and $q$ is a projection in $B^{**}$, the following are equivalent: \begin{itemize} 
\item  [(i)]  $q$ is a peak projection with respect to $B$, indeed there exists $b \in {\rm Ball}(B_+)$ with $u(b) = q$.
\item   [(ii)]    $q$ is a compact projection which is  $G_\delta$ (that is, an infimum in  $B^{**}$ of a sequence $(p_n)$ of open projections).    
\item    [(iii)]    $q$ is the weak* limit of a decreasing sequence from $B_{\rm sa}$.  \end{itemize} 
\end{proposition}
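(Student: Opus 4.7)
The plan is to establish the cycle (i) $\Rightarrow$ (iii) $\Rightarrow$ (i) and (i) $\Rightarrow$ (ii), then handle (ii) $\Rightarrow$ (i) by reducing to the separable case, where Theorem \ref{inco}(2) applies. Throughout I work under the implicit assumption $q \neq 0$, since peak projections are nonzero by convention.

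The direction (i) $\Rightarrow$ (iii) is immediate: if $q = u(b)$ for $b \in {\rm Ball}(B_+)$, then $(b^n)$ is decreasing in $B_+ \subset B_{\rm sa}$ with weak$^*$ limit $q$. For (i) $\Rightarrow$ (ii), compactness is automatic, and to exhibit $q$ as a $G_\delta$ I take $p_n = s\bigl((b - (1 - \tfrac{1}{n}))_+\bigr)$, which are open in $B^{**}$ as support projections of positive elements of $B$, and which decrease to the spectral projection $\chi_{\{1\}}(b) = u(b) = q$.

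For (iii) $\Rightarrow$ (i), suppose $a_n \searrow q$ weak$^*$ with $a_n \in B_{\rm sa}$. Monotonicity and weak$^*$ convergence give $a_n \geq q \geq 0$, so $a_n \in B_+$; truncating via the Jordan continuous functional calculus (available since $B$ is closed under squares of self-adjoints) I may further assume $0 \leq a_n \leq 1$ (still decreasing to $q$). Setting $b = \sum_n 2^{-n} a_n \in B_+$, I will verify in a faithful normal representation that $\{\xi : b\xi = \xi\} = \{\xi : q\xi = \xi\}$: if $b\xi = \xi$ with $\|\xi\|=1$ then $\sum_n 2^{-n}\langle a_n\xi,\xi\rangle = 1$ with each $\langle a_n \xi,\xi\rangle \leq 1$ forces $a_n\xi = \xi$ for all $n$, whence $\langle q\xi,\xi\rangle = \lim_n\langle a_n\xi,\xi\rangle = 1$; the converse uses $q \leq a_n \leq 1$ directly. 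This identifies $u(b) = \chi_{\{1\}}(b) = q$.

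For the deepest direction (ii) $\Rightarrow$ (i), I would reduce to the separable case. Write $q = \wedge_n p_n$ with $p_n = s(x_n)$ open and $x_n \in B_+$, and pick $b \in {\rm Ball}(B_+)$ with $qbq = q$ witnessing compactness. Let $B_0$ be the separable JC*-subalgebra of $B$ generated by $b$ and $\{x_n\}$. Under the canonical embedding $B_0^{**} \hookrightarrow B^{**}$ as $B_0^{\perp\perp}$, each $p_n$ lies in $B_0^{\perp\perp}$, and since the latter is a JW*-subalgebra of $B^{**}$ closed under meets of projections (see the last paragraph of Section 1), so does $q$. The same $b \in B_0$ witnesses compactness of $q$ in $B_0^{**}$, so Theorem \ref{inco}(2) produces $q = u(b_0)$ for some $b_0 \in \frac{1}{2}{\mathfrak F}_{B_0}$, which in the JC*-setting is a positive contraction in $B$. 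Since $(b_0^n)$ has the same weak$^*$ limit in $B^{**}$ as in $B_0^{**}$, this yields (i). The main obstacle is precisely this identification of $q$ as an element of $B_0^{**}$; once that is in hand, Theorem \ref{inco}(2) does the rest.
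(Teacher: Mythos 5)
Your implications (i) $\Rightarrow$ (iii) and (i) $\Rightarrow$ (ii) are correct, but both of the substantive directions contain genuine gaps. In (iii) $\Rightarrow$ (i), the reduction to $0 \leq a_n \leq 1$ by truncation is not legitimate: $t \mapsto \min(t,1)$ is not operator monotone, so the truncated sequence need be neither decreasing nor $\geq q$, and your converse inclusion $\{\xi : q\xi = \xi\} \subset \{\xi : b\xi = \xi\}$ relies essentially on $q \leq a_n \leq 1$. Concretely, for $q = e_{11}$ and $a = \left[\begin{smallmatrix} 1+\delta & \delta \\ \delta & \delta \end{smallmatrix}\right]$ one has $q \leq a$ but $\min(a,1) \not\geq q$ (the $(1,1)$ entry of $\min(a,1)-q$ is negative); and since no nonconstant operator monotone function on $[0,\infty)$ fixes $1$ while remaining $\leq 1$, no better choice of truncating function can repair this. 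The passage from a decreasing selfadjoint sequence to a decreasing sequence of positive \emph{contractions} still lying above $q$ is a real step that needs an argument, not a normalization.

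In (ii) $\Rightarrow$ (i), the opening move ``write $p_n = s(x_n)$ with $x_n \in B_+$'' is unjustified: an open projection is a supremum of support projections, but it is itself of the form $s(x)$ for a single $x$ only when the corresponding hereditary subalgebra is $\sigma$-unital (this is precisely the $\sigma$-compactness characterization of Lemma \ref{cssigc}), which fails for general open projections when $B$ is nonseparable. Since your whole separable reduction hinges on placing $q = \wedge_n p_n$ inside $B_0^{\perp\perp}$, it does not get off the ground as written. The repair --- and essentially the route the paper intends, deferring to \cite{blueda} --- is to shrink the $p_n$ first: by the noncommutative Urysohn lemma (Theorem \ref{1Ur} or \ref{bnc26}, or Akemann--Pedersen in the $C^*$-case) choose $b_n \in {\rm Ball}(B_+)$ with $q b_n q = q$ and $s(b_n) \leq p_n$, so that $q \leq u(b_n) \leq p_n$ and hence $q = \wedge_n u(b_n) = u\bigl(\sum_n 2^{-n} b_n\bigr)$ by Corollary \ref{cipeak}; this gives the positive peaking element directly, with no separable reduction and no appeal to Theorem \ref{inco}(2). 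Finally, your parenthetical that an element of $\frac{1}{2}{\mathfrak F}_{B_0}$ ``in the JC*-setting is a positive contraction'' is false (e.g.\ $\frac{1}{2}(1-u) \in \frac{1}{2}{\mathfrak F}$ for any unitary $u$); as the introduction of the paper warns, the selfadjoint versions of these results require replacing real positive elements by genuinely positive ones, an adjustment your argument elides but that statement (i) explicitly demands.
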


\bigskip

{\em Acknowledgement:}  A few results here about compact and peak projections (which are adaptions
of arguments from \cite{BHN, BNII}) were found 
by the first author and Zhenhua Wang, and will appear also in Wang's thesis.   We thank him for permission
to include them here for completeness of our exposition.  We also thank him and the referee for spotting some typos.

\vspace{\baselineskip}


\begin{thebibliography}{10}

 

 \bibitem{Ake2}   C. A. Akemann, {\em Left ideal structure of $C\sp*$-algebras,}  J.\ Funct.\ Anal.\
\textbf{6}  (1970), 305-317.

\bibitem{APfaces} C. A. Akemann and 
G. K. Pedersen, {\em 
Facial structure in operator algebra theory,}
 Proc. London Math. Soc.  {\bf 64} (1992),  418--448. 


\bibitem{AS}   J. Arazy and B. Solel, {\em 
Isometries of nonselfadjoint operator algebras,} 
J. Funct. Anal. {\bf 90} (1990),  284--305. 

\bibitem{BBS}  C. A. Bearden, D. P. Blecher and S. Sharma, {\em On positivity and roots in operator algebras,}  Integral Equations Operator Theory {\bf  79} (2014), no. 4, 555--566.


\bibitem{Bnpi}  D.\  P.\ Blecher, {\em Noncommutative peak interpolation
revisited}, Bull.\ London Math.\ Soc.\ {\bf 45} (2013),
1100-1106.



\bibitem{BHN}  D. P. Blecher, D. M. Hay, and
M. Neal, {\em Hereditary subalgebras of operator algebras,} J.\
Operator Theory {\bf 59} (2008), 333-357.


\bibitem{blueda}  D. P. Blecher and L. E. Labuschagne, {\em Ueda's peak set theorem for general von Neumann algebras,} Preprint (2016),  to appear Trans. Amer. Math. Soc.

\bibitem{BLM}  D. P. Blecher and 
C.  Le Merdy, {\em Operator algebras and their modules---an
operator space approach,} Oxford Univ.\  Press, Oxford (2004).

\bibitem{BNI} D. P. Blecher
and M. Neal, {\em Open projections in operator algebras I: Comparison theory,} Studia Math.\ {\bf 208} (2012), 117--150.


\bibitem{BNII} D. P. Blecher and M. Neal, {\em Open projections in operator algebras II: Compact projections,} Studia Math.\ {\bf 209} (2012), 203--224.

\bibitem{BNmetric} D. P. Blecher
and M. Neal, {\em Metric characterizations II,} Illinois J.\  Math.  {\bf 57} (2013), 25--41.
 
\bibitem{BNp}   D. P. Blecher
and M. Neal, {\em Completely contractive projections on operator algebras,}  Pacific J. Math.   {\bf 283-2} (2016), 289--324.

\bibitem{BNjp} D. P. Blecher
and M. Neal, {\em Contractive projections on Jordan operator algebras (tentative title),} 
manuscript in preparation.

 \bibitem{BOZ}   D. P. Blecher and N. Ozawa, {\em Real 
positivity and approximate identities
in Banach algebras}, Pacific J. Math.  {\bf 277} (2015), 1--59.

\bibitem{BRI}  D. P. Blecher and C. J. Read, {\em  Operator algebras with contractive approximate identities,}
J. Funct.\ Anal.\  {\bf 261} (2011), 188--217.

\bibitem{BRII}  D. P. Blecher and C. J. Read, {\em  Operator algebras with contractive approximate identities II,} J. Funct.\ Anal.\  {\bf 264} (2013), 1049--1067.

\bibitem{BRord}  D. P. Blecher and C. J. Read, {\em   Order theory  and interpolation in operator algebras,}   Studia Math. {\bf 225} (2014), 61--95.

  \bibitem{BRoyce}  D. P. Blecher and M. Royce, {\em   Extensions of operator algebras I,} 
J. Math.\ Analysis and Applns.\  {\bf  339}  (2008), 1451--1467.

\bibitem{BRS}  D. P. Blecher, Z-J. Ruan, and A. M. Sinclair, {\em   A characterization of operator algebras}, J.\ Funct.\ Anal.\  {\bf 89} (1990), 188--201.


\bibitem{BWj}  D. P. Blecher and Z. Wang, {\em Jordan operator algebras: basic theory,}
Preprint 2017, to appear Math.\ Nachrichten.

\bibitem{BWj2}  D. P. Blecher and Z. Wang, {\em Jordan operator algebras: supplemental results,} (tentative title)
Preprint 2018, in preparation.

\bibitem{Brown} L.  G. 
Brown, {\em Semicontinuity and multipliers of $C^???$-algebras,} Can.\ J.\ Math. {\bf 40} (1988),  865--988. 


\bibitem{BFT}  L. J. Bunce, B. Feely, and R. M. Timoney, {\em Operator space structure of JC$^???$-triples and TROs, I,} Math. Z. {\bf 270} (2012), 
961--982. 

\bibitem{BT}  L. J. Bunce and R. M. Timoney, {\em Universally reversible JC???-triples and operator spaces,} J. Lond. Math. Soc. {\bf  88} (2013), 271--293.  


\bibitem{Rod}  M. Cabrera Garcia and A.  Rodriguez Palacios, {\em 
	Non-associative normed algebras. Vol. 1,}
The Vidav-Palmer and Gelfand-Naimark theorems. Encyclopedia of Mathematics and its Applications, 154. Cambridge University Press, Cambridge, 2014.

\bibitem{FPP0}  F. J. Fern\'andez-Polo and A. M. Peralta, {\em Compact tripotents and the Stone–Weierstrass Theorem for $C^∗$-algebras and JB∗-triples,}
J.\ Operator Theory {\bf 58} (2007), 157--173.
 
\bibitem{FPP}  F. J. Fern\'andez-Polo and A. M. Peralta, {\em Non-commutative generalisations of Urysohn's lemma and hereditary inner ideals,} 
J.\ Funct.\ Anal.\ {\bf 259} (2010), 343--358.


\bibitem{Gam}   T. W. Gamelin, {\em Uniform Algebras,} Second edition, Chelsea, New York, 1984.

\bibitem{Ham}  M. Hamana,  {\em Triple envelopes and Silov boundaries of operator spaces,} Math. J. Toyama Univ. {\bf 
22} (1999),  77--93.

\bibitem{HS}  H. Hanche-Olsen and E. Stormer, {\em  Jordan operator algebras,} 
Monographs and Studies in Mathematics, 21,  Pitman (Advanced Publishing Program), Boston, MA, 1984.   

\bibitem{Har2}  L. A. Harris, {\em  A generalization of $C^*$-algebras,} Proc. Lond. Math. Soc. {\bf 42}  (1981),  331--361.

\bibitem{Hay}  D. M. Hay, {\em Closed projections and peak interpolation for operator algebras,}
  Integral Equations Operator Theory  {\bf 57}  (2007),  491--512.  

\bibitem{HWW}  P. Harmand, D. Werner, and W. Werner,
{\em $M$-ideals in Banach spaces and Banach algebras,}    
 Lecture Notes in Math.,  1547, Springer-Verlag, Berlin--New York, 1993. 

\bibitem{HN}  G. Horn and E. Neher, {\em 
Classification of continuous JBW???-triples,}  
Trans. Amer. Math. Soc. {\bf 306} (1988), 553--578. 

\bibitem{KP} M. Kaneda and V. I. Paulsen, {\em 
Quasi-multipliers of operator spaces,} J.\ Funct.\ Anal.\ {\bf 217} (2004), 347--365. 

\bibitem{Kirch} E.  Kirchberg, 
 {\em On restricted perturbations in inverse images and a description of normalizer algebras in $C^*$-algebras,}  J. Funct. Anal. {\bf 129} (1995), 1--34. 

\bibitem{Lin}  H. Lin, {\em Cuntz semigroups of $C^*$-algebras of stable rank one and projective Hilbert modules}, Preprint 
arXiv:1001.4558 (2010).



\bibitem{Mc} K. McCrimmon, {\em A taste of Jordan algebras,}  
Universitext. Springer-Verlag, New York, 2004.


\bibitem{N} M.  Neal, {\em Inner ideals and facial structure of the quasi-state space of a JB-algebra,}
 J. Funct. Anal.\ {\bf  173} (2000), 284--307.

\bibitem{N2} M.  Neal, E. Ricard and B. Russo, {\em Classification of contractively complemented Hilbertian operator spaces,} J.\ Funct.\ Anal.\  237 (2006), 589--616.


\bibitem{NR03}  M. Neal and B. Russo, {\em Contractive projections and operator spaces}, Trans. Amer. Math. Soc. {\bf 355} (2003), 2223--2262. 

\bibitem{NeRu} M. Neal and B. Russo, {\em Operator space
characterizations of
C*-algebras and ternary rings}, Pacific J. Math. {\bf 209} (2003), 339--364.

\bibitem{NR14} M. Neal and B. Russo, {\em A holomorphic characterization of operator algebras}, Math. Scand. {\bf 115} (2014), 229--268.


\bibitem{NR05}  M. Neal and B. Russo, {\em Representation of contractively complemented Hilbertian operator spaces on the Fock space,} Proc.\  Amer. Math. Soc. {\bf  134}, (2005), 475--485


\bibitem{P} G. K. Pedersen, {\em C*-algebras and their automorphism
groups,} Academic Press, London (1979).

\bibitem{Pisbk}   G.\ Pisier, {\em  Introduction to operator space
theory,} London Math.\ Soc.\ Lecture Note Series, 294, Cambridge
University Press, Cambridge, 2003.


\bibitem{ZWdraft}  Z. Wang, Ph.\ D.\ thesis in preparation, University of Houston (2018).
\end{thebibliography}
\end{document}